\newcommand{\di}{\displaystyle} 
\newcommand{\D}{\mathrm{d}} 
\newcommand{\Le}{\leqslant} 
\newcommand{\Ge}{\geqslant} 
\newcommand{\Prec}{\preccurlyeq} 
\newcommand{\Succ}{\succcurlyeq} 
\newcommand{\cmark}{\ding{51}} 
\newcommand{\xmark}{\ding{55}} 
\newcommand{\ep}{\epsilon}
\newcommand{\Dx}{\delta{\bm X}_k}
\newcommand{\Dv}{\delta{\bm V}_k}
\newcommand{\Dmv}{\delta_m{\bm V}_k}
\newcommand{\Dhv}{\delta_h{\bm V}_k}
\newcommand{\Dy}{\delta{\bm Y}_k}
\begin{document}
	
	\title{Mean Square Error Analysis of Stochastic Gradient and Variance-Reduced Sampling Algorithms}
	
	\author{\name Jianfeng Lu \email jianfeng@math.duke.edu \\
		\addr Departments of Mathematics, Physics, and Chemistry\\
		Duke University\\
		Durham, NC 27708, USA
		\AND
		\name Xuda Ye \email ye481@purdue.edu \\
		\addr Department of Mathematics\\
		Purdue University\\
		West Lafayette, IN 47907, USA
		\AND
		\name Zhennan Zhou \email zhouzhennan@westlake.edu.cn\\
		\addr Institute for Theoretical Sciences\\
		Westlake University \\
		Hangzhou, 310030, China
	}
	\editor{TBD}
	
	\maketitle
	
    \begin{abstract}%
        This paper considers mean square error (MSE) analysis for stochastic gradient sampling algorithms applied to underdamped Langevin dynamics under a global convexity assumption. A novel discrete Poisson equation framework is developed to bound the time-averaged sampling error. For the Stochastic Gradient UBU (SG-UBU) sampler, we derive an explicit MSE bound and establish that the numerical bias exhibits first-order convergence with respect to the step size $h$, with the leading error coefficient proportional to the variance of the stochastic gradient. The analysis is further extended to variance-reduced algorithms for finite-sum potentials, specifically the SVRG-UBU and SAGA-UBU methods. For these algorithms, we identify a phase transition phenomenon whereby the convergence rate of the numerical bias shifts from first to second order as the step size decreases below a critical threshold. Theoretical findings are validated by numerical experiments. In addition, the analysis provides a practical empirical criterion for selecting between the mini-batch SG-UBU and SVRG-UBU samplers to achieve optimal computational efficiency.
    \end{abstract}

	\begin{keywords}
    mean square error, stochastic gradient sampling, discrete Poisson equation, variance reduction
	\end{keywords}
	
\section{Introduction}
\label{section: introduction}

    Sampling from complicated probability distributions is a fundamental task in computational physics and data sciences. In many problems of interest, the potential function (i.e., the negative log-density or log-likelihood) has a natural finite-sum structure. A direct example comes from Bayesian inference: the posterior distribution accumulates log-likelihood contributions from a large dataset \citep{gelman1995bayesian}.  This finite-sum structure motivates stochastic gradient sampling methods, pioneered by Stochastic Gradient Langevin Dynamics (SGLD) and related MCMC algorithms \citep{welling2011bayesian,chen2014stochastic}. At each step, a mini-batch gradient, an unbiased estimator computed from a small subset of the data, provides an approximation of the exact gradient. Because of the unbiasedness, the errors introduced into the drift term average out over long simulation horizons; consequently, as the integrator step size vanishes, the dynamics converge to the correct target distribution and yield reliable samples of the posterior. Building on these properties, SGLD has found wide applications in Bayesian deep learning and uncertainty quantification \citep{welling2011bayesian,li2016preconditioned}; scalable latent-variable models such as topic models and probabilistic matrix factorization \citep{patterson2013stochastic,chen2014stochastic}; and online or streaming Bayesian inference for large datasets \citep{ahn2012bayesian}.
	
	The necessity of stochastic gradients for sampling extends beyond the data sub-sampling context to problems where the target log-probability gradient is intrinsically stochastic. This situation frequently arises in Bayesian inference for complex generative models, particularly those defined implicitly through stochastic simulators, which render the likelihood function intractable \citep{cranmer2020frontier}. In such models, common across fields such as systems biology, physics, and econometrics, the gradient of the log-likelihood cannot be computed analytically and must instead be approximated via Monte Carlo estimation, often through repeated simulator runs. As a result, any gradient-based sampler targeting the posterior must operate with noisy gradient estimates. This challenge lies at the core of likelihood-free inference (also known as Approximate Bayesian Computation) and pseudo-marginal methods, where the central task is to perform posterior sampling in the presence of intrinsically noisy gradient information \citep{andrieu2009pseudo,meeds2014gps}.
	
We formulate the stochastic gradient sampling problem as follows. Let $\pi(\bm{x}) \propto e^{-U(\bm{x})}$ be the target distribution in $\mathbb{R}^d$. The full gradient $\nabla U(\bm{x})$ is assumed to be computationally intractable in $\mathcal O(1)$ time. Instead, we assume cheap access to a stochastic gradient $b(\bm{x},\theta)$ for some random variable $\theta \sim \mathbb{P}_\theta$, and $b(\bm{x},\theta)$ is an unbiased estimator of the full gradient:
\begin{equation*}
	\mathbb{E}_\theta\big[b(\bm{x},\theta)\big] = \nabla U(\bm{x}), \qquad \forall \bm{x} \in \mathbb{R}^d.
\end{equation*}
Two common forms of the unbiased stochastic gradient include:

\paragraph{Additive noise form.} The gradient estimator is given by
\begin{equation}
	b(\bm{x},\theta) = \nabla U(\bm{x}) + \sigma \theta,
	\qquad \theta \sim \mathcal{N}(\bm{0},\bm{I}_d),
\end{equation}
with $\sigma > 0$ being a constant noise level.

\paragraph{Finite-sum form.} This setting assumes that the potential is an average of $N$ components,
\begin{equation}
	U(\bm{x}) = \frac{1}{N} \sum_{i=1}^N U_i(\bm{x}),
	\label{finite-sum potential}
\end{equation}
where the stochastic gradient is a mini-batch estimate over a random subset $\theta \subset \{1, \dots, N\}$ of size $p$, with the batch size $p$ being a key tunable parameter:
\begin{equation}
	b(\bm{x},\theta) = \frac{1}{p} \sum_{i \in \theta} \nabla U_i(\bm{x}).
	\label{stochastic gradient}
\end{equation}
A stochastic gradient sampling algorithm generates a sequence of samples $(\bm{X}_k)_{k=0}^\infty$ by applying a numerical integrator with a fixed step size $h > 0$, driven by the stochastic gradient $b(\bm{x},\theta)$. For a sufficiently small step size, the distribution of the iterate $\bm{X}_k$ is expected to provide a close approximation to the target distribution $\pi(\bm{x})$ as $k \to \infty$.

This paper provides a rigorous analysis of the error for stochastic gradient sampling algorithms. The analysis of such methods is considerably more challenging than their full gradient counterparts, as it must account for both the intricate interplay between the noise from the stochastic gradients and the systematic error from the numerical discretization. We focus on a practical error metric by first specifying a test function $f(\bm{x})$ and the target statistical average $\pi(f) := \int_{\mathbb{R}^d} f(\bm{x})\pi(\bm{x})\D\bm{x}$. For the numerical solution $(\bm{X}_k)_{k=0}^\infty$, the sampling error after $K$ steps is quantified by the mean square error (MSE):
\begin{equation}
	\mathrm{MSE}_K = \mathbb{E}\Bigg[\bigg(\frac{1}{K} \sum_{k=0}^{K-1} f(\bm{X}_k) - \pi(f)\bigg)^2\Bigg].
    \label{MSE}
\end{equation}
Unlike metrics that measure distributional discrepancy (e.g., Wasserstein distance or Kullback--Leibler (KL) divergence), the MSE characterizes the error and variance of the time average obtained from a single numerical trajectory, which more accurately reflects the practice.

The MSE can be conceptually decomposed into three primary sources: (i) the \textit{stochastic fluctuation} arising from the finite sample size, (ii) the \textit{stochastic gradient error} introduced by gradient noise, and (iii) the \textit{discretization error} originating from the numerical integrator. The stochastic fluctuation component is an intrinsic feature of Monte Carlo sampling algorithms and exhibits a conventional decay rate of $\mathcal O(1/K)$ as the number of sampling steps $K$ approaches infinity.
The discretization error can, in principle, be systematically reduced by employing higher-order integrators. For underdamped Langevin dynamics, second-order schemes like BAOAB \citep{leimkuhler2015molecular} and UBU \citep{sanz2021wasserstein}, and third-order schemes like the randomized midpoint integrator \citep{shen2019randomized} and QUICSORT \citep{scott2025underdamped}, are designed for this purpose. However, analyzing these advanced schemes in the stochastic gradient setting is non-trivial. In fact, many existing results for similar algorithms only establish a suboptimal half-order convergence rate in $h$ \citep{gouraud2025hmc, guillin2024error}.

To reduce the stochastic gradient error component of the MSE, variance reduction techniques come into play. By reducing the variance of the stochastic gradient, these techniques have been widely applied in optimization to achieve faster convergence \citep{bubeck2015convex,allen2018katyusha,lan2019unified,song2020variance}. The success of variance reduction in optimization motivates a natural question for sampling: Under what conditions, if any, do these techniques offer a genuine advantage? While methods like SVRG \citep{johnson2013accelerating,reddi2016stochastic,allen2016improved} and SAGA \citep{defazio2014saga,chatterji2018theory} are well-established for accelerating optimization, their role in sampling is fundamentally different. Optimization seeks an efficient path to a single minimum, whereas sampling must generate a trajectory that characterizes an entire probability distribution. This latter objective requires balancing the \textit{discretization error}, governed by the step size $h$, against the \textit{stochastic error} originating from gradient noise. Since variance reduction techniques are designed to mitigate only the stochastic component, their interaction with the discretization error is critical for the total MSE. A key contribution of this paper is to quantify this interplay and establish a practical criterion for the advantageous use of variance-reduced samplers.

\subsection{Our contributions}
This paper introduces a novel discrete Poisson equation for the MSE analysis of stochastic gradient sampling algorithms. We demonstrate the power of this approach by applying it to the Stochastic Gradient UBU (SG-UBU) sampler \citep{sanz2021wasserstein} -- a second-order accurate integrator for underdamped Langevin dynamics -- and its variance-reduced variants. Our main contributions are as follows:
\begin{itemize}
	\item We propose a general discrete Poisson equation framework, extending the ideas in \cite{mattingly2010convergence}, to analyze the MSE for a broad class of numerical integrators. A key advantage of this framework is its modularity: an MSE bound for any specific integrator can be derived by establishing its uniform-in-time moment bounds (stability) and its local error estimate (consistency).
	
	\item Applying this framework to SG-UBU, we prove that the numerical bias in computing the statistical average $\pi(f)$ has first-order convergence in the step size $h$ and explicitly show that its leading-order term is proportional to the stochastic gradient variance.
	
	\item For the variance-reduced variants, SVRG-UBU and SAGA-UBU, we identify a phase transition: the numerical bias shifts from first to second-order convergence as the step size $h$ decreases below a critical threshold. This accelerated convergence is a unique feature of variance reduction when combined with high-order integrators.
	
	\item We derive a practical empirical criterion to guide the choice between standard and variance-reduced samplers, providing guidance for optimal algorithm selection based on problem parameters.
\end{itemize}

\subsection{Related works}
The error analysis for stochastic gradient sampling algorithms, which typically discretize either overdamped or underdamped Langevin dynamics, is well studied in the literature. The underdamped approach is often preferred for its computational efficiency, as it exhibits faster mixing times \citep{cao2023explicit,eberle2024non,altschuler2024faster,altschuler2025shifted} and permits the use of higher-order integrators that achieve accelerated convergence with respect to step size $h$. However, this efficiency is coupled with greater analytical challenges due to hypoellipticity of the underdamped dynamics. 

In the following, we summarize four primary analytical frameworks for stochastic gradient samplers, with results covering both overdamped and underdamped Langevin dynamics. 
These frameworks vary in their methodology and applicability. The most direct approach combines an integrator's recurrence relations with strong or weak error analysis; this is effective for simple schemes like SGLD \citep{nagapetyan2017true,dalalyan2017user,brosse2018promises,zou2019stochastic,cheng2020stochastic,chada2023unbiased,guillin2024error,shaw2025random}, but is not suitable for higher-order integrators. A second approach uses coupling arguments to establish global contractivity in a probability metric, which guarantees a unique invariant distribution but often yields suboptimal convergence rates such as $\mathcal{O}(\sqrt{h})$ \citep{chak2023reflection,leimkuhler2023contraction,schuh2024convergence,gouraud2025hmc}. Alternatively, the entropy approach analyzes the associated Fokker--Planck equation to bound the KL divergence; however, its complexity typically limits its use to simple integrators \citep{li2022sharp,kinoshita2022improved,suzuki2023convergence}. Finally, the continuous Poisson equation is a powerful tool for analyzing the MSE, but its application to underdamped systems is hampered by the significant challenge of obtaining the necessary derivative bounds on its solution, especially without assuming convexity \citep{mattingly2010convergence,ross2011fundamentals,chen2015convergence,vollmer2016exploration,teh2016consistency,xu2018global,brosse2019tamed,chen2023probability}.

Our approach is based on discrete Poisson equation, tailored for the direct analysis of numerical integrators. Compared to the continuous Poisson equation, this approach aims at analyzing the one step transition kernel of the numerical scheme instead of the infinitesimal generator of the continuous process. The analysis results in a simple and exact decomposition of the time average error into stochastic gradient, discretization, and martingale components. This decomposition provides a direct path for a sharp MSE analysis, and as in the continuous case, requires establishing bounds on the derivatives of the solution to the Poisson equation. We establish such bounds up to the second order under a global convexity assumption on the potential function $U(\bm x)$. The idea of using discrete Poisson equation draws strong inspiration from early studies \citep{kontoyiannis2003spectral,glynn1996liapounov,johndrow2017error}. Nevertheless, the error analyses in these previous works mainly utilize Harris' ergodic theorem \citep{meyn2012markov}, whereas our work combines the discrete Poisson equation with local strong error analysis.

Denote by $\pi_h$ the invariant distribution of the numerical solution with the step size $h$, we may quantify the difference between $\pi_h$ and the target distribution $\pi$ using a distribution metric, such as Wasserstein or KL divergence, thereby characterizing the effects of stochastic gradient error and discretization error. For overdamped Langevin dynamics, both $\pi_h$ and $\pi$ are supported in $\mathbb{R}^d$. For underdamped Langevin dynamics, $\pi_h$ is a distribution over both position and velocity variables, and is thus supported in $\mathbb R^{2d}$. In this case, the difference is measured between $\pi_h$ and the Gaussian velocity-augmented distribution of $\pi$. By setting a test function $f$ supported in $\mathbb R^d$, the difference between $\pi_h$ and $\pi$ can also be quantified by the numerical bias
\begin{equation}
\pi_h(f) - \pi(f) = \int_{\mathbb{R}^{2d}}
f(\bm{x}) \big(\pi_h(\bm x,\bm v) - \pi(\bm{x})\big) \mathrm{d}\bm{x}\mathrm{d}\bm{v}.
\label{numerical bias}
\end{equation}
which can be interpreted as the long-time limit $(K \to \infty)$ of the MSE defined in \eqref{MSE}.

To situate our contributions within the existing literature, Tables~\ref{table: over} and \ref{table: under} summarize key error analysis results for the stochastic gradient sampling algorithms in overdamped and underdamped Langevin dynamics, respectively. In the tables, the second column indicates whether the global convexity of the potential $U(\bm x)$ is assumed, with $m>0$ as the convexity lower bound (see Assumption~\ref{asP}). The constant $C$ may depend on the spatial dimension $d$ or the convexity bound $m$, indicated by subscripts, while subscript $*$ indicates dependence on additional parameters for non-convex cases. 

\begin{table}[htb]
	\small
	\centering
	\renewcommand{\arraystretch}{2.22}
	\begin{tabular}{cccc}
		\toprule
		Reference & Convex & Integrator & Result \\
		\midrule
		\makecell{Theorem~9 \\ \scriptsize\citep{vollmer2016exploration}} & \xmark & SGLD & $\mathcal W_1(\pi_h,\pi) \Le C_{d,*}h$ \\
		\makecell{Theorem~5 \\ \scriptsize\citep{dalalyan2017user}} & \cmark & SGLD & $\mathcal W_2(\pi_h,\pi) \Le C\bigg(\dfrac{dh}{m} + \sqrt{\dfrac{dh}{m}}\bigg)$ \\
		\makecell{Corollary~3.1 \\ \scriptsize\citep{li2022sharp}} & \xmark & SGLD & $\mathsf{KL}(\pi_h\|\pi) \Le C_*dh^2$ \\
		\makecell{Theorem~3.3 \\ \scriptsize\citep{chen2023probability}} & \cmark & SGLD & $\mathcal W_1(\pi_h,\pi) \Le C_m d(1+\log |h|)h$ \\
		\makecell{Theorem~3.9 \\ \scriptsize\citep{shaw2025random} }& \cmark & \hspace{-6pt} SGLD-RR & $\mathcal W_2(\pi_h,\pi) \Le \dfrac{Cdh}{m}$ \\
		\bottomrule
	\end{tabular}
	\captionof{table}{Numerical bias results for stochastic gradient sampling algorithms based on the overdamped Langevin dynamics. The constant $C$ may depend on $d$ and $m$, as indicated by subscripts; subscript $*$ denotes dependence on additional parameters in non-convex cases.
    } 
	\label{table: over}
\end{table}

\begin{table}[htb]
	\small
	\renewcommand{\arraystretch}{2.22}
	\centering
	\begin{tabular}{cccc}
		\toprule
		Reference & Convex & Integrator & Result \\
		\midrule
		\makecell{Theorem~3.8 \\ \scriptsize\citep{zou2019stochastic}} & \xmark & SG-EM & $\mathcal W_2(\pi_h,\pi) \Le C_{d,*} h^{\frac34}$ \\
		\makecell{Theorem~1 \\ \scriptsize\citep{guillin2024error}} & \cmark & SG-EM& $\mathcal W_1(\pi_h,\pi) \Le C_m\sqrt{dh}$ \\
		\makecell{Proposition~18 \\ \scriptsize\citep{gouraud2025hmc}} & \cmark & SGgHMC & $\mathcal W_2(\pi_h,\pi) \Le C_m\sqrt{dh}$ \\
		\makecell{Theorem~\ref{theorem: SG-UBU numerical bias} \\ \scriptsize \textbf{(this paper)}} & \cmark & SG-UBU & $|\pi_h(f)-\pi(f)| \Le \dfrac{Cdh}{m^2}$ \\
		\bottomrule
	\end{tabular}
	\caption{Numerical bias results for stochastic gradient sampling algorithms based on the underdamped Langevin dynamics. The constant $C$ may depend on  $d$ and $m$, as indicated by subscripts; subscript $*$ denotes dependence on additional parameters in non-convex cases.}
	\label{table: under}
\end{table}

The results in Table~\ref{table: under} highlight a key contribution of our work. While existing analyses for underdamped samplers often yield suboptimal convergence rates such as $\mathcal{O}(\sqrt{h})$, our analysis demonstrates that the SG-UBU integrator achieves first-order convergence of the numerical bias $\pi_h(f) - \pi(f)$. We further show that the leading-order error coefficient is determined by the stochastic gradient variance and is, remarkably, unaffected by the discretization error. 

In the case of finite-sum potentials \eqref{finite-sum potential}, error analysis for variance-reduced sampling algorithms presents additional challenges. The inherent complexity of methods such as SVRG and SAGA means that the numerical solution is typically no longer a Markov chain. This structural limitation obstructs standard analytical tools such as global contractivity proofs, and as a result, the literature mostly uses recurrence relations for error analysis \citep{nagapetyan2017true, zou2018subsampled, chatterji2018theory, baker2019control, zou2018stochastic, zou2019stochastic, chen2022approximation}. While alternative methods using the continuous Poisson equation \citep{dubey2016variance, li2019stochastic} or entropy approaches \citep{kinoshita2022improved} show theoretical promise for non-convex potentials, their extensions to the underdamped case are less explored.

Table~\ref{table: vr} summarizes existing error analysis results for these algorithms. The parameter $N$ denotes the number of components in the finite-sum potential defined in \eqref{finite-sum potential}, and $p$ is the batch size for the mini-batch gradient approximation. Due to the non-Markovian nature of these methods, the numerical bias is measured with respect to $\tilde{\pi}_h$, which represents the long-time limit of the averaged distributions over the trajectory. For a fair comparison across different works, the results cited are normalized such that the assumed upper bound on the Hessian matrix $\|\nabla^2 U(\bm x)\|$ is independent of $N$. For algorithms utilizing an anchor position, the update frequency is set to $N/p$ iterations in the mini-batch setting.

\begin{table}[htb]
	\small
	\renewcommand{\arraystretch}{2.22}
    \centerline{
	\begin{tabular}{cccc}
		\toprule
		Reference & \hspace{-0.4cm} Convex \hspace{-0.4cm} & Integrator & Result \\
		\midrule
		\makecell{Theorem~3.5 \\ \scriptsize \citep{zou2018subsampled}} &  \cmark  & SVRG-LD & $\mathcal W_2(\tilde \pi_h,\pi) \Le C\bigg(
        \dfrac{dh}{m} + \sqrt{\dfrac{dh}{mp}}\min\bigg\{1,\sqrt{\dfrac{Nh}{mp}}\bigg\}\bigg)$\\
		\makecell{Theorem~4.2 \\ \scriptsize\citep{chatterji2018theory}} & \cmark & SVRG-LD & $\mathcal W_2(\tilde \pi_h,\pi) \Le C\bigg(\dfrac{dh}{m}+\sqrt{\dfrac{dN}{mp^3}}h\bigg)$ \\
		\makecell{Theorem~4.1 \\ \scriptsize\citep{chatterji2018theory}} & \cmark &  SAGA-LD & $\mathcal W_2(\tilde \pi_h,\pi) \Le C\bigg(\dfrac{dh}{m}+\sqrt{\dfrac{dN}{mp}}h\bigg)$ \\
		\makecell{Theorem~3.3 \\ \scriptsize \citep{zou2019stochastic}} &\xmark & \hspace{-0.4cm} SRVR-HMC \hspace{-0.4cm} &
		$\mathcal W_2(\tilde \pi_h,\pi_h) \Le C_{d,*}\bigg(1+\dfrac{N}{p^2}\bigg)^{\frac14}h^{\frac34}$ \\
		\makecell{Theorem~2.5 \\ \scriptsize\citep{chen2022approximation}} & \xmark & SVRG-LD & $\mathcal W_1(\tilde \pi_h,\pi) \Le C_{d,*} h^{\frac14}$ \\
        \makecell{Theorem~1 \\ \scriptsize\hspace{-0.3cm}\citep{kinoshita2022improved}} \hspace{-0.3cm}  & \xmark & SVRG-LD & $\mathsf{KL}(\tilde \pi_h\|\pi) \Le C_{d,*} \bigg(1+\dfrac{N}{p^2}\bigg)h$ \\
        \makecell{Theorem~2 \\ \scriptsize\hspace{-0.3cm}\citep{kinoshita2022improved}} \hspace{-0.3cm} & \xmark & \hspace{-0.4cm}SARAH-LD \hspace{-0.4cm} & $\mathsf{KL}(\tilde \pi_h\|\pi) \Le C_{d,*} \bigg(1+\dfrac{N}{p^2}\bigg)h$ \\
		\makecell{Theorem~\ref{theorem: SVRG-UBU numerical bias} \\ \scriptsize
		\textbf{(this paper)}} & \cmark & \hspace{-0.4cm} SVRG-UBU \hspace{-0.4cm} & \hspace{-0.2cm} $|\tilde \pi_h(f) - \pi(f)| \Le C\bigg(\dfrac{dh}{m^2p} \min
		\bigg\{1,\dfrac{N^2h^2}{mp^2}\bigg\} + 
        \dfrac{dh^2}{m^3}\bigg)$ \hspace{-0.2cm} \\
		\makecell{Theorem~\ref{theorem: SAGA-UBU numerical bias} \\ \scriptsize \textbf{(this paper)}} &
		\cmark & \hspace{-0.4cm} SAGA-UBU \hspace{-0.4cm} & \hspace{-0.2cm} $|\tilde \pi_h(f) - \pi(f)| \Le C\bigg(\dfrac{dh}{m^3}\min\big\{1,N^2h^2\big\}+ \dfrac{dh^2}{m^3}\bigg)$ \hspace{-0.2cm} \\
		\bottomrule
	\end{tabular}
    }
	\caption{Numerical bias results for variance-reduced sampling algorithms. Results that do not explicitly depend on the batch size $p$ are for the $p=1$ case. The constant $C$ may depend on $d$, as indicated by the subscript; subscript $*$ denotes dependence on additional parameters in non-convex cases.}
	\label{table: vr}
\end{table}

As shown in Table~\ref{table: vr}, our analysis of SVRG-UBU and SAGA-UBU uncovers a phase transition phenomenon for the rate of convergence. The convergence rate with respect to the step size $h$ transitions from first-order to an accelerated second-order rate as $h$ decreases below a critical threshold. This relationship appears to be a unique feature of combining variance reduction with second-order discretization schemes, a property fulfilled by the UBU-type integrators studied in this work.

The remainder of this paper is organized as follows. Section~\ref{section: pre-meth} introduces our notation and the discrete Poisson equation framework. In Section~\ref{section: MSE bias SG-UBU}, we present our main results for the SG-UBU integrator, followed by the analysis of its variance-reduced counterparts, SVRG-UBU and SAGA-UBU, in Section~\ref{section: MSE bias variance-reduced SG-UBU}. We provide numerical validation in Section~\ref{section: numerical} and conclude in Section~\ref{section: conclusion}.
\section{Preliminaries and methodology}
\label{section: pre-meth}

\subsection{Notations and assumptions}

Throughout the paper, we use $|\cdot|$ to denote the Euclidean norm for vectors. For a matrix $\bm{A} \in \mathbb{R}^{m \times n}$, we use $\|\bm{A}\|$ for the operator 2-norm (or spectral norm), defined as $$
\|\bm{A}\| := \sup_{|\bm{x}|=1, \bm{x} \in \mathbb{R}^n} |\bm{A}\bm{x}|.
$$
This definition extends to a third-order tensor $\bm{B} \in \mathbb{R}^{m \times n \times l}$, whose norm is defined via the corresponding multilinear form:
\begin{equation*}
	\|\bm{B}\| := \sup_{|\bm{x}|=1, |\bm{y}|=1, |\bm{z}|=1} \sum_{i=1}^m \sum_{j=1}^n \sum_{k=1}^l B_{ijk} x_i y_j z_k.
\end{equation*}

We adopt standard typesetting conventions where vectors and matrices are denoted by bold fonts (e.g., $\bm{x}, \bm{A}$), while scalars are in normal font (e.g., $h, \sigma$). 
The zero matrix and the identity matrix in $\mathbb{R}^{d\times d}$ are indicated by $\bm{O}_d$ and $\bm{I}_d$, respectively. For symmetric matrices in $\mathbb R^{d\times d}$, we write $\bm A\preccurlyeq \bm B$ if $\bm B-\bm A$ is positive semidefinite. Furthermore, we distinguish continuous-time dynamics from their numerical approximations by using lowercase and uppercase letters, respectively. Specifically, the continuous solution to the underdamped Langevin dynamics is denoted by $(\bm{x}_t, \bm{v}_t)_{t \Ge 0}$. Its numerical approximation, generated by an integrator such as SG-UBU with step size $h$, is denoted by the discrete sequence $(\bm{X}_k, \bm{V}_k)_{k=0}^\infty$, where $(\bm{X}_k, \bm{V}_k)$ approximates the continuous solution at time $t=kh$.

Our analysis relies on the following assumptions regarding the potential function $U(\bm{x})$, the stochastic gradient $b(\bm{x},\theta)$, and the test function $f(\bm{x})$.

\begin{asP}
	\mylabel{asP}{(P)}
	The potential function $U(\bm{x}) \in C^3(\mathbb{R}^d)$ satisfies $\nabla U(\bm{0}) = \bm{0}$ and
	\begin{equation*}		
		m\bm{I}_d \preccurlyeq \nabla^2 U(\bm{x}) \preccurlyeq M_2\bm{I}_d ,\quad \|\nabla^3 U(\bm{x})\| \Le M_3 ,\quad \forall\bm{x}\in\mathbb{R}^d		
	\end{equation*}
	for constants $m, M_2, M_3 >0$.
\end{asP}
\begin{asSg}
	\mylabel{asSg}{(Sg)}
	The stochastic gradient $b(\bm x,\theta)$ satisfies
	$$
	\mathbb E_\theta\big[b(\bm x,\theta)\big] = \nabla U(\bm x),\quad
	\Big(\mathbb E_\theta\big[|b(\bm x,\theta) - \nabla U(\bm x)|^8\big]\Big)^{\frac14} \Le \sigma^2d, \quad
	\forall \bm x\in\mathbb R^d
	$$
	for the constant $\sigma>0$, and for almost all $\theta \sim \mathbb P_\theta$ there holds
	\begin{equation*}
		|b(\bm x,\theta)| \Le M_1 \sqrt{|\bm x|^2+d},
		\quad \forall \bm x\in\mathbb R^d	
	\end{equation*}
	for the constant $M_1>0$. Assume $\sigma\Le M_1$ for the convenience of analysis.
\end{asSg}
In the case of finite-sum potentials, Assumption~\ref{asSg} is specified as follows.
\begin{asSgN}
	\mylabel{asSgN}{(SgN)}
	For the finite-sum potential $U(\bm x)$, the gradients $\{\nabla U_i(x)\}_{i=1}^N$ satisfy
	\begin{equation*} 
		\bigg(\frac1N \sum_{i=1}^N |\nabla U_i(x) - \nabla U(x)|^8\bigg)^{\frac14} \Le \sigma^2d,\quad 
		\forall \bm x\in\mathbb R^d
	\end{equation*}
	for the constant $\sigma>0$, and for all $i\in\{1,\cdots,N\}$ there holds
	\begin{equation*}
		|\nabla U_i(x)| \Le M_1\sqrt{|\bm x|^2+d},\quad 
		\forall \bm x\in\mathbb R^d
	\end{equation*}	
	for the constant $M_1>0$. Assume $\sigma\Le M_1$ for the convenience of analysis.
\end{asSgN}
\begin{asT}
	\mylabel{asT}{(T)}
	The test function $f(\bm x) \in C^2(\mathbb R^d)$ satisfies
	\begin{equation*}
		|\nabla f(\bm x)| \Le L_1,\quad	
		\norm{\nabla^2 f(\bm x)} \Le L_2,\quad \forall \bm x\in\mathbb R^d
	\end{equation*}
	for constants $L_1,L_2>0$.
\end{asT}
The subscripts on the constants $M_i$ and $L_i$ correspond to the order of the associated derivatives. These assumptions provide several key properties. First, Assumption~\ref{asP} guarantees $m\Le M_2$, and $U(\bm{x})$ is strongly convex and has a unique minimizer at $\bm{0}$. Second, the growth condition and unbiasedness in Assumption~\ref{asSg} together imply a linear growth bound on the true gradient, $|\nabla U(\bm{x})| \Le M_1\sqrt{|\bm{x}|^2 + d}$. Finally, the eighth moment condition in Assumption~\ref{asSg} ensures the necessary regularity for a rigorous MSE analysis.

\subsection{Underdamped Langevin dynamics and SG-UBU integrator}
\label{subsection: uld SG-UBU}

To sample from the target distribution $\pi(\bm{x}) \propto e^{-U(\bm{x})}$, we consider the discretization of the underdamped Langevin dynamics. Given the Hessian bound $\|\nabla^2 U(\bm{x})\| \Le M_2$, consider the Hamiltonian system
\begin{equation*}
	H(\bm{x},\bm{v}) = U(\bm{x}) + \frac{M_2}{2}|\bm{v}|^2, \quad \bm{x},\bm{v} \in \mathbb{R}^d,
\end{equation*}
where the auxiliary velocity variable $\bm{v}$ extends the state space to $\mathbb{R}^{2d}$.

The underdamped Langevin dynamics for $H(\bm x,\bm v)$ evolves according to the following stochastic differential equation:
\begin{equation}
	\label{ULD}
	\mathfrak{L}:\left\{
	\begin{aligned}
		\dot{\bm{x}}_t &= \bm{v}_t, \\
		\dot{\bm{v}}_t &= -\frac{1}{M_2}\nabla U(\bm{x}_t) - 2\bm{v}_t + \frac{2}{\sqrt{M_2}}\dot{\bm{B}}_t,
	\end{aligned}
	\right.
\end{equation}
where $\bm{B}_t$ is the standard Brownian motion in $\mathbb{R}^d$. We choose the damping coefficient  as 2 for optimal dissipation characteristics, and the gradient scaling as $1/M_2$ to enable stable numerical integration with $\mathcal{O}(1)$ step sizes.

Next we describe the UBU-type numerical integrators for the underdamped Langevin dynamics \eqref{ULD}, including the full gradient and stochastic gradient versions. Applying the Lie--Trotter splitting method, we decompose \eqref{ULD} into two separate dynamics
\begin{equation*}
	\mathfrak{U}: 
	\left\{
	\begin{aligned}
		\dot{\bm{x}}_t &= \bm{v}_t, \\
		\dot{\bm{v}}_t &= -2\bm{v}_t + \frac{2}{\sqrt{M_2}}\dot{\bm{B}}_t,
	\end{aligned}
	\right. \quad
	\mathfrak{B}:
	\left\{
	\begin{aligned}
		\dot{\bm{x}}_t &= \bm{0}, \\
		\dot{\bm{v}}_t &= -\frac{1}{M_2}\nabla U(\bm{x}_t),
	\end{aligned}
	\right.
\end{equation*}
where $\mathfrak U$ denotes the linear part and $\mathfrak B$ represents a free particle driven by the potential gradient.
Their solution flows, denoted by $\Phi_t^{\mathfrak{U}}$ and $\Phi_t^{\mathfrak{B}}$, respectively, are given by
\begin{equation*}
	\Phi_t^{\mathfrak{U}}:
	\left\{
	\begin{aligned}
		\bm{x}_t &= \bm{x}_0 + \frac{1-e^{-2t}}{2}\bm{v}_0 + \frac{2}{\sqrt{M_2}}\int_0^t \frac{1-e^{-2(t-s)}}{2} \D\bm{B}_s,\\
		\bm{v}_t &= e^{-2t}\bm{v}_0 + \frac{2}{\sqrt{M_2}}\int_0^t e^{-2(t-s)}\D\bm{B}_s,
	\end{aligned}
	\right.
	~~
	\Phi_t^{\mathfrak{B}}:
	\left\{
	\begin{aligned}
		\bm{x}_t &= \bm{x}_0, \\
		\bm{v}_t &= \bm{v}_0 - \frac{t}{M_2}\nabla U(\bm{x}_0).
	\end{aligned}
	\right.
\end{equation*}

The Full Gradient UBU (FG-UBU) integrator implements the following composition:
\begin{equation}
	\label{FG-UBU}
	(\bm{X}_{k+1}, \bm{V}_{k+1}) = \big(\Phi_{h/2}^{\mathfrak{U}} \circ \Phi_h^{\mathfrak{B}} \circ \Phi_{h/2}^{\mathfrak{U}}\big)(\bm{X}_k, \bm{V}_k), \qquad k = 0,1,\cdots.
\end{equation}
Correspondingly, the Stochastic Gradient UBU (SG-UBU) integrator is constructed by replacing $\nabla U(\bm x)$ in FG-UBU with $b(\bm x,\theta)$. Define the perturbed solution flow corresponding to the stochastic gradient $b(\bm x,\theta)$ by
\begin{equation*}
	\Phi_t^{\mathfrak{B}}(\theta):\left\{
	\begin{aligned}
		\bm{x}_t &= \bm{x}_0,\\
		\bm{v}_t &= \bm{v}_0 - \frac{t}{M_2}b(\bm{x}_0,\theta).
	\end{aligned}
	\right.
\end{equation*}
Then the SG-UBU integrator with the step size $h$ becomes
\begin{equation}
	\label{SG-UBU}
	(\bm{X}_{k+1},\bm{V}_{k+1}) = \big(\Phi_{h/2}^{\mathfrak{U}} \circ \Phi_h^{\mathfrak{B}}(\theta_k) \circ \Phi_{h/2}^{\mathfrak{U}}\big)(\bm{X}_k,\bm{V}_k),\qquad k=0,1,\cdots,
\end{equation}
where $(\theta_k)_{k=0}^\infty$ are independent samples  from $\mathbb P_\theta$. Equivalently, we can produce the numerical solution of SG-UBU $(\bm{X}_k,\bm{V}_k)_{k=0}^\infty$ through Algorithm~\ref{algorithm: SG-UBU}.
\begin{algorithm}[htb]
	\setstretch{1.12}
	\caption{Stochastic Gradient UBU (SG-UBU)}
	\textbf{Input:} initial state $(\bm{X}_0,\bm{V}_0)$, step size $h$ \\
	\textbf{Output:} numerical solution $(\bm{X}_k,\bm{V}_k)_{k=0}^\infty$ \\
	\For{$k=0,1,\cdots$}{
		Evolve $(\bm{Y}_k,\bm{V}_k^{(1)}) = \Phi_{h/2}^{\mathfrak{U}}(\bm{X}_k,\bm{V}_k)$ \\
		Sample a random index $\theta_k \sim \mathbb{P}_\theta$ \\
		Compute the stochastic gradient $\bm{b}_k = b(\bm{Y}_k,\theta_k)$ \\
		Update the velocity $\bm{V}_k^{(2)} = \bm{V}_k^{(1)} - \frac{h}{M_2}\bm{b}_k$ \\
		Evolve $(\bm{X}_{k+1},\bm{V}_{k+1}) = \Phi_{h/2}^{\mathfrak{U}}(\bm{Y}_k,\bm{V}_k^{(2)})$
	}
	\label{algorithm: SG-UBU}
\end{algorithm}
Note that in SG-UBU, the stochastic gradient is evaluated at the intermediate position $\bm{Y}_k$, which is the position component of $\Phi_{h/2}^{\mathfrak{U}}(\bm{X}_k,\bm{V}_k)$, rather than at $\bm{X}_k$ itself. Specifically, $\bm Y_k$ is explicitly expressed as
\begin{equation}
	\bm{Y}_k = \bm{X}_k + \frac{1-e^{-h}}{2}\bm{V}_k + \frac{2}{\sqrt{M_2}} \int_0^{h/2} \frac{1-e^{-(h-2s)}}{2} \D\bm{B}_{kh+s}.
	\label{Y_k expression}
\end{equation}

For the finite-sum potential defined in \eqref{finite-sum potential}, the SG-UBU integrator can be efficiently implemented using a mini-batch approach. The procedure is detailed in Algorithm~\ref{algorithm: mini-batch SG-UBU}.
\begin{algorithm}[htb]
	\setstretch{1.12}
	\caption{Mini-batch Stochastic Gradient UBU (mini-batch SG-UBU)}
	\textbf{Input:} initial state $(\bm{X}_0,\bm{V}_0)$, batch size $p$, step size $h$ \\
	\textbf{Output:} numerical solution $(\bm{X}_k,\bm{V}_k)_{k=0}^\infty$ \\
	\For{$k=0,1,\cdots$}{
		Evolve $(\bm{Y}_k,\bm{V}_k^{(1)}) = \Phi_{h/2}^{\mathfrak{U}}(\bm{X}_k,\bm{V}_k)$. \\
		Sample a random subset $\theta_k\subset \{1,\cdots,N\}$ of size $p$ \\
		Compute the mini-batch gradient $\bm{b}_k = \frac{1}{p} \sum_{i \in \theta_k} \nabla U_i(\bm Y_k)$ \\
		Update the velocity $\bm{V}_k^{(2)} = \bm{V}_k^{(1)} - \frac{h}{M_2}\bm{b}_k$ \\
		Evolve $(\bm{X}_{k+1},\bm{V}_{k+1}) = \Phi_{h/2}^{\mathfrak{U}}(\bm{Y}_k,\bm{V}_k^{(2)})$
	}
	\label{algorithm: mini-batch SG-UBU}
\end{algorithm}
The computational cost for each iteration of Algorithm~\ref{algorithm: mini-batch SG-UBU} is $\mathcal{O}(p)$, corresponding to the evaluation of $p$ individual gradients that form the mini-batch.

\subsection{The discrete Poisson equation framework}
The discrete Poisson equation provides an analytical framework for studying the time average error of numerical solutions to Langevin dynamics. For the underdamped Langevin dynamics \eqref{ULD}, the infinitesimal generator is given by
\begin{equation*}
	\mathcal{L} = \bm{v}^\top \nabla_{\bm{x}} - \bigg(\frac{1}{M_2}\nabla U(\bm{x}) + 2\bm{v}\bigg)^\top \nabla_{\bm{v}} + \frac{2}{M_2}\Delta_{\bm{v}}.
\end{equation*}
The discrete Poisson equation with respect to this $\mathcal L$ and fixed step size $h>0$ is defined as
\begin{equation}
	\frac{1 - e^{h\mathcal{L}}}{h} \phi_h(\bm{x},\bm{v}) = f(\bm{x}) - \pi(f),
	\label{discrete Poisson eq}
\end{equation}
where $\phi_h(\bm x,\bm v)$ is known as the discrete Poisson solution. 

To facilitate error analysis, we introduce a more tractable representation of the discrete Poisson solution $\phi_h(\bm x,\bm v)$. First, we define the function
\begin{equation}
	u(\bm{x},\bm{v},t) = \mathbb{E}^{(\bm{x},\bm{v})}[f(\bm{x}_t)] - \pi(f),
	\label{function u}
\end{equation}
where the superscript $(\bm{x},\bm{v})$ denotes the initial state for the exact solution $(\bm{x}_t,\bm{v}_t)_{t\Ge0}$. Using the semigroup operator, this is equivalent to
\begin{equation*}
	u(\bm{x},\bm{v},t) = (e^{t\mathcal{L}}f)(\bm{x},\bm{v}) - \pi(f),
\end{equation*}
which leads to a series representation for the discrete Poisson solution:
\begin{equation}
	\phi_h(\bm{x},\bm{v}) = h \sum_{k=0}^\infty u(\bm{x},\bm{v},kh).
	\label{discrete Poisson solution}
\end{equation}
In fact, one can conveniently verify that $\phi_h(\bm x,\bm v)$, as defined in \eqref{discrete Poisson solution}, satisfies
\begin{equation*}
    \phi_h(\bm x,\bm v) = h \sum_{k=0}^\infty \big((e^{kh\mathcal L} f)(\bm x,\bm v) - \pi(f)\big) = h(1-e^{h\mathcal L})^{-1} \big(f(\bm x) - \pi(f)\big),
\end{equation*}
which shows that $\phi_h(\bm x,\bm v)$ solves the discrete Poisson equation \eqref{discrete Poisson eq}.

As $h\to0$, the discrete Poisson solution $\phi_h(\bm x,\bm v)$ in \eqref{discrete Poisson solution} converges to the integral defining the continuous Poisson solution: \begin{equation}
\phi_0(\bm x,\bm v) = \int_0^\infty u(\bm x,\bm v,t)\D t,
\label{continuous Poisson solution}
\end{equation}
which satisfies the conventional Poisson equation $-(\mathcal{L}\phi_0)(\bm x,\bm v) = f(\bm x) - \pi(f)$.

The following theorem establishes global bounds on the gradient and Hessian matrix of the discrete Poisson solution $\phi_h(\bm x,\bm v)$.
\begin{theorem}
	\label{theorem: phi h estimate}
	Under Assumptions~\ref{asP} and \ref{asT}, let the step size $h\Le \frac14$; the discrete Poisson solution $\phi_h(\bm x,\bm v)$ has a uniformly bounded gradient and Hessian matrix:
	\begin{align*}
		\max\big\{|\nabla_{\bm x} \phi_h|,
		|\nabla_{\bm v} \phi_h|\big\} & \Le \frac{C}{m}, \\
		\max\big\{
		\norm{\nabla_{\bm x\bm x}^2 \phi_h},
		\norm{\nabla_{\bm x\bm v}^2 \phi_h},
		\norm{\nabla_{\bm v\bm v}^2 \phi_h}
		\big\} & \Le \frac{C}{m^2},
	\end{align*}
	where the constant $C$ depends only on $(M_i)_{i=1}^3$ and $(L_i)_{i=1}^2$.
\end{theorem}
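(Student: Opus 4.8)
The plan is to differentiate the series representation \eqref{discrete Poisson solution}, $\phi_h(\bm x,\bm v)=h\sum_{k=0}^\infty u(\bm x,\bm v,kh)$, term by term in the initial data and to show that each summand decays geometrically fast enough that the sum — and its derivatives — converge uniformly for $h\Le\frac14$. Since the Brownian forcing in \eqref{ULD} is additive and the drift is $C^2$ (because $U\in C^3$), the flow map $(\bm x,\bm v)\mapsto(\bm x_t,\bm v_t)$ is almost surely $C^2$, and differentiating \eqref{function u} under the expectation (legitimate by Assumption~\ref{asT} and pathwise bounds on the flow variations) yields, with $\bm J_t=\partial(\bm x_t,\bm v_t)/\partial(\bm x,\bm v)\in\mathbb R^{2d\times2d}$ the first variation, $\bm K_t$ the second variation, and $\bm J_t^{\bm x},\bm K_t^{\bm x}$ their position blocks,
\begin{equation*}
\nabla_{(\bm x,\bm v)}u=\mathbb E\big[(\bm J_t^{\bm x})^\top\nabla f(\bm x_t)\big],\qquad
\nabla^2_{(\bm x,\bm v)}u=\mathbb E\big[(\bm J_t^{\bm x})^\top\nabla^2f(\bm x_t)\,\bm J_t^{\bm x}+\nabla f(\bm x_t)\cdot\bm K_t^{\bm x}\big],
\end{equation*}
where the last term denotes the contraction of $\nabla f(\bm x_t)$ against the first index of $\bm K_t^{\bm x}$. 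All of $\nabla_{\bm x}\phi_h,\nabla_{\bm v}\phi_h,\nabla^2_{\bm x\bm x}\phi_h,\nabla^2_{\bm x\bm v}\phi_h,\nabla^2_{\bm v\bm v}\phi_h$ are blocks of $\nabla_{(\bm x,\bm v)}\phi_h$ and $\nabla^2_{(\bm x,\bm v)}\phi_h$, so it suffices to bound $h\sum_k\|\bm J_{kh}\|$, $h\sum_k\|\bm J_{kh}\|^2$, and $h\sum_k\|\bm K_{kh}\|$ using $|\nabla f|\Le L_1$, $\norm{\nabla^2 f}\Le L_2$.

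The analytical heart is a uniform-in-trajectory contraction for the first variation. Along any fixed path, $\bm J_t$ solves $\dot{\bm J}_t=\bm A_t\bm J_t$, $\bm J_0=\bm I_{2d}$, with $\bm A_t=\left(\begin{smallmatrix}\bm O_d&\bm I_d\\-\frac1{M_2}\nabla^2U(\bm x_t)&-2\bm I_d\end{smallmatrix}\right)$; Assumption~\ref{asP} forces $\frac{m}{M_2}\bm I_d\Prec\frac1{M_2}\nabla^2U(\bm x_t)\Prec\bm I_d$, so every $\bm A_t$ is stable with eigenvalues in $[-2,0)$ and spectral abscissa $-(1-\sqrt{1-m/M_2})$. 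I would prove the pathwise bound $\|\bm J_t\|\Le C_0\big(e^{-\lambda t}+(1+t)e^{-t/2}\big)$ with rate $\lambda\asymp m/M_2$ and a constant $C_0$ depending only on absolute constants — in particular with no hidden factor $1/m$ in the prefactor — via a twisted quadratic Lyapunov form, or equivalently a direct analysis of the damped oscillator $\ddot q+2\dot q+\eta q=0$ for $\eta\in[m/M_2,1]$. This is the main obstacle: one must control the near-critically-damped directions ($\eta\approx1$, which carry the harmless polynomial prefactor but decay at the fast rate $\Theta(1)$) and the slow overdamped directions ($\eta\approx m/M_2$, which decay at the slow rate $\lambda$ but with an $\mathcal O(1)$ prefactor) simultaneously, and track the $m$-dependence so that integrating against $e^{-\lambda t}$ contributes exactly one power of $\lambda^{-1}\asymp M_2/m$.

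For the second variation, $\bm K_t$ solves the inhomogeneous equation $\dot{\bm K}_t=\bm A_t\bm K_t+\bm R_t$, $\bm K_0=\bm0$, where $\bm R_t$ is the bilinear expression formed from $-\frac1{M_2}\nabla^3U(\bm x_t)$ and two copies of $\bm J_t$, so $\|\bm R_t\|\Le\frac{M_3}{M_2}\|\bm J_t\|^2$. Duhamel's formula combined with the first-variation bound gives $\|\bm K_t\|\Le\int_0^t\|\bm J_{t-s}\|\,\|\bm R_s\|\,\D s\Le\frac{CM_3}{M_2}\big(\lambda^{-1}e^{-\lambda t}+(1+t)^2e^{-t/2}\big)$, the dominant contribution being $\int_0^t e^{-\lambda(t-s)}e^{-2\lambda s}\D s\le\lambda^{-1}e^{-\lambda t}$; this is one extra power of $\lambda^{-1}\asymp M_2/m$ relative to $\|\bm J_t\|$.

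Assembling the estimates, the elementary bounds $h\sum_{k\ge0}e^{-\lambda kh}\Le C\lambda^{-1}$ and $h\sum_{k\ge0}(1+kh)^je^{-kh/2}\Le C_j$ for $h\Le\frac14$ (which also justify the term-by-term differentiation of \eqref{discrete Poisson solution}) give $\max\{|\nabla_{\bm x}\phi_h|,|\nabla_{\bm v}\phi_h|\}\Le L_1\,h\sum_k\|\bm J_{kh}\|\Le CL_1\lambda^{-1}\asymp C/m$, and
\begin{equation*}
\norm{\nabla^2_{(\bm x,\bm v)}\phi_h}\Le L_2\,h\sum_k\|\bm J_{kh}\|^2+L_1\,h\sum_k\|\bm K_{kh}\|\Le C\big(L_2\lambda^{-1}+L_1M_3M_2^{-1}\lambda^{-2}\big)\asymp\frac{C}{m^2},
\end{equation*}
where $C$ depends only on $M_2,M_3,L_1,L_2$ and absolute constants. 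The degenerate polynomial factors $(1+t)^j$ appear only on the rate-$\Theta(1)$ parts of the bounds and contribute $\mathcal O(1)$, so they do not degrade the stated powers of $m$.
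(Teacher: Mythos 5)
Your proposal is correct and follows the same overall strategy as the paper: differentiate the series $\phi_h=h\sum_k u(\cdot,\cdot,kh)$ in the initial data, prove exponential decay of the first variation at rate $\asymp m/M_2$ via the twisted quadratic form $\bm W_t^\top\bm S\bm W_t$ with $\bm S=\left(\begin{smallmatrix}2\bm I_d&\bm I_d\\ \bm I_d&\bm I_d\end{smallmatrix}\right)$, and sum the geometric series to collect one power of $M_2/m$ for the gradient and two for the Hessian. The one genuine difference is the second-order step: you introduce the second variation $\bm K_t$, which solves a tensor-valued inhomogeneous ODE with source $\|\bm R_t\|\Le\frac{M_3}{M_2}\|\bm J_t\|^2$, and integrate by Duhamel; the paper (Lemma~\ref{lemma: first variation 2}) instead never writes down $\bm K_t$, but bounds the difference $\bm W_t'-\bm W_t$ of two first-variation processes started from perturbed initial states, running a Gr\"onwall argument on $(\bm W_t'-\bm W_t)^\top\bm S(\bm W_t'-\bm W_t)$ with a time-dependent Young parameter, and converts this Lipschitz estimate on $\nabla u$ into the Hessian bound. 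Both routes yield the same extra factor $\lambda^{-1}\asymp M_2/m$ with prefactor $\propto M_3$; yours is more direct but requires justifying a.s. second differentiability of the flow and bounding the two-parameter propagator $\Phi(t,s)$ of $\dot{\bm K}=\bm A_t\bm K+\bm R_t$ (your notation $\|\bm J_{t-s}\|$ glosses over the fact that the linearized equation is not time-homogeneous — the same twisted-form coercivity gives $\|\Phi(t,s)\|\Le Ce^{-\lambda(t-s)}$, so this is only a notational gap). Two further cautions: the ``direct damped-oscillator'' alternative you mention does not literally apply, since $\nabla^2U(\bm x_t)$ is time-varying and non-commuting along the path, so only the Lyapunov route is rigorous (and it in fact yields a clean $\sqrt2\,e^{-mt/(2M_2)}$ with no polynomial factor, so your $(1+t)e^{-t/2}$ terms are unnecessary); and your final bound $L_2\lambda^{-1}+L_1M_3M_2^{-1}\lambda^{-2}$ matches the theorem only after using $m\Le M_2$ to absorb the $1/m$ term into $C/m^2$, which is exactly what the paper does.
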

The proof of Theorem~\ref{theorem: phi h estimate} is given in Appendix~\ref{appendix: discrete Poisson}.

Next, we detail how the discrete Poisson equation framework is used to analyze the time average error of the numerical solutions. Given the step size $h>0$, let $\bm{Z}_k = (\bm{X}_k, \bm{V}_k)$ be the numerical solution of SG-UBU at the $k$-th step. For each $k = 0,1,2,\cdots$, we define the corresponding full-gradient update $\bar{\bm{Z}}_{k+1} = (\bar{\bm{X}}_{k+1}, \bar{\bm{V}}_{k+1})$ as
\begin{equation}
	(\bar{\bm{X}}_{k+1}, \bar{\bm{V}}_{k+1}) = 
	\big(\Phi_{h/2}^{\mathfrak{U}} \circ \Phi_h^{\mathfrak{B}} \circ \Phi_{h/2}^{\mathfrak{U}}\big)(\bm{X}_k, \bm{V}_k),
\end{equation}
and the exact one-step solution of the Langevin dynamics $\bm{Z}_k(h) = (\bm{X}_k(h), \bm{V}_k(h))$ as
\begin{equation}
	(\bm{X}_k(h), \bm{V}_k(h)) = \Phi_{h}^{\mathfrak{L}}(\bm{X}_k, \bm{V}_k),
\end{equation}
where $\Phi_h^{\mathfrak{L}}$ is the solution flow of \eqref{ULD}. This allows for a natural decomposition of the one-step evolution of the numerical solution:
\begin{equation}
	\bm{Z}_{k+1} - \bm{Z}_k = \big(\bm{Z}_{k+1} - \bar{\bm{Z}}_{k+1}\big) + \big(\bar{\bm{Z}}_{k+1} - \bm{Z}_k(h)\big) + \big(\bm{Z}_k(h) - \bm{Z}_k\big).
	\label{one-step error}
\end{equation}
The three terms on the right-hand side isolate the fundamental sources of error: the first is the unbiased \emph{stochastic gradient error}, the second is the \emph{discretization error} of the FG-UBU integrator, and the third corresponds to the evolution of the exact dynamics, which can be analyzed using its martingale properties.

Applying the discrete Poisson solution $\phi_h(\bm{x},\bm{v})$ to the one-step evolution \eqref{one-step error} yields the key error decomposition identity:
\begin{equation}
	\frac{\phi_h(\bm{Z}_{k+1})-\phi_h(\bm{Z}_k)}{h} + f(\bm{X}_k) - \pi(f) = R_k + S_k + T_k,
	\label{RST decomposition}
\end{equation}
where the error terms $R_k, S_k$ and $T_k$ are defined by
\begin{gather*}
	R_k  =\frac{\phi_h(\bm{Z}_{k+1})-\phi_h(\bar{\bm{Z}}_{k+1})}{h},\qquad S_k=\frac{\phi_h(\bar{\bm{Z}}_{k+1})-\phi_h(\bm{Z}_k(h))}{h},\\
	T_k =\frac{\phi_h(\bm{Z}_k(h))-\phi_h(\bm{Z}_k)}{h}+f(\bm{X}_k)-\pi(f).
\end{gather*}
Here, $R_k$ and $S_k$ correspond to the stochastic gradient error and the discretization error, respectively. The third term, $T_k$, forms a martingale difference sequence. This property follows directly from the discrete Poisson equation \eqref{discrete Poisson eq}, which ensures that the conditional expectation of $T_k$ is zero:
\begin{equation*}
	\mathbb{E}\big[T_k\big|\bm{Z}_k\big]=\frac{(e^{h\mathcal{L}}\phi_h)(\bm{Z}_k)-\phi_h(\bm{Z}_k)}{h}+f(\bm{X}_k)-\pi(f)=0.
\end{equation*}

Finally, summing the identity \eqref{RST decomposition} from $k=0$ to $K-1$ establishes a fundamental expression for the time average error:
\begin{equation}
	\frac{1}{K}\sum_{k=0}^{K-1}f(\bm{X}_k)-\pi(f)=\frac{\phi_h(\bm{Z}_0)-\phi_h(\bm{Z}_K)}{Kh} + \frac{1}{K}\sum_{k=0}^{K-1}(R_k + S_k + T_k).
	\label{explicit time average}
\end{equation}
This representation allows the MSE to be bounded by analyzing the second moments of the cumulative error terms. The analysis is simplified by the orthogonality of the martingale difference sequence $(T_k)_{k=0}^{K-1}$, which implies that its variance is additive:
\begin{equation*}
	\mathbb{E}\Bigg[\bigg(\sum_{k=0}^{K-1}T_k\bigg)^2\Bigg] = \sum_{k=0}^{K-1}\mathbb{E}\big[T_k^2\big].
\end{equation*}
Therefore, the MSE estimation reduces to bounding the accumulated stochastic gradient errors ($R_k$) and discretization errors ($S_k$), while the martingale terms ($T_k$) contribute only through the sum of their individual variances.
\section{Mean square error and numerical bias of SG-UBU}
\label{section: MSE bias SG-UBU}

This section establishes a quantitative estimate for the MSE of SG-UBU by analyzing the second moment of the time average error in \eqref{explicit time average}. Our proof combines two essential components, corresponding to stability and consistency in the standard numerical analysis methodology. We first establish stability by deriving uniform-in-time moment bounds for the SG-UBU solution $(\bm X_k,\bm V_k)_{k=0}^\infty$. We then analyze consistency by estimating the moments of the local error, which is decomposed into the stochastic gradient error $\bm{Z}_{k+1} - \bar{\bm{Z}}_{k+1}$ and the discretization error $\bar{\bm{Z}}_{k+1} - \bm Z_k(h)$. This decomposition reveals that the stochastic gradient error contributes the dominant first-order term in the step size $h$, while the discretization error contributes a higher second-order term. This leads to a first-order numerical bias, whose leading-order coefficient is proportional to the variance of the stochastic gradient.

\subsection{Stability: Uniform-in-time moment bound}
We establish a uniform-in-time eighth moment bound for the SG-UBU solution $(\bm X_k,\bm V_k)_{k=0}^\infty$. This high-order moment is required to control the corresponding moments of the local error terms in our subsequent analysis. The proof hinges on a key simplification: we first analyze the related SG-BU integrator, whose update rule is given by
\begin{equation}
	\label{SG-BU}
	(\bm{X}_{k+1},\bm{V}_{k+1}) = \big(\Phi_h^{\mathfrak{U}} \circ \Phi_h^{\mathfrak{B}}(\theta_k)\big)(\bm{X}_k,\bm{V}_k),\qquad k=0,1,\cdots.
\end{equation}
Since the SG-UBU integrator \eqref{SG-UBU} differs from SG-BU only by a stable $\Phi_{h/2}^{\mathfrak{U}}$ mapping, their uniform-in-time moment bounds are equivalent \citep{schuh2024convergence}. This approach is advantageous because the SG-BU scheme evaluates the stochastic gradient at the state $\bm X_k$ itself, making the analysis more tractable than a direct treatment of SG-UBU.

To establish the uniform-in-time moment estimate for SG-BU, we employ a Lyapunov function approach. Thanks to the global convexity of the potential function $U(\bm x)$, we define the Lyapunov function based on the quadratic form in \cite{mattingly2002ergodicity}:
\begin{equation*}
	\mathcal V(\bm x,\bm v) = \big(2|\bm x|^2 + 2\bm x^\top\bm v + |\bm v|^2\big)^4,\qquad
	\bm z = (\bm x,\bm v) \in \mathbb R^{2d},
\end{equation*}
where the fourth power is chosen to control the eighth moment of the numerical solution.

For SG-BU, this choice of $\mathcal{V}$ yields the following one-step contractive property.
\begin{lemma}
	\label{lemma: SG-BU Lyapunov}
	Under Assumptions~\ref{asP} and \ref{asSg}, let the step size $h\Le \frac14$; for any $\bm z_0\in\mathbb R^{2d}$,
	\begin{equation*}
		\mathbb E\Big[\mathcal V\big((\Phi_h^{\mathfrak U} \circ \Phi_h^{\mathfrak B}(\theta)\big) \bm z_0\big)\Big] \Le \bigg(1-\frac{mh}{3M_2}\bigg) \mathcal V(\bm z_0) + \frac{Cd^4h}{m^3},
	\end{equation*}
	where $\theta\sim\mathbb P_\theta$ and the constant $C$ depends only on $M_1,M_2$.
\end{lemma}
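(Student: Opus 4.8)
The plan is to analyze the one-step SG-BU map $\bm z_0 \mapsto \bm z_1 := (\Phi_h^{\mathfrak U}\circ\Phi_h^{\mathfrak B}(\theta))(\bm z_0)$ directly. Since $\Phi_h^{\mathfrak U}$ is affine in the velocity, setting $a = \tfrac{1-e^{-2h}}{2}$, $c = e^{-2h}$, $\bm\xi_\theta = b(\bm x_0,\theta)-\nabla U(\bm x_0)$, and letting $\bm G = (\bm G_x,\bm G_v)$ be the Gaussian increments in $\Phi_h^{\mathfrak U}$, one has the clean splitting
\[
  \bm z_1 = \bm z_1^{(0)} + \bm\eta_\theta + \bm G,\qquad
  \bm z_1^{(0)} = \Big(\bm x_0 + a\bm v_0 - \tfrac{ah}{M_2}\nabla U(\bm x_0),\; c\bm v_0 - \tfrac{ch}{M_2}\nabla U(\bm x_0)\Big),\qquad
  \bm\eta_\theta = -\tfrac{h}{M_2}\big(a\bm\xi_\theta,\,c\bm\xi_\theta\big),
\]
where $\bm z_1^{(0)}$ is the noiseless full-gradient BU update (deterministic in $\bm z_0$), $\bm\eta_\theta$ is the stochastic-gradient perturbation with $\mathbb E_\theta[\bm\eta_\theta]=\bm 0$, and $\bm G$ is centered Gaussian, independent of $(\bm z_0,\theta)$, with covariance of order $h/M_2$ (dominated by the velocity block $\mathrm{Cov}(\bm G_v) = \tfrac{1-e^{-4h}}{M_2}\bm I_d$). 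I would take conditional expectations in two stages, first over $\bm G$ and then over $\theta$, exploiting that $Q(\bm x,\bm v) = 2|\bm x|^2 + 2\bm x^\top\bm v + |\bm v|^2$ is a homogeneous quadratic, so $Q(\bm w+\bm G) = Q(\bm w) + \langle\nabla Q(\bm w),\bm G\rangle + Q(\bm G)$ holds exactly.

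The crux is the discrete analogue of the generator bound $\mathcal L Q \le -\tfrac{m}{3M_2}Q + \tfrac{4d}{M_2}$, namely a one-step contraction for $Q$ under the deterministic map. Using $a=\tfrac{1-c}{2}$ one verifies the exact identities $4a+2c = 2$ and $2a^2+2ac+c^2 = \tfrac{1+e^{-4h}}{2}$, whence, with $\bm g=\nabla U(\bm x_0)$,
\[
  Q(\bm z_1^{(0)}) - Q(\bm z_0) = \tfrac{e^{-4h}-1}{2}|\bm v_0|^2 - \tfrac{2h}{M_2}\bm x_0^\top\bm g - \tfrac{(1+e^{-4h})h}{M_2}\bm v_0^\top\bm g + \tfrac{(1+e^{-4h})h^2}{2M_2^2}|\bm g|^2 .
\]
I would bound the cross term by a weighted AM--GM $\tfrac{(1+e^{-4h})h}{M_2}|\bm v_0^\top\bm g| \le wh|\bm v_0|^2 + \tfrac{h}{wM_2^2}|\bm g|^2$, pay for the $wh|\bm v_0|^2$ from the velocity dissipation (using $\tfrac{1-e^{-4h}}{2} > wh$ on $0<h\le\tfrac14$ for a suitable $w$, which still leaves a genuine $-\delta_v h|\bm v_0|^2$), and route the residual $|\bm g|^2$ terms back into a fraction of $-\tfrac{2h}{M_2}\bm x_0^\top\bm g$ via co-coercivity $|\nabla U(\bm x_0)|^2 \le M_2\,\bm x_0^\top\nabla U(\bm x_0)$, retaining $-c_1\tfrac{mh}{M_2}|\bm x_0|^2$ from strong convexity $\bm x_0^\top\nabla U(\bm x_0) \ge m|\bm x_0|^2$. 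With $|\bm x_0|^2 + |\bm v_0|^2 \ge \tfrac13 Q(\bm z_0)$ this gives $Q(\bm z_1^{(0)}) \le (1-\kappa h)Q(\bm z_0)$ for an explicit $\kappa = \Theta(m/M_2)$; the constants must be tracked so that, after raising to the fourth power, the net rate still exceeds $\tfrac{m}{3M_2}$ with a margin to absorb the lower-order terms below. This step is the main obstacle: a naïve Cauchy--Schwarz on $\bm v_0^\top\nabla U$ destroys the sign when $m\ll M_2$, and the correct bookkeeping is exactly the one that makes $Q$ a Lyapunov function for the continuous dynamics \eqref{ULD}.

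Finally I would bootstrap to $\mathcal V = Q^4$. Binomially expanding $\mathcal V(\bm z_1) = \big(Q(\bm w)+\langle\nabla Q(\bm w),\bm G\rangle + Q(\bm G)\big)^4$ with $\bm w = \bm z_1^{(0)}+\bm\eta_\theta$ and integrating over $\bm G$, the odd Gaussian moments vanish, $|\nabla Q(\bm w)|^2\lesssim Q(\bm w)$, $\mathbb E_{\bm G}[Q(\bm G)^j]\lesssim(dh/M_2)^j$ and $\mathbb E_{\bm G}[\langle\nabla Q(\bm w),\bm G\rangle^{2j}]\lesssim(h/M_2)^jQ(\bm w)^j$; a parallel expansion over $\theta$ uses $\mathbb E_\theta[\bm\eta_\theta]=\bm 0$, $a,c\le1$ and $(\mathbb E_\theta|\bm\xi_\theta|^8)^{1/4}\le\sigma^2 d$ from Assumption~\ref{asSg} (with $\sigma\le M_1$ folding all $\sigma$-powers into a constant depending on $M_1,M_2$). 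This yields $\mathbb E[\mathcal V(\bm z_1)\mid\bm z_0] \le Q(\bm z_1^{(0)})^4 + h\sum_{j=1}^4 A_j\,Q(\bm z_1^{(0)})^{4-j}$ with coefficients $A_j$ depending only on $d,M_1,M_2$, the dominant one $A_1\asymp d$ arising from $4Q^3\,\mathbb E_{\bm G}[Q(\bm G)]$ — the discrete counterpart of the source $\tfrac{4d}{M_2}$. Using $Q(\bm z_1^{(0)})^4\le(1-\kappa h)^4Q(\bm z_0)^4$ and absorbing each lower-degree term by Young's inequality, $hA_jQ^{4-j}\le\epsilon_j hQ^4 + C_jh$ with $\epsilon_j = \Theta(m/M_2)$ small and $C_j\lesssim_{M_1,M_2}\epsilon_j^{-(4-j)/j}A_j^{4/j}\lesssim_{M_1,M_2}d^4/m^3$ (the worst case $j=1$ producing the $m^{-3}$ scaling), and keeping $\sum_j\epsilon_j$ below the margin in $4\kappa-\tfrac{m}{3M_2}$, one concludes
\[
  \mathbb E\big[\mathcal V(\bm z_1)\mid\bm z_0\big] \le \Big(1 - \tfrac{mh}{3M_2}\Big)\mathcal V(\bm z_0) + \frac{Cd^4 h}{m^3},
\]
as claimed, with $C$ depending only on $M_1,M_2$.
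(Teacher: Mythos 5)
Your proposal is correct, and its overall architecture matches the paper's: the same twisted quadratic $Q(\bm z)=\bm z^\top\bm S\bm z$, a deterministic one-step contraction for $Q$, and a binomial/Young bootstrap to $\mathcal V=Q^4$ in which the mean-zero Gaussian and stochastic-gradient increments contribute only through even moments or higher powers of $h$. The genuine difference lies in how the pivotal contraction is established and in the organization. The paper writes the noiseless update as $\bm A\bm z_0$ with $\bm A$ built from the mean-value Hessian $\bm H=\frac{1}{M_2}\int_0^1\nabla^2U(s\bm x_0)\D s$ and proves the matrix inequality $\bm S-\bm A^\top\bm S\bm A\Succ\frac{mh}{2M_2}\bm S$ directly, which yields the clean rate $1-\frac{mh}{2M_2}$ for all $\bm z_0$ simultaneously; it also factors the argument through a separate full-gradient lemma (Lemma~\ref{lemma: FG-BU Lyapunov}) and then treats the stochastic gradient as an unbiased $\mathcal O(h^2)$ perturbation. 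You instead derive the exact scalar identity for $Q(\bm z_1^{(0)})-Q(\bm z_0)$ (your coefficient identities $4a+2c=2$ and $2a^2+2ac+c^2=\frac{1+e^{-4h}}{2}$ check out, and the displayed difference is exactly right) and close it with weighted AM--GM, co-coercivity $|\nabla U(\bm x_0)|^2\Le M_2\,\bm x_0^\top\nabla U(\bm x_0)$, and strong convexity. This is more elementary and makes the mechanism transparent, at the cost of a smaller and implementation-dependent $\kappa$: e.g.\ with weight $w=0.9$ one gets $\kappa\approx 0.12\,m/M_2$, so $4\kappa\approx 0.48\,m/M_2$, which does exceed $\frac{m}{3M_2}$ with enough margin to absorb the Young terms, as you anticipated. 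One small point to make explicit in a full write-up: in the quartic expansion only the terms \emph{linear} in $\bm\eta_\theta$ vanish by unbiasedness; cubic terms such as $\langle\nabla Q(\bm z_1^{(0)}),\bm\eta_\theta\rangle^3$ survive but carry $|\bm\eta_\theta|^3=\mathcal O(h^3)$ and are absorbed the same way, and the $j=4$ source term requires exactly the eighth-moment bound $(\mathbb E_\theta|\bm\xi_\theta|^8)^{1/4}\Le\sigma^2d$ from Assumption~\ref{asSg}, which is also where the paper's proof uses it.
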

This Lyapunov condition for the SG-BU integrator provides the foundation for bounding the moments of the desired SG-UBU solution.
\begin{theorem}
	\label{theorem: SG-UBU moment bound}
	Under Assumptions~\ref{asP} and \ref{asSg}, let the step size $h\Le \frac14$, and the initial state
	$$
	\bm Z_0 = (\bm X_0,\bm V_0)\text{~with~}\bm X_0 = \bm 0\text{~and~}\bm V_0 \sim \mathcal N(\bm 0,M_2^{-1}\bm I_d);
	$$
	then the SG-UBU solution $(\bm X_k,\bm V_k)_{k=0}^\infty$ satisfies
	\begin{equation*}
		\sup_{k\Ge0} \mathbb E\big[|\bm Z_k|^8\big]
		\Le \frac{Cd^4}{m^4},
	\end{equation*}
	where the constant $C$ depends only on $M_1,M_2$.
\end{theorem}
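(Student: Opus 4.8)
The plan is to bootstrap the uniform eighth moment bound from the one-step Lyapunov contraction for SG-BU in Lemma~\ref{lemma: SG-BU Lyapunov}, exploiting the exact conjugacy between the two integrators. Let $(\bm Z_k)_{k\Ge0}$ be the SG-UBU trajectory \eqref{SG-UBU} started from the prescribed Gaussian state, and set $\bm Y_k := \Phi_{h/2}^{\mathfrak U}(\bm Z_k)$, where this half-step $\mathfrak U$-flow uses the Brownian increments of the \emph{first} $\mathfrak U$ substep of the $k$-th UBU iteration. Invoking the semigroup identity $\Phi_{h/2}^{\mathfrak U}\circ\Phi_{h/2}^{\mathfrak U}=\Phi_h^{\mathfrak U}$ for the linear flow --- which holds exactly under the natural coupling of Brownian increments over consecutive intervals --- one computes
\begin{equation*}
	\bm Y_{k+1}=\Phi_{h/2}^{\mathfrak U}\circ\Phi_{h/2}^{\mathfrak U}\circ\Phi_h^{\mathfrak B}(\theta_k)\circ\Phi_{h/2}^{\mathfrak U}(\bm Z_k)=\big(\Phi_h^{\mathfrak U}\circ\Phi_h^{\mathfrak B}(\theta_k)\big)(\bm Y_k),
\end{equation*}
so $(\bm Y_k)_{k\Ge0}$ is itself an SG-BU trajectory \eqref{SG-BU}, driven by the i.i.d.\ indices $(\theta_k)$ and i.i.d.\ Gaussian increments (merely time-shifted by $h/2$, which is irrelevant for a time-independent moment bound), started from $\bm Y_0=\Phi_{h/2}^{\mathfrak U}(\bm 0,\bm V_0)$; by \eqref{Y_k expression} this $\bm Y_0$ is a centered Gaussian with covariance of order $M_2^{-1}\bm I_{2d}$.

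Next I would iterate the Lyapunov inequality: conditioning on $\bm Y_k$ and applying Lemma~\ref{lemma: SG-BU Lyapunov} pointwise (using independence of $\theta_k$ from $\bm Y_k$), the quantity $a_k:=\mathbb E[\mathcal V(\bm Y_k)]$ obeys $a_{k+1}\Le(1-\tfrac{mh}{3M_2})a_k+\tfrac{Cd^4h}{m^3}$, and summing the geometric series gives $\sup_{k\Ge0}a_k\Le a_0+\tfrac{3M_2}{mh}\cdot\tfrac{Cd^4h}{m^3}=a_0+\tfrac{Cd^4}{m^4}$. The quadratic form $Q(\bm x,\bm v)=2|\bm x|^2+2\bm x^\top\bm v+|\bm v|^2$ has eigenvalues $\tfrac{3\pm\sqrt5}{2}$, both strictly positive, so $c_1|\bm z|^8\Le\mathcal V(\bm z)=Q(\bm z)^4\Le c_2|\bm z|^8$ with universal $c_1,c_2>0$; hence $a_0\Le c_2\mathbb E[|\bm Y_0|^8]\Le Cd^4M_2^{-4}\Le Cd^4m^{-4}$, the middle step being the Gaussian eighth-moment estimate for $\bm Y_0$ and the last using $m\Le M_2$. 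The same two-sided comparison converts the Lyapunov bound into $\sup_{k\Ge0}\mathbb E[|\bm Y_k|^8]\Le Cd^4/m^4$.

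The last step is to transfer this bound back to $(\bm Z_k)$. For $k\Ge1$ we have $\bm Z_k=\Phi_{h/2}^{\mathfrak U}\circ\Phi_h^{\mathfrak B}(\theta_{k-1})(\bm Y_{k-1})$; the linear part of $\Phi_{h/2}^{\mathfrak U}$ has $\mathcal O(1)$ operator norm for $h\Le\tfrac14$, its additive noise has eighth moment $\mathcal O(d^4M_2^{-4})$, and Assumption~\ref{asSg} bounds $|b(\cdot,\theta_{k-1})|\Le M_1\sqrt{|\bm Y_{k-1}|^2+d}$, so $|\bm Z_k|$ is at most a constant multiple of $|\bm Y_{k-1}|+\sqrt d$ plus the $\mathfrak U$-noise, whence $\mathbb E[|\bm Z_k|^8]\Le C(\mathbb E[|\bm Y_{k-1}|^8]+d^4)\Le Cd^4/m^4$; the case $k=0$ is immediate since $\mathbb E[|\bm Z_0|^8]=\mathbb E[|\bm V_0|^8]=\mathcal O(d^4M_2^{-4})$. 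Taking the supremum over $k$ and absorbing the $M_2$-dependence into $C$ yields the claim. Given Lemma~\ref{lemma: SG-BU Lyapunov}, none of these steps is a serious obstacle --- the genuine analytic content (the Lyapunov drift condition) is already packaged in that lemma; the only point that really demands care is the conjugacy bookkeeping of the first step, namely the stochastic semigroup identity and the verification that $(\bm Y_k)$ is a bona fide SG-BU chain with i.i.d.\ driving data, since this is exactly what licenses importing the SG-BU estimate rather than re-deriving a drift condition directly for SG-UBU.
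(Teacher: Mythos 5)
Your proposal is correct and follows essentially the same route as the paper: both reduce SG-UBU to SG-BU via the half-step $\Phi^{\mathfrak U}_{h/2}$ conjugacy, iterate the Lyapunov contraction of Lemma~\ref{lemma: SG-BU Lyapunov} on the resulting SG-BU chain, and transfer the bound back using the one-step stability of the $\mathfrak U$ and $\mathfrak B$ substeps. The only difference is presentational --- you make the conjugacy explicit by exhibiting the intermediate states as a bona fide SG-BU trajectory, while the paper writes the same composition of flows directly.
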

The order of parameters $d$ and $m$ in Theorem~\ref{theorem: SG-UBU moment bound} is optimal, and the equality holds if the target distribution $\pi(\bm x)$ is exactly the Gaussian distribution $\mathcal N(\bm 0,m^{-1} \bm I_d)$.

As a corollary, this uniform-in-time moment bound also applies to the numerical invariant distribution of the SG-UBU integrator, whose existence is guaranteed by the Krylov--Bogolyubov theorem \citep{hairer2006ergodic}.
\begin{lemma}
\label{lemma: SG-UBU invariant}
Under Assumptions~\ref{asP} and \ref{asSg}, let the step size $h\Le \frac14$; then SG-UBU has an invariant distribution $\pi_h(\bm x,\bm v)$ in $\mathbb R^{2d}$, which satisfies
\begin{equation*}
	\int_{\mathbb R^{2d}} |\bm z|^8 \pi_h(\bm z)
	\D\bm z \Le \frac{Cd^4}{m^4},
\end{equation*}
where the constant $C$ depends only on $M_1,M_2$.
\end{lemma}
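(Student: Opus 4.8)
The plan is to realize $\pi_h$ as a subsequential Cesàro limit of the laws of the SG-UBU iterates and then push the uniform moment estimate of Theorem~\ref{theorem: SG-UBU moment bound} through to this limit. Let $P_h$ be the one-step transition kernel of the SG-UBU chain \eqref{SG-UBU}, which is a time-homogeneous Markov kernel on $\mathbb R^{2d}$: conditionally on the index $\theta_k$ and the Brownian increment over $[kh,(k+1)h]$, the update $(\Phi_{h/2}^{\mathfrak U}\circ\Phi_h^{\mathfrak B}(\theta_k)\circ\Phi_{h/2}^{\mathfrak U})(\bm X_k,\bm V_k)$ is an explicit function of $(\bm X_k,\bm V_k)$ that is continuous in the current state, so $P_h$ is Feller. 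I would start the chain from $\bm Z_0=(\bm 0,\bm V_0)$ with $\bm V_0\sim\mathcal N(\bm 0,M_2^{-1}\bm I_d)$ as in Theorem~\ref{theorem: SG-UBU moment bound}, write $\nu_k=\mathrm{Law}(\bm Z_k)$, and set $\mu_n=\frac1n\sum_{k=0}^{n-1}\nu_k$.

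First I would check tightness of $(\mu_n)_{n\Ge1}$. Theorem~\ref{theorem: SG-UBU moment bound} gives $\sup_{k\Ge0}\mathbb E[|\bm Z_k|^8]\Le Cd^4/m^4$, hence $\int_{\mathbb R^{2d}}|\bm z|^8\,\mu_n(\D\bm z)=\frac1n\sum_{k=0}^{n-1}\mathbb E[|\bm Z_k|^8]\Le Cd^4/m^4$ uniformly in $n$, and Markov's inequality yields uniform tightness. By Prokhorov's theorem a subsequence $(\mu_{n_j})$ converges weakly to a probability measure $\pi_h$ on $\mathbb R^{2d}$. The standard Krylov--Bogolyubov argument then shows $\pi_h$ is $P_h$-invariant: $\mu_nP_h-\mu_n=\frac1n(\nu_n-\nu_0)\to0$ in total variation, while the Feller property gives $\mu_{n_j}P_h\Rightarrow\pi_hP_h$; combining the two along $(n_j)$ forces $\pi_hP_h=\pi_h$. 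This supplies the invariant distribution asserted in the lemma.

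It remains to bound the eighth moment of $\pi_h$. Since $\bm z\mapsto|\bm z|^8$ is unbounded, I would truncate: for each $R>0$ the map $\bm z\mapsto|\bm z|^8\wedge R$ is bounded and continuous, so weak convergence yields
\begin{equation*}
	\int_{\mathbb R^{2d}}\big(|\bm z|^8\wedge R\big)\,\pi_h(\D\bm z)
	=\lim_{j\to\infty}\int_{\mathbb R^{2d}}\big(|\bm z|^8\wedge R\big)\,\mu_{n_j}(\D\bm z)
	\Le\frac{Cd^4}{m^4}.
\end{equation*}
Letting $R\to\infty$ and applying monotone convergence gives $\int_{\mathbb R^{2d}}|\bm z|^8\,\pi_h(\D\bm z)\Le Cd^4/m^4$ with the same constant $C$ (depending only on $M_1,M_2$), which is the claimed estimate.

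The only genuinely nonroutine point is the Feller property of $P_h$, i.e. continuity of the update map in the current state; this is immediate under Assumption~\ref{asP} since $\nabla U\in C^2$ (and under Assumption~\ref{asSgN} since each mini-batch gradient is a finite average of the $\nabla U_i$), because the flows $\Phi^{\mathfrak U}$, $\Phi^{\mathfrak B}$ are affine in the state. An alternative route, should one prefer to avoid the weak-limit bookkeeping, is to integrate a geometric drift inequality against $\pi_h$: the proof of Theorem~\ref{theorem: SG-UBU moment bound} (via Lemma~\ref{lemma: SG-BU Lyapunov} and the equivalence of SG-BU and SG-UBU moment bounds) provides $P_h\mathcal W\Le(1-cmh/M_2)\mathcal W+Cd^4h/m^3$ for a Lyapunov function $\mathcal W$ comparable to $|\bm z|^8$; using $\pi_hP_h=\pi_h$ then gives $\int\mathcal W\,\D\pi_h\Le Cd^4/m^4$ directly, and hence the same bound for $\int|\bm z|^8\,\D\pi_h$ up to a constant.
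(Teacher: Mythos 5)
Your proposal is correct and rests on the same core mechanism as the paper's proof --- Krylov--Bogolyubov combined with the eighth-moment control --- but the two arguments are instantiated differently. The paper applies Hairer's form of Krylov--Bogolyubov to the SG-BU chain, whose one-step Lyapunov drift is exactly what Lemma~\ref{lemma: SG-BU Lyapunov} provides, obtains an invariant measure $\hat\pi_h$ for SG-BU with the moment bound, and then transfers it to SG-UBU by the explicit conjugation $\pi_h = P_{h/2}^{\mathfrak U}P_h^{\mathfrak B}\hat\pi_h$, checking invariance algebraically from $P_h^{\mathfrak U}P_h^{\mathfrak B}\hat\pi_h=\hat\pi_h$. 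You instead run the Ces\`aro-average version of Krylov--Bogolyubov directly on the SG-UBU kernel, using the uniform-in-time bound of Theorem~\ref{theorem: SG-UBU moment bound} for tightness and recovering the eighth-moment bound of the limit by truncation and monotone convergence; this avoids the BU-to-UBU transfer entirely and is somewhat more self-contained, at the cost of spelling out the Feller/Prokhorov bookkeeping that the paper outsources to a cited theorem. One shared caveat: both routes implicitly require the Feller property of the one-step kernel, i.e.\ continuity of $\bm x\mapsto b(\bm x,\theta)$ for a.e.\ $\theta$, which Assumption~\ref{asSg} does not literally guarantee (it holds in the finite-sum and additive-noise cases); since the paper's appeal to the Krylov--Bogolyubov theorem carries the same hypothesis, this is not a defect of your argument relative to the paper's. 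Your suggested alternative of integrating the drift inequality against $\pi_h$ would additionally need an a priori truncation to justify $\int \mathcal V\,\D\pi_h<\infty$, so the main route you give is the right one.
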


The proofs of the results above are given in Appendix~\ref{appendix: uniform-in-time bound}.

\subsection{Consistency: Local error analysis}

Building on the one-step error decomposition in \eqref{one-step error}, we establish fourth moment bounds for both the stochastic gradient error $\bm Z_{k+1} - \bar{\bm Z}_{k+1}$ and the discretization error $\bar{\bm Z}_{k+1} - \bm Z_k(h)$. These estimates are a prerequisite for deriving the MSE of SG-UBU. The explicit integral formulas for $\bm Z_{k+1}$, $\bar {\bm Z}_{k+1}$, and $\bm Z_k(h)$ used in this analysis are detailed in Appendix~\ref{appendix: local error}.

The stochastic gradient error, $\bm{Z}_{k+1} - \bar{\bm{Z}}_{k+1}$, has the concise form:
\begin{equation}
	\left\{
	\begin{aligned}
		\bm X_{k+1} - \bar{\bm X}_{k+1} & = -\frac{h(1-e^{-h})}{2M_2}(b(\bm Y_k,\theta_k) - \nabla U(\bm Y_k)), \\
		\bm V_{k+1} - \bar{\bm V}_{k+1} & = -\frac{he^{-h}}{M_2}
		(b(\bm Y_k,\theta_k) - \nabla U(\bm Y_k)),
	\end{aligned}
	\right.
	\label{stochastic gradient error expression}
\end{equation}
where the gradient is evaluated at the intermediate position $\bm Y_k$ from \eqref{Y_k expression}, which is independent of the random variable $\theta_k$. This independence, combined with the unbiased nature of the stochastic gradient, yields the conditional unbiasedness property:
\begin{equation}
	\mathbb E\big[\bm X_{k+1} - \bar{\bm X}_{k+1}\big|\bm Y_k\big] =
	\mathbb E\big[\bm V_{k+1} - \bar{\bm V}_{k+1}\big|\bm Y_k\big] = 0.
	\label{martingale property}
\end{equation}
This confirms that the error sequence $\{\bm Z_{k+1} - \bar {\bm Z}_{k+1}\}_{k=0}^\infty$ is a martingale difference sequence. The following lemma provides bounds on its fourth moments.
\begin{lemma}
	\label{lemma: stochastic gradient error}
	Under Assumptions~\ref{asP} and \ref{asSg}, let the step size $h\Le \frac14$, and the initial state
	$$
	\bm Z_0 = (\bm X_0,\bm V_0)\text{~with~}\bm X_0 = \bm 0\text{~and~}\bm V_0 \sim \mathcal N(\bm 0,M_2^{-1}\bm I_d);
	$$
	then the stochastic gradient error of SG-UBU satisfies
	\begin{equation*}
		\sup_{k\Ge0}\sqrt{\mathbb E\big|\bm X_{k+1} - \bar{\bm X}_{k+1}\big|^4}
		\Le C\sigma^2dh^4,\quad
		\sup_{k\Ge0}\sqrt{\mathbb E\big|\bm V_{k+1} - \bar{\bm V}_{k+1}\big|^4}
		\Le C\sigma^2dh^2,
	\end{equation*}
	where the constant $C$ depends only on $M_1, M_2$.
\end{lemma}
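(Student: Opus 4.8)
The plan is to exploit the explicit expression \eqref{stochastic gradient error expression}, which shows that both error components equal, up to a purely deterministic scalar prefactor, the single random vector $b(\bm Y_k,\theta_k) - \nabla U(\bm Y_k)$. The whole estimate therefore splits into two elementary tasks: (i) bound the deterministic prefactors by powers of $h$, and (ii) bound the fourth moment of $b(\bm Y_k,\theta_k) - \nabla U(\bm Y_k)$ uniformly in $k$. No stability input is actually needed, since the relevant control is spatially uniform.

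For (i), I would use the elementary inequalities $0 \le 1-e^{-h} \le h$ and $0 < e^{-h} \le 1$, which hold for $h \le \tfrac14$, to get $\big|\tfrac{h(1-e^{-h})}{2M_2}\big| \le \tfrac{h^2}{2M_2}$ and $\big|\tfrac{he^{-h}}{M_2}\big| \le \tfrac{h}{M_2}$. Raising \eqref{stochastic gradient error expression} to the fourth power and taking expectations then yields
\begin{align*}
	\mathbb E\big|\bm X_{k+1} - \bar{\bm X}_{k+1}\big|^4 &\le \frac{h^8}{16M_2^4}\,\mathbb E\big|b(\bm Y_k,\theta_k) - \nabla U(\bm Y_k)\big|^4, \\
	\mathbb E\big|\bm V_{k+1} - \bar{\bm V}_{k+1}\big|^4 &\le \frac{h^4}{M_2^4}\,\mathbb E\big|b(\bm Y_k,\theta_k) - \nabla U(\bm Y_k)\big|^4,
\end{align*}
so the two claimed orders $h^4$ and $h^2$ (after taking the square root) are produced entirely by these prefactors.

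For (ii), I would first note the pointwise bound obtained from the eighth-moment assumption via the power-mean (Jensen) inequality:
\begin{equation*}
	\mathbb E_\theta\big[|b(\bm x,\theta) - \nabla U(\bm x)|^4\big] \le \Big(\mathbb E_\theta\big[|b(\bm x,\theta) - \nabla U(\bm x)|^8\big]\Big)^{1/2} \le (\sigma^2 d)^2 = \sigma^4 d^2, \qquad \forall\,\bm x\in\mathbb R^d,
\end{equation*}
using Assumption~\ref{asSg}. As recorded after \eqref{Y_k expression}, the intermediate position $\bm Y_k$ depends only on $(\bm X_k,\bm V_k)$ and the Brownian increment on $[kh,kh+h/2]$, hence is independent of the freshly sampled $\theta_k$; so the tower property gives $\mathbb E|b(\bm Y_k,\theta_k) - \nabla U(\bm Y_k)|^4 = \mathbb E\big[\mathbb E_\theta[|b(\bm x,\theta)-\nabla U(\bm x)|^4]\big|_{\bm x=\bm Y_k}\big] \le \sigma^4 d^2$, uniformly in $k$. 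Substituting this into the two displays of (i) and taking square roots gives the stated bounds with $C$ depending only on $M_2$.

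The argument is short, and the main point worth flagging is what it does \emph{not} require: neither the uniform-in-time moment bound of Theorem~\ref{theorem: SG-UBU moment bound} nor the prescribed Gaussian initialization enters, precisely because Assumption~\ref{asSg} controls the stochastic-gradient variance uniformly in space, so no moment estimate on $\bm Y_k$ is needed. The only step demanding any care is the independence of $\bm Y_k$ and $\theta_k$, which legitimizes the conditioning; everything else is elementary. (This stands in contrast to the discretization-error estimate to follow, where the uniform moment bound will genuinely be needed, since that local error involves $\nabla^2 U$ and $\nabla^3 U$ evaluated at the random, unbounded iterates.)
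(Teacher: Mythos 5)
Your proposal is correct and follows essentially the same route as the paper's proof: substitute the explicit expression \eqref{stochastic gradient error expression}, bound the scalar prefactors by $h^2$ and $h$ respectively, and control $\mathbb E|b(\bm Y_k,\theta_k)-\nabla U(\bm Y_k)|^4 \le \sigma^4 d^2$ via Assumption~\ref{asSg}. Your added remarks — the explicit Jensen step from the eighth to the fourth moment, the conditioning on $\bm Y_k$ via independence of $\theta_k$, and the observation that no uniform moment bound on the iterates is actually needed here — are all accurate refinements of what the paper leaves implicit.
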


The analysis of the discretization error, $\bar{\bm Z}_{k+1} - \bm Z_k(h)$, follows the methodology in Theorem~25 of \cite{sanz2021wasserstein}, which employs a martingale decomposition technique. For notational convenience, we define the position and velocity errors as
\begin{equation*}
	\Dx = \bar{\bm X}_{k+1} - \bm X_k(h),\qquad
	\Dv = \bar{\bm V}_{k+1} - \bm V_k(h).
\end{equation*}
This technique decomposes the velocity error $\Dv$ into a martingale difference component $\Dmv$ and a higher-order remainder $\Dhv$. The martingale component satisfies the conditional unbiasedness property:
\begin{equation*}
	\mathbb E\big[\Dmv\big|\bm Z_k\big] = 0.
\end{equation*}
The following lemma provides fourth moment bounds for each of these error components.
\begin{lemma}
\label{lemma: discretization error}
Under Assumptions \ref{asP} and \ref{asSg}, let the step size $h\Le \frac14$, and the initial state
$$
	\bm Z_0 = (\bm X_0,\bm V_0)\text{~with~}\bm X_0 = \bm 0\text{~and~}\bm V_0 \sim \mathcal N(\bm 0,M_2^{-1}\bm I_d);
$$
then the discretization error of SG-UBU satisfies	\begin{equation*}
	\sup_{k\Ge0}\sqrt{\mathbb E|\Dx|^4} \Le \frac{Cdh^6}{m},\quad
	\sup_{k\Ge0}\sqrt{\mathbb E|\Dmv|^4} \Le \frac{Cdh^5}{m},\quad
	\sup_{k\Ge0}\sqrt{\mathbb E|\Dhv|^4} \Le \frac{Cd^2h^6}{m^2},
\end{equation*}
where the constant $C$ depends only on $(M_i)_{i=1}^3$.
\end{lemma}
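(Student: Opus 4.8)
The starting point is the explicit integral representations of the exact one–step flow $\bm Z_k(h)=\Phi_h^{\mathfrak L}(\bm Z_k)$ and of the FG-UBU step $\bar{\bm Z}_{k+1}$ (assembled in Appendix~\ref{appendix: local error}), together with the structural observation that, since the diffusion in \eqref{ULD} is additive, the composition $\Phi_{h/2}^{\mathfrak U}\circ\Phi_h^{\mathfrak B}\circ\Phi_{h/2}^{\mathfrak U}$ reproduces the Brownian contribution of $\Phi_h^{\mathfrak L}$ over the step \emph{exactly}: the Ornstein--Uhlenbeck kernels compose multiplicatively, so the noise of two half $\mathfrak U$-steps coincides with that of one full $\mathfrak U$-step, and $\Phi_h^{\mathfrak B}$ is deterministic. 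Consequently the discretization error is purely a force-discretization error, and propagating the single kick $-\tfrac{h}{M_2}\nabla U(\bm Y_k)$ and the continuously acting force $-\tfrac1{M_2}\nabla U(\bm x_s)$ through the remaining linear flows yields the closed forms
\begin{align*}
	\Dx &= -\frac{1}{M_2}\bigg(\frac{h(1-e^{-h})}{2}\,\nabla U(\bm Y_k) - \int_0^h\frac{1-e^{-2(h-s)}}{2}\,\nabla U(\bm x_s)\,\D s\bigg), \\
	\Dv &= -\frac{1}{M_2}\bigg(h e^{-h}\,\nabla U(\bm Y_k) - \int_0^h e^{-2(h-s)}\,\nabla U(\bm x_s)\,\D s\bigg),
\end{align*}
where $(\bm x_s,\bm v_s)_{s\in[0,h]}$ is the exact flow started from $\bm Z_k$ and driven by the same Brownian path as the scheme, and $\bm Y_k$ is given by \eqref{Y_k expression}. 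Throughout I rely on the uniform eighth-moment bound $\sup_k\mathbb E|\bm Z_k|^8\Le Cd^4/m^4$ of Theorem~\ref{theorem: SG-UBU moment bound}, which transfers (with the same order in $d,m$) to $\bm Y_k$ and to the exact flow over $[0,h]$ by a short Grönwall estimate exploiting the linear growth of $\nabla U$, and on standard bounds for Gaussian/Itô integrals.

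For the position error I would add and subtract $\nabla U(\bm Y_k)$ under the integral, writing $\Dx$ as a frozen-gradient term whose scalar prefactor $\tfrac{h(1-e^{-h})}{2}-\int_0^h\tfrac{1-e^{-2(h-s)}}{2}\D s$ equals $\mathcal O(h^3)$ by elementary Taylor expansion, plus a gradient-increment term controlled via $|\nabla U(\bm x_s)-\nabla U(\bm Y_k)|\Le M_2|\bm x_s-\bm Y_k|$. Since $\bm x_s-\bm Y_k$ is an $\mathcal O(h)$-length time integral of velocities plus an $\mathcal O(h^{3/2})$ Brownian increment, it is $\mathcal O(h)$ in $L^4$; combined with the $L^4$ bound on $\nabla U(\bm Y_k)$ from Assumption~\ref{asSg} and Theorem~\ref{theorem: SG-UBU moment bound}, both contributions are $\mathcal O(h^3\sqrt d/\sqrt m)$ in $L^4$, i.e.\ $\sqrt{\mathbb E|\Dx|^4}\Le Cdh^6/m$.

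For the velocity error I would set $\Dhv:=\mathbb E[\Dv\mid\bm Z_k]$ and $\Dmv:=\Dv-\Dhv$, so that $\mathbb E[\Dmv\mid\bm Z_k]=0$ holds by construction; this realises the martingale decomposition used in Theorem~25 of \cite{sanz2021wasserstein}. To bound $\Dhv$ I would Taylor-expand $\nabla U(\bm Y_k)$ and $\nabla U(\bm x_s)$ to second order about $\bm X_k$ and take conditional expectations: the Brownian-fluctuation parts average out, the $\bm V_k$-linear first-order contributions from the kick and from the integral cancel down to $\mathcal O(h^3)$ — this is precisely the second-order accuracy of UBU — and the leftover is the frozen-gradient $\mathcal O(h^3)$ term together with the second-order Taylor remainders. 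The latter carry the factor $\mathbb E[|\bm Y_k-\bm X_k|^2\mid\bm Z_k]=\mathcal O(h^2|\bm V_k|^2+h^3 d/M_2)$ (and similarly for $\bm x_s$); its $\bm V_k\bm V_k^\top$ piece does \emph{not} cancel between the two terms, producing a net $\mathcal O(\tfrac{M_3}{M_2}h^3|\bm V_k|^2)$ contribution, and estimating $\|\bm V_k\|_{L^8}^2\Le Cd/m$ via Theorem~\ref{theorem: SG-UBU moment bound} gives $\sqrt{\mathbb E|\Dhv|^4}\Le Cd^2 h^6/m^2$. To bound $\Dmv$ I would identify its dominant term as $-\tfrac1{M_2}\int_0^h e^{-2(h-s)}\nabla^2 U(\bm Y_k)\,[\bm x_s-\bm Y_k]_{\mathrm{fl}}\,\D s$, where $[\bm x_s-\bm Y_k]_{\mathrm{fl}}$ is the Brownian-fluctuation part of $\bm x_s-\bm Y_k$ — a difference of two Itô integrals against $\D\bm B_{kh+\cdot}$, one coming from the additive diffusion term of the exact position and one from $\bm Y_k$ — together with the analogous fluctuation of $\nabla U(\bm Y_k)$. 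By the Itô isometry $[\bm x_s-\bm Y_k]_{\mathrm{fl}}$ has $L^2$-size $\mathcal O(h^{3/2}\sqrt{d/M_2})$; with $\|\nabla^2 U\|\Le M_2$ cancelling the $1/M_2$ prefactor, an $\mathcal O(1)$ kernel and a length-$h$ outer integral, and Gaussianity giving $L^4\sim L^2$, this term has $L^4$-size $\mathcal O(h^{5/2}\sqrt{d/M_2})$; all remaining cross-terms and Taylor remainders in $\Dmv$ are of this order or smaller, so $\sqrt{\mathbb E|\Dmv|^4}\Le Cdh^5/m$ (using $1/M_2\Le 1/m$).

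I expect the main obstacle to be the bookkeeping of the martingale decomposition of $\Dv$: one must separate precisely which portions of $\nabla U(\bm x_s)-\nabla U(\bm Y_k)$ are conditionally centred — the Brownian-fluctuation pieces, responsible for the $\mathcal O(h^{5/2})$ martingale $\Dmv$ — from the conditionally biased ones, and then verify, via Taylor expansion with carefully tracked remainders, that the first-order biased contributions cancel down to $\mathcal O(h^3)$ (UBU's second order) while every term not yet accounted for is of strictly higher order. A secondary, routine step is to propagate the eighth-moment bound of Theorem~\ref{theorem: SG-UBU moment bound} uniformly in $k$ to the exact one-step flow $(\bm x_s,\bm v_s)_{s\in[0,h]}$ and to $\bm Y_k$ via Grönwall's inequality, using the linear growth of $\nabla U$ from Assumption~\ref{asSg}.
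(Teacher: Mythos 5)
Your overall route is essentially the paper's: you use the same explicit Duhamel formulas from Appendix~\ref{appendix: local error}, correctly observe that the Brownian contributions of $\bar{\bm Z}_{k+1}$ and $\bm Z_k(h)$ cancel exactly so that the discretization error is a pure force error, handle $\Dx$ by the $\mathcal O(h^3)$ kernel cancellation plus a Lipschitz bound, identify the correct leading martingale term (the iterated It\^o integral of size $\mathcal O(h^{5/2}\sqrt{d/m})$ in $L^4$, which is the paper's $\bm I_5$), and correctly trace the $d^2/m^2$ scaling of $\Dhv$ to the $|\bm V_k|^2$-quadratic Taylor remainder, which is exactly why the paper needs $M_3$ and the uniform eighth moment.

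The genuine gap is your choice of decomposition, $\Dhv:=\mathbb E[\Dv\mid\bm Z_k]$ and $\Dmv:=\Dv-\Dhv$. This forces \emph{every} conditionally centred piece of $\Dv$ into $\Dmv$, and not all of them respect the budget $\|\Dmv\|_{L^4}\Le C\sqrt{d}\,h^{5/2}/\sqrt{m}$. Concretely, write $\bm x_s-\bm Y_k=\bm a_s+\bm b_s$ with $\bm a_s=\mathbb E[\bm x_s-\bm Y_k\mid\bm Z_k]=\mathcal O(h|\bm V_k|)$ and $\bm b_s$ the Brownian fluctuation of size $\mathcal O(h^{3/2}\sqrt{d/M_2})$. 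Two cross terms are conditionally centred but sit at the order $\mathcal O(h^{7/2}d/\sqrt{m})$ in $L^4$: (i) the fluctuation of $\nabla^2U(\bm Y_k)$ (of size $M_3|\delta\bm Y_k^{\mathrm{fl}}|=\mathcal O(h^{3/2}\sqrt{d})$) contracted against the drift $\bm a_s$, and (ii) the $\bm a_s\cdot\bm b_s$ cross term inside the quadratic remainder $\mathcal O(M_3|\bm x_s-\bm Y_k|^2)$; after multiplying by the $\mathcal O(1)$ kernel, the length-$h$ outer integral and $\|\bm V_k\|_{L^8}\Le C\sqrt{d/m}$, each contributes $\mathcal O(h^{7/2}d/\sqrt m)$ to your $\Dmv$. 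This is smaller than the leading term by a factor $h$ but \emph{larger} by a factor $\sqrt d$, so your claim that ``all remaining cross-terms and Taylor remainders in $\Dmv$ are of this order or smaller'' fails once $h^2d\gtrsim1$, and the stated bound $\sqrt{\mathbb E|\Dmv|^4}\Le Cdh^5/m$ would then require the extra restriction $h\Le Cd^{-1/2}$ not present in the lemma. The paper avoids this by \emph{not} taking the conditional expectation: it defines $\Dmv$ to be only the single explicit iterated It\^o integral $\bm I_5$ and dumps everything else --- centred or not, including exactly these cross terms --- into $\Dhv$, whose budget $\|\Dhv\|_{L^4}\Le Cdh^3/m$ absorbs $\mathcal O(h^{7/2}d/\sqrt m)$ for all $h\Le\frac14$. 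Your proof becomes correct if you adopt that placement; as written, the martingale bound does not hold uniformly in $d$.
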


The proofs of the results above are given in Appendix~\ref{appendix: local error}.

\subsection{Mean square error of SG-UBU}

Integrating the stability and consistency results from the preceding sections, we now establish the MSE bound for the SG-UBU integrator. The proof leverages the time average error decomposition \eqref{explicit time average}, which is built upon the discrete Poisson solution $\phi_h(\bm x,\bm v)$.
\begin{theorem}
\label{theorem: SG-UBU MSE}
Under Assumptions \ref{asP}, \ref{asSg} and \ref{asT}, let the step size $h\Le \frac14$, and
$$
\bm Z_0 = (\bm X_0,\bm V_0)\text{~with~}\bm X_0 = \bm 0\text{~and~}\bm V_0 \sim \mathcal N(\bm 0,M_2^{-1}\bm I_d);
$$
then the SG-UBU solution satisfies
\begin{equation*}
	\mathbb E \Bigg[\bigg(\frac1K\sum_{k=0}^{K-1} f(\bm X_k) - \pi(f)\bigg)^2\Bigg] \Le
	C\bigg(\frac{d}{m^3Kh} + \frac{\sigma^4d^2h^2}{m^4} + \frac{d^2h^4}{m^6}\bigg(1+\frac{dh}m\bigg) \bigg),
\end{equation*}
where the constant $C$ depends only on $(M_i)_{i=1}^3$ and $(L_i)_{i=1}^2$.
\end{theorem}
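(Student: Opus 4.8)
The plan is to square the exact time-average identity \eqref{explicit time average} and bound separately, via $(a+b+c+d)^2\Le 4(a^2+b^2+c^2+d^2)$, the boundary term $\tfrac{\phi_h(\bm Z_0)-\phi_h(\bm Z_K)}{Kh}$, the stochastic-gradient term $\tfrac1K\sum_k R_k$, the discretization term $\tfrac1K\sum_k S_k$, and the martingale term $\tfrac1K\sum_k T_k$. The recurring tools are the gradient and Hessian bounds $\|\nabla\phi_h\|\Le C/m$, $\|\nabla^2\phi_h\|\Le C/m^2$ from Theorem~\ref{theorem: phi h estimate} and the uniform moment bound $\sup_k\mathbb E|\bm Z_k|^8\Le Cd^4/m^4$ from Theorem~\ref{theorem: SG-UBU moment bound}. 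The boundary term is immediate from $(C/m)$-Lipschitzness of $\phi_h$ together with $\mathbb E|\bm Z_0|^2\Le d/M_2$ and $\mathbb E|\bm Z_K|^2\Le Cd/m$: its second moment is $O\big(\tfrac{d}{m^3(Kh)^2}\big)$, dominated by the first term of the stated bound in the meaningful regime $Kh\gtrsim1$.

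For the martingale term, $\mathbb E[T_k\mid\bm Z_k]=0$ (established via the discrete Poisson equation), so orthogonality gives $\mathbb E\big[(\sum_k T_k)^2\big]=\sum_k\mathbb E[T_k^2]$. Writing $T_k=h^{-1}\big(\phi_h(\bm Z_k(h))-\mathbb E[\phi_h(\bm Z_k(h))\mid\bm Z_k]\big)$ and using Lipschitzness, $\mathbb E[T_k^2]\Le \tfrac{C}{m^2h^2}\,\mathbb E\big[\,\mathbb E[|\bm Z_k(h)-\mathbb E[\bm Z_k(h)\mid\bm Z_k]|^2\mid\bm Z_k]\,\big]$; a standard one-step fluctuation bound for the exact underdamped flow (the velocity Brownian increment over one step contributes conditional variance $O(dh/M_2)$, the position part $O(dh^3/M_2)$, and the drift contributes strictly lower order) controls the inner quantity by $O(dh/m)$. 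Hence $\mathbb E[T_k^2]=O\big(\tfrac{d}{m^3h}\big)$ and $\tfrac1{K^2}\sum_k\mathbb E[T_k^2]=O\big(\tfrac{d}{m^3Kh}\big)$ -- the first term of the stated bound.

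For the stochastic-gradient term, Taylor-expand $R_k=\tfrac1h\nabla\phi_h(\bar{\bm Z}_{k+1})\cdot(\bm Z_{k+1}-\bar{\bm Z}_{k+1})+\tfrac1{2h}\nabla^2\phi_h(\xi_k)[\bm Z_{k+1}-\bar{\bm Z}_{k+1}]^{\otimes2}$. By \eqref{stochastic gradient error expression}--\eqref{martingale property} the linear term is conditionally centered given all randomness except $\theta_k$, hence a martingale difference; using $\mathbb E|\bm Z_{k+1}-\bar{\bm Z}_{k+1}|^2=O(\sigma^2dh^2)$ from Lemma~\ref{lemma: stochastic gradient error}, its accumulated second moment is $O\big(\tfrac{\sigma^2 d}{m^2 K}\big)$, which is dominated by the first term of the bound since $\sigma^2mh\Le M_1^2M_2/4$. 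For the quadratic remainder, $|R_k^{\mathrm{rem}}|\Le\tfrac C{m^2h}|\bm Z_{k+1}-\bar{\bm Z}_{k+1}|^2$, so Jensen gives $\big(\tfrac1K\sum_k R_k^{\mathrm{rem}}\big)^2\Le\tfrac1K\sum_k(R_k^{\mathrm{rem}})^2$, and Lemma~\ref{lemma: stochastic gradient error} ($\mathbb E|\bm Z_{k+1}-\bar{\bm Z}_{k+1}|^4=O(\sigma^4d^2h^4)$) yields $O\big(\tfrac{\sigma^4d^2h^2}{m^4}\big)$ -- the second term. Crucially this bias does not decay in $K$, which produces the first-order numerical bias with coefficient $\propto\sigma^2$.

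The discretization term is the main obstacle. The naive estimate $|S_k|\Le\tfrac1h\|\nabla\phi_h\|\,|\bar{\bm Z}_{k+1}-\bm Z_k(h)|$ only yields an $O(h^{3/2})$ bias, because the dominant one-step discretization error is the martingale velocity component $\Dmv$ of size $O(h^{5/2})$ (Lemma~\ref{lemma: discretization error}) -- far too crude. The resolution is to use $\Dv=\Dmv+\Dhv$ and expand $\phi_h$ about $\bm Z_k$ (not about $\bm Z_k(h)$): $S_k=\tfrac1h\nabla_{\bm v}\phi_h(\bm Z_k)\cdot\Dmv+\tfrac1h\big(\nabla_{\bm x}\phi_h(\bm Z_k)\cdot\Dx+\nabla_{\bm v}\phi_h(\bm Z_k)\cdot\Dhv+E_k\big)$, where $|E_k|\Le\tfrac C{m^2}\big(|\bm Z_k(h)-\bm Z_k|+|\bar{\bm Z}_{k+1}-\bm Z_k(h)|\big)\,|\bar{\bm Z}_{k+1}-\bm Z_k(h)|$. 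The leading term is a martingale difference ($\nabla_{\bm v}\phi_h(\bm Z_k)$ is $\bm Z_k$-measurable and $\mathbb E[\Dmv\mid\bm Z_k]=0$), with accumulated variance $O\big(\tfrac{dh^3}{m^3K}\big)$, absorbed into the first term of the bound; for $E_k$, pairing the $O(\sqrt{dh/m})$ one-step flow displacement with the $O(h^{5/2}\sqrt{d/m})$ and $O(h^3 d/m)$ bounds on $\bar{\bm Z}_{k+1}-\bm Z_k(h)$ from Lemma~\ref{lemma: discretization error} (via Cauchy--Schwarz) gives $\|E_k/h\|_{L^2}=O\big(\tfrac{dh^2}{m^3}\sqrt{1+dh/m}\big)$; the linear $\Dx$ and $\Dhv$ contributions are treated analogously, their conditionally centered parts joining the martingale sum and their (higher-order) conditional means contributing biases of order at most $h^2$ in $L^2$. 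Jensen on the biased remainders then gives the third term $O\big(\tfrac{d^2h^4}{m^6}(1+\tfrac{dh}m)\big)$, and collecting the four pieces -- with the $O(1/K)$ martingale remnants absorbed into $\tfrac{d}{m^3Kh}$ -- completes the proof. The crux throughout is isolating the martingale-difference components precisely, so that the genuinely biased parts attain the sharp order $h^2$ in $L^2$, with $\|\nabla^2\phi_h\|\Le C/m^2$ rather than $\|\nabla\phi_h\|\Le C/m$ furnishing the additional power of $1/m$ over the naive count.
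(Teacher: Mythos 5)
Your proposal follows essentially the same route as the paper's proof: the same $R/S/T$ decomposition of \eqref{explicit time average}, the same martingale-plus-remainder splits for $R_k$ and $S_k$ (with the leading martingale pieces $\hat R_k$ and $\frac1h\Dmv^\top\nabla_{\bm v}\phi_h(\bm Z_k)$ summed by orthogonality and the remainders by Cauchy--Schwarz/Jensen), and the same inputs from Theorems~\ref{theorem: phi h estimate} and \ref{theorem: SG-UBU moment bound} and Lemmas~\ref{lemma: stochastic gradient error} and \ref{lemma: discretization error}.

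The one place where your argument, as written, does not deliver the stated bound is the boundary term: bounding $\mathbb E|\bm Z_0-\bm Z_K|^2$ by $Cd/m$ via the uniform moments gives only $O\big(\tfrac{d}{m^3(Kh)^2}\big)$, which exceeds $\tfrac{d}{m^3Kh}$ when $Kh<1$, and the theorem is claimed for all $K$ and all $h\Le\frac14$ with no restriction to $Kh\gtrsim1$. The paper closes this with Lemma~\ref{lemma: displacement}, which exploits the martingale structure of the one-step increments to show $\mathbb E|\bm Z_K-\bm Z_0|^2\Le CdKh/m$ (linear, not constant, in $Kh$ for small $Kh$), so the boundary contribution is exactly $O\big(\tfrac{d}{m^3Kh}\big)$ unconditionally. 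With that lemma substituted for your crude boundary estimate, the rest of your argument is sound.
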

\begin{proof}[Sketch of Proof]
The proof proceeds by bounding the second moments of the cumulative error terms in the time average expression \eqref{explicit time average}:
\begin{equation*}
	\mathbb E\Bigg[\bigg(\sum_{k=0}^{K-1} R_k\bigg)^2\Bigg], \quad 
	\mathbb E\Bigg[\bigg(\sum_{k=0}^{K-1} S_k\bigg)^2\Bigg], \quad 
	\mathbb E\Bigg[\bigg(\sum_{k=0}^{K-1} T_k\bigg)^2\Bigg].
\end{equation*}
The core technique is to decompose each error term into a principal martingale difference component and a higher-order remainder. The leading-order martingale components for the stochastic error $R_k$ and the discretization error $S_k$ are, respectively,
\begin{equation*}
	\hat R_k = \frac1h(\bm Z_{k+1} - \bar{\bm Z}_{k+1})^\top \nabla \phi_h(\bar{\bm Z}_{k+1}),
	\qquad
	\hat S_k = \frac1h \Dmv^\top \nabla_{\bm v} \phi_h(\bm Z_k).
\end{equation*}
This decomposition allows for a sharp estimation of the second moments. The orthogonality of the martingale sequences ensures that their cumulative variance is the sum of the individual variances, simplifying the analysis and leading to the final MSE bound.
\end{proof}
Theorem~\ref{theorem: SG-UBU MSE} decomposes the MSE into three principal components:
\begin{itemize}
	\setlength{\itemsep}{2pt}
	\item \textbf{Stochastic fluctuation:} Decays with simulation time $Kh$ at a rate of $\mathcal{O}\big(\frac{d}{m^3Kh}\big)$.
	\item \textbf{Stochastic gradient error:} The first component of the numerical bias, scaling as $\mathcal{O}\big(\frac{\sigma^4 d^2h^2}{m^4}\big)$. Exhibits first-order convergence with respect to the step size $h$.
	\item \textbf{Discretization error:} The second component of the numerical bias, scaling as $\mathcal{O}\big(\frac{d^2h^4}{m^6}(1\hspace{-2.5pt}+\hspace{-2.5pt}\frac{dh}{m})\big)$. Exhibits second-order convergence with respect to the step size $h$.
\end{itemize}
In the SG-UBU setting, the numerical bias is dominated by the stochastic gradient error.

The MSE provides a distinct characterization of numerical accuracy compared to metrics such as the Wasserstein distance. While the Wasserstein distance measures the discrepancy between the numerical and target distributions, the MSE quantifies the error and variance of the time-averaged estimate from a single trajectory. As this aligns more closely with how sampling algorithms are used in practice, the MSE often serves as a more direct measure of the computational effort required to achieve a desired level of accuracy.

The $\mathcal{O}(d/(m^3Kh))$ term in the MSE is mainly driven by the martingale component $\sum_{k=0}^{K-1} T_k$. Its asymptotic behavior can be understood by applying the Itô formula to the discrete Poisson solution. For a sufficiently small step size $h$, we have
\begin{equation*}
	\D \phi_h(\bm Z_k(s)) = (\mathcal L\phi_h) (\bm Z_k(s)) + \frac2{\sqrt{M_2}} \nabla_{\bm v} \phi_h(\bm Z_k(s)) \cdot \D \bm B_{kh+s},
\end{equation*}
and integrating over $s\in[0,h]$ yields
\begin{align*}
	\frac{\phi_h(\bm Z_k(h)) - \phi_h(\bm Z_k)}{h} 
	& \approx (\mathcal L\phi_0)(\bm Z_k) + \frac2{\sqrt{M_2}h}\int_0^h \nabla_{\bm v} \phi_0(\bm Z_k(s))\cdot\D\bm B_{kh+s} \\
	& = -f(\bm X_k) + \pi(f) + \frac2{\sqrt{M_2}h}\int_0^h \nabla_{\bm v} \phi_0(\bm Z_k(s))\cdot\D\bm B_{kh+s}.
\end{align*}
Summing from $k=0$ to $K-1$, we find the continuous-time integral limit
\begin{align*}
	\frac1K \sum_{k=0}^{K-1} T_k & = 
	\frac1K \sum_{k=0}^{K-1} \bigg(
	\frac{\phi_h(\bm Z_k(h)) - \phi_h(\bm Z_k)}{h} + 
	f(\bm X_k) - \pi(f)
	\bigg) \\
	& \approx \frac2{\sqrt{M_2}Kh} \sum_{k=0}^{K-1} 
	\int_0^h \nabla_{\bm v} \phi_0(\bm Z_k(s))\cdot\D\bm B_{kh+s} \approx \frac2{\sqrt{M_2}T} \int_0^T \nabla_{\bm v}
	\phi_0(\bm Z_0(t))\cdot\D \bm B_t,
\end{align*}
where we identify the total sampling time as $T = Kh$. Consequently, the variance from this stochastic fluctuation is principally governed by
\begin{equation*}
	\frac{4}{M_2T^2} \int_0^T \mathbb E\big[|\nabla_{\bm v} \phi_0(\bm Z_0(t))|^2\big]\D t,
\end{equation*}
which is independent of the step size $h$. This reveals a key insight: this component of the sampling error is an intrinsic feature of the underlying underdamped Langevin dynamics \eqref{ULD} and is unaffected by the numerical discretization or the variance of the stochastic gradient.

This framework also provides the MSE estimate for the mini-batch SG-UBU algorithm when applied to the finite-sum potential in \eqref{finite-sum potential}, as detailed in the following result:

\begin{theorem}
	\label{theorem: mini-batch SG-UBU MSE}
	Under Assumptions \ref{asP}, \ref{asSgN} and \ref{asT}, let the step size $h\Le \frac14$, and
	$$
	\bm Z_0 = (\bm X_0,\bm V_0)\text{~with~}\bm X_0 = \bm 0\text{~and~}\bm V_0 \sim \mathcal N(\bm 0,M_2^{-1}\bm I_d);
	$$
	then the mini-batch SG-UBU solution with the batch size $p$ satisfies
	\begin{equation*}
		\mathbb E \Bigg[\bigg(\frac1K\sum_{k=0}^{K-1} f(\bm X_k) - \pi(f)\bigg)^2\Bigg] \Le
		C\bigg(\frac{d}{m^3Kh} + \frac{\sigma^4d^2h^2}{m^4p^2} + \frac{d^2h^4}{m^6}\bigg(1+\frac{dh}m\bigg) \bigg),
	\end{equation*}
	where the constant $C$ depends only on $(M_i)_{i=1}^3$ and $(L_i)_{i=1}^2$.
\end{theorem}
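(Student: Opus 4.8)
The plan is to observe that the mini-batch SG-UBU algorithm (Algorithm~\ref{algorithm: mini-batch SG-UBU}) is precisely the SG-UBU integrator \eqref{SG-UBU} driven by the stochastic gradient $b(\bm x,\theta) = \frac1p\sum_{i\in\theta}\nabla U_i(\bm x)$, where $\theta$ is a uniformly random subset of $\{1,\dots,N\}$ of size $p$ and $(\theta_k)_{k=0}^\infty$ are i.i.d. It therefore suffices to check that Assumption~\ref{asSgN} implies Assumption~\ref{asSg} for this particular $b(\bm x,\theta)$, keeping the same linear-growth constant $M_1$ but with an improved noise level $\tilde\sigma$ of order $\sigma/\sqrt p$; Theorem~\ref{theorem: SG-UBU MSE} then applies verbatim and delivers the claimed estimate after substituting $\tilde\sigma$ for $\sigma$. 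Note that the stability bounds (Theorem~\ref{theorem: SG-UBU moment bound} and Lemma~\ref{lemma: SG-UBU invariant}), the discretization-error bounds (Lemma~\ref{lemma: discretization error}), and the Poisson-solution estimate (Theorem~\ref{theorem: phi h estimate}) do not involve $\sigma$, so they are unchanged; only the stochastic-gradient error in Lemma~\ref{lemma: stochastic gradient error} scales with the noise level.

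Three of the four conditions in Assumption~\ref{asSg} are immediate: unbiasedness holds since $\theta$ is uniform, and the triangle inequality gives $|b(\bm x,\theta)| \Le \frac1p\sum_{i\in\theta}|\nabla U_i(\bm x)| \Le M_1\sqrt{|\bm x|^2+d}$, so $M_1$ is preserved. The only substantive point is the eighth-moment variance bound. Writing $\zeta_i := \nabla U_i(\bm x) - \nabla U(\bm x)$, these $N$ fixed vectors satisfy $\sum_{i=1}^N\zeta_i = \bm 0$ and $\frac1N\sum_i|\zeta_i|^8 \Le (\sigma^2 d)^4$, and the power-mean inequality then yields $\frac1N\sum_i|\zeta_i|^2 \Le \sigma^2 d$; moreover $b(\bm x,\theta)-\nabla U(\bm x) = \frac1p\sum_{i\in\theta}\zeta_i$. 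I would bound the eighth moment of this sum in two steps: first, Hoeffding's convex-ordering lemma dominates a sample drawn without replacement by an i.i.d. sample drawn with replacement for every convex functional, in particular $|\cdot|^8$; second, a Rosenthal-type inequality for i.i.d. centered vectors, $\mathbb E|\sum_{j=1}^p Y_j|^8 \Le C\big(p^4(\mathbb E|Y_1|^2)^4 + p\,\mathbb E|Y_1|^8\big)$, applied with $Y_j$ uniform on $\{\zeta_1,\dots,\zeta_N\}$, gives $\mathbb E|\sum_{j=1}^p Y_j|^8 \Le C p^4(\sigma^2 d)^4$. Dividing by $p^8$ produces $\big(\mathbb E_\theta|b(\bm x,\theta)-\nabla U(\bm x)|^8\big)^{1/4} \Le C\sigma^2 d/p$, i.e. Assumption~\ref{asSg} holds with $\tilde\sigma^2 = C\sigma^2/p$.

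With Assumption~\ref{asSg} verified for parameters $(M_1,M_2,M_3,\tilde\sigma)$ with $\tilde\sigma$ of order $\sigma/\sqrt p$, Theorem~\ref{theorem: SG-UBU MSE} yields exactly the asserted bound: the stochastic-fluctuation term $\frac{d}{m^3Kh}$ and the discretization term $\frac{d^2h^4}{m^6}(1+\frac{dh}{m})$ are noise-independent and hence unchanged, while the stochastic-gradient term $\frac{\tilde\sigma^4 d^2 h^2}{m^4}$ acquires the factor $p^{-2}$, the constant being absorbed into $C$. One minor technicality is the normalization $\tilde\sigma \Le M_1$ assumed "for convenience" in Assumption~\ref{asSg}: it can fail when $p$ is below a universal constant, but in that bounded regime one instead uses the crude estimate $\big(\mathbb E_\theta|b(\bm x,\theta)-\nabla U(\bm x)|^8\big)^{1/4}\Le\sigma^2 d$ (two applications of Jensen's inequality), which gives Assumption~\ref{asSg} with $\tilde\sigma=\sigma\Le M_1$, and the lost $p^{-2}$ factor is again absorbed into $C$. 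I expect the combinatorial moment estimate for sampling without replacement---quantifying the $O(p^{-4})$ decay of the eighth moment while correctly handling the negative dependence among the sampled indices---to be the only step requiring genuine work; should one prefer to avoid Hoeffding's lemma, an equally valid route is to repeat the short proof of Lemma~\ref{lemma: stochastic gradient error} with the mini-batch variance bound inserted in place of $\sigma^2 d$, which propagates the $p^{-1}$ factor through the term $R_k$ in \eqref{RST decomposition} and hence a $p^{-2}$ factor into the final MSE.
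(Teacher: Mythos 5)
Your proposal is correct and follows essentially the same route as the paper: the paper's Lemma~\ref{lemma: eighth moment} establishes the $\mathcal O(p^{-4})$ eighth-moment bound for sampling without replacement via exactly the Hoeffding convex-ordering plus Rosenthal combination you describe, and then concludes that the mini-batch gradient satisfies Assumption~\ref{asSg} with $\sigma^2$ replaced by $C\sigma^2/p$, so that Theorem~\ref{theorem: mini-batch SG-UBU MSE} follows verbatim from Theorem~\ref{theorem: SG-UBU MSE}. Your additional remark about the normalization $\tilde\sigma\Le M_1$ for small $p$ is a detail the paper passes over silently, but it does not change the argument.
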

The MSE bound in Theorem~\ref{theorem: mini-batch SG-UBU MSE} differs from the standard SG-UBU result in Theorem~\ref{theorem: SG-UBU MSE} only in the stochastic gradient error component. This term is reduced by a factor of $p^2$, which reflects the variance reduction achieved by using a mini-batch of size $p$.

The proofs of Theorems~\ref{theorem: SG-UBU MSE} and \ref{theorem: mini-batch SG-UBU MSE} are given in Appendix~\ref{appendix: SG-UBU 1}.

\subsection{Numerical bias of SG-UBU}

The MSE bound in Theorem~\ref{theorem: SG-UBU MSE} is composed of a stochastic fluctuation term that vanishes as the number of iterations $K\to\infty$, alongside persistent terms arising from the stochastic gradient and discretization errors. These latter components constitute the numerical bias defined in \eqref{numerical bias}, which characterizes the long-time limit of the MSE:
$$
\pi_h(f) - \pi(f) = \int_{\mathbb R^{2d}} 
f(\bm x) \big(\pi_h(\bm x,\bm v) - \pi(\bm x)\big)\D\bm x\D\bm v.
$$
Here, $\pi_h(\bm x,\bm v)$ is the invariant distribution of the SG-UBU algorithm for a step size $h>0$; its existence is guaranteed by Lemma~\ref{lemma: SG-UBU invariant}.

Beyond Theorem~\ref{theorem: SG-UBU MSE}, the discrete Poisson solution $\phi_h(\bm x,\bm v)$ also provides a direct expression for the numerical bias. For an arbitrary initial state $\bm Z_0 = (\bm X_0,\bm V_0)$, the definition of the discrete Poisson equation in \eqref{discrete Poisson eq} implies the identity:
\begin{equation}
	\mathbb E\bigg[\frac{\phi_h(\bm Z_0) - \phi_h(\bm Z_0(h))}{h}\bigg] = f(\bm X_0) - \pi(f),
	\label{bias 1}
\end{equation}
where the expectation is taken over the Brownian motion driving the exact one-step solution $\bm Z_0(h)$. Taking a further expectation with respect to the invariant distribution $\bm Z_0 \sim \pi_h$, and using the fact that the one-step SG-UBU update
\begin{equation*}
	\bm Z_1 = \big(\Phi^{\mathfrak U}_{h/2}\circ \Phi^{\mathfrak B}_h(\theta) \circ \Phi^{\mathfrak U}_{h/2}\big) \bm Z_0
\end{equation*}
also follows the distribution $\pi_h$, allows us to replace $\mathbb E^{\pi_h}[\phi_h(\bm Z_0)]$ with $\mathbb E^{\pi_h}[\phi_h(\bm Z_1)]$. This substitution leads to an exact expression for the numerical bias:
\begin{equation}
	\pi_h(f) - \pi(f)=
	\mathbb E_\theta^{\pi_h}\bigg[\frac{\phi_h(\bm Z_1) - \phi_h(\bm Z_0(h))}{h}\bigg],
	\label{bias 2}
\end{equation}
where the total expectation is over $\bm Z_0 \sim \pi_h$, the stochastic gradient index $\theta\sim \mathbb P_\theta$, and the Brownian motion $(\bm B_t)_{t\in[0,h]}$.

The right-hand side of \eqref{bias 2} corresponds to the local error of the SG-UBU integrator, which can be decomposed into the stochastic gradient error and the discretization error:
\begin{equation}
	\pi_h(f) - \pi(f) = \mathbb E_\theta^{\pi_h}\bigg[\frac{\phi_h(\bm Z_1) - \phi_h(\bar{\bm Z}_1)}{h}\bigg] +
	\mathbb E_\theta^{\pi_h}\bigg[\frac{\phi_h(\bar{\bm Z}_1) - \phi_h(\bm Z_0(h))}{h}\bigg],
	\label{bias decomposition}
\end{equation}
where $\bar{\bm Z}_1$ is the one-step FG-UBU update
\begin{equation*}
	\bar{\bm Z}_1 = \big(\Phi^{\mathfrak U}_{h/2}\circ \Phi^{\mathfrak B}_h \circ \Phi^{\mathfrak U}_{h/2}\big) \bm Z_0.
\end{equation*}
The decomposition \eqref{bias decomposition} reveals that the stochastic gradient error term, driven by $\bm Z_1 - \bar {\bm Z}_1$, is the dominant component that dictates the first-order convergence of the numerical bias. The discretization error, driven by $\bar{\bm Z}_1 - \bm Z_0(h)$, contributes to the higher-order terms.

Formalizing this decomposition allows for a precise quantification of the numerical bias, as stated in the following theorem. An additional upper bound of $\frac{m}{d}$ is imposed on the step size $h$ to ensure stability.
\begin{theorem}
	\label{theorem: SG-UBU numerical bias}
	Under Assumptions \ref{asP}, \ref{asSg} and \ref{asT}, let the step size $h\Le \min\{\frac14,\frac{m}{d}\}$; 
	then the numerical bias $\pi_h(f)-\pi(f)$ of SG-UBU satisfies
	\begin{equation*}
		\bigg|\pi_h(f) - \pi(f) -
		\frac{h}{2M_2^2} \mathbb E_\theta^{\pi_h}\Big[
		\big(b(\bm Y_0,\theta) - \nabla U(\bm Y_0)\big)^\top \bm M_0 \big(b(\bm Y_0,\theta) - \nabla U(\bm Y_0)\big)\Big]\bigg| \Le 
        \frac{Cdh^2}{m^3},
	\end{equation*}
    where the initial state $(\bm X_0,\bm V_0) \sim \pi_h$, and $\theta\sim\mathbb P_\theta$. The intermediate position
	$\bm Y_0$ defined in \eqref{Y_k expression} is the position component of $\Phi^{\mathfrak U}_{h/2} \bm Z_0$,
	$\bm M_0$ is the Hessian of $\phi_h(\bm x,\bm v)$:
	\begin{equation*}
		\bm M_0 = \int_0^1\bigg(2\lambda\int_0^1\nabla_{\bm v\bm v}^2 \phi_h\big(\bar{\bm Z}_1+\lambda\varphi(\bm Z_1 - \bar{\bm Z}_1)\big)\D\varphi\bigg)\D\lambda,
	\end{equation*}
	and the constant $C$ depends only on $(M_i)_{i=1}^3$ and $(L_i)_{i=1}^2$. Consequently,
	\begin{equation*}
		|\pi_h(f) - \pi(f)| \Le  C\bigg(\frac{\sigma^2dh}{m^2} + 
        \frac{dh^2}{m^3}
        \bigg).
	\end{equation*}
\end{theorem}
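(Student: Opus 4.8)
The plan is to start from the exact bias decomposition \eqref{bias decomposition}, which writes $\pi_h(f)-\pi(f)$ as the sum of the stochastic gradient error term $\frac1h\mathbb E_\theta^{\pi_h}[\phi_h(\bm Z_1)-\phi_h(\bar{\bm Z}_1)]$ and the discretization error term $\frac1h\mathbb E_\theta^{\pi_h}[\phi_h(\bar{\bm Z}_1)-\phi_h(\bm Z_0(h))]$. The strategy is to extract the stated leading-order expression from the first term, with an $\mathcal O(dh^2/m^3)$ remainder, and to show the second term is entirely $\mathcal O(dh^2/m^3)$. Throughout, the key inputs are the derivative bounds $\|\nabla\phi_h\|\Le C/m$ and $\|\nabla^2\phi_h\|\Le C/m^2$ from Theorem~\ref{theorem: phi h estimate}, the fourth-moment local error estimates of Lemma~\ref{lemma: discretization error}, the eighth-moment stability bound of Lemma~\ref{lemma: SG-UBU invariant}, and the bound $\mathbb E_\theta^{\pi_h}[|\xi|^2]\Le(\mathbb E_\theta[|\xi|^8])^{1/4}\Le\sigma^2d$ on $\xi:=b(\bm Y_0,\theta)-\nabla U(\bm Y_0)$ (Jensen and Assumption~\ref{asSg}), together with $\sigma\Le M_1$ and $m\Le M_2$ to absorb stray factors.

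For the stochastic gradient error term, I would Taylor-expand $\phi_h$ around $\bar{\bm Z}_1$ with integral remainder, $\phi_h(\bm Z_1)-\phi_h(\bar{\bm Z}_1)=\nabla\phi_h(\bar{\bm Z}_1)^\top(\bm Z_1-\bar{\bm Z}_1)+(\bm Z_1-\bar{\bm Z}_1)^\top\tilde{\bm Q}(\bm Z_1-\bar{\bm Z}_1)$ with $\tilde{\bm Q}=\int_0^1\int_0^1\lambda\,\nabla^2\phi_h(\bar{\bm Z}_1+\lambda\varphi(\bm Z_1-\bar{\bm Z}_1))\,\D\varphi\,\D\lambda$, which only uses the Hessian bound. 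Since $\bar{\bm Z}_1$ and the intermediate position $\bm Y_0$ are measurable with respect to $(\bm Z_0,(\bm B_t)_{t\in[0,h]})$ while $\bm Z_1-\bar{\bm Z}_1$ is linear in $\xi$ by \eqref{stochastic gradient error expression} with $\mathbb E_\theta[\xi\mid\bm Z_0,(\bm B_t)_{t\in[0,h]}]=0$, the first-order term vanishes in expectation. Inserting \eqref{stochastic gradient error expression} into the quadratic remainder and expanding the block structure, the velocity--velocity block equals $\frac{h^2e^{-2h}}{M_2^2}\xi^\top\tilde{\bm Q}_{\bm v\bm v}\xi$ with $\tilde{\bm Q}_{\bm v\bm v}=\tfrac12\bm M_0$; dividing by $h$ and replacing $e^{-2h}$ by $1$ produces exactly $\frac{h}{2M_2^2}\mathbb E_\theta^{\pi_h}[\xi^\top\bm M_0\xi]$, while the $e^{-2h}-1$ correction is $\mathcal O(h)$. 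The position--position and position--velocity blocks carry prefactors $(1-e^{-h})^2$ and $e^{-h}(1-e^{-h})$, both $\mathcal O(h^2)$. Bounding each remaining contribution by $\|\nabla^2\phi_h\|\,\mathbb E_\theta^{\pi_h}[|\xi|^2]\Le\frac{C}{m^2}\sigma^2d$ (the uniform Hessian bound makes $\|\tilde{\bm Q}\|$ deterministic) and dividing by $h$ gives $\mathcal O(\sigma^2dh^2/m^2)=\mathcal O(dh^2/m^3)$.

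For the discretization error term, the essential point is to close the estimate with only the Hessian bound, so I would use the fundamental theorem of calculus rather than a second-order expansion. With $\bm\Delta:=\bar{\bm Z}_1-\bm Z_0(h)=(\delta\bm X_0,\,\delta_m\bm V_0+\delta_h\bm V_0)$, write $\phi_h(\bar{\bm Z}_1)-\phi_h(\bm Z_0(h))=\nabla\phi_h(\bm Z_0)^\top\bm\Delta+\int_0^1[\nabla\phi_h(\bm Z_0(h)+s\bm\Delta)-\nabla\phi_h(\bm Z_0)]^\top\bm\Delta\,\D s$. In the first piece $\mathbb E[\nabla_{\bm v}\phi_h(\bm Z_0)^\top\delta_m\bm V_0]=0$ because $\nabla_{\bm v}\phi_h(\bm Z_0)$ is $\bm Z_0$-measurable and $\mathbb E[\delta_m\bm V_0\mid\bm Z_0]=0$ (Lemma~\ref{lemma: discretization error}), while the terms involving $\delta\bm X_0$ and $\delta_h\bm V_0$ are $\mathcal O(h^3\sqrt d/m^{3/2}+dh^3/m^2)$ by $\|\nabla\phi_h\|\Le C/m$ and the $L^4$-bounds. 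In the second piece the Hessian bound gives $|[\nabla\phi_h(\bm Z_0(h)+s\bm\Delta)-\nabla\phi_h(\bm Z_0)]^\top\bm\Delta|\Le\frac{C}{m^2}(|\bm Z_0(h)-\bm Z_0|+|\bm\Delta|)\,|\bm\Delta|$; Cauchy--Schwarz with $\|\bm Z_0(h)-\bm Z_0\|_{L^2}=\mathcal O(\sqrt{dh})$ (exact one-step increment, its drift part controlled via $h\Le m/d$ and Lemma~\ref{lemma: SG-UBU invariant}) and $\|\bm\Delta\|_{L^2}=\mathcal O(h^{5/2}\sqrt{d/m})$ (dominated by $\delta_m\bm V_0$) gives $\mathcal O(dh^3/m^{5/2})$. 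Dividing by $h$, the discretization error term is $\mathcal O(dh^2/m^3)$. Summing with the stochastic gradient remainder proves the first inequality, and the consequence follows from $\|\bm M_0\|\Le C/m^2$ and $\mathbb E_\theta^{\pi_h}[|\xi|^2]\Le\sigma^2d$, hence $\frac{h}{2M_2^2}|\mathbb E_\theta^{\pi_h}[\xi^\top\bm M_0\xi]|\Le C\sigma^2dh/m^2$, combined with the triangle inequality.

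The main obstacle I anticipate is the discretization error term: one must harvest the martingale cancellation of $\delta_m\bm V_0$ even though $\phi_h$ is evaluated along a segment depending on the Brownian path --- which forces the expansion to be centered at the $\bm Z_0$-measurable point rather than at $\bm Z_0(h)$ --- and one must finish using only the Hessian bound on $\phi_h$, which is precisely why the integral-remainder form is needed and why the step-size restriction $h\Le m/d$ must be invoked to dominate the $\mathcal O(\sqrt{dh})$ Brownian increment against the higher-order local errors. A secondary bookkeeping challenge is collapsing the various mixed powers of $d$ and $m$ from the $L^8$ stability bound, the $L^4$ local-error bounds, and the $\phi_h$ derivative bounds into the single clean factor $d/m^3$, which relies on $m\Le M_2$ and $d\Ge1$.
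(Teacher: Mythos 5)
Your proposal is correct and follows essentially the same route as the paper's proof: the identical decomposition \eqref{bias decomposition}, the same Taylor expansion of $\phi_h$ about $\bar{\bm Z}_1$ with the martingale cancellation of the first-order stochastic gradient term, the same isolation of the velocity--velocity block via \eqref{stochastic gradient error expression}, and the same $S_{0,1}/S_{0,2}/S_{0,3}$-type treatment of the discretization error centered at the $\bm Z_0$-measurable point to exploit $\mathbb E[\delta_m\bm V_0\mid\bm Z_0]=0$. The only differences are cosmetic bookkeeping in the intermediate powers of $d$ and $m$, which you correctly collapse into $Cdh^2/m^3$ using $h\Le m/d$, $\sigma\Le M_1$, and $m\Le M_2$.
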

Using the inequality $\frac{dh}{m} \Le 1$, the numerical bias bound in Theorem~\ref{theorem: SG-UBU numerical bias} simplifies to
\begin{equation*}
    |\pi_h(f) - \pi(f)| \Le \frac{Cdh}{m^2},
\end{equation*}
which recovers the result in Table~\ref{table: under}. In Theorem~\ref{theorem: SG-UBU numerical bias}, the intermediate position $\bm Y_0$ is independent of the stochastic gradient index $\theta\sim \mathbb P_\theta$, and the numerical bias $\pi_h(f) - \pi(f)$ converges at a first-order rate with respect to the step size $h$. Crucially, its leading-order coefficient is shown to be proportional to the variance of the stochastic gradient.

Theorem~\ref{theorem: SG-UBU numerical bias} can be readily adapted for mini-batch SG-UBU with the batch size $p$. The use of mini-batching reduces the stochastic gradient variance, scaling the leading-order bias term by $1/p$, thus
\begin{equation*}
    |\pi_h(f) - \pi(f)| \Le C\bigg(\frac{\sigma^2dh}{m^2p} + 
        \frac{dh^2}{m^3}\bigg).
\end{equation*}

In the limit as $h\to 0$, the leading-order coefficient can be informally identified as
\begin{equation}
	\small
	\lim_{h\rightarrow 0}\frac{\pi_h(f) - \pi(f)}{h} =
	\frac1{2M_2^2} \mathbb E_\theta^\pi\Big[
	\big(b(\bm x_0,\theta) - \nabla U(\bm x_0)\big)^\top \nabla_{\bm v\bm v}^2 \phi_0(\bm x_0,\bm v_0) \big(b(\bm x_0,\theta) - \nabla U(\bm x_0)\big)\Big],
	\label{h coefficient}
\end{equation}
where the expectation is taken over the stochastic gradient index $\theta\sim \mathbb P_\theta$, and the stationary distribution
$(\bm x_0,\bm v_0) \sim \pi(\bm x)\otimes \mathcal N(\bm 0,M_2^{-1}\bm I_d)$,
which is invariant under the underdamped Langevin dynamics \eqref{ULD}. This result highlights a significant feature of employing a second-order integrator: the discretization error is suppressed to $\mathcal{O}(h^2)$, allowing the stochastic gradient error to singularly determine the leading-order $\mathcal{O}(h)$ bias.

A subtle issue arises in the structure of the leading-order coefficient in \eqref{h coefficient}. The expression involves the Hessian of the continuous Poisson solution, $\nabla_{\bm v\bm v}^2 \phi_0(\bm x_0,\bm v_0)$, and our analysis in Theorem~\ref{theorem: phi h estimate} required a bound on $\nabla^2 f(\bm x)$ to control this term. This creates an apparent inconsistency, as one might intuitively expect the leading-order bias for an expectation $\pi(f)$ to depend only on the first-order properties of the test function $f$. The following lemma resolves this apparent mismatch:
\begin{lemma}
	\label{lemma: paradox}
	Under Assumption \ref{asP}, let the test function $f(\bm x)$ satisfy $|\nabla f(\bm x)|\Le L_1$; then 
	the continuous Poisson solution $\phi_0(\bm x,\bm v)$ defined in \eqref{continuous Poisson solution} satisfies
	\begin{equation*}
		\mathbb E^\pi\Big[
		\big\|\nabla_{\bm v\bm v}^2 \phi_0(\bm x_0,\bm v_0)\big\|^2\Big]
		\Le \frac{C_d}{m^3},
	\end{equation*}
    where the expectation is taken over the stationary distribution $(\bm x_0,\bm v_0) \sim \pi(\bm x)\otimes \mathcal N(\bm 0,M_2^{-1}\bm I_d)$, and the constant $C_d$ depends only on $d$, $M_1$, $M_2$, and $L_1$.
\end{lemma}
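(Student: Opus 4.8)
The plan is to bypass a pointwise bound on $\nabla_{\bm v\bm v}^2\phi_0$ -- which, as Theorem~\ref{theorem: phi h estimate} signals, genuinely needs a bound on $\nabla^2 f$ -- and to estimate only the $L^2$-average against the stationary measure $\bar\pi := \pi(\bm x)\otimes\mathcal N(\bm 0,M_2^{-1}\bm I_d)$. The structural observation that enables this is that, since $f$ (hence $f-\pi(f)$) does not depend on $\bm v$, each component $\psi_i := \partial_{v_i}\phi_0$ solves a Poisson equation whose right-hand side is assembled only from \emph{first} derivatives of $\phi_0$: a direct computation gives the commutator identity $[\partial_{v_i},\mathcal L] = \partial_{x_i} - 2\partial_{v_i}$, so applying $\partial_{v_i}$ to $-\mathcal L\phi_0 = f-\pi(f)$ and using $\partial_{v_i}(f-\pi(f)) = 0$ yields
\begin{equation*}
	-\mathcal L\psi_i = \partial_{x_i}\phi_0 - 2\psi_i =: g_i .
\end{equation*}
Since $\int_{\mathbb R^{2d}}\mathcal L h\,\D\bar\pi = 0$ for suitable $h$, the right-hand side $g_i$ is automatically $\bar\pi$-centered, so $\psi_i$ is, up to an additive constant, the continuous Poisson solution for $g_i$.

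The first substantive step is to bound the first derivatives of $\phi_0$ pointwise using only $|\nabla f|\le L_1$; this is exactly what resolves the apparent mismatch. Differentiating $\phi_0 = \int_0^\infty u\,\D t$ under the integral -- legitimate because only $\nabla f$ enters -- gives $\partial_{v_i}\phi_0 = \int_0^\infty\mathbb E^{(\bm x,\bm v)}[\nabla f(\bm x_t)^\top\partial_{v_i}\bm x_t]\,\D t$ and likewise for $\partial_{x_i}\phi_0$. The flow sensitivity $(\bm\xi_t,\bm\zeta_t) := (\partial_{\bm\eta}\bm x_t,\partial_{\bm\eta}\bm v_t)$ solves the linear variational equation $\dot{\bm\xi}_t = \bm\zeta_t$, $\dot{\bm\zeta}_t = -\tfrac1{M_2}\nabla^2 U(\bm x_t)\bm\xi_t - 2\bm\zeta_t$, whose coefficient satisfies $\tfrac m{M_2}\bm I_d\preccurlyeq\tfrac1{M_2}\nabla^2 U(\bm x_t)\preccurlyeq\bm I_d$ at every time and realization. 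Using the quadratic form $\mathcal E(\bm\xi,\bm\zeta) = 2|\bm\xi|^2 + 2\bm\xi^\top\bm\zeta + |\bm\zeta|^2$ (cf. the Lyapunov function of Lemma~\ref{lemma: SG-BU Lyapunov}), a short computation shows $\tfrac{\D}{\D t}\mathcal E(\bm\xi_t,\bm\zeta_t)\le-\tfrac m{M_2}\mathcal E(\bm\xi_t,\bm\zeta_t)$, hence the \emph{deterministic}, dimension-free bound $|(\bm\xi_t,\bm\zeta_t)|\le Ce^{-mt/(2M_2)}|(\bm\xi_0,\bm\zeta_0)|$ with $C$ absolute. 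Integrating in $t$ gives $\|\partial_{x_i}\phi_0\|_\infty + \|\partial_{v_i}\phi_0\|_\infty\le CM_2L_1/m$, and therefore $\|g_i\|_\infty\le CM_2L_1/m$.

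The second step is a Dirichlet-form (energy) estimate. Writing $\mathcal L = \mathcal T + \mathcal S$ with $\mathcal T = \bm v^\top\nabla_{\bm x} - \tfrac1{M_2}\nabla U^\top\nabla_{\bm v}$ skew-adjoint in $L^2(\bar\pi)$ and $\mathcal S = -2\bm v^\top\nabla_{\bm v} + \tfrac2{M_2}\Delta_{\bm v}$ the Ornstein--Uhlenbeck generator of $\mathcal N(\bm 0,M_2^{-1}\bm I_d)$, one has $\int(-\mathcal L h)h\,\D\bar\pi = \tfrac2{M_2}\int|\nabla_{\bm v}h|^2\,\D\bar\pi$. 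Applying this to $h = \psi_i$ and inserting the Poisson equation,
\begin{equation*}
	\frac{2}{M_2}\int_{\mathbb R^{2d}}|\nabla_{\bm v}\psi_i|^2\,\D\bar\pi = \int_{\mathbb R^{2d}}g_i\,\psi_i\,\D\bar\pi\le\|g_i\|_\infty\|\psi_i\|_\infty\le\frac{CM_2^2L_1^2}{m^2} .
\end{equation*}
In particular $\nabla_{\bm v}\psi_i\in L^2(\bar\pi)$, so $\nabla_{\bm v\bm v}^2\phi_0$ is well defined in $L^2(\bar\pi)$; and since $\|\nabla_{\bm v\bm v}^2\phi_0\|^2\le\|\nabla_{\bm v\bm v}^2\phi_0\|_F^2 = \sum_{i,j=1}^d(\partial_{v_iv_j}^2\phi_0)^2 = \sum_{i=1}^d|\nabla_{\bm v}\psi_i|^2$ pointwise, summing over $i$ gives
\begin{equation*}
	\mathbb E^\pi\big[\|\nabla_{\bm v\bm v}^2\phi_0\|^2\big] \le \sum_{i=1}^d\int_{\mathbb R^{2d}}|\nabla_{\bm v}\psi_i|^2\,\D\bar\pi\le\frac{CdM_2^3L_1^2}{m^2} ,
\end{equation*}
which is a bound of the claimed form $C_d/m^3$ (in fact with a power of $m$ to spare), with $C_d$ depending only on $d,M_2,L_1$.

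The step demanding the most care is the uniform-in-$\bm x$ contractivity of the variational flow with the correct $m$-scaling: one must supply a single quadratic Lyapunov form valid simultaneously for the whole family of coefficient matrices whose spectra spread over $[\tfrac m{M_2}\bm I_d,\bm I_d]$, and do so with an $\mathcal O(1)$ condition number so that the decay prefactor is $m$-free; this is what pins $\|\nabla\phi_0\|_\infty$ at the sharp order $1/m$ and hence determines the final $m$-power. A more routine technical point is making the Dirichlet-form identity for $\psi_i$ rigorous: since $\nabla_{\bm v}\psi_i\in L^2(\bar\pi)$ is not known a priori, one first applies the energy identity to the resolvent approximations $(\varepsilon-\mathcal L)^{-1}g_i$ (which are regular, by hypoellipticity), then lets $\varepsilon\downarrow0$, using the uniform bound on $\|g_i\|_{L^2(\bar\pi)}$ together with the spectral gap of $\mathcal L$ in $L^2(\bar\pi)$.
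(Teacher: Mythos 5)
Your proposal is correct in substance but proceeds by a genuinely different route than the paper. The paper's proof never touches the equation satisfied by $\nabla_{\bm v}\phi_0$: it rearranges the Poisson equation to extract a pointwise linear-growth bound $|\Delta_{\bm v}\phi_0|\Le \frac{C}{m}\sqrt{|\bm x|^2+|\bm v|^2+d}$, then invokes interior $L^2$ elliptic estimates for the Laplacian in $\bm v$ (with $\bm x$ frozen) on a partition of velocity space into unit hypercubes, and sums the resulting block estimates against the Gaussian weight; the constant $C_d$ it obtains is therefore dimension-dependent in an unquantified way (lattice sums over $\mathbb Z^d$ and elliptic constants on $d$-cubes). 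Your argument instead differentiates the Poisson equation via the commutator $[\partial_{v_i},\mathcal L]=\partial_{x_i}-2\partial_{v_i}$ (which I checked; it is correct for the generator in the paper, and uses crucially that $f$ is $\bm v$-independent), reducing everything to the first-derivative bound $\|\nabla\phi_0\|_\infty\Le CL_1M_2/m$ — the same bound the paper derives from the contractive first variation process — followed by a single global Dirichlet-form identity, exploiting that the transport part is skew-adjoint in $L^2(\bar\pi)$ and the collision part is Ornstein--Uhlenbeck. This buys you a sharper and fully explicit constant, $C\,dM_2^3L_1^2/m^2$, which improves both the $m$-power and the $d$-dependence of the paper's $C_d/m^3$, and it avoids local elliptic regularity entirely. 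The one point you must actually discharge is the integration by parts $\int(-\mathcal L\psi_i)\psi_i\,\D\bar\pi=\frac{2}{M_2}\int|\nabla_{\bm v}\psi_i|^2\D\bar\pi$ without knowing a priori that $\nabla_{\bm v}\psi_i\in L^2(\bar\pi)$; you flag this and your resolvent (or, more simply, a compactly supported cutoff $\chi_R$ with errors controlled by the boundedness of $\psi_i$ and the integrability of $|\bm v|$ and $|\nabla U|$ against $\bar\pi$) closes it by standard means — the paper's own proof carries a comparable unaddressed regularity debt in applying the interior elliptic estimates. So the proposal stands as a valid, and in fact quantitatively stronger, alternative proof.
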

This reconciles the analytical requirements with intuition, confirming that the magnitude of the leading-order bias is controlled by the first-order information of the test function.

In the specific case where the stochastic gradient noise is Gaussian, the leading-order coefficient in the numerical bias admits a more explicit form. Suppose 
\begin{equation*}
	b(\bm x,\theta) = \nabla U(\bm x) + \sigma \theta,~~~~
	\theta \sim \mathcal N(\bm 0,\bm I_d),
\end{equation*}
where the parameter $\sigma>0$ aligns with Assumption~\ref{asSg}. In this scenario, the expectation over the Gaussian noise $\theta$ in \eqref{h coefficient} can be computed explicitly, reducing the expression to involve the trace of the Hessian, i.e., the Laplacian:
\begin{equation}
	\lim_{h\rightarrow0} \frac{\pi_h(f) - \pi(f)}{h} =
	\frac{\sigma^2}{2M_2^2} \mathbb E^{\pi}[\Delta_{\bm v}\phi_0(\bm x_0,\bm v_0)].
	\label{coeff 1}
\end{equation}
By substituting the continuous Poisson equation into \eqref{coeff 1}, we can express the Laplacian in terms of first-order derivatives of $\phi_0$, thereby eliminating the dependence on its Hessian:
\begin{equation*}
	\lim_{h\rightarrow 0}\frac{\pi_h(f) - \pi(f)}{h} =
	\frac{\sigma^2}{4M_2} \mathbb E^{\pi}\bigg[\bigg(\frac1{M_2}\nabla U(\bm x_0)+2\bm v_0\bigg)^\top \nabla_{\bm v}\phi_0(\bm x_0,\bm v_0) -\bm v_0^\top \nabla_{\bm x} \phi_0(\bm x_0,\bm v_0)\bigg].
\end{equation*}
Given the uniform bound $|\nabla\phi_0|\Le C/m$, the leading-order coefficient is bounded as follows:
\begin{equation}
	\bigg|\lim_{h\rightarrow 0}\frac{\pi_h(f) - \pi(f)}{h}\bigg|
	\Le \frac Cm \mathbb E^\pi\big[|\nabla U(\bm x_0)| + |\bm v_0|\big] \Le \frac{Cd^{\frac12}}{m^{\frac32}}.
	\label{coeff 2}
\end{equation}
This bound suggests that the numerical bias remains controlled as long as $h = \mathcal O(d^{-1/2}m^{3/2})$. This condition on the step size is notably less restrictive than the $\mathcal{O}(d^{-1}m^2)$ requirement established for the general case in Theorem~\ref{theorem: SG-UBU numerical bias}.

It is important to emphasize that this derivation relies on the informal limit taken in \eqref{h coefficient}, and a rigorous justification of this limit is beyond the scope of our current assumptions. Nonetheless, this analysis motivates the conjecture that the less restrictive condition $h = \mathcal{O}(d^{-1/2}m^{3/2})$ may be sufficient to ensure the stability and first-order accuracy of SG-UBU in general. A proof of this conjecture is left for future investigation.

The proofs of the results above are given in Appendix~\ref{appendix: SG-UBU 2}.
\section{Mean square error and numerical bias of variance-reduced SG-UBU}
\label{section: MSE bias variance-reduced SG-UBU}

In this section, we extend our analysis to the variance-reduced sampling algorithms designed for finite-sum potentials of the form
$$
U(\bm{x}) = \frac{1}{N}\sum_{i=1}^{N} U_i(\bm{x}).
$$
We focus on two prominent techniques adapted to the UBU framework: Stochastic Variance Reduced Gradient (SVRG) \citep{johnson2013accelerating} and SAGA\footnote{SAGA is a variant of Stochastic Averaged Gradient (SAG), but not an acronym.} \citep{defazio2014saga}. While the numerical bias of the standard SG-UBU integrator is fundamentally limited by a first-order convergence rate due to stochastic gradient noise, the SVRG-UBU and SAGA-UBU variants exhibit a more complex behavior. Our analysis reveals that these methods undergo a phase transition in their convergence rates: the numerical bias shifts from a first-order to a second-order rate as the step size $h$ decreases below a critical threshold. This accelerated convergence is a distinctive feature that emerges from the interplay between variance reduction and a second-order accurate integrator.

Our analysis of the variance-reduced methods requires additional bounds on the individual potential components $\{\nabla U_i(\bm x)\}_{i=1}^N$.
\begin{asSgNp}
	\mylabel{asSgNp}{(SgN$^+$)}
	For the finite-sum potential $U(\bm x)$, there hold
	\begin{equation*}
		\bigg(\frac1N \sum_{i=1}^N |\nabla U_i(\bm x) - \nabla U(\bm x)|^8\bigg)^{\frac14} \Le \sigma^2d,\quad
		\bigg(\frac1N \sum_{i=1}^N \big\|\nabla^2 U_i(\bm x) - \nabla^2 U(\bm x)\big\|^8\bigg)^{\frac14} \Le \sigma^2,
		\quad
		\forall \bm x\in\mathbb R^d
	\end{equation*}
	for the constant $\sigma>0$, and for all $i\in\{1,\cdots,N\}$, there hold
	\begin{equation*}
		|\nabla U_i(\bm x)| \Le M_1\sqrt{|\bm x|^2+d},\quad
		\big\|\nabla^2 U_i(\bm x)\big\| \Le M_2,\quad
		\forall \bm x\in\mathbb R^d
	\end{equation*}	
	for constants $M_1,M_2>0$. Assume $\sigma\Le \min\{M_1,M_2\}$ for the convenience of analysis.
\end{asSgNp}
Assumption~\ref{asSgNp} is a direct strengthening of Assumption~\ref{asSgN}, as it extends the regularity conditions to include bounds on the Hessians of the individual components.

\subsection{SVRG-UBU analysis}

The SVRG-UBU algorithm adapts the SVRG approach, a control variate technique widely used to accelerate the convergence of Stochastic Gradient Descent (SGD) \citep{law2000simulation, johnson2013accelerating}.
This adaptation replaces the standard mini-batch gradient estimator in the mini-batch SG-UBU with a variance-reduced counterpart.

Given a mini-batch subset $\theta \subset \{1, \dots, N\}$ of size $p$ and a fixed anchor position $\bm x_* \in \mathbb R^d$, the SVRG estimator is defined as:
\begin{equation}
	b(\bm x,\bm x_*,\theta) = \frac1p 
	\sum_{i\in\theta} 
	\nabla U_i(\bm x) - \frac1p \sum_{i\in\theta} 
	\nabla U_i(\bm x_*) + \nabla U(\bm x_*).
	\label{SVRG b}
\end{equation}
It is straightforward to verify that this estimator remains unbiased for the true gradient:
\begin{equation*}
	\mathbb E_{\theta}\big[
	b(\bm x,\bm x_*,\theta)
	\big] = \nabla U(\bm x).
\end{equation*}
The key to the variance reduction lies in the structure of the gradient error. The difference between the SVRG estimator and the true gradient can be expressed as:
\begin{equation}
	b(\bm x,\bm x_*,\theta) - \nabla U(\bm x) = (\bm x - \bm x_*)^\top \int_0^1 \bigg(\frac1p\sum_{i\in\theta}
	\nabla^2 U_i - \nabla^2 U\bigg)\big(\lambda \bm x + (1-\lambda)\bm x_*\big) \D\lambda.
    \label{SVRG gradient error}
\end{equation}
This expression reveals that as the sampling point $\bm x$ approaches the anchor position $\bm x_*$, the gradient error diminishes.
The full gradient $\nabla U(\bm x_*)$ serves as an effective control variate, substantially reducing the variance of the estimator when $\bm x$ is in a neighborhood of $\bm x_*$.

The SVRG-UBU algorithm is implemented in epochs to manage computational overhead. At the start of each epoch, a new anchor position $\bm x_*$ is established, and the corresponding full gradient $\nabla U(\bm x_*)$ is computed. This full gradient is then reused for a fixed number of inner iterations—typically $N/p$—before being updated. This strategy amortizes the $\mathcal{O}(N)$ cost of the full gradient calculation, ensuring that the average computational cost per step remains $\mathcal{O}(p)$, on par with the standard mini-batch SG-UBU. The detailed procedure is presented in Algorithm~\ref{algorithm: SVRG-UBU}.
\begin{algorithm}[h]
	\setstretch{1.12}
	\caption{Stochastic Variance Reduced Gradient UBU (SVRG-UBU)}
	\textbf{Input:} initial state $(\bm X_0,\bm V_0)$, batch size $p$, step size $h$ \\
	\textbf{Output:} numerical solution $(\bm X_k,\bm V_k)_{k=0}^\infty$ \\
	\For{$k=0,1,\cdots$}{
		Evolve $(\bm Y_k,\bm V_k^{(1)}) = \Phi^{\frak U}_{h/2}(\bm X_k,\bm V_k)$ \\
		\eIf{$k$ is a multiple of $\frac Np$}{
			\mbox{}\\
			Set the anchor position $\bm Y_* = \bm Y_k$ \\[2pt]
			Compute $\bm b_k = \di\nabla U(\bm Y_*) = \frac1N\sum_{i=1}^N \nabla U_i(\bm Y_*)$ \\[-2pt]
		}{
			Sample a random subset $\theta_k \subset \{1,\cdots,N\}$ of size $p$ \\[2pt]
			Compute $\di\bm b_k =
			\frac1p\sum_{i\in\theta_k} \nabla U(\bm Y_k) -
			\frac1p\sum_{i\in\theta_k} \nabla U(\bm Y_*) + \nabla U(\bm Y_*)$ \\[-2pt]
		}
		Update the velocity $\bm V_k^{(2)} = \bm V_k^{(1)} - \frac{h}{M_2} \bm b_k$ \\
		Evolve $(\bm X_{k+1},\bm V_{k+1}) = \Phi^{\frak U}_{h/2}(\bm Y_k,\bm V_k^{(2)})$
	}
	\label{algorithm: SVRG-UBU}
\end{algorithm}

The following theorem establishes the MSE bound for the SVRG-UBU integrator, paralleling the analysis for mini-batch SG-UBU in Theorem~\ref{theorem: SG-UBU MSE}.
\begin{theorem}
	\label{theorem: SVRG-UBU MSE}
	Under Assumptions \ref{asP}, \ref{asSgNp} and \ref{asT}, let the step size $h\Le \frac14$, and
	$$
	\bm Z_0 = (\bm X_0,\bm V_0)\text{~with~}\bm X_0 = \bm 0\text{~and~}\bm V_0 \sim \mathcal N(\bm 0,M_2^{-1}\bm I_d);
	$$
	then the SVRG-UBU solution with the batch size $p$ satisfies
	\begin{equation*}
		\mathbb E \left[\bigg(\frac1K\sum_{k=0}^{K-1} f(\bm X_k) - \pi(f)\bigg)^2\right] \Le
		C\bigg(\frac{d}{m^3Kh} + \frac{\sigma^4d^2h^2}{m^4p^2} \min\bigg\{1,\frac{N^4h^4}{m^2p^4}\bigg\}+ \frac{d^2h^4}{m^6}\bigg(1+\frac{dh}m\bigg) \bigg),
	\end{equation*}
	where the constant $C$ depends only on $(M_i)_{i=1}^3$ and $(L_i)_{i=1}^2$.
\end{theorem}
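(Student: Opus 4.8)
The plan is to rerun the proof of Theorem~\ref{theorem: SG-UBU MSE} with the only change being the gradient estimator, and to extract the improved stochastic-gradient term from the structure of the SVRG error \eqref{SVRG gradient error}. I would form the error identity \eqref{RST decomposition} and the time-average expression \eqref{explicit time average} with $R_k,S_k,T_k$ defined as there, but using $b(\bm Y_k,\bm Y_*,\theta_k)$ in place of $b(\bm Y_k,\theta_k)$. The algebraic identity and the martingale property $\mathbb E[T_k\mid\mathcal F_k]=0$ rely only on the FG-UBU map and the exact one-step flow $\Phi_h^{\mathfrak L}$, so they remain valid despite the loss of the Markov property caused by the anchor. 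Consequently the contributions of $\sum S_k$ (the FG-UBU discretization error, Lemma~\ref{lemma: discretization error}), of $\sum T_k$ (the martingale attached to the exact dynamics), and of the endpoint term $\tfrac{\phi_h(\bm Z_0)-\phi_h(\bm Z_K)}{Kh}$ are estimated word for word as in Theorem~\ref{theorem: SG-UBU MSE}, once a uniform-in-time moment bound is available, yielding the $\tfrac{d}{m^3Kh}$ and $\tfrac{d^2h^4}{m^6}(1+\tfrac{dh}{m})$ terms; only $\mathbb E[(\sum_{k}R_k)^2]$ requires new input.

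Step one is stability: a uniform-in-time eighth moment bound $\sup_{k\Ge0}\mathbb E[|\bm Z_k|^8]\Le Cd^4/m^4$ for SVRG-UBU, the analogue of Theorem~\ref{theorem: SG-UBU moment bound}. I would again pass to the SG-BU companion scheme and use the Lyapunov function $\mathcal V$, the obstruction being that the SVRG estimator has linear growth $|b(\bm x,\bm x_*,\theta)|\Le M_1(\sqrt{|\bm x|^2+d}+\sqrt{|\bm x_*|^2+d})$ involving the anchor. I would resolve this by an epoch-wise induction: conditionally on the anchor $\bm Y_*$, a previous iterate whose moments are already controlled by the inductive hypothesis, the one-step Lyapunov drift of Lemma~\ref{lemma: SG-BU Lyapunov} holds with an additive term depending on low moments of $\bm Y_*$; iterating over the $N/p$ inner steps of an epoch and taking $h\Le\tfrac14$ keeps the moment uniform, closing the induction. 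Assumption~\ref{asSgNp} supplies exactly the bounds ($\|\nabla^2U_i\|\Le M_2$, linear growth of $\nabla U_i$) needed here and below, and the bound transfers to $\bm Y_k$ via \eqref{Y_k expression}.

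The heart of the argument is a drift estimate for the SVRG gradient error. By \eqref{SVRG gradient error}, $b(\bm Y_k,\bm Y_*,\theta_k)-\nabla U(\bm Y_k)$ is a size-$p$ mini-batch average of mean-zero terms, each bounded by $|\bm Y_k-\bm Y_*|\cdot\|\tfrac1p\sum_{i\in\theta_k}\nabla^2U_i-\nabla^2U\|$; with the Hessian-fluctuation bound in Assumption~\ref{asSgNp} this gives, conditionally on $(\bm Y_k,\bm Y_*)$, $\mathbb E[|b(\bm Y_k,\bm Y_*,\theta_k)-\nabla U(\bm Y_k)|^4\mid\mathcal F_k]\Le C\sigma^4|\bm Y_k-\bm Y_*|^4/p^2$. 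Independently, writing $b-\nabla U$ as a difference of two ordinary mini-batch gradient errors and using Assumption~\ref{asSgNp} yields the crude bound $\mathbb E[|b-\nabla U|^4]\Le C\sigma^4d^2/p^2$. To bound $\mathbb E|\bm Y_k-\bm Y_*|^4$, note $\bm Y_*=\bm Y_{k_0}$ with $0\Le k-k_0<N/p$, so the explicit UBU displacement \eqref{Y_k expression} summed over the epoch, together with the eighth moment bound, gives the ballistic estimate $\mathbb E|\bm Y_k-\bm Y_*|^4\Le C(d/m)^2(Nh/p)^4$ for short epochs and $\mathbb E|\bm Y_k-\bm Y_*|^4\Le C(d/m)^2$ in general, hence $\mathbb E|\bm Y_k-\bm Y_*|^4\Le C(d/m)^2\min\{1,(Nh/p)^4\}$. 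Combining the two routes, $\sup_k\sqrt{\mathbb E|b(\bm Y_k,\bm Y_*,\theta_k)-\nabla U(\bm Y_k)|^4}\Le C\sigma^2dp^{-1}\min\{1,N^2h^2/(mp^2)\}$, the $\min\{1,\cdot\}$-improved counterpart of Lemma~\ref{lemma: stochastic gradient error}. (At the anchor-refresh steps $b-\nabla U=0$, so those steps contribute nothing.)

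Finally I would assemble $\mathbb E[(\sum_kR_k)^2]$ as in Theorem~\ref{theorem: SG-UBU MSE}: split $R_k=\hat R_k+(R_k-\hat R_k)$ with leading martingale part $\hat R_k=\tfrac1h(\bm Z_{k+1}-\bar{\bm Z}_{k+1})^\top\nabla\phi_h(\bar{\bm Z}_{k+1})$. Orthogonality gives $\mathbb E[(\sum\hat R_k)^2]=\sum\mathbb E[\hat R_k^2]$, which is $\mathcal O(1/K)$ and absorbed into the stochastic-fluctuation term; for the quadratic remainder, the Hessian bound on $\phi_h$ from Theorem~\ref{theorem: phi h estimate} gives $|\mathbb E[R_k-\hat R_k\mid\mathcal F_k]|\Le Chm^{-2}\,\mathbb E[|b-\nabla U|^2\mid\mathcal F_k]$, so that $(\tfrac1K\sum\mathbb E[R_k-\hat R_k\mid\mathcal F_k])^2\Le C\sigma^4d^2h^2m^{-4}p^{-2}\min\{1,N^4h^4/(m^2p^4)\}$, which is precisely the claimed stochastic-gradient term, while the martingale fluctuation of the remainder is again $\mathcal O(1/K)$ by the crude fourth moment bound. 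Adding the three pieces gives the stated MSE. I expect the main obstacle to be the drift estimate in conjunction with stability: establishing $\mathbb E|\bm Y_k-\bm Y_*|^4\Le C(d/m)^2\min\{1,(Nh/p)^4\}$ uniformly in $k$ requires the uniform-in-time eighth moment bound for the non-Markovian SVRG-UBU chain, delicate because of the anchor-induced memory, together with a careful use of the explicit UBU formulas to show that the within-epoch displacement neither accumulates across epochs nor loses the $(Nh/p)$ scaling.
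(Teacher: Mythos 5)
Your proposal is correct and follows essentially the same route as the paper: the same $R_k,S_k,T_k$ decomposition via the discrete Poisson solution, the same two-sided bound on the SVRG deviation (conditional estimate $\mathbb E_{\theta_k}[|\bm g_k|^4\mid\mathcal F_k]\Le C\sigma^4|\bm Y_k-\bm Y_*|^4/p^2$ combined with the within-epoch displacement bound $\mathbb E|\bm Y_k-\bm Y_*|^4\Le C(d/m)^2(Nh/p)^4$ and the crude anchor-free bound to produce the $\min$ factor), and an equivalent martingale/remainder split of $\sum_k R_k$. The only place you overcomplicate is stability: no epoch-wise induction conditioning on the anchor is needed, because the triangle inequality together with Assumption~\ref{asSgNp} gives $\mathbb E_\theta\big[|b(\bm x,\bm x_*,\theta)-\nabla U(\bm x)|^8\big]\Le C(\sigma^2 d)^4$ uniformly in the anchor $\bm x_*$, so Lemma~\ref{lemma: SG-BU Lyapunov} applies verbatim to SVRG-BU and the uniform eighth-moment bound follows exactly as in Theorem~\ref{theorem: SG-UBU moment bound}.
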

Theorem~\ref{theorem: SVRG-UBU MSE} reveals a phase transition in the convergence rate of the numerical bias, which depends on the step size $h$ relative to a critical threshold. For large step sizes, where $h > \sqrt{m}p/N$, the bias is dominated by the stochastic gradient variance and exhibits first-order convergence, mirroring the behavior of the mini-batch SG-UBU. Conversely, when the step size is small, $h < \sqrt{m}p/N$, the variance reduction mechanism becomes effective, suppressing the first-order error term. This allows the underlying second-order accuracy of the UBU integrator to emerge, resulting in an accelerated convergence rate. The transition between these two distinct convergence regimes occurs around the critical step size of $\sqrt{m}p/N$.

Next, we characterize the numerical bias of SVRG-UBU, adapting the approach from Theorem~\ref{theorem: SG-UBU numerical bias} to account for the algorithm's non-Markovian structure. The analysis involves two key distributions. The first is $\pi_h(\bm x,\bm v)$, the invariant distribution of the embedded Markovian subsequence $(\bm X_{kN/p},\bm V_{kN/p})_{k=0}^\infty$, whose existence is guaranteed by Lemma~\ref{lemma: SVRG-UBU invariant}.

The second, which is used to measure the bias of the full trajectory, is the averaged distribution $\tilde \pi_h(\bm x,\bm v)$. It represents the average state over a single epoch of $q = N/p$ steps, assuming the process starts from the stationary distribution $\pi_h$. For any Borel set $A\subset\mathbb R^{2d}$, this averaged distribution is precisely defined as
\begin{equation}
    \tilde \pi_h(A) = \frac1q\sum_{k=0}^{q-1} \mathbb P\big(\bm Z_k \in A:\bm Z_0 \sim \pi_h\big).
    \label{averaged q}
\end{equation}
We note that only the subsequence of SVRG-UBU iterates sampled at epoch boundaries is Markovian. Consequently,
unlike Theorem~\ref{theorem: SG-UBU numerical bias}, which represents the local error in a single SG-UBU update, SVRG-UBU must use the numerical solution $(\bm X_k,\bm V_k)_{k=0}^q$ within an epoch to characterize the numerical bias, which consists of the stochastic gradient error $\bm Z_{k+1} - \bar {\bm Z}_{k+1}$ and the discretization error $\bar {\bm Z}_{k+1} - \bm Z_k(h)$ in the total $q$ steps. Recall that $\bar {\bm Z}_{k+1}$ is the one-step FG-UBU update from $\bm Z_k$, namely,
\begin{equation*}
    \bar{\bm Z}_{k+1} = (\Phi^{\mathfrak U}_{h/2}\circ \Phi^{\mathfrak B}_h \circ \Phi^{\mathfrak U}_{h/2}) \bm Z_k.
\end{equation*}

The following theorem provides a precise characterization of the numerical bias associated with the time-averaged distribution $\tilde \pi_h$ defined in \eqref{averaged q}. 
\begin{theorem}
	\label{theorem: SVRG-UBU numerical bias}
	Under Assumptions \ref{asP}, \ref{asSgNp} and \ref{asT}, let the step size $h\Le \min\{\frac14,\frac{m}{d}\}$; then
	the numerical bias $\tilde \pi_h(f)-\pi(f)$ of SVRG-UBU with the batch size $p = N/q$ satisfies
	\begin{equation*}
		\Bigg|\tilde \pi_h(f) - \pi(f) -
		\frac{h}{2M_2^2q} \mathbb E_{\theta_0,\cdots,\theta_{q-1}}^{\pi_h}\bigg[
        \sum_{k=0}^{q-1}
		\big(\bm g(\bm Y_k,\bm Y_0,\theta_k)\big)^\top \bm M_k \big(\bm g(\bm Y_k,\bm Y_0,\theta_k)\big) \bigg]\Bigg| \Le \frac{Cdh^2}{m^3} ,
	\end{equation*}
    where $\bm g(\bm x,\bm x_*,\theta)$ is the stochastic gradient deviation with the subset $\theta$:
    \begin{equation*}
        \bm g(\bm x,\bm x_*,\theta) = \frac1p \sum_{i\in\theta} 
        \nabla U_i(\bm x) - \frac1{p} \sum_{i\in\theta} 
        \nabla U_i(\bm x_*) + \nabla U(\bm x_*) - \nabla U(\bm x),
    \end{equation*}
    the numerical solution $(\bm X_k,\bm V_k)_{k=0}^{q}$ in an epoch is generated by Algorithm~\ref{algorithm: SVRG-UBU}, the initial state $(\bm X_0,\bm V_0) \sim \pi_h$, and $\theta_0,\cdots,\theta_{q-1}$ are independent random subsets of $\{1,\cdots,N\}$ with size $p$. The intermediate position $\bm Y_k$ defined in \eqref{Y_k expression} is the position component of $\Phi^{\mathfrak U}_{h/2} \bm Z_k$,
	$\bm M_k$ is the Hessian of $\phi_h(\bm x,\bm v)$:
	\begin{equation*}
		\bm M_k = \int_0^1\bigg(2\lambda\int_0^1\nabla_{\bm v\bm v}^2 \phi_h\big(\bar{\bm Z}_{k+1}+\lambda\varphi(\bm Z_{k+1} - \bar{\bm Z}_{k+1})\big)\D\varphi\bigg)\D\lambda,
	\end{equation*}
	and the constant $C$ depends only on $(M_i)_{i=1}^3$ and $(L_i)_{i=1}^2$. Consequently,
	\begin{equation*}
		|\tilde \pi_h(f) - \pi(f)| \Le C\biggl(\frac{\sigma^2dh}{m^2p} \min
		\bigg\{1,\frac{N^2h^2}{mp^2}\bigg\} + \frac{dh^2}{m^3}
        \biggr).
	\end{equation*}
\end{theorem}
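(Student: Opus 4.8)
The plan is to run the discrete Poisson error decomposition \eqref{RST decomposition} not over a single step but over one \emph{epoch} of $q=N/p$ SVRG-UBU steps, exploiting that the embedded subsequence $(\bm Z_{kq})_{k\ge0}$ is Markovian with invariant law $\pi_h$. For $\bm Z_0\sim\pi_h$ and each $k=0,\dots,q-1$, the identity $\frac{\phi_h(\bm Z_{k+1})-\phi_h(\bm Z_k)}{h}+f(\bm X_k)-\pi(f)=R_k+S_k+T_k$ holds verbatim: $\bar{\bm Z}_{k+1}$ and $\bm Z_k(h)$ are still defined through the FG-UBU map and the exact flow, and $\mathbb E[T_k\,|\,\bm Z_k]=0$ follows from the discrete Poisson equation \eqref{discrete Poisson eq} alone; only the stochastic-gradient error $\bm Z_{k+1}-\bar{\bm Z}_{k+1}$ changes, being now driven by $\bm g_k:=\bm g(\bm Y_k,\bm Y_0,\theta_k)$, where $\bm Y_0$ is the epoch's anchor. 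Summing over $k$, the telescoping leaves $\frac{\phi_h(\bm Z_q)-\phi_h(\bm Z_0)}{h}$, whose expectation vanishes because $\bm Z_q\sim\pi_h$ (integrability from Theorem~\ref{theorem: phi h estimate} and Lemma~\ref{lemma: SVRG-UBU invariant}), while $\mathbb E[T_k]=0$; recalling the definition \eqref{averaged q} of $\tilde\pi_h$, this gives the exact identity $\tilde\pi_h(f)-\pi(f)=\frac1q\sum_{k=0}^{q-1}\big(\mathbb E^{\pi_h}[R_k]+\mathbb E^{\pi_h}[S_k]\big)$, reducing the theorem to estimating the two sums.

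For the discretization sum I would reuse, step by step, the argument behind Theorem~\ref{theorem: SG-UBU numerical bias}: Taylor-expand $S_k$ to second order about $\bm Z_k(h)$, use that the first-order contraction against the martingale component $\Dmv$ has zero conditional mean up to a Hessian correction bounded via Theorem~\ref{theorem: phi h estimate}, and feed the fourth-moment bounds of the SVRG-UBU analog of Lemma~\ref{lemma: discretization error} into the remaining $\Dx$, $\Dhv$, and quadratic pieces. Because the FG-UBU-versus-exact-flow discretization error is structurally identical to the SG-UBU case, this produces $\big|\frac1q\sum_k\mathbb E[S_k]\big|\le Cdh^2/m^3$, which is the second term of the claimed bias.

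The substance is the stochastic-gradient sum. Using \eqref{stochastic gradient error expression} with $b$ replaced by the SVRG estimator \eqref{SVRG b}, one has $\bm Z_{k+1}-\bar{\bm Z}_{k+1}=\big(-\tfrac{h(1-e^{-h})}{2M_2}\bm g_k,\ -\tfrac{he^{-h}}{M_2}\bm g_k\big)$; Taylor-expanding $\phi_h(\bm Z_{k+1})$ about $\bar{\bm Z}_{k+1}$ to second order, the first-order term has zero expectation because $\bm Y_k$ and $\bm Y_0$ are independent of $\theta_k$ and $\mathbb E_{\theta_k}[\bm g_k]=0$ (in particular the anchor step $k\equiv0$ contributes nothing), while in the quadratic term the position component is of order $h^2\bm g_k$, so the cross- and $\bm x\bm x$-Hessian blocks are of order $h^2|\bm g_k|^2/m^2$ and, using $\|\nabla^2\phi_h\|\le C/m^2$, $\mathbb E|\bm g_k|^2\lesssim\sigma^2 d/p$, and $m\le M_2$, are absorbable into $Cdh^2/m^3$; replacing $e^{-2h}$ by $1$ at the same cost identifies the leading term as $\frac{h}{2M_2^2q}\mathbb E^{\pi_h}\big[\sum_{k=0}^{q-1}\bm g_k^\top\bm M_k\bm g_k\big]$ with $\bm M_k$ the integrated $\bm v\bm v$-Hessian displayed in the statement, which is the first displayed inequality. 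For the ``Consequently'' bound I use $|\bm g_k^\top\bm M_k\bm g_k|\le\frac{C}{m^2}|\bm g_k|^2$ and estimate $\mathbb E|\bm g_k|^2$ two ways: directly by mini-batch variance, $\mathbb E|\bm g_k|^2\lesssim\sigma^2 d/p$ (Assumption~\ref{asSgNp}); and through the control-variate representation \eqref{SVRG gradient error}, $|\bm g_k|\lesssim|\bm Y_k-\bm Y_0|\cdot\big\|\tfrac1p\sum_{i\in\theta_k}\nabla^2 U_i-\nabla^2 U\big\|$, which yields $\mathbb E|\bm g_k|^2\lesssim\sqrt{\mathbb E|\bm Y_k-\bm Y_0|^4}\cdot\sigma^2/p$. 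Combining the two, and using $\sqrt{\mathbb E|\bm Y_k-\bm Y_0|^4}\lesssim\frac{d}{m}\min\{(kh)^2,1\}$, gives $\mathbb E|\bm g_k|^2\lesssim\frac{\sigma^2 d}{p}\min\{1,\tfrac{N^2h^2}{mp^2}\}$ uniformly for $k\le q$, and summing over the epoch produces the stated $\frac{\sigma^2dh}{m^2p}\min\{1,N^2h^2/(mp^2)\}$.

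I expect the crux to be the within-epoch drift estimate $\sup_{0\le k\le q}\mathbb E|\bm Y_k-\bm Y_0|^4\lesssim\frac{d^2}{m^2}\min\{(kh)^4,1\}$, \emph{uniform in $N$ and $p$}: since an epoch spans $q=N/p$ steps, a one-step bound is useless and a crude union bound over the epoch loses a spurious $\sqrt q$, so one must reproduce the ballistic-to-saturated growth of the position mean-square displacement of the underdamped dynamics by a discrete Gronwall / maximal-inequality argument on the position increments, combined with the uniform eighth-moment bound. A secondary, necessary preliminary is re-establishing the SVRG-UBU analogs of Theorem~\ref{theorem: SG-UBU moment bound} and Lemma~\ref{lemma: discretization error}: the estimator \eqref{SVRG b} can be large when $\bm Y_k$ drifts away from the anchor, but this is controlled by the linear-growth bound $|b(\bm x,\bm x_*,\theta)|\lesssim M_1\big(\sqrt{|\bm x|^2+d}+\sqrt{|\bm x_*|^2+d}\big)$ together with the fact that $\bm Y_*$ is itself a chain iterate whose moments are governed by the same Lyapunov estimate.
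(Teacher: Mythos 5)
Your proposal is correct and follows essentially the same route as the paper: the epoch-level telescoping identity $\tilde\pi_h(f)-\pi(f)=\frac1q\sum_{k=0}^{q-1}\big(\mathbb E^{\pi_h}[R_k]+\mathbb E^{\pi_h}[S_k]\big)$ via invariance of $\pi_h$ for the subsequence, the same Taylor/martingale treatment of $R_k$ and $S_k$, and the same two-way bound on $\mathbb E|\bm g_k|^2$ (direct mini-batch variance versus the control-variate representation through $|\bm Y_k-\bm Y_0|$). The only point you flag as a crux, the within-epoch bound $\sqrt{\mathbb E|\bm Y_k-\bm Y_0|^4}\lesssim \frac{d}{m}(kh)^2$, needs no maximal inequality: since the one-step increments satisfy $\mathbb E|\bm Y_{s+1}-\bm Y_s|^4\le Cd^2h^4/m^2$, plain H\"older on the telescoping sum already gives the ballistic $k^4h^4$ rate without any spurious factor, which is exactly what the paper does.
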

In Theorem~\ref{theorem: SVRG-UBU numerical bias}, the intermediate position $\bm Y_k$ depends on the history $\{\theta_0,\cdots,\theta_{k-1}\}$ but does not depend on the current random subset $\theta_k$.
Theorem~\ref{theorem: SVRG-UBU numerical bias} reveals that the numerical bias of SVRG-UBU is primarily determined by the average variance of the stochastic gradient over a single epoch, with residual errors of order $\mathcal O(\frac{dh^2}{m^3})$. A crucial aspect of this result is that the bias is quantified with respect to the averaged distribution $\tilde \pi_h$, rather than the invariant distribution $\pi_h$ of the anchor position subsequence. This choice is deliberate and aligns with the practical goal of measuring the time average error. Since the MSE in Theorem~\ref{theorem: SVRG-UBU MSE} is calculated over the entire numerical trajectory $(\bm X_k,\bm V_k)_{k=0}^\infty$, the averaged distribution $\tilde \pi_h$ provides the correct corresponding measure for the systematic bias.

The proofs of the results above are given in Appendix~\ref{appendix: proof SVRG-UBU}.

\subsection{SAGA-UBU analysis}

The SAGA-UBU algorithm adapts the SAGA method, which, unlike SVRG, avoids periodic full gradient computations. Instead, it maintains a table of historical gradients for each of the $N$ potential components, requiring $\mathcal{O}(dN)$ memory to store this information.

The SAGA estimator constructs a control variate from this table of stored gradients. For each component $i \in \{1, \dots, N\}$, the algorithm stores the gradient $\nabla U_i(\bm \Phi_i)$ evaluated at the historical position $\bm \Phi_i$ where it was last computed. At a new sampling position $\bm x$, the SAGA gradient estimator for a mini-batch $\theta$ of size $p$ is defined as
\begin{equation}
    b\big(\bm x,\{\bm\Phi_i\}_{i=1}^N,\theta\big) = 
    \frac1p \sum_{i\in\theta} \nabla U_i(\bm x) - \frac1p 
    \sum_{i\in\theta} \nabla U_i(\bm\Phi_i) + \frac1N 
    \sum_{i=1}^N \nabla U_i(\bm\Phi_i),
\end{equation}
where $\theta$ is a random subset of $\{1, \dots, N\}$. This estimator is unbiased for the full gradient, as can be readily verified by taking the expectation over $\theta$:
\begin{equation*}
    \mathbb E\big[b\big(\bm x,\{\bm\Phi_i\}_{i=1}^N,\theta\big)\big] = \nabla U(\bm x).
\end{equation*}
By replacing the standard stochastic gradient in the UBU framework with this SAGA estimator, we arrive at the SAGA-UBU algorithm.
\begin{algorithm}[htbp]
	\label{algorithm: SAGA-UBU}
	\caption{SAGA-UBU}
	\setstretch{1.12}
	\textbf{Input:} initial state $(\bm X_0,\bm V_0)$, batch size $p$, step size $h$ \\
	\textbf{Output:} numerical solution $(\bm X_k,\bm V_k)_{k=0}^\infty$ \\
	Evolve $(\bm Y_0,\bm V_0^{(1)}) = \Phi^{\mathfrak U}_{h/2}(\bm X_0,\bm V_0)$ \\
	Set $\bm\Phi_i = \bm Y_0$ for $i=1,\cdots,N$ \\
	Evolve  $\bm V_0^{(2)} = \bm V_0^{(1)} - \frac{h}{M_2} \nabla U(\bm Y_0)$ \\
	Evolve $(\bm X_1,\bm V_1) = \Phi^{\mathfrak U}_{h/2}(\bm Y_0,\bm V_0^{(2)})$ \\
	\For{$k=1,2,\cdots$}{
		Evolve $(\bm Y_k,\bm V_k^{(1)}) = \Phi^{\mathfrak U}_{h/2}(\bm X_k,\bm V_k)$ \\
		Sample a random subset $\theta_k\subset\{1,\cdots,N\}$ of size $p$ \\
		Compute $ \di
			\bm b_k = \frac1p\sum_{i\in\theta_k}\nabla U_i(\bm Y_k) - \frac1p \sum_{i\in\theta_k}
            \nabla U_i(\bm\Phi_i) + \frac1N \sum_{i=1}^N
			\nabla U_i(\bm\Phi_i)$ \\[2pt]
		Set $\bm \Phi_i = \bm Y_k$ for all $i\in\theta_k$ \\
		Evolve $\bm V_k^{(2)} = \bm V_k^{(1)} - \frac{h}{M_2} \bm b_k$ \\
		Evolve $(\bm X_{k+1},\bm V_{k+1}) = \Phi^{\mathfrak U}_{h/2}(\bm Y_k,\bm V_k^{(2)})$
	}
\end{algorithm}

The SAGA-UBU algorithm generates a non-Markovian solution sequence $(\bm X_k, \bm V_k)_{k=0}^\infty$ due to its reliance on a continuously updated table of historical gradients. This structure is more complex than that of SVRG-UBU, which possesses an embedded Markovian subsequence. Quantifying the stochastic gradient error for a general batch size becomes significantly more challenging in the SAGA setting. For analytical tractability, our analysis therefore focuses on the specific case where the batch size $p=1$. The following theorem establishes the MSE bound for SAGA-UBU under this condition.
\begin{theorem}
	\label{theorem: SAGA-UBU MSE}
	Under Assumptions \ref{asP}, \ref{asSgNp} and \ref{asT}, let the step size $h\Le c_0m$, and
	$$
	\bm Z_0 = (\bm X_0,\bm V_0)\text{~with~}\bm X_0 = \bm 0\text{~and~}\bm V_0 \sim \mathcal N(\bm 0,M_2^{-1}\bm I_d);
	$$
	then the SAGA-UBU solution with the batch size $1$ satisfies
	\begin{equation*}
		\mathbb E \Bigg[\bigg(\frac1K\sum_{k=0}^{K-1} f(\bm X_k) - \pi(f)\bigg)^2\Bigg] \Le
		C\bigg(\frac{d}{m^3Kh} + \frac{d^2h^2}{m^6} \min\big\{1,N^4h^4\big\}+ \frac{d^2h^4}{m^6}\bigg(1+\frac{dh}m\bigg) \bigg),
	\end{equation*}
	where $c_0$ depends only on $(M_i)_{i=1}^2$, and $C$  depends only on $(M_i)_{i=1}^3$ and $(L_i)_{i=1}^2$.
\end{theorem}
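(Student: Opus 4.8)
The plan is to run the same stability--consistency program used for SG-UBU (Theorem~\ref{theorem: SG-UBU MSE}) and SVRG-UBU (Theorem~\ref{theorem: SVRG-UBU MSE}), now applied to the SAGA-UBU iterates. The time-average error identity \eqref{explicit time average} is purely algebraic in the discrete Poisson solution $\phi_h$ and the realized trajectory, so it stays valid even though SAGA-UBU is non-Markovian; likewise the martingale property $\mathbb E[T_k\mid\mathcal F_k]=0$ survives, since it uses only the exact one-step flow $\Phi_h^{\mathfrak L}$ and the discrete Poisson equation \eqref{discrete Poisson eq}. It therefore suffices to bound the boundary term and the three second moments $\mathbb E[(\sum_{k=0}^{K-1}R_k)^2]$, $\mathbb E[(\sum_{k=0}^{K-1}S_k)^2]$, $\mathbb E[(\sum_{k=0}^{K-1}T_k)^2]$, using the gradient and Hessian bounds on $\phi_h$ from Theorem~\ref{theorem: phi h estimate}.

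First I would establish the stability estimate $\sup_{k\Ge0}\mathbb E[|\bm Z_k|^8]\Le Cd^4/m^4$ for SAGA-UBU, which automatically gives the same bound for each stored position $\bm\Phi_i^{(k)}$, since every table entry equals some past intermediate position $\bm Y_j$. This is the step that forces the tighter restriction $h\Le c_0 m$: the SAGA gradient $\bm b_k$ depends on the historical positions through the linear-growth bound $|\nabla U_i(\bm\Phi_i)|\Le M_1\sqrt{|\bm\Phi_i|^2+d}$ of Assumption~\ref{asSgNp}, so the one-step Lyapunov recursion of Lemma~\ref{lemma: SG-BU Lyapunov} acquires memory terms proportional to past values of the Lyapunov function; closing the contraction then requires a joint Lyapunov function on $(\bm Z_k,\{\bm\Phi_i^{(k)}\}_{i=1}^N)$ (or a discrete Gronwall argument on the moment sequence) together with a sufficiently small step size $h\Le c_0 m$.

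Next I would carry out the local error analysis. The discretization error $\bar{\bm Z}_{k+1}-\bm Z_k(h)$ does not involve the stochastic gradient, so Lemma~\ref{lemma: discretization error} applies verbatim once the uniform moment bound is in hand. The new ingredient is the SAGA stochastic gradient error $\bm Z_{k+1}-\bar{\bm Z}_{k+1}$, which has the form \eqref{stochastic gradient error expression} with $b(\bm Y_k,\theta_k)-\nabla U(\bm Y_k)$ replaced by $\bm e_k := b(\bm Y_k,\{\bm\Phi_i^{(k)}\}_{i=1}^N,\theta_k)-\nabla U(\bm Y_k)$. With $p=1$, writing $\bm d_j=(\bm\Phi_j^{(k)}-\bm Y_k)^\top\int_0^1\nabla^2 U_j(\lambda\bm\Phi_j^{(k)}+(1-\lambda)\bm Y_k)\D\lambda$, one has $\bm e_k=-\bm d_{i_k}+\frac1N\sum_{j=1}^N\bm d_j$, which is conditionally mean zero given $\bm Y_k$ and the table, so $\{\bm Z_{k+1}-\bar{\bm Z}_{k+1}\}$ is again a martingale difference sequence. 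The key quantitative estimate is that the age of any table entry is geometric with parameter $1/N$, so the increment $\bm Y_k-\bm\Phi_j^{(k)}$ typically spans a time of order $Nh$; combining $\|\nabla^2 U_j\|\Le M_2$ with a displacement bound $\mathbb E[|\bm Y_k-\bm Y_{k-s}|^4]\Le C(d^2/m^2)\min\{1,(sh)^4\}$ --- whose ballistic part uses near-constancy of the velocity over times $\Le 1$ and the velocity scale $d/M_2\Le d/m$, and whose cap at $d^2/m^2$ uses the stationary position scale from the moment bound --- yields
\begin{equation*}
\sup_{k\Ge0}\mathbb E[|\bm e_k|^2]\Le \frac{Cd}{m}\min\big\{1,N^2h^2\big\},\qquad
\sup_{k\Ge0}\mathbb E[|\bm e_k|^4]\Le \frac{Cd^2}{m^2}\min\big\{1,N^4h^4\big\},
\end{equation*}
with $C$ absorbing the $M$-constants (recall $\sigma\Le M_2$). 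A mild subtlety is that the age of the entry and the trajectory $(\bm Y_{k-s},\dots,\bm Y_k)$ are both functions of the selection indices; this is handled by conditioning on the selection history and invoking the uniform moment bound, which holds for every realization.

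Finally I would assemble the bound exactly as in the SG-UBU proof of Appendix~\ref{appendix: SG-UBU 1}. Splitting $R_k=\hat R_k+(R_k-\hat R_k)$ and $S_k=\hat S_k+(S_k-\hat S_k)$ into their leading martingale differences plus Taylor remainders, the martingale parts contribute only $\mathcal O(1/K)$ to the MSE by orthogonality and hence vanish as $K\to\infty$; the $T_k$ sum gives the stochastic-fluctuation term $\mathcal O(d/(m^3Kh))$; and the boundary term $\mathcal O(d/(m^3K^2h^2))$ is absorbed. The persistent numerical bias comes from the expected Taylor remainders: $\mathbb E[\frac1K\sum_k(R_k-\hat R_k)]$ is, to leading order, $\frac{h}{2M_2^2}\mathbb E[\bm e_k^\top\nabla_{\bm v\bm v}^2\phi_h\,\bm e_k]$, of size $\mathcal O\big(\frac{h}{m^2}\mathbb E|\bm e_k|^2\big)=\mathcal O\big(\frac{dh}{m^3}\min\{1,N^2h^2\}\big)$, giving the squared contribution $\mathcal O\big(\frac{d^2h^2}{m^6}\min\{1,N^4h^4\}\big)$, while the discretization remainder contributes $\mathcal O\big(\frac{d^2h^4}{m^6}(1+\frac{dh}{m})\big)$ just as in Theorem~\ref{theorem: SG-UBU MSE}; one must also check that the correlations among the $\bm e_k$ across $k$ --- which, unlike SVRG-UBU, have no epoch/block structure --- do not inflate $\mathrm{Var}(\frac1K\sum_k R_k)$ beyond these orders, which follows from the uniform-in-time control of $\mathbb E|\bm e_k|^4$ and the geometric decay of table staleness. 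Collecting these pieces reproduces the claimed estimate. I expect the main obstacle to be the stability estimate under the non-Markovian feedback of the historical-gradient table, together with the associated uniform-in-time displacement bounds underlying the $\mathbb E|\bm e_k|^2$ and $\mathbb E|\bm e_k|^4$ estimates; the restriction $h\Le c_0 m$ is precisely the price of closing this argument in the presence of the table memory.
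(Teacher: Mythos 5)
Your proposal follows essentially the same route as the paper: a Lyapunov recursion with a history-dependent term that is closed under $h\Le c_0 m$ (Lemma~\ref{lemma: SAGA-BU Lyapunov} and Theorem~\ref{theorem: SAGA-UBU moment bound}), a staleness-based bound on the SAGA gradient deviation matching Lemma~\ref{lemma: SAGA distance} and Lemma~\ref{lemma: SAGA-UBU stochastic gradient error}, and the standard $R_k/S_k/T_k$ assembly from Appendix~\ref{appendix: SG-UBU 1}. The one point to tighten is your decoupling of the table age from the trajectory: the uniform moment bound does \emph{not} hold conditionally on every realization of the selection history (the trajectory itself is a function of those indices), and the paper instead decouples via Cauchy--Schwarz, $\mathbb E\big[(k-k_i)^6\bm 1_{\{k_i\Le l\}}|\bm Y_{l+1}-\bm Y_l|^4\big]\Le\sqrt{\mathbb E\big[(k-k_i)^6\bm 1_{\{k_i\Le l\}}\big]}\sqrt{\mathbb E|\bm Y_{l+1}-\bm Y_l|^8}$, which is precisely why the eighth-moment stability estimate (rather than a fourth-moment one) is needed.
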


\noindent
Theorem~\ref{theorem: SAGA-UBU MSE} requires an additional upper bound $c_0m$ on the step size $h$. This requirement stems from the modified Lyapunov condition satisfied by SAGA-BU (Lemma~\ref{lemma: SAGA-BU Lyapunov}), which results in a nonlinear recursive inequality. The constant $c_0$ is then specifically determined in Theorem~\ref{theorem: SAGA-UBU moment bound} to ensure the uniform-in-time moments.

SAGA-UBU (Theorem~\ref{theorem: SAGA-UBU MSE}) and SVRG-UBU (Theorem~\ref{theorem: SVRG-UBU MSE}) exhibit analogous convergence characteristics, including the phase transition concerning the step size $h$. However, the error bounds derived for SAGA-UBU are weaker than those for SVRG-UBU. This difference stems from the more stringent requirements needed in the analysis to control the moments of the SAGA-UBU solution and the variance of its stochastic gradient estimator, primarily due to its non-Markovian structure and reliance on historical gradient information.

Next, we characterize the numerical bias for SAGA-UBU. Because the SAGA-UBU solution $(\bm X_k,\bm V_k)_{k=0}^\infty$ is non-Markovian, we measure the bias using the limit distribution $\tilde\pi_h(\bm x,\bm v)$, which is defined as the subsequential weak limit of the time-averaged distributions
\begin{equation*}
		\tilde{\pi}_h^K(A) = \frac{1}{K}
		\sum_{k=0}^{K-1} \mathbb{P} \big(
		\bm{Z}_k \in A \mid \bm{X}_0 = \bm{0},\ \bm{V}_0 \sim \mathcal{N}(\bm{0}, M_2^{-1}\bm{I}_d)
		\big)
\end{equation*}
whose existence is guaranteed by Prokhorov's theorem, as stated in Lemma~\ref{lemma: SAGA-UBU invariant}. The theorem below presents the bound for the numerical bias $\tilde \pi_h(f)-\pi(f)$.
\begin{theorem}
	\label{theorem: SAGA-UBU numerical bias}
	Under Assumptions \ref{asP}, \ref{asSgNp} and \ref{asT}, let the step size $h\Le\min\{c_0m,\frac{m}{d}\}$; then
	the numerical bias $\tilde \pi_h(f)-\pi(f)$ of SAGA-UBU with the batch size 1 satisfies
	\begin{equation*}
		|\tilde \pi_h(f) - \pi(f)| \Le
		C\bigg(\frac{dh}{m^3}\min\big\{1,N^2h^2\big\} + 
        \frac{dh^2}{m^3} \bigg),
	\end{equation*}
	where $c_0$ depends only on $(M_i)_{i=1}^2$, and $C$  depends only on $(M_i)_{i=1}^3$ and $(L_i)_{i=1}^2$.
\end{theorem}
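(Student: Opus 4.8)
The plan is to follow the blueprint of Theorem~\ref{theorem: SG-UBU numerical bias}, but with the Markov structure there replaced by a direct, epoch-free treatment of the non-Markovian SAGA recursion and, crucially, a new recursive estimate for the variance of the SAGA gradient deviation. First I would take expectations in the time-average identity \eqref{explicit time average} written for the SAGA-UBU iterates $(\bm Z_k)$: the martingale increments obey $\mathbb E[T_k]=0$ by the discrete Poisson equation (this uses only the exact one-step evolution conditional on $\bm Z_k$, so non-Markovianity is irrelevant), and the boundary term $\mathbb E[(\phi_h(\bm Z_0)-\phi_h(\bm Z_K))/(Kh)]$ is $\mathcal O(1/(Kh))$ because Theorem~\ref{theorem: phi h estimate} forces $\phi_h$ to grow at most linearly while Theorem~\ref{theorem: SAGA-UBU moment bound} bounds $\mathbb E|\bm Z_k|^2\Le Cd/m$ uniformly in $k$. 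This reduces the time-averaged law to $\tilde\pi_h^K(f)-\pi(f)=\mathcal O(1/(Kh))+\frac1K\sum_{k=0}^{K-1}(\mathbb E[R_k]+\mathbb E[S_k])$, and the same uniform moment bound gives $\sup_K\int|f|^2\,\D\tilde\pi_h^K<\infty$, so along the subsequence defining $\tilde\pi_h$ (Lemma~\ref{lemma: SAGA-UBU invariant}) one may pass to the limit. It then suffices to bound $|\mathbb E[R_k]|$ and $|\mathbb E[S_k]|$ uniformly in $k$.

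The discretization term $S_k=(\phi_h(\bar{\bm Z}_{k+1})-\phi_h(\bm Z_k(h)))/h$ is the easy one: $\bar{\bm Z}_{k+1}-\bm Z_k(h)$ is precisely the FG-UBU one-step local error from $\bm Z_k$, involving neither the sampled index nor the gradient table, so the Taylor-expansion-plus-martingale argument from the proof of Theorem~\ref{theorem: SG-UBU numerical bias} (using the mean-zero property of $\Dmv$ and the moment bounds of Lemma~\ref{lemma: discretization error}, all of which need only $\sup_k\mathbb E|\bm Z_k|^p<\infty$) applies verbatim and yields $|\mathbb E[S_k]|\Le Cdh^2/m^3$. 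For the stochastic gradient term, as in \eqref{stochastic gradient error expression} (with the SAGA estimator $\bm b_k$ in place of $b(\bm Y_k,\theta_k)$) the error $\bm Z_{k+1}-\bar{\bm Z}_{k+1}$ is a fixed $\mathcal O(h)$ multiple of the SAGA deviation $\bm g_k=\bm b_k-\nabla U(\bm Y_k)$; since $\bm Y_k$ and $\bar{\bm Z}_{k+1}$ are measurable with respect to the history $\mathcal F_k$ before $\theta_k$ is drawn and $\mathbb E[\bm g_k\mid\mathcal F_k]=0$, the linear term in $\phi_h(\bm Z_{k+1})-\phi_h(\bar{\bm Z}_{k+1})=\nabla\phi_h(\bar{\bm Z}_{k+1})^\top(\bm Z_{k+1}-\bar{\bm Z}_{k+1})+\frac12(\bm Z_{k+1}-\bar{\bm Z}_{k+1})^\top\bm M_k(\bm Z_{k+1}-\bar{\bm Z}_{k+1})$ vanishes in expectation, and the Hessian bound $\|\nabla^2\phi_h\|\Le C/m^2$ together with $|\bm Z_{k+1}-\bar{\bm Z}_{k+1}|^2\Le Ch^2|\bm g_k|^2$ gives $|\mathbb E[R_k]|\Le (Ch/m^2)\,\mathbb E|\bm g_k|^2$.

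Everything therefore comes down to estimating $\mathbb E|\bm g_k|^2$, and this is where the phase transition is produced. Since $\bm g_k$ is the $\theta_k$-centering of $\nabla U_{i}(\bm Y_k)-\nabla U_{i}(\bm\Phi_i)$, one has $\mathbb E[\,|\bm g_k|^2\mid\mathcal F_k\,]\Le\frac1N\sum_{i=1}^N|\nabla U_i(\bm Y_k)-\nabla U_i(\bm\Phi_i)|^2\Le\frac{M_2^2}{N}\sum_{i=1}^N|\bm Y_k-\bm \Phi_i|^2$, using $\|\nabla^2U_i\|\Le M_2$ from Assumption~\ref{asSgNp}. A crude bound via the linear growth $|\nabla U_i(\bm x)|\Le M_1\sqrt{|\bm x|^2+d}$ and the uniform second moments of the current and stored positions gives $\mathbb E|\bm g_k|^2\Le Cd/m$, hence $|\mathbb E[R_k]|\Le Cdh/m^3$ --- the ``$1$'' branch of the minimum. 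For the sharp branch I would set $A_k:=\mathbb E\big[\frac1N\sum_{i=1}^N|\bm Y_k-\bm \Phi_i^{(k)}|^2\big]$ with $\bm\Phi^{(k)}$ the table at step $k$, split off the single index $\theta_k$ whose stored position is reset to $\bm Y_k$, write $\bm Y_{k+1}=\bar{\bm Y}_{k+1}+\mathcal O(h|\bm g_k|)$ with $\bar{\bm Y}_{k+1}$ the $\theta_k$-independent full-gradient update, and apply Young's inequality with parameter of order $1/N$ term by term. Because each index is refreshed with probability $1/N$ per step, this produces a recursion $A_{k+1}\Le(1-c/N)A_k+CN\big(\mathbb E|\bar{\bm Y}_{k+1}-\bm Y_k|^2+\mathbb E|\bm Y_{k+1}-\bar{\bm Y}_{k+1}|^2\big)$; the one-step increment is controlled by the short-time bound $\mathbb E|\bar{\bm Y}_{k+1}-\bm Y_k|^2\Le Ch^2d/m$ (the position increment being a time integral of the velocity, whose moments are uniformly bounded by Theorem~\ref{theorem: SAGA-UBU moment bound}), and $\mathbb E|\bm Y_{k+1}-\bar{\bm Y}_{k+1}|^2\Le Ch^2\mathbb E|\bm g_k|^2\Le Ch^2d/m$ by the crude variance bound, so the recursion reads $A_{k+1}\Le(1-c/N)A_k+CNh^2d/m$. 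With $A_0=0$ this stably gives $\sup_kA_k\Le CN^2h^2d/m$, hence $\mathbb E|\bm g_k|^2\Le CN^2h^2d/m$ and $|\mathbb E[R_k]|\Le CN^2h^3d/m^3$ --- the ``$N^2h^2$'' branch. Combining the two branches with $|\mathbb E[S_k]|\Le Cdh^2/m^3$ and letting $K\to\infty$ yields $|\tilde\pi_h(f)-\pi(f)|\Le C\big(\frac{dh}{m^3}\min\{1,N^2h^2\}+\frac{dh^2}{m^3}\big)$.

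The hard part is the sharp estimate $\mathbb E|\bm g_k|^2\Le CN^2h^2d/m$: the heuristic that a stored gradient has ``age $\sim N$'' must be turned into a rigorous, uniform-in-$k$ bound even though the index $\theta_k$ feeds back into the trajectory $\bm Y_{k+1}$. The recursion for $A_k$ does this, but the balance is delicate --- the geometric contraction rate $1-c/N$ must be played against the $N$-fold amplification of the per-step input coming from the Young parameter of order $1/N$, so that the equilibrium lands exactly at $N^2h^2d/m$ rather than a worse power of $N$; securing the one-step position-increment bound $\mathbb E|\bar{\bm Y}_{k+1}-\bm Y_k|^2\Le Ch^2d/m$ and absorbing the $\theta_k$-dependence of $\bm Y_{k+1}$ through the crude variance bound are exactly the points where the restriction $h\Le c_0m$ and the uniform moments of Theorem~\ref{theorem: SAGA-UBU moment bound} enter.
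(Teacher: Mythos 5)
Your proposal is correct and its overall skeleton coincides with the paper's: expectation of the decomposition \eqref{explicit time average}, $\mathbb E[T_k]=0$ from the discrete Poisson equation, cancellation of the linear term of $R_k$ by conditional unbiasedness of the SAGA deviation $\bm g_k$ given the pre-$\theta_k$ history, the quadratic remainder bounded by $\frac{Ch}{m^2}\mathbb E|\bm g_k|^2$, the $S_k$ bound carried over verbatim from the SG-UBU case under $h\Le m/d$, a vanishing boundary term, and passage to the subsequential limit $\tilde\pi_h$ via Lemma~\ref{lemma: SAGA-UBU invariant}. Where you genuinely depart from the paper is the key variance estimate. The paper proves a fourth-moment bound $\frac1N\sum_i\mathbb E|\bm Y_k-\bm\Phi_i|^4\Le CN^4d^2h^4/m^2$ (Lemma~\ref{lemma: SAGA distance}) by an explicit computation with the geometric ``age'' distribution of each stored gradient, reducing matters to a purely combinatorial inequality $\sum_l\sqrt{\mathbb E[(k-k_1)^6\bm 1_{\{k_1\Le l\}}]}\Le CN^4$; this heavier estimate is needed elsewhere for the MSE theorem. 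You instead run the standard SAGA Lyapunov recursion on the second moment $A_k=\mathbb E[\frac1N\sum_i|\bm Y_k-\bm\Phi_i|^2]$: Young's inequality with parameter $\mathcal O(1/N)$ against the $1/N$ refresh probability gives $A_{k+1}\Le(1-\frac{c}{N})A_k+\frac{CNh^2d}{m}$, hence $\sup_kA_k\Le CN^2h^2d/m$, and only the second moment is needed for the bias. Your version is lighter and arguably more transparent for this theorem; one detail worth making explicit is that in bounding the feedback term $\mathbb E|\bm Y_{k+1}-\bar{\bm Y}_{k+1}|^2\Le Ch^2\mathbb E|\bm g_k|^2$ you must use the crude bound $\mathbb E|\bm g_k|^2\Le Cd/m$ (as you do) rather than reinserting $A_k$, since the latter would demand $Nh^2\lesssim 1/N$ and ruin the admissible step-size range. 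Also, when proving that crude bound one should note that the event $\{k_i=l\}$ depends only on $\theta_l,\dots,\theta_{k-1}$ and is therefore independent of $\bm Y_l$, so $\mathbb E|\bm\Phi_i|^2\Le\sup_l\mathbb E|\bm Y_l|^2$ follows cleanly from Theorem~\ref{theorem: SAGA-UBU moment bound}. With these points spelled out, the argument closes and yields exactly the stated bound.
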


The proofs of the results above are given in Appendix~\ref{appendix: proof SAGA-UBU}.
\section{Numerical verification}
\label{section: numerical}

In this section, we implement numerical tests to validate our theoretical results on the convergence of stochastic sampling algorithms. For this purpose, we restrict our tests to low-dimensional target distributions.

\subsection{First-order convergence of numerical bias for SG-UBU}

To verify the leading-order coefficient equality \eqref{h coefficient} for SG-UBU, we test 1D and 2D potential functions using both Gaussian noise and finite-sum stochastic gradients. Computationally, the limit in the LHS of \eqref{h coefficient} is evaluated by running SG-UBU at multiple step sizes $h$. The leading-order coefficient in the RHS is computed by numerically calculating the Hessian  matrix $\nabla_{\bm v\bm v} \phi_0(\bm x,\bm v) \in \mathbb R^{d\times d}$, where the algorithmic details are provided in Appendix~\ref{appendix: leading-order coefficient}.

\paragraph{1D Example}
We define the potential function $U(x)$ and test function $f(x)$ as
\begin{align*}
	U(x) & = \frac{1}{2}x^2 + 0.15 \sin(1.6x-0.5) + 0.1 \sin(2.4x+0.4), \\
	f(x) & = \cos x + 0.5 \sin(2.5x) + 0.2 \sin(0.5x+0.4),
\end{align*}
which are plotted in Figure~\ref{figure: 1D_UF}.

\begin{center}
	\includegraphics[width=0.53\textwidth]{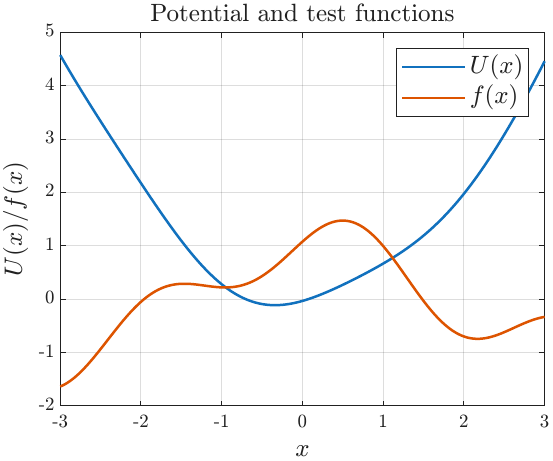}
	\captionof{figure}{Graphs of the 1D potential function $U(x)$ and the test function $f(x)$.}
	\label{figure: 1D_UF}
\end{center}

We test two stochastic gradient models: Gaussian noise and finite-sum cases. In both cases, the numerical bias $\pi_h(f) - \pi(f)$ is computed from SG-UBU simulations with varying step sizes $h$ and a total time $T=10^6$. The leading-order coefficient is computed from $5\times10^7$ independent FG-UBU trajectories using $h=2^{-10}$.

\begin{itemize}
	\item \textbf{Gaussian noise case:} The stochastic gradient is
	\begin{equation*}
		b(x,\theta) = \nabla U(x) + 3\theta, \quad \theta \sim \mathcal{N}(0,1).
	\end{equation*}
	
	\item \textbf{Finite-sum case:} The stochastic gradient is
	\begin{equation*}
		b(x,\theta) = \nabla U(x) + \nabla V_\theta(x)
	\end{equation*}
	where the random index $\theta$ is uniformly distributed on $\{1,\cdots,N\}$, and
	\begin{equation*}
		V_i(x) = a_i \sin x + b_i \cos(1.2x) + c_i \sin(2x) + d_i \cos(2.5x), \quad i=1,\cdots,N.
	\end{equation*}
	The parameters $\{a_i,b_i,c_i,d_i\}_{i=1}^N$ are drawn uniformly from $[-6,6]$ and then shifted to be mean-zero, which ensures that $b(x,\theta)$ is an unbiased estimator for $\nabla U(x)$.
\end{itemize}
Figure~\ref{figure: 1D_bias} plots the computed numerical bias $\pi_h(f) - \pi(f)$ against the step size $h$ on a log-log scale. We compare this to the theoretical linear approximation (i.e., a line with a slope given by the independently computed leading-order coefficient), which shows strong agreement.

\begin{center}
	\includegraphics[width=0.48\textwidth]{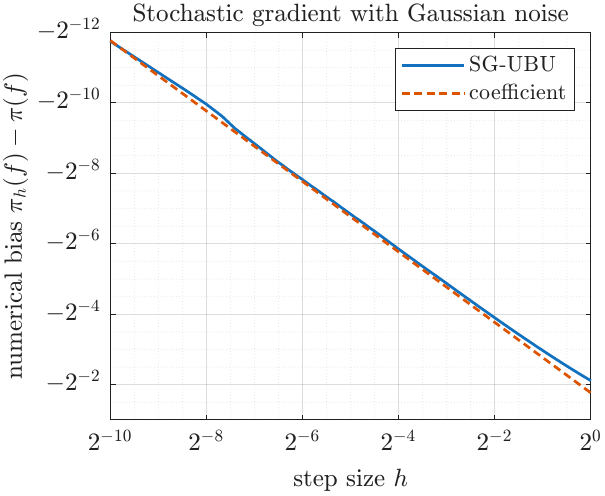}
	\includegraphics[width=0.48\textwidth]{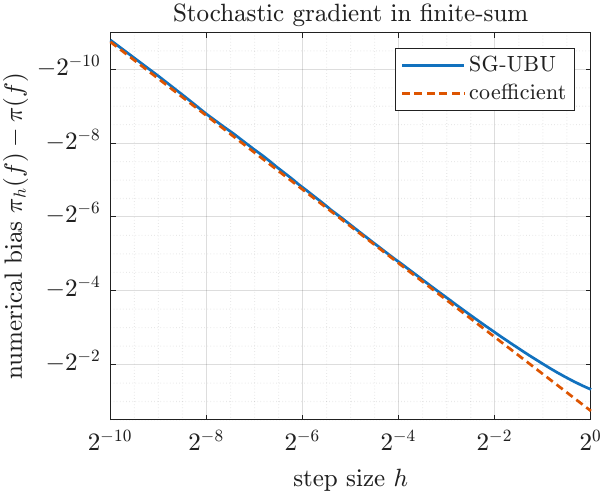}
	\captionof{figure}{Log-log plot of the SG-UBU numerical bias $\pi_h(f) - \pi(f)$ vs. step size $h$ for the 1D potential. The computed bias (blue solid line) is compared to the theoretical linear approximation (red dashed line). (Left) Gaussian noise case. (Right) Finite-sum case.}
	\label{figure: 1D_bias}
\end{center}

\paragraph{2D Example}
We define the potential function $U(x_1,x_2)$ and test function $f(x_1,x_2)$ as:
\begin{align*}
    U(x_1,x_2) & = \frac{1}{2}\big(1.4x_1^2 + 0.8x_2^2 + \sin(0.7x_1-x_2) \cos(0.4x_1+0.6x_2)\big), \\
    f(x_1,x_2) & = \cos\big(1.4x_1-1.1\sin(1.2x_2)\big),
\end{align*}
which are plotted in Figure~\ref{figure: 2D_UF}.

\begin{figure}[htb]
    \includegraphics[width=0.48\textwidth]{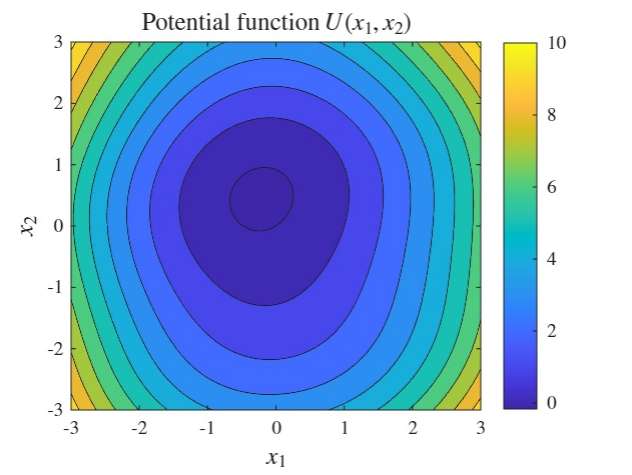}
    \includegraphics[width=0.48\textwidth]{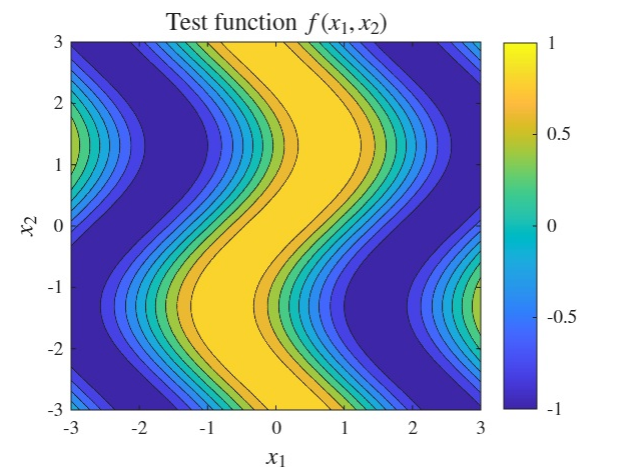}
    \caption{Graphs of the 2D potential function $U(x_1,x_2)$ and the test function $f(x_1,x_2)$.}
    \label{figure: 2D_UF}
\end{figure}

We test two stochastic gradient models: Gaussian noise and finite-sum cases. In both cases, the numerical bias $\pi_h(f) - \pi(f)$ is computed from SG-UBU simulations, and the parameter settings are the same with the 1D example.

\begin{itemize}
    \item \textbf{Gaussian noise case:} The stochastic gradient is
    \begin{equation*}
        b(\bm{x},\theta) = \nabla U(\bm{x}) + 3\theta,\quad \theta \sim \mathcal{N}(0,\bm{I}_2).
    \end{equation*}
    
    \item \textbf{Finite-sum case:} The stochastic gradient is
    \begin{equation*}
        b(\bm{x},\theta) = \nabla U(\bm{x}) + \nabla V_\theta(\bm{x}),
    \end{equation*}
    where the random index $\theta$ is uniformly distributed on $\{1,\cdots,N\}$, and
    \begin{equation*}
        V_i(x_1,x_2) = a_i \sin(x_1+2x_2) + b_i \cos(1.2x_1-0.7x_2) + c_i e^{-\frac{1}{2}x_1^2} + d_i e^{-\frac{1}{3}x_2^2},~~i=1,\cdots,N.
    \end{equation*}
    The parameters $\{a_i,b_i,c_i,d_i\}_{i=1}^N$ are drawn uniformly from $[-8,8]$ and then shifted to be mean-zero, which ensures that $b(x,\theta)$ is an unbiased estimator for $\nabla U(x)$.
\end{itemize}
Figure~\ref{figure: 2D_bias} plots the computed numerical bias $\pi_h(f) - \pi(f)$ against the step size $h$ on a log-log scale. We compare this to the theoretical linear approximation (i.e., a line with a slope given by the independently computed leading-order coefficient), which shows strong agreement.

\begin{figure}[htb]
    \includegraphics[width=0.48\textwidth]{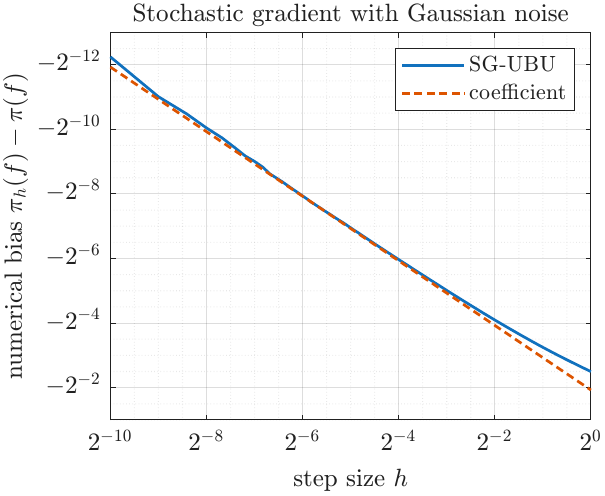}
    \includegraphics[width=0.48\textwidth]{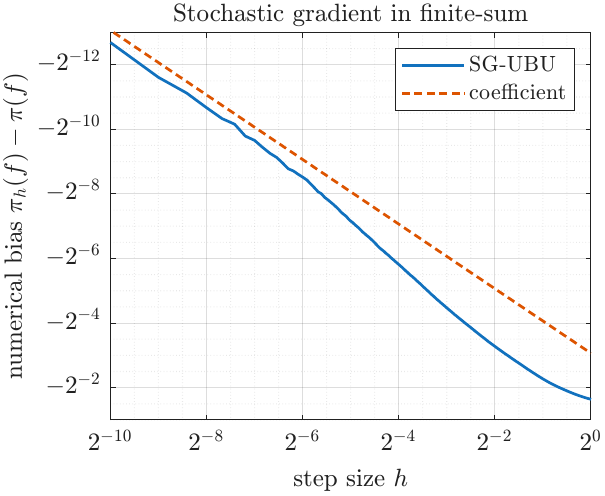}
    \caption{Log-log plot of the SG-UBU numerical bias $\pi_h(f) - \pi(f)$ vs. step size $h$ for the 2D potential. The computed bias (blue solid line) is compared to the theoretical linear approximation (red dashed line). (Left) Gaussian noise case. (Right) Finite-sum case.}
    \label{figure: 2D_bias}
\end{figure}
Results demonstrate that $\pi_h(f) - \pi(f)$ exhibits first-order convergence in the step size $h$, with the leading-order coefficient accurately predicting the linear component at moderately small step sizes. This provides numerical validation for the equality \eqref{h coefficient}. However, SG-UBU simulations in practice reveal significant nonlinear bias dependence on $h$, particularly in the finite-sum case. For the 2D example, the finite-sum bias exhibits approximate linear convergence only when $h \Le 2^{-6}$.

\subsection{Comparison of variance-reduced sampling algorithms}
\label{subsection: comparison}

We compare the numerical performance of three sampling algorithms: mini-batch SG-UBU, SVRG-UBU, and SAGA-UBU. Consider a finite-sum potential $U(\bm x)$ in $\mathbb R^{10}$ defined as
\begin{equation*}
	U(\bm x) = \frac1N \sum_{i=1}^N U_i(\bm x), \quad \text{where} \quad
	U_i(\bm x) = \frac16|\bm x|^2 + D_0( w_i^\top \bm x+b_i), \quad i=1,\cdots,N,
\end{equation*}
where the function $D_0(x) = 16 e^{-\frac12x^2} - 8\cos(x) - 4\sin(2x)$ introduces non-convexity. The parameters $(w_i,b_i)_{i=1}^N$ are randomly generated as follows:
\begin{equation*}
	w_{i,j} = \frac{j^{\frac14} \xi_{i,j}}
	{\sqrt{\sum_{l=1}^{10} l^{\frac12} \xi_{i,l}^2}}, \quad
	j=1,\cdots,10, \quad \text{and} \quad b_i = \frac{\cos i + \eta_i}{10},
\end{equation*}
where $\xi_{i,j}, \eta_i \sim \mathcal N(0,1)$ are independent standard Gaussian random variables.

For convenience, we set $M_2=1$ in the underdamped Langevin dynamics \eqref{ULD}, resulting in the stochastic differential equation
\begin{equation}
	\left\{
	\begin{aligned}
		\dot {\bm x}_t & = \bm v_t, \\
		\dot {\bm v}_t & = -\nabla U(\bm x_t) - 2\bm v_t + 2\dot{\bm B}_t.
	\end{aligned}
	\right.
	\label{ULD 2}
\end{equation}
The stochastic gradient sampling algorithms generate numerical solutions $(\bm X_k,\bm V_k)_{k=0}^\infty$ by discretizing \eqref{ULD 2} with the step size $h$. To measure the sampling error, we select the vector-valued test function $f: \mathbb R^{10} \to \mathbb R^{30}$ as
\begin{equation*}
	f(\bm x) = \Big(e^{-\frac{1}{8d}|\bm x|^2 - \frac{1}{4}x_j^2}, e^{-\frac{1}{8d}|\bm x|^2 - \frac{1}{4}(x_j+2)^2}, e^{-\frac{1}{8d}|\bm x|^2 - \frac{1}{4}(x_j+4)^2}\Big)_{j=1,\cdots, 10} \in \mathbb R^{30}.
\end{equation*}
Finally, the numerical performance of the algorithms is characterized by the time average error after $K=T/h$ steps:
\begin{equation*}
	\text{sampling error} = \bigg|\frac1K \sum_{k=1}^K f(\bm X_k) - \pi(f)\bigg|,
\end{equation*}
where $|\cdot|$ denotes the Euclidean norm in $\mathbb R^{30}$. The sampling error comprises two components: the stochastic fluctuation depending on the total simulation time $T = Kh$, and the numerical bias depending on the step size $h$.

\paragraph{Sampling error dependence on $N$.}
We fix the batch size $p=1$ and test the performance of the algorithms with various $N$.
Figure~\ref{figure: variance error} shows log-log plots of the sampling error versus the step size $h$ for SG-UBU, SVRG-UBU, and SAGA-UBU. We test different dataset sizes $N\in\{10,50,100,500\}$ with a fixed simulation time $T=10^7$.

\begin{figure}[htb]
	\includegraphics[width=0.48\textwidth]{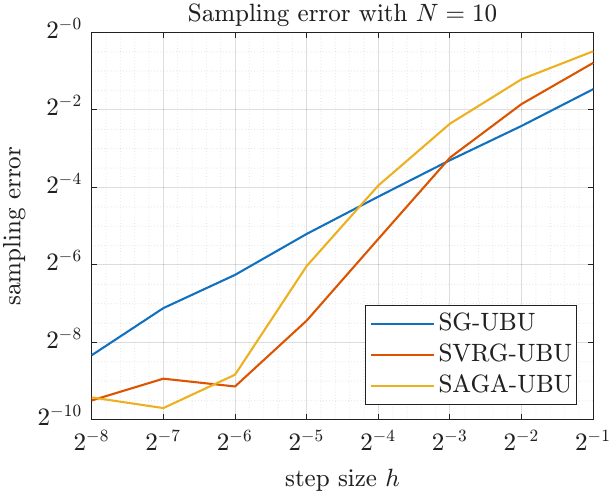}
	\includegraphics[width=0.48\textwidth]{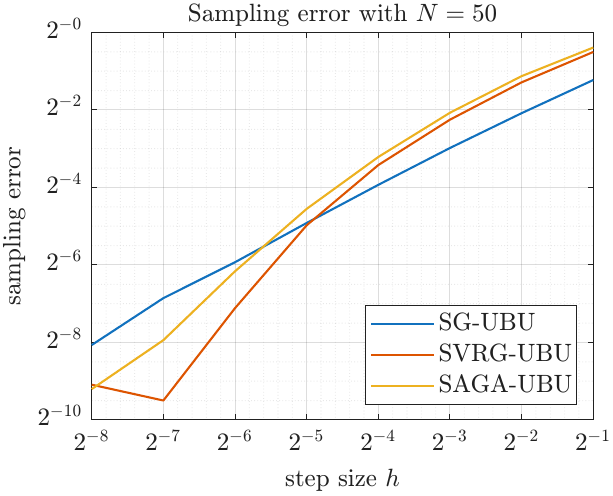} \\
	\includegraphics[width=0.48\textwidth]{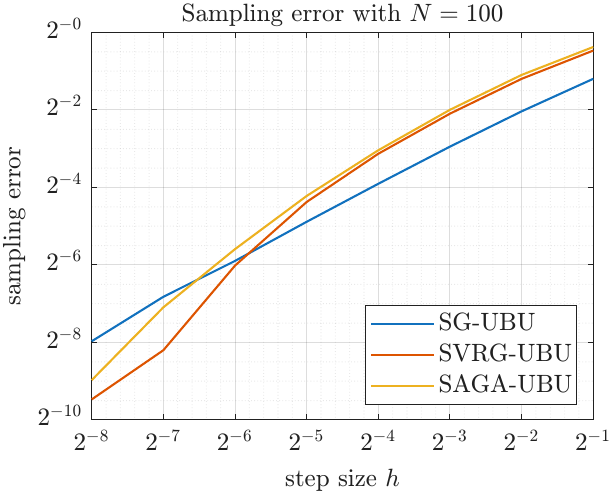}
	\includegraphics[width=0.48\textwidth]{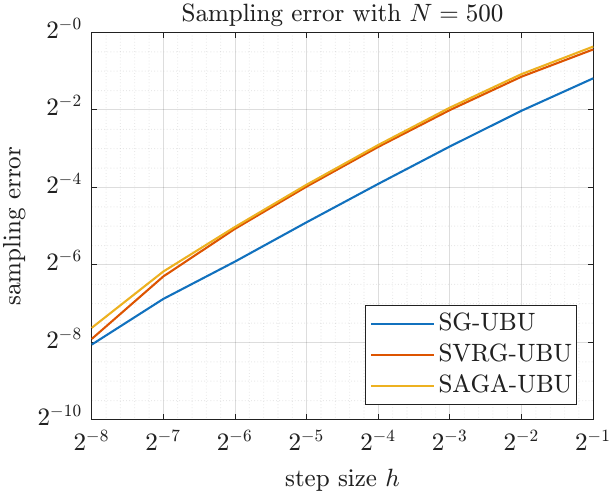}
	\caption{Log-log plot of the sampling error vs. step size $h$ for SG-UBU, SVRG-UBU, and SAGA-UBU for different numbers of components $N\in\{10,50,100,500\}$.}
	\label{figure: variance error}
\end{figure}

Consistent with Theorems~\ref{theorem: SVRG-UBU MSE} and \ref{theorem: SAGA-UBU MSE}, SVRG-UBU and SAGA-UBU exhibit a phase transition in convergence order near the critical step size at $\mathcal O(1/N)$. For $N=10$, the error reduction for SVRG-UBU and SAGA-UBU saturates for $h < 2^{-6}$, likely due to stochastic fluctuations becoming dominant. For $N=500$, the variance-reduced methods can perform worse than SG-UBU in certain ranges of $h$. This occurs because a larger $N$ can increase the variance introduced by the control variate strategy itself when the anchor point is far from the current sampling point.

In summary, SVRG-UBU and SAGA-UBU achieve better error than SG-UBU only when $h$ is sufficiently small, specifically $h < \mathcal O(1/N)$. Thus, these methods may underperform SG-UBU when $N$ is large or when the target error tolerance is not sufficiently stringent.

\paragraph{Sampling error dependence on $p$.}
We fix $N=100$ and $T=10^7$ and compare the performance of mini-batch SG-UBU and SVRG-UBU for different batch sizes $p \in \{2,4,8,16\}$. Table~\ref{table: sampling error p} shows the sampling errors of these two algorithms. As predicted by theory, for a fixed $T$, the sampling error converges to a constant (dominated by stochastic fluctuations) as $h$ becomes sufficiently small, which is evident in each column.

\begin{table}[htb]
\centering
	\renewcommand{\arraystretch}{1.2}
	\begin{tabular}{c|cc|cc}
	\toprule
		& \multicolumn{2}{c|}{$p=2$}
		 & \multicolumn{2}{c}{$p=4$} \\
	$h$ & mini-batch SG-UBU & SVRG-UBU & mini-batch SG-UBU & SVRG-UBU \\
	\midrule
	$2^{-1}$ & $2.37\times10^{-1}$& $4.27\times10^{-1}$& $1.18\times10^{-1}$& $2.17\times10^{-1}$\\
	$2^{-2}$ & $1.26\times10^{-1}$& $2.28\times10^{-1}$& $6.21\times10^{-2}$& $1.06\times10^{-1}$\\
	$2^{-3}$ & $6.57\times10^{-2}$& $1.11\times10^{-1}$& $3.14\times10^{-2}$& $4.51\times10^{-2}$\\
	$2^{-4}$ & $3.37\times10^{-2}$& $4.75\times10^{-2}$& $1.60\times10^{-2}$& $1.47\times10^{-2}$\\
	$2^{-5}$ & $1.67\times10^{-2}$& $1.55\times10^{-2}$& $7.27\times10^{-3}$& $3.39\times10^{-3}$\\
	$2^{-6}$ & $8.54\times10^{-3}$& $3.35\times10^{-3}$& $3.96\times10^{-3}$& $1.36\times10^{-3}$\\
	$2^{-7}$ & $4.12\times10^{-3}$& $1.30\times10^{-3}$& $2.30\times10^{-3}$& $1.60\times10^{-3}$\\
	$2^{-8}$ & $1.90\times10^{-3}$& $1.65\times10^{-3}$& $1.24\times10^{-3}$& $1.33\times10^{-3}$\\
	\midrule
	& \multicolumn{2}{c|}{$p=8$}
	& \multicolumn{2}{c}{$p=16$}  \\
	$h$ & mini-batch SG-UBU & SVRG-UBU & mini-batch SG-UBU & SVRG-UBU \\
	\midrule
	$2^{-1}$ & $5.20\times10^{-2}$& $9.31\times10^{-2}$& $1.75\times10^{-2}$& $2.88\times10^{-2}$\\
	$2^{-2}$ & $2.81\times10^{-2}$& $4.12\times10^{-2}$& $1.12\times10^{-2}$& $1.10\times10^{-2}$\\
	$2^{-3}$ & $1.42\times10^{-2}$& $1.40\times10^{-2}$& $6.18\times10^{-3}$& $3.22\times10^{-3}$\\
	$2^{-4}$ & $6.72\times10^{-3}$& $2.87\times10^{-3}$& $3.19\times10^{-3}$& $1.26\times10^{-3}$\\
	$2^{-5}$ & $3.08\times10^{-3}$& $1.09\times10^{-3}$& $1.89\times10^{-3}$& $1.61\times10^{-3}$\\
	$2^{-6}$ & $1.66\times10^{-3}$& $1.00\times10^{-3}$& $1.64\times10^{-3}$& $1.57\times10^{-3}$\\
	$2^{-7}$ & $1.45\times10^{-3}$& $1.27\times10^{-3}$& $1.36\times10^{-3}$& $1.70\times10^{-3}$\\
	$2^{-8}$ & $1.77\times10^{-3}$& $1.36\times10^{-3}$& $1.28\times10^{-3}$& $1.77\times10^{-3}$\\
	\bottomrule
	\end{tabular}
	\caption{Sampling error vs. step size $h$ for mini-batch SG-UBU and SVRG-UBU at $N=100$ with varying batch sizes $p\in\{2,4,8,16\}$.}
	\label{table: sampling error p}
\end{table}

To quantify the relative performance, we define the sampling error ratio $R(p,h)$ as
\begin{equation*}
	R(p,h) = \frac{\text{sampling error of SVRG-UBU with batch size } p \text{ and step size } h}
	{\text{sampling error of mini-batch SG-UBU with batch size } p \text{ and step size } h}.
\end{equation*}
Figure~\ref{figure: ratio} shows the plot of $R(p,h)$ for $N=100$ and $p\in\{2,4,8,16\}$.

\begin{figure}[htb]
    \centering
    \includegraphics[width=0.7\textwidth]{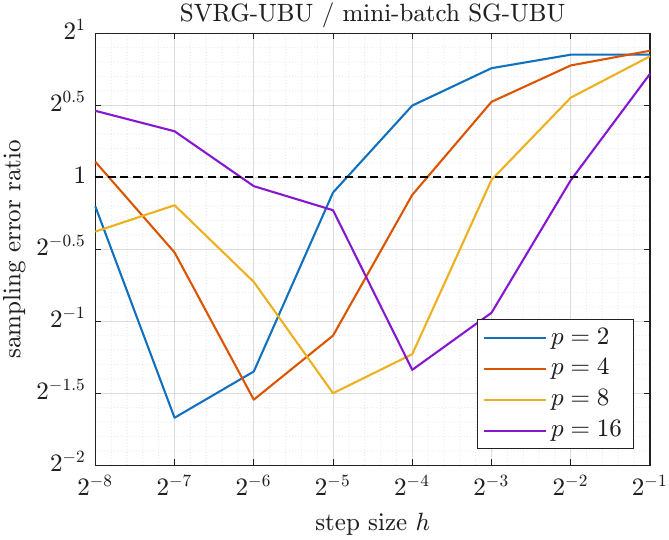}
	\caption{Sampling error ratio $R(p,h)$ (SVRG-UBU error / mini-batch SG-UBU error) vs. step size $h$ at $N = 100$ for batch sizes $p\in\{2,4,8,16\}$. Values below 1 (black dashed line) indicate that SVRG-UBU performs better than mini-batch SG-UBU.}
	\label{figure: ratio}
\end{figure} 

Figure~\ref{figure: ratio} illustrates that for each $p$, the sampling error ratio $R(p,h)$ first decreases and then increases towards $1$ as $h$ decreases. The minimum value of $R(p,h)$ occurs near the critical step size $\mathcal O(p/N)$, aligning with the prediction from Theorem~\ref{theorem: SVRG-UBU MSE}. Based on the MSE bounds within the theorem, for a fixed simulation time $T$, $R(p,h)$ can be approximated as
\begin{equation}
	R(p,h) \sim \frac{\di\frac1{\sqrt{dT}}+\frac hp\min\bigg\{1,\frac{N^2h^2}{p^2}\bigg\}+h^2}
	{\di\frac1{\sqrt{dT}}+\frac hp+h^2},
	\label{ratio approximation}
\end{equation}
where we ignore the constants and dependence on $m, \sigma$ for simplicity. When $h \Le \frac pN$, the $\min$ term becomes small, and \eqref{ratio approximation} implies $R(p,h) \Le 1$, indicating that SVRG-UBU outperforms mini-batch SG-UBU. However, when $h \Le  \frac{p}{\sqrt{dT}}$ we have
\begin{equation*}
	R(p,N) \Ge \frac{\di\frac1{\sqrt{dT}}+h^2}{\di\frac2{\sqrt{dT}}+h^2} \Ge \frac12,
\end{equation*}
showing that the advantage of SVRG-UBU diminishes. Therefore, an empirical criterion for expecting SVRG-UBU to be superior is when the step size $h$ lies in the range:
\begin{equation}
	\frac{p}{\sqrt{dT}} \Le h\Le \frac{p}{N}.
	\label{SVRG rule}
\end{equation}

\subsection{Optimal sampling algorithm selection}

Building upon the MSE bounds derived in Theorems~\ref{theorem: SG-UBU MSE} and \ref{theorem: SVRG-UBU MSE}, along with the numerical comparisons in Section~\ref{subsection: comparison}, we propose the following heuristic procedure for selecting between mini-batch SG-UBU and SVRG-UBU and determining the simulation parameters.

First, the batch size $p$ is typically chosen based on hardware constraints, such as the parallel processing capabilities of the CPU or GPU. Given $p$, the procedure is as follows:

\begin{enumerate}
    \item \textbf{Determine maximal step size $h$:}
    
    For a target accuracy level $\ep > 0$ (representing the desired upper bound on the square root of the MSE), find the largest step size $h$ such that the stochastic gradient and discretization error terms are controlled:
	\begin{equation*}
		\left\{
		\begin{aligned}
			& \frac{dh}{p} \min\bigg\{1,\frac{N^2h^2}{p^2}\bigg\} \Le \ep, \\ 
			& dh^2 \Le \ep. 
		\end{aligned}
		\right.
	\end{equation*}

    \item \textbf{Determine simulation time $T$ and algorithm choice:}
    
    Set the total simulation time $T$ to control the stochastic fluctuation component, e.g., $T = d/\ep^2$, such that the $\mathcal{O}(d/(Kh))$ term in the MSE is roughly $\ep^2$. Then, apply the empirical criterion \eqref{SVRG rule}: if $h$ falls within the range
    $$
    \frac{p}{\sqrt{dT}} \Le h \Le \frac{p}{N}
    $$
    choose SVRG-UBU; otherwise, choose mini-batch SG-UBU.

    \item \textbf{Determine number of steps $K$:}
    
    Set the total number of iterations to $K = T/h$.
\end{enumerate}

This procedure provides a practical guideline for algorithm selection based on the theoretical MSE analysis. Step 1 identifies the largest step size $h$ allowable to keep the systematic bias below the tolerance $\epsilon$, considering both the stochastic gradient contribution (using the SVRG form which adapts based on $h$) and the discretization contribution. Step 2 sets the simulation time $T$ required to reduce the stochastic fluctuation error below $\epsilon$. The core of the selection occurs in Step 2, where the empirical rule \eqref{SVRG rule} checks if the chosen $h$ falls into the regime where SVRG-UBU is expected to outperform mini-batch SG-UBU (i.e., where variance reduction effectively reduces the dominant error term without being overwhelmed by stochastic fluctuations or the cost of the control variate itself). This heuristic aims to balance accuracy requirements with computational efficiency by selecting the algorithm likely to perform best under the calculated parameters $h$ and $T$.
\section{Conclusion}
\label{section: conclusion}

This paper leverages the discrete Poisson equation framework to conduct a rigorous analysis of the mean square error for three stochastic gradient sampling algorithms---SG-UBU, SVRG-UBU, and SAGA-UBU---under the assumption of a globally convex potential function. Our work advances the theoretical understanding of sampling algorithms in several key aspects. First, for the baseline SG-UBU algorithm, we establish that its numerical bias exhibits first-order convergence with respect to the step size, where the leading coefficient is proportional to the stochastic gradient variance. This result quantifies the explicit impact of gradient noise on discretization bias. Second, for the variance-reduced variants SVRG-UBU and SAGA-UBU, we uncover a distinctive phase transition phenomenon: their numerical convergence order shifts between first- and second-order regimes depending on the step size magnitude. This behavior highlights the nuanced interplay between variance reduction techniques and discretization schemes in second-order integrators. Third, we derive a practical empirical criterion to guide algorithm selection between mini-batch SG-UBU and mini-batch SVRG-UBU, optimizing computational efficiency based on problem parameters. 

The analytical value of our contributions lies in the precision and generality of the discrete Poisson framework, which enables sharp error decomposition that is difficult to achieve through traditional Wasserstein analysis alone. Specifically, our approach cleanly separates stochastic noise, discretization error, and variance reduction effects, providing a simple bias bound for SG-UBU-type methods under global convexity. For variance-reduced algorithms, the identified step size-dependent phase transition offers new insights into the design of high-precision samplers. These theoretical advances strengthen the foundation for developing efficient sampling algorithms in high-dimensional settings, particularly in Bayesian inference and other applications requiring scalable, accurate integration over complex distributions. The discrete Poisson methodology, with its flexibility in handling time average errors and numerical bias, further establishes itself as a powerful tool for analyzing stochastic sampling algorithms.

However, certain limitations in our analysis warrant mention. First, the explicit dependence of the error bounds on the dimension $d$ and the convexity parameter $m$ may be suboptimal, and our framework does not readily yield convergence rates in standard Wasserstein distances. This constraint arises from the structure of the discrete Poisson methodology itself, which fundamentally targets mean square error analysis. Additionally, the global convexity assumption remains necessary to establish pointwise bounds on the Hessian of the Kolmogorov solution. Extending this analysis to relax convexity assumptions would require significant further developments in the theoretical tools currently available.

	
\acks{The work of Z. Zhou was partially supported by the National Key R\&D Program of China (Project No. 2021YFA1001200), and the National Natural Science Foundation of China (Grant No. 12171013).

The authors would like to thank Xu'an Dou at Peking University for the helpful discussions on the proof of Lemma~\ref{lemma: paradox}. }
	
	
\newpage

\tableofcontents
	
\appendix
	
	\section{Estimate of discrete Poisson solution}
	\label{appendix: discrete Poisson}
	
	This section is dedicated to proving Theorem~\ref{theorem: phi h estimate}, which provides quantitative bounds on the derivatives of the discrete Poisson solution $\phi_h(\bm{x},\bm{v})$ introduced in \eqref{discrete Poisson solution}. Our analysis relies on the \emph{first variation process}\footnote{This term is also known as the \emph{tangent process} or \emph{Jacobian process} in the literature. We adopt the term \emph{first variation process}, which is common in Malliavin calculus.}, a technical tool that captures the sensitivity of the underdamped Langevin dynamics \eqref{ULD} to perturbations in its initial state $(\bm x,\bm v)$. The idea of utilizing the variation process is inspired by \cite{chak2023optimal}.
	
	\subsection{First variation process: definition}
	
	Let $(\bm x_t,\bm v_t)_{t\Ge0}$ denote the exact solution to the underdamped Langevin dynamics \eqref{ULD} with the initial state $(\bm x_0,\bm v_0)$. The first variation process $D_{\bm v}\bm x_t \in \mathbb R^{d\times d}$ is defined as the Jacobian matrix of $\bm x_t$ with respect to the initial velocity $\bm v_0$, specifically expressed as:
	\begin{equation*}
		D_{\bm v} \bm x_t = \big[\partial_{v_j} x_i\big]_{i,j=1}^d =
		\begin{bmatrix}
			\partial_{v_1} \bm x_t & \partial_{v_2} \bm x_t & \cdots & \partial_{v_d} \bm x_t
		\end{bmatrix} \in \mathbb R^{d\times d}.
	\end{equation*}
	The companion first variation processes $D_{\bm x} \bm x_t,D_{\bm x} \bm v_t$ and $D_{\bm v} \bm v_t$ admit analogous definitions. Through differentiation of the Kolmogorov solution $u(\bm x_0,\bm v_0,t)$ in \eqref{function u}, we derive the gradient expressions for $u(\bm x_0,\bm v_0,t)$:
	\begin{equation}
		\begin{aligned}
			\nabla_{\bm x} u(\bm x_0,\bm v_0,t) & = \mathbb E^{(\bm x_0,\bm v_0)}
			\big[\big(D_{\bm x} \bm x_t\big)^\top\nabla f(\bm x_t) \big], \\
			\nabla_{\bm v} u(\bm x_0,\bm v_0,t) & = \mathbb E^{(\bm x_0,\bm v_0)}
			\big[\big(D_{\bm v} \bm x_t\big)^\top \nabla f(\bm x_t) \big].
		\end{aligned}
		\label{gradient u expression}
	\end{equation}
	This formulation reveals that the exponential decay of $\nabla u(\bm x_0,\bm v_0,t)$ originates directly from the analytical properties of the first variation processes $D_{\bm x} \bm x_t$ and $D_{\bm v} \bm x_t$.
	
	By differentiating \eqref{ULD} with respect to the initial state $(\bm x_0,\bm v_0)$, we obtain the governing ordinary differential equations for the first variation processes.
	\begin{definition}
		Let $(\bm x_t,\bm v_t)_{t\Ge0}$ be the solution to \eqref{ULD} with initial state $(\bm x_0,\bm v_0)$ and Brownian motion $(\bm B_t)_{t\Ge0}$. Define $(\bm Q_t,\bm P_t)_{t\Ge0}$ as the solution to the matrix-valued differential equation
		\begin{equation}
			\left\{
			\begin{aligned}
				\dot{\bm Q}_t & = \bm P_t, \\
				\dot{\bm P}_t & = -\frac{1}{M_2}\nabla^2 U(\bm x_t) \bm Q_t - 2 \bm P_t,
			\end{aligned}
			\right.
			\label{first variation}
		\end{equation}
		with the initial configuration $(\bm Q_0,\bm P_0) \in \mathbb R^{2d\times d}$, formally expressed as
		\begin{equation*}
			(\bm Q_t,\bm P_t)_{t\Ge0} = \mathrm{FirstVariation}\big(\bm Q_0,\bm P_0,\bm x_0,\bm v_0,(\bm B_t)_{t\Ge0}\big).
		\end{equation*}
	\end{definition}
	\noindent
	At the initial time $t=0$, the Jacobian matrix of $(\bm x_0,\bm v_0)$ with respect to $\bm x_0$ yields $(\bm I_d,\bm O_d)$, thus identifying the first variation process
	\begin{equation*}
		(D_{\bm x} \bm x_t, D_{\bm x} \bm v_t)_{t\Ge0}  =
		\mathrm{FirstVariation}\big(\bm I_d,\bm O_d,\bm x_0,\bm v_0,(\bm B_t)_{t\Ge0}\big).
	\end{equation*}
	Analogously, the velocity-related variation process satisfies
	\begin{equation*}
		(D_{\bm v} \bm x_t, D_{\bm v} \bm v_t)_{t\Ge0}  =
		\mathrm{FirstVariation}\big(\bm O_d,\bm I_d,\bm x_0,\bm v_0,(\bm B_t)_{t\Ge0}\big).
	\end{equation*}
	
	We proceed to demonstrate the exponential decay property of the first variation process $(\bm Q_t,\bm P_t)_{t\Ge0}$. Define the composite matrices
	\begin{equation*}
		\bm W_t=\begin{bmatrix}
			\bm Q_t \\ \bm P_t
		\end{bmatrix} \in \mathbb R^{2d\times d}, \qquad
		\bm S = \begin{bmatrix}
			2 \bm I_d & \bm I_d \\
			\bm I_d & \bm I_d
		\end{bmatrix} \in \mathbb R^{2d\times 2d},
	\end{equation*}
	and consider the \emph{twisted} quadratic form
	\begin{equation}
		\bm W_t^\top \bm S \bm W_t = 2\bm Q_t^\top \bm Q_t + \bm Q_t^\top \bm P_t + \bm P_t^\top \bm Q_t + \bm P_t^\top \bm P_t \in \mathbb R^{d\times d}.
		\label{twisted form W}
	\end{equation}
	The twisted quadratic form plays a fundamental role in hypocoercivity analysis \citep{villani2009hypocoercivity}, particularly for establishing ergodicity and Lyapunov conditions in the underdamped Langevin dynamics \citep{eberle2019couplings}.
	
	\subsection{First variation process: Proof of exponential decay}
	We show that the exponential decay of the quadratic form $\bm W_t^\top \bm S \bm W_t$ can be rigorously established under the global convexity of the potential function $U(\bm x)$.
	
	\begin{lemma}
	\label{lemma: first variation 1}
	Under Assumption~\ref{asP} with the initial configuration
		\begin{equation*}
			(\bm Q_0,\bm P_0)=(\bm I_d,\bm O_d)~\text{or}~(\bm O_d,\bm I_d),
		\end{equation*}
		the first variation process $\bm W_t$ satisfies the exponential decay property
		\begin{equation*}
			\bm W_t^\top \bm S \bm W_t \Prec 2e^{-\frac{mt}{M_2}} \bm I_d, \quad \forall t\Ge0.
		\end{equation*}
	\end{lemma}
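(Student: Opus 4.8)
The plan is to study the symmetric $d\times d$ matrix-valued process $\bm A_t := \bm W_t^\top \bm S \bm W_t$ appearing in \eqref{twisted form W}, and to prove the matrix differential inequality $\dot{\bm A}_t \preccurlyeq -\frac{m}{M_2}\bm A_t$ for all $t\ge 0$. Since for both admissible initial configurations one has $\bm A_0 = 2\bm I_d$ (case $(\bm I_d,\bm O_d)$) or $\bm A_0 = \bm I_d$ (case $(\bm O_d,\bm I_d)$), hence $\bm A_0 \preccurlyeq 2\bm I_d$ in either case, this inequality integrates to $\bm A_t \preccurlyeq 2e^{-mt/M_2}\bm I_d$, which is the assertion.

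The first step is to differentiate $\bm A_t$ along the linear flow \eqref{first variation}. Substituting $\dot{\bm Q}_t = \bm P_t$ and $\dot{\bm P}_t = -\frac1{M_2}\bm H_t\bm Q_t - 2\bm P_t$, where I write $\bm H_t := \nabla^2 U(\bm x_t)$, and using the symmetry of $\bm S$, a direct computation — in which the off-diagonal block of $\bm S$ is tuned so that all terms of the form $\bm Q_t^\top\bm P_t + \bm P_t^\top\bm Q_t$ cancel — yields
\[
\dot{\bm A}_t = -\frac{2}{M_2}\bm Q_t^\top\bm H_t\bm Q_t - \frac{1}{M_2}\big(\bm Q_t^\top\bm H_t\bm P_t + \bm P_t^\top\bm H_t\bm Q_t\big) - 2\bm P_t^\top\bm P_t .
\]
The core of the argument is then the pointwise bound $\frac{m}{M_2}\bm A_t + \dot{\bm A}_t \preccurlyeq 0$. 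I would split $\bm H_t = m\bm I_d + \bm K_t$, where by Assumption~\ref{asP} one has $\bm 0 \preccurlyeq \bm K_t \preccurlyeq (M_2 - m)\bm I_d$ (here $m\le M_2$ is used). After substitution, the $\bm Q_t^\top\bm Q_t$ and $\bm Q_t^\top\bm P_t+\bm P_t^\top\bm Q_t$ contributions proportional to $m$ cancel exactly against those in $\frac{m}{M_2}\bm A_t$, leaving
\[
\frac{m}{M_2}\bm A_t + \dot{\bm A}_t = -\frac{2}{M_2}\bm Q_t^\top\bm K_t\bm Q_t - \frac{1}{M_2}\big(\bm Q_t^\top\bm K_t\bm P_t + \bm P_t^\top\bm K_t\bm Q_t\big) + \Big(\frac{m}{M_2} - 2\Big)\bm P_t^\top\bm P_t .
\]
Applying the elementary matrix inequality $\bm Q_t^\top\bm K_t\bm P_t + \bm P_t^\top\bm K_t\bm Q_t \succcurlyeq -\bm Q_t^\top\bm K_t\bm Q_t - \bm P_t^\top\bm K_t\bm P_t$ (equivalent to $(\bm Q_t+\bm P_t)^\top\bm K_t(\bm Q_t+\bm P_t)\succcurlyeq 0$) and then $\bm P_t^\top\bm K_t\bm P_t \preccurlyeq (M_2-m)\bm P_t^\top\bm P_t$, the coefficient of $\bm P_t^\top\bm P_t$ collapses to $1 - \frac{m}{M_2} + \frac{m}{M_2} - 2 = -1$ and that of $\bm Q_t^\top\bm K_t\bm Q_t$ to $-\frac1{M_2}$, so $\frac{m}{M_2}\bm A_t + \dot{\bm A}_t \preccurlyeq -\frac1{M_2}\bm Q_t^\top\bm K_t\bm Q_t - \bm P_t^\top\bm P_t \preccurlyeq 0$ since $\bm K_t \succcurlyeq 0$.

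To finish, I would note that $\frac{d}{dt}\big(e^{mt/M_2}\bm A_t\big) = e^{mt/M_2}\big(\frac{m}{M_2}\bm A_t + \dot{\bm A}_t\big) \preccurlyeq 0$; testing against an arbitrary fixed $\xi\in\mathbb R^d$, the scalar $t\mapsto e^{mt/M_2}\,\xi^\top\bm A_t\xi$ is nonincreasing, so $\xi^\top\bm A_t\xi \le e^{-mt/M_2}\,\xi^\top\bm A_0\xi \le 2e^{-mt/M_2}|\xi|^2$, i.e.\ $\bm A_t \preccurlyeq 2e^{-mt/M_2}\bm I_d$. I do not expect a genuine obstacle: this is a standard hypocoercivity computation, and the only points needing a little care are the bookkeeping in differentiating the twisted form (which is precisely why working with $\bm A_t$ rather than $\bm Q_t,\bm P_t$ separately is convenient) and the remark that a matrix-valued differential inequality may be integrated by pairing with fixed vectors. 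The mild ``hard part'' is checking that the specific constants in the dynamics \eqref{ULD} — damping $2$ and gradient scaling $1/M_2$ — are compatible with the fixed twisting matrix $\bm S$, which the coefficient accounting above confirms.
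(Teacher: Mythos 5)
Your proposal is correct and follows essentially the same route as the paper: differentiate the twisted quadratic form $\bm W_t^\top\bm S\bm W_t$ along the linearized flow, establish the coercivity bound $\frac{\D}{\D t}(\bm W_t^\top\bm S\bm W_t)\preccurlyeq-\frac{m}{M_2}\bm W_t^\top\bm S\bm W_t$ from the two-sided Hessian bounds in Assumption~\ref{asP}, and integrate via Gr\"onwall together with $\bm W_0^\top\bm S\bm W_0\preccurlyeq 2\bm I_d$. The only cosmetic difference is that the paper packages the coercivity step as the single block-matrix inequality $\bm A_t^\top\bm S+\bm S\bm A_t\succcurlyeq\frac{m}{M_2}\bm S$, whereas you verify the same inequality by expanding into $\bm Q_t,\bm P_t$ blocks and accounting coefficients with the splitting $\bm H_t=m\bm I_d+\bm K_t$.
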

	
	\begin{proof}
		\label{Proof: first variation 1}
		Define the auxiliary matrices
		\begin{equation}
			\bm H_t = \frac{1}{M_2}\nabla^2 U(\bm x_t) \in \mathbb R^{d\times d}, \qquad
			\bm A_t = \begin{bmatrix}
				\bm O_{d} & -\bm I_{d} \\
				\bm H_t & 2 \bm I_{d}
			\end{bmatrix} \in \mathbb R^{2d\times 2d},
			\label{HA expression}
		\end{equation}
		then the first variation process in \eqref{first variation} admits the matrix form $\dot{\bm W}_t = -\bm A_t \bm W_t$. Note that Assumption~\ref{asP} yields the matrix eigenvalue bound
		\begin{equation*}
			\frac{m}{M_2} \bm I_d \Prec \bm H_t \Prec \bm I_d,
		\end{equation*}
		and we obtain the following matrix inequality as the coercivity condition:
		\begin{equation}
			\bm A_t^\top \bm S + \bm S \bm A_t =
			\begin{bmatrix}
				2\bm H_t & \bm H_t \\
				\bm H_t & 2 \bm I_d
			\end{bmatrix} \Succ
			\begin{bmatrix}
				2\bm H_t & \bm H_t \\
				\bm H_t & 2 \bm H_t
			\end{bmatrix} \Succ \frac{m}{M_2} \begin{bmatrix}
				2 \bm I_d & \bm I_d \\
				\bm I_d & 2\bm I_d
			\end{bmatrix} \Succ \frac{m}{M_2} \bm S.
			\label{proof: S inequality}
		\end{equation}
		Differentiating the quadratic form $\bm W_t^\top \bm S\bm W_t$ with respect to $t$ yields
		\begin{equation*}
			\frac{\D}{\D t}(\bm W_t^\top \bm S\bm W_t) = -\bm W_t^\top (\bm A_t^\top \bm S + \bm S \bm A_t) \bm W_t \Prec -\frac{m}{M_2} (\bm W_t^\top \bm S\bm W_t),
		\end{equation*}
		then Grönwall's inequality generates the exponential decay:
		\begin{equation*}
			\bm W_t^\top \bm S \bm W_t \Prec e^{-\frac{m}{M_2}t} \bm W_0^\top \bm S \bm W_0 \Prec 2 e^{-\frac{m}{M_2}t} \bm I_d,
		\end{equation*}
		yielding the desired result.
	\end{proof}
	
Next, we derive the coupling contractivity for the underdamped Langevin dynamics \eqref{ULD}.
\begin{lemma}\label{lemma: contractivity}
	Under Assumption~\ref{asP}, let $(\bm x_t,\bm v_t)_{t\Ge0}$ and $(\bm x_t',\bm v_t')_{t\Ge0}$ be the strong solutions to \eqref{ULD} driven by the same Brownian motion $(\bm B_t)_{t\Ge0}$. Then we have almost surely
	\begin{equation*}
		|\bm x_t'-\bm x_t| \Le \sqrt{2} e^{-\frac{mt}{2M_2}} \big(|\bm x_0'-\bm x_0| + |\bm v_0'-\bm v_0|\big), \quad \forall t\Ge0.
	\end{equation*}
\end{lemma}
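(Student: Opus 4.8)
The plan is to run a \emph{synchronous coupling} argument: since $(\bm x_t,\bm v_t)$ and $(\bm x_t',\bm v_t')$ are driven by the \emph{same} Brownian motion, the diffusion term cancels in the difference, so the pair $(\delta\bm x_t,\delta\bm v_t):=(\bm x_t'-\bm x_t,\bm v_t'-\bm v_t)$ solves, pathwise and almost surely, a linear non-autonomous ODE of exactly the structure already analyzed in the proof of Lemma~\ref{lemma: first variation 1}. Thus the lemma is essentially Lemma~\ref{lemma: first variation 1} transported from the variational process to a difference of two solutions, with $\nabla^2U(\bm x_t)$ replaced by an averaged Hessian.

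First I would subtract the two copies of \eqref{ULD}. The $\tfrac{2}{\sqrt{M_2}}\dot{\bm B}_t$ terms cancel, leaving
\begin{equation*}
	\dot{\delta\bm x}_t = \delta\bm v_t,\qquad
	\dot{\delta\bm v}_t = -\tfrac1{M_2}\big(\nabla U(\bm x_t')-\nabla U(\bm x_t)\big)-2\,\delta\bm v_t,
\end{equation*}
which, crucially, is a genuine $C^1$ ODE in $t$ almost surely (the only stochastic-calculus input needed is the observation that the Itô noise exactly cancels). Writing $\nabla U(\bm x_t')-\nabla U(\bm x_t)=M_2\bar{\bm H}_t\,\delta\bm x_t$ with the averaged rescaled Hessian $\bar{\bm H}_t=\frac1{M_2}\int_0^1\nabla^2U\big(\bm x_t+s\,\delta\bm x_t\big)\,\D s$, Assumption~\ref{asP} gives $\frac{m}{M_2}\bm I_d\Prec\bar{\bm H}_t\Prec\bm I_d$, i.e.\ exactly the same eigenvalue bound that $\bm H_t$ enjoys in the proof of Lemma~\ref{lemma: first variation 1}.

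Next, set $\bm w_t=(\delta\bm x_t,\delta\bm v_t)^\top\in\mathbb R^{2d}$ and $\bm A_t=\begin{bmatrix}\bm O_d&-\bm I_d\\\bar{\bm H}_t&2\bm I_d\end{bmatrix}$, so that $\dot{\bm w}_t=-\bm A_t\bm w_t$. The coercivity inequality $\bm A_t^\top\bm S+\bm S\bm A_t\Succ\frac{m}{M_2}\bm S$ established in \eqref{proof: S inequality} holds verbatim with $\bm H_t$ replaced by $\bar{\bm H}_t$, because its proof uses only the bounds $\frac{m}{M_2}\bm I_d\Prec\bar{\bm H}_t\Prec\bm I_d$. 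Differentiating the twisted quadratic form and applying Grönwall's inequality then yields
\begin{equation*}
	\bm w_t^\top\bm S\bm w_t \Le e^{-\frac{mt}{M_2}}\,\bm w_0^\top\bm S\bm w_0 ,\qquad \forall t\Ge0.
\end{equation*}

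Finally, to extract the stated bound with the sharp constant $\sqrt2$, I would use the square-completion identity $\bm w_t^\top\bm S\bm w_t=|\delta\bm x_t|^2+|\delta\bm x_t+\delta\bm v_t|^2\Ge|\delta\bm x_t|^2$ on the left-hand side, and on the right-hand side the crude estimate $\bm w_0^\top\bm S\bm w_0\Le 2|\delta\bm x_0|^2+2|\delta\bm x_0||\delta\bm v_0|+|\delta\bm v_0|^2\Le 2\big(|\delta\bm x_0|+|\delta\bm v_0|\big)^2$, so that $|\delta\bm x_t|^2\Le 2e^{-\frac{mt}{M_2}}\big(|\delta\bm x_0|+|\delta\bm v_0|\big)^2$; taking square roots gives the claim. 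There is no deep obstacle here; the only points needing care are (i) justifying that the difference process is pathwise $C^1$ so that the ODE manipulations are rigorous without stochastic calculus, (ii) the mean-value representation of $\nabla U(\bm x_t')-\nabla U(\bm x_t)$, which needs only $U\in C^2$ and the Hessian bounds of Assumption~\ref{asP}, and (iii) tracking the numerical constants so that the factor $\sqrt2$ emerges from the square-completion identity for $\bm S$ rather than from a lossy eigenvalue bound.
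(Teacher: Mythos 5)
Your proof is correct, but it takes a genuinely different route from the paper. The paper obtains the lemma as a corollary of Lemma~\ref{lemma: first variation 1}: from $\bm W_t^\top\bm S\bm W_t\Prec 2e^{-mt/M_2}\bm I_d$ it extracts $\max\{\|D_{\bm x}\bm x_t\|,\|D_{\bm v}\bm x_t\|\}\Le\sqrt2\,e^{-mt/(2M_2)}$, then interpolates the initial data along $s\in[0,1]$ and writes $\bm x_t'-\bm x_t$ as an integral of the Jacobians of the interpolated flows; the constant $\sqrt2$ there comes from $\bm W_0^\top\bm S\bm W_0\Prec2\bm I_d$. You instead apply the synchronous-coupling cancellation directly to the difference process, which after the mean-value representation $\nabla U(\bm x_t')-\nabla U(\bm x_t)=M_2\bar{\bm H}_t\,\delta\bm x_t$ satisfies the same linear structure $\dot{\bm w}_t=-\bm A_t\bm w_t$ with $\frac{m}{M_2}\bm I_d\Prec\bar{\bm H}_t\Prec\bm I_d$, so the coercivity inequality \eqref{proof: S inequality} and a scalar Gr\"onwall argument apply verbatim; your constant $\sqrt2$ then comes from the norm equivalence $|\delta\bm x_t|^2\Le\bm w_t^\top\bm S\bm w_t$ and $\bm w_0^\top\bm S\bm w_0\Le2(|\delta\bm x_0|+|\delta\bm v_0|)^2$, both of which check out. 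Your argument is more self-contained and elementary for this particular lemma: it avoids invoking differentiability of the flow with respect to initial data and the interpolation step, needing only $U\in C^2$ and pathwise $C^1$-regularity of the difference (which holds because the It\^o terms cancel). The paper's route, on the other hand, reuses the first variation machinery that is indispensable anyway for the Hessian bounds in Theorem~\ref{theorem: u estimate}, so nothing is wasted there. Both arguments yield the identical rate and constant.
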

\begin{proof}
	Lemma~\ref{lemma: first variation 1} has established the matrix inequality
	\begin{equation}
		\bm Q_t^\top \bm Q_t \Prec \bm W_t^\top \bm S \bm W_t \Prec 2e^{-\frac{mt}{M_2}} \bm I_d, \quad \forall t\Ge0,
		\label{proof: QQ decay}
	\end{equation}
	implying $\|\bm Q_t\| \Le \sqrt{2} e^{-\frac{mt}{2M_2}}$ for all $t\Ge0$. This consequently establishes
	\begin{equation}
		\max\big\{\|D_{\bm x}\bm x_t\|,\|D_{\bm v}\bm x_t\|\big\} \Le \sqrt{2} e^{-\frac{mt}{2M_2}}, \quad \forall t\Ge0.
		\label{proof: D decay}
	\end{equation}
	Consider coupled solutions $(\bm x_t,\bm v_t)$ and $(\bm x_t',\bm v_t')$ driven by the same Brownian motion $(\bm B_t)_{t\Ge0}$. For $s\in[0,1]$, define their interpolation as
	\begin{equation*}
		(\bm x_0^s,\bm v_0^s) = (1-s)(\bm x_0,\bm v_0) + s(\bm x_0',\bm v_0')= (\bm x_0,\bm v_0) + s(\bm x_0'-\bm x_0,\bm v_0'-\bm v_0),
	\end{equation*}
	and let the process $(\bm x_t^s,\bm v_t^s)_{t\Ge0}$ solve the underdamped Langevin dynamics \eqref{ULD} with the initial state $(\bm x_0^s,\bm v_0^s)$. 
	Through the chain rule differentiation of $\bm x_t^s$ with respect to the parameter $s$, we derive the sensitivity equation
	\begin{equation*}
		\frac{\D}{\D s} \bm x_t^s = D_{\bm x}\bm x_t^s (\bm x_0'-\bm x_0) + D_{\bm v}\bm x_t^s (\bm v_0'-\bm v_0).
	\end{equation*}
	Integration over $s\in[0,1]$ provides the error decomposition
	\begin{equation}
		\bm x_t' - \bm x_t = \int_0^1 \Big(D_{\bm x}\bm x_t^s (\bm x_0'-\bm x_0) + D_{\bm v} \bm x_t^s (\bm v_0'-\bm v_0)\Big)\D s.
		\label{proof: x'-x}
	\end{equation}
	Finally, applying \eqref{proof: D decay} and \eqref{proof: x'-x} generates the contractivity bound:
	\begin{equation*}
		|\bm x_t'-\bm x_t| \Le \sqrt{2} e^{-\frac{mt}{2M_2}} \int_0^1 \big(|\bm x_0'-\bm x_0| + |\bm v_0'-\bm v_0|\big) \D s = \sqrt{2} e^{-\frac{mt}{2M_2}} \big(|\bm x_0'-\bm x_0| + |\bm v_0'-\bm v_0|\big),
	\end{equation*}
	which completes the proof of Lemma~\ref{lemma: contractivity}.
\end{proof}

While the exponential decay of the first variation process $\bm W_t$ is sufficient for bounding the gradient of $u(\bm x,\bm v,t)$, estimating its Hessian requires analyzing the second-order sensitivity of the underdamped Langevin dynamics \eqref{ULD}.

\subsection{First variation process: Proof of coupling contractivity}
We now prove the contractivity of the first variation process $\bm W_t$ with respect to perturbations in the initial state $(\bm x_0,\bm v_0)$. This property provides the necessary second-order information for analyzing the underdamped Langevin dynamics \eqref{ULD}.

\begin{lemma}\label{lemma: first variation 2}
	Under Assumption~\ref{asP} with the initial configuration
	\begin{equation*}
		(\bm Q_0,\bm P_0)=(\bm I_d,\bm O_d)~\text{or}~(\bm O_d,\bm I_d),
	\end{equation*}
	for the first variation processes
	\begin{equation*}
		\begin{aligned}
			\bm W_t &= \mathrm{FirstVariation}\big(\bm Q_0,\bm P_0,\bm x_0,\bm v_0,(\bm B_t)_{t\Ge0}\big), \\
			\bm W_t' &= \mathrm{FirstVariation}\big(\bm Q_0,\bm P_0,\bm x_0',\bm v_0',(\bm B_t)_{t\Ge0}\big),
		\end{aligned}
	\end{equation*}
	their discrepancy satisfies the contractive bound
	\begin{equation*}
		(\bm W_t'-\bm W_t)^\top \bm S (\bm W_t'-\bm W_t) \Prec \frac{32M_3^2}{m^2} e^{-\frac{mt}{4M_2}} \big(|\bm x_0'-\bm x_0| + |\bm v_0'-\bm v_0|\big)^2 \bm I_d, \quad \forall t\Ge0.
	\end{equation*}
\end{lemma}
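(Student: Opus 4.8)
The plan is to analyze the linear matrix ODE for the discrepancy $\bm D_t := \bm W_t' - \bm W_t$. Both $\bm W_t$ and $\bm W_t'$ solve $\dot{\bm W}_t = -\bm A_t\bm W_t$ with the generator $\bm A_t$ from \eqref{HA expression}, evaluated along $(\bm x_t,\bm v_t)$ and $(\bm x_t',\bm v_t')$ respectively; denoting by $\bm A_t'$ the matrix \eqref{HA expression} with $\bm x_t$ replaced by $\bm x_t'$ and subtracting the two equations, I obtain
\[
\dot{\bm D}_t = -\bm A_t'\bm D_t - \bm E_t,\qquad \bm E_t := (\bm A_t' - \bm A_t)\bm W_t,\qquad \bm D_0 = \bm 0,
\]
the last equality because $\bm W_0' = \bm W_0$ (the two processes share the initial configuration $(\bm Q_0,\bm P_0)$). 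The matrix $\bm A_t' - \bm A_t$ has the single nonzero block $\bm H_t' - \bm H_t = \frac1{M_2}\big(\nabla^2 U(\bm x_t') - \nabla^2 U(\bm x_t)\big)$, so $\|\bm H_t' - \bm H_t\| \Le \frac{M_3}{M_2}|\bm x_t' - \bm x_t|$ by Assumption~\ref{asP}, and $\bm E_t = \begin{bmatrix}\bm{O}_d\\ (\bm H_t'-\bm H_t)\bm Q_t\end{bmatrix}$. Thus the homogeneous part of $\bm D_t$ is driven by the same coercive generator used in Lemma~\ref{lemma: first variation 1}, while $\bm E_t$ is a forcing term that the coupling contractivity makes exponentially small.

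Next I differentiate the twisted form $\bm D_t^\top\bm S\bm D_t$. The coercivity identity \eqref{proof: S inequality} holds verbatim for $\bm A_t'$ (its proof only uses the eigenvalue bounds on the Hessian), so the dissipative term is $-\bm D_t^\top\big((\bm A_t')^\top\bm S + \bm S\bm A_t'\big)\bm D_t \Prec -\frac{m}{M_2}\bm D_t^\top\bm S\bm D_t$; for the cross term I apply the matrix Young inequality $\bm E_t^\top\bm S\bm D_t + \bm D_t^\top\bm S\bm E_t \Prec \frac{m}{2M_2}\bm D_t^\top\bm S\bm D_t + \frac{2M_2}{m}\bm E_t^\top\bm S\bm E_t$, valid since $\bm S\Succ\bm{O}_{2d}$. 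This gives
\[
\frac{\D}{\D t}\big(\bm D_t^\top\bm S\bm D_t\big) \Prec -\frac{m}{2M_2}\bm D_t^\top\bm S\bm D_t + \frac{2M_2}{m}\bm E_t^\top\bm S\bm E_t .
\]
Because only the bottom block of $\bm E_t$ is nonzero, $\bm E_t^\top\bm S\bm E_t = \bm Q_t^\top(\bm H_t'-\bm H_t)^2\bm Q_t \Prec \frac{M_3^2}{M_2^2}|\bm x_t'-\bm x_t|^2\,\|\bm Q_t\|^2\,\bm I_d$; inserting $\|\bm Q_t\|^2 \Le 2e^{-mt/M_2}$ from \eqref{proof: QQ decay} and $|\bm x_t'-\bm x_t|^2 \Le 2e^{-mt/M_2}\big(|\bm x_0'-\bm x_0|+|\bm v_0'-\bm v_0|\big)^2$ from Lemma~\ref{lemma: contractivity} bounds the forcing by $\frac{4M_3^2}{M_2^2}\big(|\bm x_0'-\bm x_0|+|\bm v_0'-\bm v_0|\big)^2 e^{-2mt/M_2}\bm I_d$. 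Projecting the matrix inequality onto an arbitrary unit vector $\bm u\in\mathbb R^d$ yields a scalar inequality $\psi'(t) \Le -\frac{m}{2M_2}\psi(t) + c\,e^{-2mt/M_2}$ with $\psi(0)=0$; there is no resonance since $2m/M_2 > m/(2M_2)$, so Grönwall gives $\psi(t) \Le \frac{CM_3^2}{m^2}\big(|\bm x_0'-\bm x_0|+|\bm v_0'-\bm v_0|\big)^2 e^{-mt/(2M_2)}$, and since the stated rate $\frac{m}{4M_2}$ is weaker and the constant $\frac{32M_3^2}{m^2}$ is generous, the claim follows.

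I do not expect a genuinely difficult step: the proof is an assembly of Lemmas~\ref{lemma: first variation 1} and \ref{lemma: contractivity} through a coercivity-plus-Young estimate closed by Grönwall. The one point requiring care is that the forcing term $\bm E_t$ must be controlled \emph{pathwise} (not merely in expectation), which is precisely what Lemma~\ref{lemma: contractivity} — and, underneath it, the operator-norm decay $\|\bm Q_t\| \Le \sqrt2\,e^{-mt/(2M_2)}$ extracted from Lemma~\ref{lemma: first variation 1} — provides, so that the differential inequality for $\bm D_t^\top\bm S\bm D_t$ closes along almost every Brownian path.
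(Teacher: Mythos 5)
Your proposal is correct and follows essentially the same route as the paper: differentiate the twisted form of the discrepancy $\bm W_t'-\bm W_t$, absorb the homogeneous part via the coercivity \eqref{proof: S inequality}, control the forcing $(\bm A_t'-\bm A_t)\bm W_t$ pathwise through the decay of $\bm Q_t$ and Lemma~\ref{lemma: contractivity}, and close with Grönwall. The only difference is that you use a constant Young parameter $\epsilon=\frac{m}{2M_2}$ and variation of constants where the paper uses a time-dependent parameter $k$ and a weighted Grönwall; your version is a bit cleaner and in fact yields the sharper rate $e^{-mt/(2M_2)}$, which implies the stated bound.
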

\begin{proof} 
	Building upon the analytical framework established in Lemma~\ref{lemma: first variation 1}, define
	\begin{equation*}
		\bm F_t = (\bm W_t' - \bm W_t)^\top \bm S (\bm W_t' - \bm W_t) \in \mathbb R^{d\times d}.
	\end{equation*}
	Computing the time derivative of $\bm F_t$ with respect to $t$ yields the evolution equation:
	\begin{align}
		\frac{\D}{\D t} \bm F_t & = -(\bm W_t'-\bm W_t)^\top \bm S (\bm A_t' \bm W_t'-\bm A_t \bm W_t) + \text{transpose} \notag \\
		& = -(\bm W_t'-\bm W_t)^\top \bm S \bm A_t(\bm W_t' - \bm W_t) 
		- (\bm W_t'-\bm W_t)^\top \bm S (\bm A_t'-\bm A_t) \bm W_t' + \text{transpose} \notag \\
		& = -\bm F_t^{(1)} - \bm F_t^{(2)},
		\label{proof: F12}
	\end{align}
	with constituent matrices defined as
	\begin{align*}
		\bm F_t^{(1)} & = (\bm W_t'-\bm W_t)^\top (\bm A_t^\top \bm S+\bm S \bm A_t)(\bm W_t' - \bm W_t), \\
		\bm F_t^{(2)} & = (\bm W_t'-\bm W_t)^\top \bm S (\bm A_t'-\bm A_t) \bm W_t' + \text{transpose}.
	\end{align*}
	
	Applying the matrix inequality \eqref{proof: S inequality} directly to $\bm F_t^{(1)}$ yields the coercive bound
	\begin{equation}
		\bm F_t^{(1)} \Succ \frac{m}{M_2} (\bm W_t'-\bm W_t)^\top \bm S (\bm W_t'-\bm W_t) = \frac{m}{M_2} \bm F_t.
		\label{proof: F1 estimate}
	\end{equation}
	For $\bm F_t^{(2)}$, we first decompose it using the expressions of $\bm A_t$ and $\bm A_t'$ in \eqref{HA expression},
	\begin{equation*}
		\bm F_t^{(2)} = (\bm Q_t'-\bm Q_t + \bm P_t'-\bm P_t)^\top (\bm H_t' -\bm H_t) \bm Q_t' + \text{transpose}.
	\end{equation*}
	Applying the Young's inequality with parameter $k>0$ generates
	\begin{equation}
		\bm F_t^{(2)} \Succ -\|\bm H_t'-\bm H_t\| \bigg(k (\bm Q_t'-\bm Q_t + \bm P_t'-\bm P_t)^\top (\bm Q_t'-\bm Q_t + \bm P_t'-\bm P_t) + \frac{1}{k} (\bm Q_t')^\top \bm Q_t'\bigg),
		\label{proof: F2 estimate}
	\end{equation}
	where Lemma~\ref{lemma: contractivity} provides the distance bound of $\bm H_t$ and $\bm H_t'$:
	\begin{equation*}
		\|\bm H_t'-\bm H_t\| \Le \frac{M_3}{M_2} |\bm x_t'-\bm x_t| \Le \frac{\sqrt{2} M_3}{M_2} e^{-\frac{mt}{2M_2}} \big(|\bm x_0'-\bm x_0| + |\bm v_0'-\bm v_0|\big).
	\end{equation*}
	Furthermore, the quadratic term in \eqref{proof: F2 estimate} admits the upper bound
	\begin{align*}
		(\bm Q_t'-\bm Q_t + \bm P_t'-\bm P_t)^\top (\bm Q_t'-\bm Q_t + \bm P_t'-\bm P_t) & \Prec (\bm W_t'-\bm W_t)^\top \bm S (\bm W_t'-\bm W_t) = \bm F_t.
	\end{align*}
	and \eqref{proof: QQ decay} implies the inequality
	\begin{equation*}
		(\bm Q_t')^\top \bm Q_t' \Prec 2 e^{-\frac{mt}{M_2}} \bm I_d.
	\end{equation*}
	Substituting the estimates above into \eqref{proof: F2 estimate} yields the estimate of $\bm F_t^{(2)}$:
	\begin{equation}
		\bm F_t^{(2)} \Succ -\frac{\sqrt{2} M_3}{M_2} e^{-\frac{mt}{2M_2}} \big(|\bm x_0'-\bm x_0| + |\bm v_0'-\bm v_0|\big) \bigg(k\bm F_t + \frac{2}{k} e^{-\frac{mt}{M_2}} \bm I_d\bigg).
		\label{proof: F2 estimate 2}
	\end{equation}
	
	We synthesize the estimates of $\bm F_t^{(1)}$ and $\bm F_t^{(2)}$ from \eqref{proof: F1 estimate} and \eqref{proof: F2 estimate 2} to derive
	\begin{equation}
		\frac{\D}{\D t} \bm F_t \Prec -\frac{m}{M_2} \bm F_t + \frac{\sqrt{2} M_3}{M_2} e^{-\frac{mt}{2M_2}} \delta \bigg(k\bm F_t + \frac{2}{k} e^{-\frac{mt}{M_2}} \bm I_d\bigg),
		\label{proof: F inequality}
	\end{equation}
	where $\delta = |\bm x_0'-\bm x_0| + |\bm v_0'-\bm v_0|$ denotes the initial state discrepancy parameter. Given $\delta>0$, we choose $k$ to be moderately small to ensure $\bm F_t$ in \eqref{proof: F inequality} has a negative coefficient:
	\begin{equation*}
		\frac{m}{2M_2} = \frac{\sqrt{2} M_3}{M_2} e^{-\frac{mt}{2M_2}} \delta k ~\Longrightarrow~ k = \frac{m}{2\sqrt{2}M_3\delta} e^{\frac{mt}{2M_2}}.
	\end{equation*}
	Substituting this $k$ value into \eqref{proof: F inequality} yields the refined differential inequality
	\begin{equation*}
		\frac{\D}{\D t} \bm F_t \Prec -\frac{m}{2M_2} \bm F_t + \frac{8(M_3\delta)^2}{M_2 m} e^{-\frac{2mt}{M_2}} \bm I_d, \qquad \forall t\Ge0.
	\end{equation*}
	
	To establish the exponential decay of $\bm F_t$, we introduce the exponentially weighted matrix $\bm G_t = e^{\frac{mt}{4M_2}} \bm F_t$, which satisfies the evolution inequality
	\begin{align*}
		\frac{\D}{\D t} \bm G_t & = \frac{m}{4M_2} e^{\frac{mt}{4M_2}} \bm F_t + e^{\frac{mt}{4M_2}} \frac{\D}{\D t}\bm F_t \\
		& \Prec e^{\frac{mt}{4M_2}} \bigg(-\frac{m}{4M_2} \bm F_t + \frac{8(M_3\delta)^2}{M_2 m} e^{-\frac{2mt}{M_2}} \bm I_d\bigg) \\
		& \Prec -\frac{m}{4M_2} \bm G_t + \frac{8(M_3\delta)^2}{M_2 m} \bm I_d.
	\end{align*}
	This Grönwall's inequality with the initial condition $\bm G_0 = \bm O_d$ generates the upper bound
	\begin{equation*}
		\bm G_t \Prec \frac{32 M_3^2}{m^2} \delta^2 \bm I_d ~\Longrightarrow~
		\bm F_t \Prec \frac{32 M_3^2}{m^2} e^{-\frac{mt}{4M_2}} \delta^2 \bm I_d,
	\end{equation*}
	thereby completing the proof of Lemma~\ref{lemma: first variation 2}.
\end{proof}

Applying Lemma~\ref{lemma: first variation 2}, we can now provide a quantitative estimate of the gradient and Hessian for the Kolmogorov solution $u(\bm x,\bm v,t)$ defined in \eqref{function u}.
\begin{theorem}
	\label{theorem: u estimate}
	Under Assumptions~\ref{asP} and \ref{asT}, the Kolmogorov solution $u(\bm x,\bm v,t)$ satisfies
	\begin{align*}
		\max\big\{|\nabla_{\bm x} u(\bm x,\bm v,t)|,
		|\nabla_{\bm v} u(\bm x,\bm v,t)|\big\} & \Le C e^{-\frac{mt}{2M_2}}, \\
		\max\big\{
		\norm{\nabla_{\bm x\bm x}^2 u(\bm x,\bm v,t)},
		\norm{\nabla_{\bm x\bm v}^2 u(\bm x,\bm v,t)},
		\norm{\nabla_{\bm v\bm v}^2 u(\bm x,\bm v,t)}
		\big\} & \Le \frac{C}{m} e^{-\frac{mt}{8M_2}},
	\end{align*}
	where the constant $C$ depends only on $(M_i)_{i=1}^3$ and $(L_i)_{i=1}^2$.
\end{theorem}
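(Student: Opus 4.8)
The plan is to read the gradient bound directly off the first-variation representation \eqref{gradient u expression} together with Lemma~\ref{lemma: first variation 1}, and then to obtain the Hessian bound by a coupling (difference-quotient) argument on the gradient that feeds on the contractivity estimate of Lemma~\ref{lemma: first variation 2}.

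\textbf{Gradient.} From \eqref{gradient u expression}, $|\nabla_{\bm x}u(\bm x,\bm v,t)| \Le \mathbb E^{(\bm x,\bm v)}\big[\|D_{\bm x}\bm x_t\|\,|\nabla f(\bm x_t)|\big] \Le L_1\,\mathbb E\big[\|D_{\bm x}\bm x_t\|\big]$. Writing $D_{\bm x}\bm x_t = \bm Q_t$ with $(\bm Q_0,\bm P_0) = (\bm I_d,\bm O_d)$, one has $\bm Q_t^\top\bm Q_t \Prec \bm W_t^\top\bm S\bm W_t$ because $\bm W_t^\top\bm S\bm W_t - \bm Q_t^\top\bm Q_t = (\bm Q_t+\bm P_t)^\top(\bm Q_t+\bm P_t) \Succ \bm O_d$, so Lemma~\ref{lemma: first variation 1} gives $\|\bm Q_t\| \Le \sqrt2\,e^{-mt/(2M_2)}$ and hence $|\nabla_{\bm x}u| \Le \sqrt2\,L_1 e^{-mt/(2M_2)}$; the bound for $\nabla_{\bm v}u$ is identical with $D_{\bm v}\bm x_t$ in place of $D_{\bm x}\bm x_t$ (initial configuration $(\bm O_d,\bm I_d)$).

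\textbf{Hessian.} I would first note that $u(\cdot,\cdot,t) \in C^2(\mathbb R^{2d})$: since $f\in C^2$ and $U\in C^3$ with $\nabla^2 U,\nabla^3 U$ bounded, $f(\bm x_t)$ can be differentiated twice in the initial state under the expectation (dominated convergence, using the deterministic bound on the first variation process and the $L^p$ bound on the second variation process), so it suffices to bound the Lipschitz constants of $\nabla_{\bm x}u(\cdot,\cdot,t)$ and $\nabla_{\bm v}u(\cdot,\cdot,t)$. To this end, couple two solutions $(\bm x_t,\bm v_t)$ and $(\bm x_t',\bm v_t')$ of \eqref{ULD} driven by the same Brownian motion, with initial states $(\bm x_0,\bm v_0)$ and $(\bm x_0',\bm v_0')$, put $\delta = |\bm x_0'-\bm x_0| + |\bm v_0'-\bm v_0|$, and write, with $\bm Q_t = D_{\bm x}\bm x_t$ and $\bm Q_t' = D_{\bm x}\bm x_t'$,
\[
\nabla_{\bm x}u(\bm x_0',\bm v_0',t) - \nabla_{\bm x}u(\bm x_0,\bm v_0,t) = \mathbb E\big[(\bm Q_t'-\bm Q_t)^\top\nabla f(\bm x_t')\big] + \mathbb E\big[\bm Q_t^\top\big(\nabla f(\bm x_t')-\nabla f(\bm x_t)\big)\big].
\]
For the first expectation, $(\bm Q_t'-\bm Q_t)^\top(\bm Q_t'-\bm Q_t) \Prec (\bm W_t'-\bm W_t)^\top\bm S(\bm W_t'-\bm W_t) \Prec \frac{32M_3^2}{m^2}e^{-mt/(4M_2)}\delta^2\bm I_d$ by Lemma~\ref{lemma: first variation 2}, so it is bounded by $\frac{\sqrt{32}\,M_3 L_1}{m}e^{-mt/(8M_2)}\delta$. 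For the second expectation, $\|\bm Q_t\| \Le \sqrt2\,e^{-mt/(2M_2)}$, $\|\nabla^2 f\| \Le L_2$, and Lemma~\ref{lemma: contractivity} gives $|\bm x_t'-\bm x_t| \Le \sqrt2\,e^{-mt/(2M_2)}\delta$ almost surely, so it is bounded by $2L_2 e^{-mt/M_2}\delta \Le \frac{2L_2 M_2}{m}e^{-mt/(8M_2)}\delta$ (using $m\Le M_2$). Hence $\nabla_{\bm x}u(\cdot,\cdot,t)$ is Lipschitz with constant $\Le \frac Cm e^{-mt/(8M_2)}$, $C = \sqrt{32}\,M_3 L_1 + 2L_2 M_2$; taking the perturbation along $\bm x_0$ only bounds $\|\nabla_{\bm x\bm x}^2 u\|$ and along $\bm v_0$ only bounds $\|\nabla_{\bm x\bm v}^2 u\|$. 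Running the same argument on $\nabla_{\bm v}u$ (with $D_{\bm v}\bm x_t$ in place of $D_{\bm x}\bm x_t$) bounds $\|\nabla_{\bm v\bm v}^2 u\|$, which completes the proof.

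\textbf{Main obstacle.} The quantitative heart — the $e^{-mt/(8M_2)}$ rate and the $1/m$ prefactor — is already packaged in Lemma~\ref{lemma: first variation 2}, so the difference-quotient step itself is essentially bookkeeping (tracking which perturbation direction yields which Hessian block, and absorbing constants via $m\Le M_2$). The one genuinely delicate point is the $C^2$ regularity of $u(\cdot,\cdot,t)$ needed to turn the Lipschitz bound on $\nabla u$ into a \emph{pointwise} Hessian bound; justifying differentiation under the expectation requires moment control of the first and second variation processes, which is available here precisely because Assumption~\ref{asP} bounds $\nabla^2 U$ and $\nabla^3 U$. If one prefers to bypass the second variation process entirely, an equivalent route is to mollify $f$, establish the estimates for the smooth approximation, and pass to the limit using the uniform bounds.
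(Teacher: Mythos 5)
Your proposal is correct and follows essentially the same route as the paper: the gradient bound via the representation \eqref{gradient u expression} and Lemma~\ref{lemma: first variation 1}, and the Hessian bound via the same two-term coupling decomposition of $\nabla_{\bm x}u(\bm x_0',\bm v_0',t)-\nabla_{\bm x}u(\bm x_0,\bm v_0,t)$ controlled by Lemmas~\ref{lemma: first variation 2} and~\ref{lemma: contractivity}. Your explicit remark on the $C^2$ regularity needed to convert the Lipschitz bound on $\nabla u$ into a pointwise Hessian bound is a point the paper leaves implicit, and is a reasonable addition.
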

\begin{proof}
	First applying Lemma~\ref{lemma: first variation 1} with the decay estimate \eqref{proof: D decay}, we establish the exponential decay of the first variation processes:
	\begin{equation*}
		\max\big\{\norm{D_{\bm x}\bm x_t},\norm{D_{\bm v}\bm x_t}\big\} \Le \sqrt{2} e^{-\frac{mt}{2M_2}},\qquad \forall t \Ge 0.
	\end{equation*}
	Combining this with the gradient representation $\nabla_{\bm x} u(\bm x_0,\bm v_0,t)$ in \eqref{gradient u expression}, we deduce
	\begin{equation*}
		|\nabla_{\bm x} u(\bm x_0,\bm v_0,t)| \Le 
		\mathbb{E}^{(\bm x_0,\bm v_0)} \big[ \norm{D_{\bm x}\bm x_t} |\nabla f(\bm x_t)| \big] \Le C e^{-\frac{mt}{2M_2}}.
	\end{equation*}
	The identical argument applied to the estimate for $\nabla_{\bm v} u(\bm x,\bm v,t)$,
	hence we obtain the first inequality in Theorem~\ref{theorem: u estimate}.
	
	To establish the Hessian estimates, consider two distinct initial states $(\bm x_0,\bm v_0)$ and $(\bm x_0',\bm v_0')$ and define the corresponding solutions $(\bm x_t,\bm v_t)_{t\Ge0}$ and $(\bm x_t',\bm v_t')_{t\Ge0}$ to the underdamped Langevin dynamics \eqref{ULD} driven by the same Brownian motion $(\bm B_t)_{t\Ge0}$. Then
	\begin{equation}
		\nabla_{\bm x} u(\bm x_0',\bm v_0',t) - \nabla_{\bm x} u(\bm x_0,\bm v_0,t) = \mathbb{E}\Big[ (D_{\bm x} \bm x_t')^\top \nabla f(\bm x_t') - (D_{\bm x} \bm x_t)^\top \nabla f(\bm x_t) \Big] = I_1 + I_2,
		\label{proof: grad_u difference}
	\end{equation}
	where the expectation is taken over $(\bm B_t)_{t\Ge0}$, and the constituent terms decompose as
	\begin{equation*}
		I_1 = \mathbb{E}\Big[(D_{\bm x} \bm x_t' - D_{\bm x} \bm x_t)^\top \nabla f(\bm x_t')\Big], \qquad
		I_2 = \mathbb{E}\Big[ (D_{\bm x} \bm x_t)^\top (\nabla f(\bm x_t') - \nabla f(\bm x_t)) \Big].
	\end{equation*}
	For $I_1$ estimation, Lemma~\ref{lemma: first variation 2} yields the exponential decay property
	\begin{equation*}
		\norm{D_{\bm x}\bm x_t'- D_{\bm x}\bm x_t} \Le \frac{C}{m} e^{-\frac{mt}{8M_2}} \big(|\bm x_0'-\bm x_0|+ |\bm v_0'-\bm v_0|\big),
	\end{equation*}
	which leads to the inequality
	\begin{equation}
		|I_1| \Le \mathbb{E} \Big[\norm{D_{\bm x}\bm x_t'- D_{\bm x}\bm x_t} |\nabla f(\bm x_t')|\Big] \Le \frac{C}{m} e^{-\frac{mt}{8M_2}} \big(|\bm x_0'-\bm x_0|+ |\bm v_0'-\bm v_0|\big).
		\label{proof: I1 estimate}
	\end{equation}
	The $I_2$ term is controlled through Lemma~\ref{lemma: contractivity} via the chain of inequalities
	\begin{align}
		|I_2| & \Le \mathbb{E} \Big[
		\norm{D_{\bm x} \bm x_t} |\nabla f(\bm x_t') - \nabla f(\bm x_t)| \Big] \Le C\cdot \mathbb{E} \Big[\norm{D_{\bm x}\bm x_t} |\bm x_t' -\bm x_t|\Big]  \notag \\
		& \Le  C \cdot e^{-\frac{mt}{2M_2}} \cdot e^{-\frac{mt}{2M_2}} \big(|\bm x_0'-\bm x_0|+ |\bm v_0'-\bm v_0|\big) \notag \\
		& \Le C e^{-\frac{mt}{M_2}} \big(|\bm x_0'-\bm x_0|+ |\bm v_0'-\bm v_0|\big).
		\label{proof: I2 estimate}
	\end{align}
	
	Synthesizing the estimates in \eqref{proof: grad_u difference}\eqref{proof: I1 estimate}\eqref{proof: I2 estimate}, we establish the composite bound
	\begin{equation*}
		\big|\nabla_{\bm x} u(\bm x_0',\bm v_0',t) - \nabla_{\bm x} u(\bm x_0,\bm v_0,t)\big| \Le \frac{C}{m} e^{-\frac{mt}{8M_2}} \big(|\bm x_0'-\bm x_0|+ |\bm v_0'-\bm v_0|\big).
	\end{equation*}
	The arbitrariness of initial states $(\bm x_0,\bm v_0)$ and $(\bm x_0',\bm v_0')$ implies the Hessian norm control:
	\begin{equation*}
		\norm{\nabla_{\bm x\bm x}^2 u(\bm x,\bm v,t)} \Le \frac{C}{m} e^{-\frac{mt}{8M_2}}.
	\end{equation*}
	The identical methodology applies to the mixed derivative $\nabla_{\bm x\bm v}^2 u(\bm x,\bm v,t)$ and the velocity Hessian $\nabla_{\bm v\bm v}^2 u(\bm x,\bm v,t)$, completing the full Hessian estimate.
\end{proof}

Theorem~\ref{theorem: u estimate} directly implies the derivative bound for the discrete Poisson solution 
$$
\phi_h(\bm x,\bm v) = h\sum_{k=0}^\infty u(\bm x,\bm v,kh)
$$
under the condition $h\Le \frac14$,
concluding the proof of Theorem~\ref{theorem: phi h estimate}.

\section{Uniform-in-time moment bound for SG-UBU}
\label{appendix: uniform-in-time bound}

For convenience, introduce the notations
\begin{equation*}
	\bm z = \begin{bmatrix}
		\bm x \\ \bm v
	\end{bmatrix} \in \mathbb R^{2d},\quad
	\bm Z_k = \begin{bmatrix}
		\bm X_k \\ \bm V_k
	\end{bmatrix} \in \mathbb R^{2d},\quad
	\bm S = \begin{bmatrix}
		2\bm I_d & \bm I_d \\
		\bm I_d & \bm I_d
	\end{bmatrix} \in \mathbb R^{2d\times 2d},
\end{equation*}
then the Lyapunov function $\mathcal V(\bm x,\bm v) = \big(2|\bm x|^2 + 2\bm x^\top \bm v + |\bm v|^2\big)^4$ can be compactly written as
\begin{equation}
	\mathcal V(\bm z) = (\bm z^\top \bm S\bm z)^4.
\end{equation}
\subsection{Lyapunov condition for FG-BU integrator}
To begin with, we verify the Lyapunov condition for the FG-UBU integrator.
\begin{lemma}
	\label{lemma: FG-BU Lyapunov}
	Under Assumptions~\ref{asP}, let the step size $h\Le \frac14$; for any $\bm z_0\in\mathbb R^{2d}$,
	\begin{equation*}
		\mathbb E\Big[\mathcal V\big((\Phi_h^{\mathfrak U} \circ \Phi_h^{\mathfrak B}\big) \bm z_0\big)\Big] \Le \bigg(1-\frac{mh}{2M_2}\bigg) \mathcal V(\bm z_0) + \frac{Cd^4h}{m^3},
	\end{equation*}
	where the constant $C$ depends only on $M_1,M_2$.
\end{lemma}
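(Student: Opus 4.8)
The plan is to separate the deterministic one-step map from the Gaussian noise injected by the $\mathfrak U$-flow, obtain a sharp contraction for the twisted quadratic form under the deterministic map, and then control the noise contributions by an exact expansion together with Young's inequality. Since $\nabla U(\bm 0)=\bm 0$, write $\nabla U(\bm x_0)=\bm H\bm x_0$ with $\bm H=\bm H(\bm x_0)=\int_0^1\nabla^2 U(s\bm x_0)\,\D s$, so that Assumption~\ref{asP} gives $m\bm I_d\Prec\bm H\Prec M_2\bm I_d$. Then $\Phi_h^{\mathfrak B}$ acts linearly and $(\Phi_h^{\mathfrak U}\circ\Phi_h^{\mathfrak B})\bm z_0=\bm G_h\bm z_0+\bm\xi$, where $\bm G_h=\bm U_h\bm B_h$ with $\bm B_h=\begin{bmatrix}\bm I_d&\bm O_d\\-\tfrac{h}{M_2}\bm H&\bm I_d\end{bmatrix}$, $\bm U_h=\begin{bmatrix}\bm I_d&\tfrac{1-e^{-2h}}{2}\bm I_d\\\bm O_d&e^{-2h}\bm I_d\end{bmatrix}$, and $\bm\xi=(\bm\xi_x,\bm\xi_v)$ is a centered Gaussian vector independent of $\bm z_0$ (the stochastic integrals in $\Phi_h^{\mathfrak U}$) with $\mathrm{Cov}(\bm\xi)\Prec\tfrac{C_0h}{M_2}\bm I_{2d}$ for a universal $C_0$.

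The core is the deterministic estimate $\bm z^\top\bm G_h^\top\bm S\bm G_h\bm z\Le\big(1-\tfrac{mh}{2M_2}\big)\,\bm z^\top\bm S\bm z$ for all $\bm z=(\bm x,\bm v)$ and $h\Le\tfrac14$. A direct attempt to show $\bm G_h^\top\bm S\bm G_h\Prec(1-\kappa)\bm S$ fails because $\Phi_h^{\mathfrak B}$ is volume-preserving; one must instead use that $\mathfrak U$ damps velocity. A short computation gives $\bm U_h^\top\bm S\bm U_h=\begin{bmatrix}2\bm I_d&\bm I_d\\\bm I_d&c\bm I_d\end{bmatrix}$ with $c=\tfrac{1+e^{-4h}}{2}$; writing $\bm u=\bm x+\bm v$ so that $\bm z^\top\bm S\bm z=|\bm x|^2+|\bm u|^2$, one obtains
\[
\bm z^\top\bm G_h^\top\bm S\bm G_h\bm z=|\bm x|^2+\big|\bm u-\tfrac{h}{M_2}\bm H\bm x\big|^2-(1-c)\big|\bm u-(\bm I_d+\tfrac{h}{M_2}\bm H)\bm x\big|^2.
\]
Expanding and subtracting $\big(1-\tfrac{mh}{2M_2}\big)(|\bm x|^2+|\bm u|^2)$, the $|\bm x|^2$- and $|\bm u|^2$-coefficients become $-\rho:=-(1-c)+\tfrac{mh}{2M_2}$, the cross terms collect into $2\bm u^\top\bm K\bm x$ with $\bm K=(1-c)\bm I_d-\tfrac{ch}{M_2}\bm H\Succ\bm O_d$, and the remaining genuinely quadratic-in-$h$ terms combine (using $|\bm H\bm x|^2\Le M_2\,\bm x^\top\bm H\bm x$ and $\tanh2h\Ge\tfrac43h$ for $h\Le\tfrac14$) into a quantity $\Le 0$. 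Hence the desired inequality reduces to the Schur-complement condition $\|\bm K\|\Le\rho$, i.e. $(1-c)-\tfrac{cmh}{M_2}\Le(1-c)-\tfrac{mh}{2M_2}$, which holds \emph{precisely because $c\Ge\tfrac12$}; positivity of $\rho$ follows from $\tfrac{mh}{2M_2}\Le\tfrac h2<\tfrac{2h}{3}\Le c\tanh 2h=1-c$. I expect this step to be the main obstacle: extracting any positive contraction rate is routine, but obtaining the sharp $\tfrac{mh}{2M_2}$ (which is what forces both $h\Le\tfrac14$ and the constant $\tfrac12$) relies on this exact structural identity rather than crude norm bounds, and the bookkeeping of the second-order remainder terms is delicate.

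For the noise, write $\bm Z_1^\top\bm S\bm Z_1=P+2L+N$ with $P=\bm z_0^\top\bm G_h^\top\bm S\bm G_h\bm z_0\Le(1-\tfrac{mh}{2M_2})\,\bm z_0^\top\bm S\bm z_0$, $L=(\bm G_h\bm z_0)^\top\bm S\bm\xi$ (so $\mathbb E[L]=0$), $N=\bm\xi^\top\bm S\bm\xi\Ge0$. Expanding $\mathcal V(\bm Z_1)=(P+2L+N)^4$ and taking expectations, every monomial carrying an odd power of $L$ vanishes (it is an odd-degree polynomial in the centered Gaussian $\bm\xi$), leaving only $P^4$, $P^iN^k$, $P^iL^2N^k$ and $L^4$. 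Using $\mathbb E[N^j]\Le C_j(\tfrac{dh}{M_2})^j$ (Gaussian moments of $|\bm\xi|^2$), $\mathbb E[L^{2j}]\Le C_j(\tfrac{h}{M_2}\,\bm z_0^\top\bm S\bm z_0)^j$ (since $\|\bm G_h\|$ is bounded and $|\bm z_0|^2\Le C\,\bm z_0^\top\bm S\bm z_0$), and the fact that every surviving cross term carries at least one factor $\tfrac{h}{M_2}$, an application of Young's inequality $a^ib^j\Le\varepsilon a^{i+j}+C_\varepsilon b^{i+j}$ to each term (with $\varepsilon$ a small multiple of $\tfrac{mh}{M_2}$) absorbs all cross terms into $\tfrac{mh}{M_2}\,\mathcal V(\bm z_0)+\tfrac{Cd^4h}{m^3}$; the dominant remainder $\tfrac{d^4h}{m^3}$ comes from $P^3\mathbb E[N]\lesssim\mathcal V(\bm z_0)^{3/4}\tfrac{dh}{M_2}$ with $\varepsilon\sim\tfrac{mh}{M_2}$.

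Finally I would assemble the pieces: $P^4\Le(1-\tfrac{mh}{2M_2})^4\mathcal V(\bm z_0)\Le(1-\tfrac{13mh}{8M_2})\mathcal V(\bm z_0)$ using $\tfrac{mh}{2M_2}\Le\tfrac18$, so adding the noise contribution $+\tfrac{mh}{M_2}\mathcal V(\bm z_0)$ gives coefficient $\Le1-\tfrac{5mh}{8M_2}\Le1-\tfrac{mh}{2M_2}$, whence $\mathbb E[\mathcal V(\bm Z_1)]\Le(1-\tfrac{mh}{2M_2})\mathcal V(\bm z_0)+\tfrac{Cd^4h}{m^3}$ with $C$ depending only on $M_1,M_2$. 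The only remaining chore is to verify that all Young-inequality remainders, which are of the form $\tfrac{d^ah^b}{m^cM_2^e}$ with $b\Ge1$, $a\Le4$, $c\Le3$, are dominated by $\tfrac{Cd^4h}{m^3}$; this is immediate from $h\Le\tfrac14$, $d\Ge1$ and $m\Le M_2$.
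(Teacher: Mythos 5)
Your proposal is correct and follows essentially the same route as the paper: linearize the drift via $\nabla U(\bm x_0)=\bm H\bm x_0$ using $\nabla U(\bm 0)=\bm 0$, prove the one-step contraction of the twisted quadratic form $\bm A^\top\bm S\bm A\Prec(1-\tfrac{mh}{2M_2})\bm S$, then expand the eighth moment, kill the odd Gaussian cross terms, and absorb the even ones with Young's inequality into $\tfrac{mh}{M_2}\mathcal V(\bm z_0)+\tfrac{Cd^4h}{m^3}$. The only difference is cosmetic: you verify the matrix contraction through the substitution $\bm u=\bm x+\bm v$ and a scalar Schur-complement check (reducing to $c\ge\tfrac12$ and $\tanh 2h\ge\tfrac43h$), whereas the paper computes $\bm S-\bm A^\top\bm S\bm A$ blockwise and checks a $2\times2$ positive-semidefiniteness condition — these are equivalent computations, and your organization of the noise expansion into $P,L,N$ is if anything slightly more careful than the paper's binomial display.
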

\begin{proof}
From the expressions of the flow solutions $\Phi_h^{\mathfrak U}$ and $\Phi_h^{\mathfrak B}$ in Section~\ref{subsection: uld SG-UBU}, the one-step FG-BU update $\bar{\bm z}_1 = (\bar{\bm x}_1,\bar{\bm v}_1) \in \mathbb R^{2d}$ can be explicitly represented as 
\begin{equation*}
	\left\{
	\begin{aligned}
		\bar{\bm x}_1 & = \bm x_0 + \frac{1-e^{-2h}}2 \bm v_0 - \frac{h(1-e^{-2h})}{2M_2} \nabla U(\bm x_0) + \frac{2}{\sqrt{M_2}}
		\int_0^h \frac{1-e^{-2(h-s)}}2 \D \bm B_s, \\
		\bar{\bm v}_1 & = e^{-2h} \bm v_0 - \frac{he^{-2h}}{M_2} \nabla U(\bm x_0)
		+ \frac2{\sqrt{M_2}} \int_0^h
		e^{-2(h-s)}\D \bm B_s.
	\end{aligned}
	\right.
\end{equation*}
Using $\nabla U(\bm 0) = \bm 0$ and the mean value theorem, it holds
\begin{equation*}
	\frac1{M_2}\nabla U(\bm x_0) = \bm H \bm x_0,~~~~\text{where}~
	\bm H = \frac1{M_2}\int_0^1 \nabla^2 U(s\bm x_0)\D s \in \mathbb R^{d\times d}
\end{equation*}
is positive definite with $\frac{m}{M_2} \bm I_d \Prec \bm H \Prec \bm I_d$. Thus the FG-BU update $\bar{\bm z}_1 = (\bar{\bm x}_1,\bar{\bm v}_1)$ reads
\begin{equation}
	\bar{\bm z}_1 = \bm A \bm z_0 + \sqrt{h}\bm w,
	\label{proof: z1 expression}
\end{equation}
where we employ the notations
\begin{equation*}
	\bm A = \begin{bmatrix}
		\bm I_d - \dfrac{h(1-e^{-2h})}{2} \bm H & \dfrac{1-e^{-2h}}{2} \bm I_d \vspace{6pt} \\
		-h e^{-2h} \bm H & e^{-2h} \bm I_d
	\end{bmatrix} ,\quad
	\bm w = \frac2{\sqrt{M_2 h}}\begin{bmatrix}
		\di\int_0^h \dfrac{1-e^{-2(h-s)}}2 \D \bm B_s \\
		\di\int_0^h e^{-2(h-s)}\D \bm B_s
	\end{bmatrix}.
\end{equation*}
It is clear that $\bm w$ is a Gaussian random variable with
\begin{equation*}
	\mathbb E[\bm w] = \bm 0, \qquad
	\mathbb E\big[|\bm w|^8\big] \Le Cd^4,
\end{equation*}
where the constant $C$ depends only on $M_1,M_2$.

Next, we prove that for any step size $h\Le \frac14$, the following matrix inequality holds:
\begin{equation}
	\bm A^\top \bm S \bm A \Prec \bigg(1-\frac{mh}{2M_2}\bigg) \bm S.
	\label{proof: ASA contractivity}
\end{equation}
The inequality \eqref{proof: ASA contractivity} serves as the coercivity condition for the FG-BU integrator. Direct calculation yields the upper bound of $\bm A^\top \bm S \bm A$:
\begin{align*}
	\bm A^\top \bm S \bm A & = 
	\begin{bmatrix}
		2\bm I_d-2h\bm H + \dfrac{1+e^{-4h}}2 h^2 \bm H^2 & \bm I_d - \dfrac{1+e^{-4h}}2 h\bm H 
		\vspace{6pt} \\
		\bm I_d - \dfrac{1+e^{-4h}}2 h\bm H & \dfrac{1+e^{-4h}}2  \bm I_d
	\end{bmatrix} \\
	& \Prec \begin{bmatrix}
		2\bm I_d - \dfrac74 h\bm H & \bm I_d - \dfrac{1+e^{-4h}}2 h\bm H \vspace{6pt} \\
		\bm I_d - \dfrac{1+e^{-4h}}2 h\bm H & \dfrac{1+e^{-4h}}2 \bm I_d
	\end{bmatrix}. \tag{using $h\Le\frac14$ and $\bm H\Prec \bm I_d$}
\end{align*}
Thus using the expression of $\bm S \in \mathbb R^{2d\times 2d}$, there holds
\begin{align*}
	\bm S - \bm A^\top \bm S \bm A & \Succ
	\begin{bmatrix}
		\dfrac74 h \bm H & \dfrac{1+e^{-4h}}2 h\bm H \vspace{6pt} \\
		\dfrac{1+e^{-4h}}2 h\bm H & \dfrac{1-e^{-4h}}2 \bm I_d
	\end{bmatrix} \\
	& \Succ
	\begin{bmatrix}
		\dfrac74 h \bm H & \dfrac{1+e^{-4h}}2 h\bm H \vspace{6pt} \\
		\dfrac{1+e^{-4h}}2 h\bm H & h \bm H
	\end{bmatrix} \tag{using $\dfrac{1-e^{-4h}}2 \Ge h$ and $\bm I_d \Succ \bm H$} \\
	& \Succ \dfrac{mh}{M_2} \begin{bmatrix}
		\dfrac74 \bm I_d & \dfrac{1+e^{-4h}}2 \bm I_d \vspace{6pt} \\
		\dfrac{1+e^{-4h}}2 \bm I_d & \bm I_d
	\end{bmatrix} \Succ \dfrac{mh}{2M_2} \bm S \tag{using $\bm H \Succ \dfrac{m}{M_2}I_d$},
\end{align*}
where the last inequality follows from
\begin{equation*}
	\begin{bmatrix}
		\dfrac74 & \dfrac{1+e^{-4h}}2 \vspace{4pt}\\
		\dfrac{1+e^{-4h}}2 & 1
	\end{bmatrix} \Succ \frac12 \begin{bmatrix}
		2 & 1 \\
		1 & 1
	\end{bmatrix}~ \Longleftrightarrow~ \begin{bmatrix}
		\dfrac32 & e^{-4h} \vspace{4pt} \\
		e^{-4h} & 1
	\end{bmatrix} \Succ 0~ \Longleftrightarrow~ \frac32 - e^{-8h}\Ge0.
\end{equation*}
This completes the proof of inequality \eqref{proof: ASA contractivity}.

The positive definite matrix $\bm S$ admits a unique symmetric square root
\begin{equation*}
	\sqrt{\bm S} =
	\frac1{\sqrt{5}} \begin{bmatrix}
		3 \bm I_d & \bm I_d \\
		\bm I_d & 2\bm I_d
	\end{bmatrix} \in \mathbb R^{2d\times 2d}.
\end{equation*}
This gives the representation $\bm z^\top \bm S \bm z = \big|\sqrt{\bm S}\bm z\big|^2$ for $\bm z\in\mathbb R^{2d}$. Using the identity $\bar{\bm z}_1 = \bm A \bm z_0 + \sqrt{h}\bm w$, we obtain the moment bound for the one-step FG-BU update $\bm z_1$:
\begin{align*}
	\mathbb E\big|\sqrt{\bm S} \bar{\bm z}_1\big|^8 & =
	\mathbb E\big|\sqrt{\bm S} \bm A\bm z_0 + \sqrt{h \bm S} \bm w\big|^8 \\
	& = \big|\sqrt{\bm S} \bm A\bm z_0\big|^8 + 28
	\big|\sqrt{\bm S} \bm A \bm z_0\big|^6 h\cdot \mathbb E\big|\sqrt{\bm S}\bm w\big|^2 + 70\big|\sqrt{\bm S} \bm A\bm z_0\big|^4 h^2 \cdot \mathbb E\big|\sqrt{\bm S}\bm w\big|^4  \\
	& ~~~~+ 28
	\big|\sqrt{\bm S} \bm z_0\big|^2 h^3\cdot \mathbb E\big|\sqrt{\bm S}\bm w\big|^6 + h^4\cdot \mathbb E\big|\sqrt{\bm S}\bm w\big|^8 \\
	& \Le \big|\sqrt{\bm S} \bm A\bm z_0\big|^8 + Cdh
	\big|\sqrt{\bm S} \bm A\bm z_0\big|^6 + Cd^2h^2\big|\sqrt{\bm S} \bm A\bm z_0\big|^4 +
	Cd^3h^3\big|\sqrt{\bm S} \bm A\bm z_0\big|^2 +
	Cd^4h^4 \\
	& \Le \big|\sqrt{\bm S} \bm A\bm z_0\big|^8 + \frac{mh}{2M_2} \big|\sqrt{\bm S} \bm A\bm z_0\big|^8 +
	\frac{Cd^4h}{m^3},
\end{align*}
where we have utilized the binomial theorem and odd powers of $\bm w$ vanish in the expansion. Then the contractivity inequality \eqref{proof: ASA contractivity} yields the moment estimate
\begin{align*}
	\mathbb E\big[(\bar{\bm z}_1^\top \bm S \bar{\bm z}_1)^4\big] & \Le 
	\bigg(1+\frac{mh}{2M_2}\bigg) \mathbb E
	\big[(\bm z_0^\top \bm A^\top\bm S \bm A\bm z_0)^4\big] + \frac{Cd^4h}{m^3} \\
	& \Le
	\bigg(1+\frac{mh}{2M_2}\bigg)
	\bigg(1-\frac{mh}{2M_2}\bigg)^4
	(\bm z_0^\top \bm S \bm z_0)^4 + \frac{Cd^4h}{m^3} \\
	& \Le \bigg(1-\frac{mh}{2M_2}\bigg) (\bm z_0^\top \bm S \bm z_0)^4 + \frac{Cd^4h}{m^3},
\end{align*}
concluding proof of the Lyapunov condition for the FG-BU integrator.
\end{proof}

Notably, Lemma~\ref{lemma: FG-BU Lyapunov} establishes the Lyapunov condition directly for the FG-UBU integrator. While a corresponding condition can be derived for the continuous underdamped Langevin dynamics \eqref{ULD}, our analytical framework bypasses its continuous counterpart, operating entirely at the discrete level.

\subsection{Lyapunov condition for SG-BU integrator}
Based on Lemma~\ref{lemma: FG-BU Lyapunov} and the fact that SG-BU employs an unbiased stochastic gradient, we can conveniently establish the Lyapunov condition for the SG-UBU integrator. This approach also applies to variance-reduced stochastic gradient sampling algorithms, as the unbiasedness property still holds (see Lemmas~\ref{lemma: SVRG-BU Lyapunov} and \ref{lemma: SAGA-BU Lyapunov}).

\begin{proof}[Proof of Lemma~\ref{lemma: SG-BU Lyapunov}]
Following the proof of Lemma~\ref{lemma: FG-BU Lyapunov}, we denote $\bar{\bm z}_1 = (\Phi^{\mathfrak U}_h \circ \Phi^{\mathfrak B}_h)\bm z_0$ and $\bm z_1 = (\Phi^{\mathfrak U}_h \circ \Phi^{\mathfrak B}_h(\theta))\bm z_0$ as the one-step updates of FG-BU and SG-BU. Then
\begin{equation*}  
	\left\{  
	\begin{aligned}  
		\bm x_1 & = \bm x_0 + \frac{1-e^{-2h}}2 \bm v_0 - \frac{h(1-e^{-2h})}{2M_2} b(\bm x_0,\theta) + \frac{2}{\sqrt{M_2}}  
		\int_0^h \frac{1-e^{-2(h-s)}}2 \D \bm B_s, \\  
		\bm v_1 & = e^{-2h} \bm v_0 - \frac{he^{-2h}}{M_2} b(\bm x_0,\theta)  
		+ \frac2{\sqrt{M_2}} \int_0^h  
		e^{-2(h-s)}\D \bm B_s.  
	\end{aligned}  
	\right.  
\end{equation*}
Thus we can write the stochastic gradient error as
\begin{equation*}  
	\bm x_1 - \bar{\bm x}_1 = h \bm\Delta_{\bm x},
    ~~~~
	\bm v_1 - \bar{\bm v}_1 = h \bm\Delta_{\bm v},  
\end{equation*}  
where the error terms are given by
\begin{align*}  
	\bm\Delta_{\bm x} = -\frac{1-e^{-2h}}{2M_2} (b(\bm x_0,\theta) - \nabla U(\bm x_0)), ~~~~  
	\bm\Delta_{\bm v} = -\frac{e^{-2h}}{M_2} (b(\bm x_0,\theta) - \nabla U(\bm x_0)).  
\end{align*}  
From Assumption~\ref{asSg} we obtain the unbiased condition and the moment bound
\begin{equation*}  
	\mathbb E[\bm\Delta_{\bm x}] = \mathbb E[\bm\Delta_{\bm v}] = \bm 0,\quad 
	\mathbb E\big[|\bm\Delta_{\bm x}|^8\big]
	 \Le Cd^4h^8,\quad\mathbb E\big[|\bm\Delta_{\bm v}|^8\big] \Le Cd^4h^8.  
\end{equation*}  
Define $\bm\Delta_{\bm z} = (\bm\Delta_{\bm x},\bm\Delta_{\bm v})\in\mathbb R^{2d}$, giving $\bm z_1 = \bar{\bm z}_1 + h\bm\Delta_{\bm z}$ with  
\begin{equation*}  
	\mathbb E[\bm\Delta_{\bm z}] = \bm 0, \quad
	\mathbb E\big[|\bm\Delta_{\bm z}|^8\big] \Le Cd^4h^8.  
\end{equation*} 
Exploiting the Lyapunov condition of the FG-BU integrator in Lemma~\ref{lemma: FG-BU Lyapunov}, we have
\begin{align*}  
	\mathbb E\big|\sqrt{\bm S} \bm z_1\big|^8 & =  
	\mathbb E\big|\sqrt{\bm S} \bar{\bm z}_1 + h \sqrt{\bm S} \bm\Delta_{\bm z}\big|^8 \\  
	& = \mathbb E\big|\sqrt{\bm S} \bar{\bm z}_1\big|^8 +  
	28h^2\mathbb E\Big(\big|\sqrt{\bm S} \bar{\bm z}_1\big|^6\big|\sqrt{\bm S}\bm \Delta_{\bm z}\big|^2\Big) + 70  
	h^4\mathbb E\Big(\big|\sqrt{\bm S} \bar{\bm z}_1\big|^4\big|\sqrt{\bm S}\bm \Delta_{\bm z}\big|^4\Big) \\  
	& \hspace{1cm} + 28h^6\mathbb E\Big(\big|\sqrt{\bm S} \bar{\bm z}_1\big|^2\big|\sqrt{\bm S}\bm \Delta_{\bm z}\big|^6\Big) +  
	h^8 \mathbb E\big|\sqrt{\bm S} \bm \Delta_{\bm z}\big|^8 \\  
	& \Le \mathbb E\big|\sqrt{\bm S} \bar{\bm z}_1\big|^8 + \frac{mh}{12M_2}  
	\mathbb E\big|\sqrt{\bm S} \bar{\bm z}_1\big|^8 + \frac{Ch}{m^3} \mathbb E\big[|\bm\Delta_{\bm z}|^8\big] \\  
	& \Le \bigg(1+\frac{mh}{12M_2}\bigg)\bigg(  
	\bigg(1-\frac{mh}{2M_2}\bigg)  
	\mathbb E\big|\sqrt{\bm S} \bm z_0\big|^8 + \frac{Cd^4h}{m^3} \bigg) + \frac{Ch}{m^3} \mathbb E\big[|\bm\Delta_{\bm z}|^8\big]\\  
	& \Le \bigg(1-\frac{mh}{3M_2}\bigg)  
	\mathbb E\big|\sqrt{\bm S} \bm z_0\big|^8 +  
	\frac{Cd^4h}{m^3}.  
\end{align*}  
Recalling the Lyapunov function $\mathcal{V}(\bm z) = (\bm z^\top \bm S \bm z)^4$, the inequality above implies
\begin{equation*}  
	\mathbb E\big[\mathcal{V}(\bm z_1)\big]  
	\Le \bigg(1-\frac{mh}{3M_2}\bigg)  
	\mathcal{V}(\bm z_0) +  
	\frac{Cd^4h}{m^3},  
\end{equation*} 
which completes the proof of Lemma~\ref{lemma: SG-BU Lyapunov}.
\end{proof}

We note that the eighth moment bound of the stochastic gradient error $b(\bm x,\theta) - \nabla U(\bm x)$ is essential in the proof above, as the eighth moment of $\bm \Delta_{\bm z}$ is required.

\subsection{Uniform-in-time moment bound of SG-UBU integrator}
Next, it is convenient to derive the uniform-in-time moment bound for the SG-UBU integrator by simply applying an additional $\Phi_{h/2}^{\mathfrak U}$ flow on the SG-BU integrator.
\begin{proof}[Proof of Theorem~\ref{theorem: SG-UBU moment bound}]
	For the step size $h\Le \frac14$, the following bounds hold for any $\bm z_0\in\mathbb R^{2d}$:
	\begin{equation*}
		\mathbb E\big[\mathcal V\big(\Phi^{\mathfrak U}_{h/2}\bm z_0\big)\big] \Le C
		\mathcal V(\bm z_0) + Cd^4,~~~~
		\mathbb E\big[\mathcal V\big(\Phi^{\mathfrak B}_{h}(\theta)\bm z_0\big)\big] \Le C
		\mathcal V(\bm z_0) + Cd^4,
	\end{equation*}
	where $C$ depends only on $M_1,M_2$. 
	These inequalities can be easily found from the explicit expressions of the solution flows $\Phi_{h/2}^{\frak U}$ and $\Phi_h^{\mathfrak B}(\theta)$. Then for $\bm X_0 = \bm 0$ and $\bm V_0 \sim \mathcal N(\bm 0,M_2^{-1}\bm I_d)$, we have the moment bound 
	\begin{equation*}
		\mathbb E\big[\mathcal V\big(\Phi^{\mathfrak U}_{h/2} \bm Z_0\big)\big] \Le \frac{Cd^4}{m^4}.
	\end{equation*}
	For $k\Ge 2$, the SG-UBU solution is written as
	\begin{equation*}
		\bm Z_k = \Phi^{\mathfrak U}_{h/2}\circ \Phi^{\mathfrak B}_h(\theta_k)\circ \bigg(\prod_{l=0}^{k-1} \Phi^{\mathfrak U}_{h}\circ
		\Phi^{\mathfrak B}_h(\theta_l) \bigg) \circ \Phi^{\mathfrak U}_{h/2} \bm Z_0.
	\end{equation*}
	The Lyapunov condition of SG-BU provided in Lemma~\ref{lemma: SG-BU Lyapunov} thus yields
	\begin{equation*}
		\sup_{k\Ge0} \mathbb E\big[\mathcal V(\bm Z_k)\big]
		\Le \frac{Cd^4}{m^4},
	\end{equation*}
	which completes the proof of Theorem~\ref{theorem: SG-UBU moment bound}.
\end{proof}

Finally, we employ Theorem~\ref{theorem: SG-UBU moment bound} to show the existence and moment bound for the invariant distribution of SG-UBU.
\begin{proof}[Proof of Lemma~\ref{lemma: SG-UBU invariant}]
Employing the Krylov--Bogolyubov theorem (Theorem~4.21, \cite{hairer2006ergodic}), when the step size $h\Le \frac14$, the Lyapunov condition in Lemma~\ref{lemma: SG-BU Lyapunov} implies that SG-BU has an invariant distribution $\hat \pi_h(\bm x,\bm v)$ satisfying
\begin{equation}
	\int_{\mathbb R^{2d}} |\bm z|^8 \hat \pi_h(\bm z)\D\bm z \Le \frac{Cd^4}{m^4}.
	\label{proof: SG-BU bound}
\end{equation}
Define distribution semigroups for $\Phi^{\mathfrak U}_t$ and $\Phi^{\mathfrak B}_t(\theta)$:
\begin{align*}
	P_t^{\mathfrak U} \mu & := \text{distribution of } \Phi^{\mathfrak U}_t \bm z_0 ~~ \text{with} ~ \bm z_0\sim \mu, \\
	P_t^{\mathfrak B} \mu & := \mathbb E_{\theta}\Big[\text{distribution of } \Phi^{\mathfrak B}_t(\theta) \bm z_0 ~ \text{with} ~ \bm z_0 \sim \mu\Big].
\end{align*}
Then the invariant distribution $\hat \pi_h$ of SG-BU satisfies:
\begin{equation*}
	P_h^{\mathfrak U} P_h^{\mathfrak B} \hat \pi_h = \hat \pi_h.
\end{equation*}
Now define the distribution $\pi_h$ in $\mathbb R^{2d}$ as
\begin{equation*}
	\pi_h = P_{h/2}^{\mathfrak U} P_h^{\mathfrak B} \hat \pi_h.
\end{equation*}
This $\pi_h$ satisfies the same moment bound \eqref{proof: SG-BU bound}, and the identity
\begin{equation*}
	\pi_h = P_{h/2}^{\mathfrak U} P_h^{\mathfrak B} P_h^{\mathfrak U} P_h^{\mathfrak B} \hat \pi_h = P_{h/2}^{\mathfrak U} P_h^{\mathfrak B} P_{h/2}^{\mathfrak U} \pi_h,
\end{equation*}
confirming that $\pi_h$ is an invariant distribution of SG-UBU.
\end{proof}
It should be noted that while Lemma~\ref{lemma: SG-UBU invariant} establishes the existence of an invariant distribution, it does not guarantee uniqueness, a property that is not required for the subsequent analysis.

\section{Local error analysis for SG-UBU}
\label{appendix: local error}

This section provides fourth moment estimates for the stochastic gradient error, $\bm{Z}_{k+1} - \bar{\bm{Z}}_{k+1}$, and the discretization error, $\bar{\bm{Z}}_{k+1} - \bm{Z}_k(h)$. The analysis requires comparing the explicit formulas for the one-step evolution of the exact solution, the full gradient update, and the stochastic gradient update, which we present below for clarity.

\paragraph{Exact solution in one-step: $\bm Z_k(h)$}
\begin{equation*}
	\small
	\left\{
	\begin{aligned}
		\bm X_k(h) & = \bm X_k + \frac{1-e^{-2h}}2 \bm V_k - \frac1{M_2}\int_0^h \frac{1-e^{-2(h-s)}}{2} \nabla U(\bm X_k(s)) +
		\frac2{\sqrt{M_2}}\int_0^h \frac{1-e^{-2(h-s)}}{2} \D\bm B_{kh+s}, \\
		\bm V_k(h) & = e^{-2h} \bm V_k - \frac1{M_2} \int_0^h e^{-2(h-s)}\nabla U(\bm X_k(s)) + \frac2{\sqrt{M_2}} \int_0^h e^{-2(h-s)} \D \bm B_{kh+s}.
	\end{aligned}
	\right.
\end{equation*}
\paragraph{One-step of FG-UBU update: $\bar{\bm Z}_{k+1}$}
\begin{equation*}
	\small
	\left\{
	\begin{aligned}
		\bar{\bm X}_{k+1} & = \bm X_k + \frac{1-e^{-2h}}2 \bm V_k - \frac{h(1-e^{-h})}{2M_2} \nabla U(\bm Y_k) +
		\frac2{\sqrt{M_2}}\int_0^h \frac{1-e^{-2(h-s)}}{2} \D\bm B_{kh+s}, \\
		\bar{\bm V}_{k+1} & = e^{-2h} \bm V_k - \frac{he^{-h}}{M_2} \nabla U(\bm Y_k) + \frac2{\sqrt{M_2}} \int_0^h e^{-2(h-s)} \D \bm B_{kh+s}.
	\end{aligned}
	\right.
\end{equation*}
\paragraph{One-step of SG-UBU update: $\bm Z_{k+1}$}
\begin{equation*}
	\small
	\left\{
	\begin{aligned}
		\bm X_{k+1} & = \bm X_k + \frac{1-e^{-2h}}2 \bm V_k - \frac{h(1-e^{-h})}{2M_2} b(\bm Y_k,\theta_k) +
		\frac2{\sqrt{M_2}}\int_0^h \frac{1-e^{-2(h-s)}}{2} \D\bm B_{kh+s}, \\
		\bm V_{k+1} & = e^{-2h} \bm V_k - \frac{he^{-h}}{M_2} b(\bm Y_k,\theta_k) + \frac2{\sqrt{M_2}} \int_0^h e^{-2(h-s)} \D \bm B_{kh+s}.
	\end{aligned}
	\right.
\end{equation*}
These expressions are derived by applying the $\Phi_t^{\mathfrak U}$ and $\Phi_t^{\mathfrak B}$ solution flows in the UBU integrator, and can also be found directly in Example~8 of \cite{sanz2021wasserstein}.
Recall that $\bm Y_k$ is the intermediate position defined in \eqref{Y_k expression}, and the stochastic gradient indices $(\theta_k)_{k=0}^\infty$ are generated independently from $\mathbb P_\theta$.

\subsection{Estimate of stochastic gradient error}
With the explicit expression for the stochastic gradient error established in \eqref{stochastic gradient error expression}, we now prove the corresponding moment bounds in Lemma~\ref{lemma: stochastic gradient error}.
\begin{proof}[Proof of Lemma~\ref{lemma: stochastic gradient error}]
	Under Assumption~\ref{asSg}, the gradient expression \eqref{stochastic gradient error expression} gives
	\begin{equation*}
		\mathbb E\big|\bm X_{k+1} - \bar{\bm X}_{k+1}\big|^4 \Le Ch^8
		\mathbb E\big|b(\bm Y_k,\theta_k) - \nabla U(\bm Y_k)\big|^4 \Le
		C\sigma^4d^2h^8,
	\end{equation*}
	yielding the position stochastic gradient error bound
	\begin{equation*}
		\sqrt{\mathbb E\big|\bm X_{k+1} - \bar{\bm X}_{k+1}\big|^4}
		\Le C\sigma^2dh^4.
	\end{equation*}
	Similarly, the velocity error bound follows
	\begin{equation*}
		\sqrt{\mathbb E\big|\bm V_{k+1} - \bar{\bm V}_{k+1}\big|^4}
		\Le C\sigma^2dh^2,
	\end{equation*}
	which completes the proof.
\end{proof}
Notably, the position and velocity components of the stochastic gradient error converge at different rates with respect to the step size $h$. This discrepancy in order is a direct consequence of their respective coefficients in the explicit form \eqref{stochastic gradient error expression}.

\subsection{Estimate of discretization error}
Our analysis of the fourth moment of the discretization error relies on the explicit expressions for the local errors $\Dx$ and $\Dv$ provided in Equations~(40) and~(42) of \cite{sanz2021wasserstein}. 
The proof relies on a delicate decomposition of the discretization error using integration by parts.
\begin{proof}[Proof of Lemma~\ref{lemma: discretization error}]
	Utilizing the explicit expressions of $\bar{\bm Z}_{k+1}$ and $\bm Z_k(h)$, 
	the discretization errors $\Dx = \bar{\bm X}_{k+1} - \bm X_k(h)$ and $\Dv = \bar{\bm V}_{k+1} - \bm V_k(h)$ are written as
	\begin{subequations}
	\begin{align}
			\Dx & = \frac{h(1-e^{-h})}{2M_2}\big(\nabla U(\bm X_k(h/2))-\nabla U(\bm Y_k)\big) + \bm I_6 + \bm I_7, \\
			\Dv & = \frac{he^{-2h}}{M_2} \big(\nabla U(\bm X_k(h/2) ) - \nabla U(\bm Y_k)\big) + \bm I_1 + \bm I_2 + \bm I_3 + \bm I_4 + \bm I_5, 
	\end{align}
	\label{proof: XV expression}
	\end{subequations}
	with the error terms $\bm I_1$ to $\bm I_7$ given by
	\begin{align*}
		\bm I_1 & = \frac4{M_2}\int_{h/2}^h \D s \int_{h/2}^s \D s'
		\int_{h-s'}^{s'} e^{-2(h-s'')} \nabla U(\bm X_k(s''))\D s'', \\
		\bm I_2 & = \frac2{M_2}\int_{h/2}^h \D s \int_{h/2}^s \D s'
		\int_{h-s'}^{s'} e^{-2(h-s'')} \nabla^2 U(\bm X_k(s'')) \bm V_k(s'')\D s'', \\
		\bm I_3 & = \frac1{M_2}\int_{h/2}^h \D s \int_{h/2}^s \D s'
		\int_{h-s'}^{s'} e^{-2(h-s'')} \bm V_k(s'')^\top\nabla^3 U(\bm X_k(s'')) \bm V_k(s'')\D s'', \\
		\bm I_4 & = -\frac1{M_2^2}\int_{h/2}^h \D s \int_{h/2}^s \D s'
		\int_{h-s'}^{s'} e^{-2(h-s'')} \nabla^2 U(\bm X_k(s'')) \nabla U(\bm X_k(s'')\D s'',\\
		\bm I_5 & = \frac2{\sqrt{M_2}} \int_{h/2}^h \D s \int_{h/2}^s \D s'
		\int_{h-s'}^{s'} e^{-2(h-s'')}
		\nabla U(\bm X_k(s''))\D \bm B_{kh+s''}, \\
		\bm I_6 & = \frac1{M_2}\int_0^h \D s \int_{h/2}^s \frac{1-e^{-2(h-s')}}2 \nabla^2 U(\bm X_k(s')) \bm V_k(s') \D s', \\
		\bm I_7 & = -\frac1{M_2}\int_0^h \D s \int_{h/2}^s e^{-2(h-s')}
		\nabla U(\bm X_k(s'))\D s'.
	\end{align*}
	For convenience, we define the deviation of $\bm Y_k$ by
	\begin{equation*}
		\Dy = \bm Y_k - \bm X_k(h/2) = \frac1{M_2}\int_0^{h/2} \frac{1-e^{-(h-2s)}}2 \nabla U(\bm X_k(s))\D s,
	\end{equation*}
	then its norm $|\Dy|$ admits the upper bound
	\begin{equation*}
		|\Dy| \Le Ch\int_0^{h/2}|\nabla U(\bm X_k(s))|\D s.
	\end{equation*}
	From the uniform-in-time moment bound in Theorem~\ref{theorem: SG-UBU moment bound} and H\"older's inequality we derive
	\begin{equation*}
		\mathbb E|\Dy|^4 \Le Ch^7\int_0^{h/2} \mathbb E|\nabla U(\bm X_k(s))|^4 \D s \Le Ch^7 \int_0^{h/2} \mathbb E\big(|\bm X_k(s)|^2+d\big)^2 \D s \Le \frac{Cd^2h^8}{m^2}.
	\end{equation*}
	Applying the same approach, we establish fourth moment bounds of $\bm I_1$ to $\bm I_4$:
	\begin{align*}
		\mathbb E|\bm I_1|^4 & \Le Ch^9\int_{h/2}^h\D s
		\int_{h/2}^s \D s'
		\int_{h-s'}^{s'}
		\mathbb E|\nabla U(\bm X_k(s''))|^4 \D s'' \Le
		\frac{Cd^2 h^{12}}{m^2}, \\
		\mathbb E|\bm I_2|^4 & \Le Ch^9\int_{h/2}^h\D s
		\int_{h/2}^s \D s'
		\int_{h-s'}^{s'} \mathbb E|\bm V_k(s'')|^4\D s'' \Le
		\frac{Cd^2 h^{12}}{m^2}, \\
		\mathbb E|\bm I_3|^4 & \Le Ch^9 \int_{h/2}^h\D s
		\int_{h/2}^s \D s'
		\int_{h-s'}^{s'} \mathbb E|\bm V_k(s'')|^8 \D s'' \Le \frac{Cd^4h^{12}}{m^4}, \\
		\mathbb E|\bm I_4|^4 & \Le Ch^9\int_{h/2}^h\D s
		\int_{h/2}^s \D s'
		\int_{h-s'}^{s'} \mathbb E\big|\nabla U(\bm X_k(s''))\big|^4\D s'' \Le \frac{Cd^2 h^{12}}{m^2}.
	\end{align*}
	For the Gaussian stochastic integral $\bm I_5$, its second moment is controlled via 
	\begin{align*}
		\mathbb E|\bm I_5|^2 & \Le C h^2
		\int_{h/2}^h \D s \int_{h/2}^s
		\mathbb E\bigg|
		\int_{h-s'}^{s'} e^{-2(h-s'')} \nabla U(\bm X_k(s'')) \D \bm B_{kh+s''}
		\bigg|^2 \D s' \\
		& \Le C h^2 \int_{h/2}^h \D s \int_{h/2}^s \D s'
		\int_{h-s'}^{s'} \mathbb E\big|\nabla U(\bm X_k(s''))\big|^2\D s'' \\
		& \Le Ch^2 \int_{h/2}^h \D s \int_{h/2}^s \D s'
		\int_{h-s'}^{s'}  \mathbb E\big(|\bm X_k(s'')|^2+d\big)\D s'' \Le \frac{Cdh^5}{m},
	\end{align*}
	and thus the fourth moment is bounded by
	\begin{equation}
		\mathbb E|\bm I_5|^4 = 3\Big(\mathbb E|\bm I_5|^2\Big)^2 \Le
		\frac{Cd^2h^{10}}{m^2}.
	\label{proof: local 1}
	\end{equation}
	Therefore, in the discretization error $\Dv = \bar{\bm V}_{k+1} - \bm V_k(h)$, we select the martingale component $\Dmv = \bm I_5$ with the high-order term $\Dhv$ as
	\begin{equation*}
		\Dhv =  \frac{he^{-2h}}{M_2} \big(\nabla U(\bm X_k(h/2)) - \nabla U(\bm Y_k)\big) + \bm I_1 + \bm I_2 + \bm I_3 + \bm I_4.
	\end{equation*}
	The fourth moment of $\Dhv$ then satisfies
	\begin{equation}
		\mathbb E|\Dhv|^4 \Le Ch^4\mathbb E|\Dy|^4 
		+ C\sum_{i=1}^4 \mathbb E|\bm I_i|^4 \Le \frac{Cd^4h^{12}}{m^4}.
		\label{proof: local 2}
	\end{equation}
	
	Finally, we bound the fourth moment of $\bm I_6$ and $\bm I_7$. Observing the inequality
	\begin{equation*}
		|\bm I_6| \Le Ch\int_0^h \D s \int_{h/2}^s |\bm V_k(s')|\D s',
	\end{equation*}
	then H\"older's inequality yields the moment bound
	\begin{equation*}
		\mathbb E|\bm I_6|^4 \Le Ch^8 \int_0^h \D s \int_{h/2}^s \mathbb E|\bm V_k(s')|^4\D s' \Le \frac{Cd^2h^{12}}{m^2}.
	\end{equation*}
	Next, from the chain rule
	\begin{equation*}
		\frac{\D}{\D s} \Big(e^{2s}\nabla U(\bm X_k(s))\Big) = e^{2s} \Big(
		2\nabla U(\bm X_k(s)) + \nabla^2 U(\bm X_k(s))\bm V_k(s)\Big),
	\end{equation*}	
	we deduce the equality
	\begin{equation*}
	\bm I_7 = -\frac1{M_2} 
	\int_{h/2}^h \D s \int_{h/2}^s \D s' \int_{h-s'}^{s'} \frac{\D}{\D s''}
	\Big(e^{-2(h-s'')} \nabla U(\bm X_k(s''))\Big)\D s'' = -\frac12(\bm I_1+\bm I_2).
	\end{equation*}
	Hence the fourth moment of $\bm I_7$ is bounded by
	\begin{equation*}
		\mathbb E|\bm I_7|^4 \Le \frac{Cd^2h^{12}}{m^2}.
	\end{equation*}
	In total, the moment bound in the position variable reads
	\begin{equation}
		\mathbb E|\Dx|^4 \Le C\big( \mathbb E|\bm I_6|^4 + \mathbb E|\bm I_7|^4 \big) \Le \frac{Cd^2h^{12}}{m^2}.
		\label{proof: local 3}
	\end{equation}
	
	Inequalities \eqref{proof: local 1}--\eqref{proof: local 3} complete the proof of Lemma~\ref{lemma: discretization error}.
\end{proof}
We note that the estimate for the term $\bm I_3$ in the preceding proof requires both the bound on the third derivative, $\nabla^3 U(\bm x)$, and the uniform eighth moment bound of the numerical solution $(\bm X_k,\bm V_k)_{k=0}^\infty$. For this reason, Assumption~\ref{asSg} requires an eighth moment bound on the stochastic gradient deviation $b(\bm x,\theta) - \nabla U(\bm x)$.

\subsection{Estimate of long-time displacement}
At the $K$-th step, the state displacement $\bm Z_K - \bm Z_0$ can be estimated as follows. 
\begin{lemma}
	\label{lemma: displacement}
	Under Assumptions \ref{asP} and \ref{asSg}, let the step size $h\Le \frac14$, and the initial state
	$$
	\bm Z_0 = (\bm X_0,\bm V_0)\text{~with~}\bm X_0 = \bm 0\text{~and~}\bm V_0 \sim \mathcal N(\bm 0,M_2^{-1}\bm I_d);
	$$
	then the SG-UBU solution $(\bm X_k,\bm V_k)_{k=0}^\infty$ satisfies
	\begin{equation*}
		\mathbb E|\bm Z_K - \bm Z_0|^2 \Le \frac{CdKh}{m},
	\end{equation*}
	where the constant $C$ depends only on $M_1,M_2$.
\end{lemma}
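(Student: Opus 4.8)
Here is the plan. The strategy is to telescope the one–step recursion of SG--UBU, split the resulting sum in each coordinate into a \emph{martingale part} (driven by the Brownian increments and by the stochastic–gradient noise) and a \emph{drift part} (built out of $\sum_k\bm V_k$ and $\sum_k\nabla U(\bm Y_k)$), estimate the martingale part sharply via orthogonality and the drift part crudely via Cauchy--Schwarz, and finally \emph{interpolate} the outcome with the trivial bound coming from the uniform moment estimate. First I would record the a priori facts: Theorem~\ref{theorem: SG-UBU moment bound} plus Jensen give $\sup_{k}\mathbb E|\bm Z_k|^2\Le Cd/m$; feeding this and $h\Le\frac14$ into the expression \eqref{Y_k expression} for $\bm Y_k$ gives $\sup_k\mathbb E|\bm Y_k|^2\Le Cd/m$, and then $\sup_k\mathbb E|\nabla U(\bm Y_k)|^2\Le M_2^2\sup_k\mathbb E|\bm Y_k|^2\Le Cd/m$ since $\nabla U(\bm 0)=\bm 0$ and $\|\nabla^2U\|\Le M_2$; finally, writing $\bm\xi_k:=b(\bm Y_k,\theta_k)-\nabla U(\bm Y_k)$, Assumption~\ref{asSg} and Jensen give $\mathbb E|\bm\xi_k|^2\Le\sigma^2d\Le M_1^2d$, and because $\bm Y_k$ is measurable w.r.t.\ $\mathcal H_k:=\sigma(\bm B_s:s\Le kh+\tfrac h2,\ \theta_0,\dots,\theta_{k-1})$ while $\theta_k$ is independent of $\mathcal H_k$, one has $\mathbb E[\bm\xi_k\mid\mathcal H_k]=\bm 0$, so $\{\bm\xi_k\}_k$ is an orthogonal sequence.

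Next I would use the one–step formulas for $\bm X_{k+1},\bm V_{k+1}$ from Appendix~\ref{appendix: local error}. Setting $\bm M_k:=\tfrac{2}{\sqrt{M_2}}\int_0^h\tfrac{1-e^{-2(h-s)}}{2}\,\D\bm B_{kh+s}$ and $\bm N_k:=\tfrac{2}{\sqrt{M_2}}\int_0^h e^{-2(h-s)}\,\D\bm B_{kh+s}$, the It\^o isometry gives $\mathbb E|\bm M_k|^2\Le Cdh^3/M_2$ and $\mathbb E|\bm N_k|^2\Le Cdh/M_2$, and $\{\bm M_k\}_k$, $\{\bm N_k\}_k$ are independent across $k$ (disjoint increments). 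Telescoping $\bm X_K-\bm X_0=\sum_{k=0}^{K-1}(\bm X_{k+1}-\bm X_k)$ and $\bm V_K-\bm V_0=\sum_{k=0}^{K-1}(\bm V_{k+1}-\bm V_k)$ and substituting $b(\bm Y_k,\theta_k)=\nabla U(\bm Y_k)+\bm\xi_k$ produces the two identities
\begin{align*}
\bm X_K-\bm X_0&=\Big(\tfrac{1-e^{-2h}}{2}\textstyle\sum_k\bm V_k-\tfrac{h(1-e^{-h})}{2M_2}\sum_k\nabla U(\bm Y_k)\Big)-\tfrac{h(1-e^{-h})}{2M_2}\sum_k\bm\xi_k+\sum_k\bm M_k,\\
\bm V_K-\bm V_0&=\Big((e^{-2h}-1)\textstyle\sum_k\bm V_k-\tfrac{he^{-h}}{M_2}\sum_k\nabla U(\bm Y_k)\Big)-\tfrac{he^{-h}}{M_2}\sum_k\bm\xi_k+\sum_k\bm N_k,
\end{align*}
in which the first bracket is the drift part and the last two sums form the martingale part.

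For the martingale parts I would use $|a+b|^2\Le2|a|^2+2|b|^2$, the orthogonality of $\{\bm\xi_k\}_k$ and of $\{\bm M_k\}_k$, $\{\bm N_k\}_k$, and $|1-e^{-h}|\Le h$, $h\Le\frac14$, to bound $\mathbb E\big|\tfrac{h(1-e^{-h})}{2M_2}\sum_k\bm\xi_k+\sum_k\bm M_k\big|^2$ and its velocity analogue by $CdKh/m$ (each term being of the form $h^{2}K\sigma^2d/M_2^2$ or $dKh^{3}/M_2$ or $dKh/M_2$, all controlled by $CdKh/m$ using $m\Le M_2$ and $\sigma\Le M_1$). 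For the drift parts I would use Cauchy--Schwarz in the form $\big|\sum_{k=0}^{K-1}\bm a_k\big|^2\Le K\sum_{k=0}^{K-1}|\bm a_k|^2$ together with $|1-e^{-2h}|\Le2h$ and the a priori moment bounds from Step~1 to get $\mathbb E|\bm X^{\mathrm{dr}}|^2,\mathbb E|\bm V^{\mathrm{dr}}|^2\Le Cd(Kh)^2/m$. Combining the two coordinates gives $\mathbb E|\bm Z_K-\bm Z_0|^2\Le Cd\big(Kh+(Kh)^2\big)/m$; on the other hand the triangle inequality with Step~1 gives $\mathbb E|\bm Z_K-\bm Z_0|^2\Le 2\mathbb E|\bm Z_K|^2+2\mathbb E|\bm Z_0|^2\Le Cd/m$, and taking the smaller of the two bounds — the first for $Kh\Le1$, the second for $Kh\Ge1$ — yields $\mathbb E|\bm Z_K-\bm Z_0|^2\Le CdKh/m$. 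The main obstacle is precisely the drift terms $\sum_k\bm V_k$ and $\sum_k\nabla U(\bm Y_k)$: they are genuinely of size $\sim K\sqrt{d/m}$ and admit no $L^2$–cancellation against the other terms, so the direct estimate only gives quadratic growth $(Kh)^2$ rather than the desired linear growth $Kh$; the resolution is the observation that $(Kh)^2\Le Kh$ exactly when $Kh\Le1$ while for $Kh\Ge1$ the uniform moment bound already suffices, so the linear–in–$Kh$ statement follows by interpolating the two estimates. A secondary technical point to handle carefully is the martingale/orthogonality bookkeeping (choosing the filtration $(\mathcal H_k)$ so that $\bm Y_k$ is measurable and $\theta_k$ is fresh), but this is routine.
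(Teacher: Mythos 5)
Your proposal is correct and follows essentially the same route as the paper's proof: telescope the one-step recursion, treat the Brownian increments as an orthogonal martingale sum, bound the remaining drift by Cauchy--Schwarz to get $Cd(Kh)^2/m$, and interpolate with the uniform moment bound $Cd/m$ via $\min\{x+x^2,1\}\Le 2x$. The only (harmless) difference is that you additionally peel off the stochastic-gradient noise $\bm\xi_k$ as a second martingale, whereas the paper simply absorbs $b(\bm Y_k,\theta_k)$ into the Cauchy--Schwarz drift estimate.
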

\begin{proof}
We write the one-step SG-UBU update $\bm Z_{k+1}$ as
\begin{subequations}
	\begin{align}
		\bm X_{k+1} - \bm X_k & = \frac{1-e^{-2h}}2 \bm V_k
		- \frac{h(1-e^{-h})}{2M_2} b(\bm Y_k,\theta_k) + \sqrt{h} \bm M_k, \\
		\bm V_{k+1} - \bm V_k & = -(1-e^{-2h}) \bm V_k -
		\frac{he^{-h}}{M_2} b(\bm Y_k,\theta_k) +
		\sqrt{h} \bm N_k,
	\end{align}
	\label{XV form}
\end{subequations}
where the martingales $\bm M_k$ and $\bm N_k$ are defined as
\begin{equation*}
	\bm M_k = \frac2{\sqrt{M_2h}} \int_0^h \frac{1-e^{-2(h-s)}}2 \D \bm B_{kh+s},~~~~
	\bm N_k = \frac2{\sqrt{M_2h}} \int_0^h e^{-2(h-s)}\D\bm B_{kh+s},
\end{equation*}
and their second moments satisfy
\begin{equation*}
	\mathbb E|\bm M_k|^2 \Le Cd,~~~~
	\mathbb E|\bm N_k|^2 \Le Cd.
\end{equation*}
Applying Cauchy's inequality to \eqref{XV form} yields
\begin{align*}
	\mathbb E \bigg|\bm X_K - \bm X_0 - \sqrt{h} & \sum_{k=0}^{K-1}
	\bm M_k\bigg|^2  = \mathbb E\bigg|
	\sum_{k=0}^{K-1} \big(
	\bm X_{k+1} - \bm X_k - \sqrt{h} \bm M_k
	\big)
	\bigg|^2 \\
	& \Le K\sum_{k=0}^{K-1} \mathbb E
	\bigg|\frac{1-e^{-2h}}2 \bm V_k
	- \frac{h(1-e^{-h})}{2M_2} b(\bm Y_k,\theta_k)\bigg|^2 \Le \frac{CdK^2h^2}{m},
\end{align*}
and similarly,
\begin{align*}
	\mathbb E\bigg|\bm V_K - \bm V_0 - \sqrt{h} & \sum_{k=0}^{K-1}
	\bm N_k\bigg|^2 = \mathbb E\bigg|
	\sum_{k=0}^{K-1} \big(
	\bm V_{k+1} - \bm V_k - \sqrt{h} \bm N_k
	\big)
	\bigg|^2 \\
	& \Le K\sum_{k=0}^{K-1} \mathbb E
	\bigg|-(1-e^{-2h}) \bm V_k
	- \frac{h e^{-h}}{M_2} b(\bm Y_k,\theta_k)\bigg|^2 \Le \frac{CdK^2h^2}{m}.
\end{align*}
Since $(\bm M_k)_{k=0}^\infty$ is a martingale, we obtain the estimate
\begin{align*}
	\mathbb E|\bm X_K - \bm X_0|^2 & \Le 2h \mathbb E
	\bigg|\sum_{k=0}^{K-1} \bm M_k\bigg|^2 + \frac{CdK^2h^2}{m} \\
	& = 2h \sum_{k=0}^{K-1} \mathbb E|\bm M_k|^2 + \frac{CdK^2h^2}{m} \Le \frac{Cd}{m} (Kh+K^2h^2).
\end{align*}
Similarly, for $\bm V_k - \bm V_0$ we have the same inequality
\begin{equation}
	\mathbb E|\bm Z_K - \bm Z_0|^2 \Le \frac{Cd}{m}(Kh+K^2h^2).
	\label{displacement: 1}
\end{equation}
On the other hand, from the uniform-in-time moments in Theorem~\ref{theorem: SG-UBU moment bound}, we have
\begin{equation}
	\mathbb E|\bm Z_K - \bm Z_0|^2 \Le 2\mathbb E
	\big(|\bm Z_K|^2 + |\bm Z_0|^2\big)
	\Le \frac{Cd}{m}.
	\label{displacement: 2}
\end{equation}
Integrating \eqref{displacement: 1}, \eqref{displacement: 2}, and the algebraic inequality $\min\{x+x^2,1\} \Le 2x$ we obtain
\begin{equation*}
	\mathbb E|\bm Z_K - \bm Z_0|^2 \Le \frac{CdKh}{m},
\end{equation*}
which completes the proof of Lemma~\ref{lemma: displacement}.
\end{proof}

\section{Proofs for MSE and numerical bias of SG-UBU}
\label{appendix: proof SG-UBU}
Recall that the time average error of SG-UBU is expressed as in \eqref{explicit time average}:
\begin{equation*}
	\frac{1}{K}\sum_{k=0}^{K-1}f(\bm{X}_k)-\pi(f)=\frac{\phi_h(\bm{Z}_0)-\phi_h(\bm{Z}_K)}{Kh} + \frac{1}{K}\sum_{k=0}^{K-1}(R_k + S_k + T_k),
\end{equation*}
where the error terms $R_k$, $S_k$, and $T_k$ are given by
\begin{gather*}
	R_k  =\frac{\phi_h(\bm{Z}_{k+1})-\phi_h(\bar{\bm{Z}}_{k+1})}{h},\qquad S_k=\frac{\phi_h(\bar{\bm{Z}}_{k+1})-\phi_h(\bm{Z}_k(h))}{h},\\
	T_k =\frac{\phi_h(\bm{Z}_k(h))-\phi_h(\bm{Z}_k)}{h}+f(\bm{X}_k)-\pi(f).
\end{gather*}
\subsection{Mean square error of SG-UBU}
\label{appendix: SG-UBU 1}

\begin{proof}[Proof of Theorem~\ref{theorem: SG-UBU MSE}]
	According to \eqref{explicit time average}, we only need to estimate the second moments:
	\begin{equation*}
		\mathbb E\Bigg[\bigg(\sum_{k=0}^{K-1} R_k\bigg)^2\Bigg],\quad
		\mathbb E\Bigg[\bigg(\sum_{k=0}^{K-1} S_k\bigg)^2\Bigg],\quad
		\mathbb E\Bigg[\bigg(\sum_{k=0}^{K-1} T_k\bigg)^2\Bigg].
	\end{equation*}
	
	First, the stochastic gradient error $R_k$ can be written as
	\begin{equation*}
		R_k = \frac1{h}\big(\phi_h(\bm Z_{k+1}) - \phi_h(\bar{\bm Z}_{k+1})\big) = \frac1{h}
		(\bm Z_{k+1} - \bar{\bm Z}_{k+1}) ^\top
		\int_0^1 \nabla \phi_h\big(\lambda \bm Z_{k+1} + (1-\lambda) \bar{\bm Z}_{k+1}\big) \D\lambda,
	\end{equation*}
	we decompose $R_k = R_{k,1} + R_{k,2}$, where $R_{k,1}$ is a martigale and $R_{k,2}$ is a high-order remainder:
	\begin{align*}
		R_{k,1} & = \frac1h (\bm Z_{k+1} - \bar {\bm Z}_{k+1}) ^\top \nabla \phi_h(\bar{\bm Z}_{k+1}), \\
		R_{k,2} & = \frac1h (\bm Z_{k+1} - \bar {\bm Z}_{k+1})
		^\top \int_0^1 \Big(\nabla \phi_h\big(\lambda \bm Z_{k+1} + (1-\lambda) \bar{\bm Z}_{k+1}\big)-\nabla\phi_h(\bar {\bm Z}_{k+1})\Big)\D\lambda.
	\end{align*}
	The bounds of $\nabla\phi_h(\bm x,\bm v)$ and $\nabla^2\phi_h(\bm x,\bm v)$ in 
	Theorem~\ref{theorem: phi h estimate} imply the inequalities
	\begin{equation*}
		|R_{k,1}| \Le \frac C{mh} |\bm Z_{k+1} - \bar{\bm Z}_{k+1}|,\qquad
		|R_{k,2}| \Le \frac C{m^2h} |\bm Z_{k+1} - \bar{\bm Z}_{k+1}|^2.
	\end{equation*}
	Hence utilizing the stochastic gradient error estimate in Lemma~\ref{lemma: stochastic gradient error}, we obtain
	\begin{align*}
		\mathbb E[R_{k,1}^2] & \Le \frac{C}{m^2h^2} \mathbb E|\bm Z_{k+1} - \bar{\bm Z}_{k+1}|^2 \Le \frac{C\sigma^2d}{m^2}, \\
		\mathbb E[R_{k,2}^2] & \Le \frac{C}{m^4h^2} \mathbb E|\bm Z_{k+1} - \bar{\bm Z}_{k+1}|^4 \Le \frac{C\sigma^4d^2h^2}{m^4}.
	\end{align*}
	Since $R_{k,1}$ is a martingale with $\mathbb E[R_{k,1}|\bm Z_k] = 0$,
	\begin{equation*}
		\mathbb E\Bigg[\bigg(\sum_{k=0}^{K-1} R_{k,1}\bigg)^2\Bigg]
		= \sum_{k=0}^{K-1} \mathbb E[R_{k,1}^2] \Le \frac{C\sigma^2dK}{m^2}.
	\end{equation*}
	On the other hand, by Cauchy's inequality
	\begin{equation*}
		\mathbb E\Bigg[\bigg(\sum_{k=0}^{K-1} R_{k,2}\bigg)^2\Bigg]
		\Le K \sum_{k=0}^{K-1} \mathbb E[R_{k,2}^2] \Le
		\frac{C\sigma^4d^2K^2h^2}{m^4}.
	\end{equation*}
	Combining the estimates of $\sum_{k=0}^{K-1} R_{k,1}$ and $\sum_{k=0}^{K-1} R_{k,2}$ yields
	\begin{equation}
		\frac1{K^2}\mathbb E\Bigg[\bigg(\sum_{k=0}^{K-1} R_k\bigg)^2\Bigg] \Le C\bigg(\frac{\sigma^2d}{m^2K} + \frac{\sigma^4d^2h^2}{m^4}\bigg).
		\label{estimate: R}
	\end{equation}
	
	The discretization error $S_k$ analysis parallels the approach for $R_k$. Adopting the analytical framework in Lemma~\ref{lemma: discretization error}, we decompose the local error $\bar{\bm Z}_{k+1} - \bm Z_k(h)$ as
	\begin{equation*}
		\bar{\bm X}_{k+1} - \bm X_k(h) = \Dx,~~~~
		\bar{\bm V}_{k+1} - \bm V_k(h) = \Dmv + \Dhv,
	\end{equation*}
	where $\Dmv$ is a martingale and $\Dhv$ is a higher-order term. Now $S_k$ decomposes as
	\begin{align*}
		S_{k,1} & = \frac1h\Big(\Dx^\top \nabla_{\bm x} \phi_h(\bm Z_k) + \Dhv^\top \nabla_{\bm v} \phi_h(\bm Z_k)\Big), \\
		S_{k,2} & = \frac1h\Dmv^\top \nabla_{\bm v} \phi_h(\bm Z_k), \\
		S_{k,3} & = \frac1h(\bar{\bm Z}_{k+1} - \bm Z_k(h))^\top \int_0^1 \Big(\nabla \phi_h\big(\lambda \bar{\bm Z}_{k+1} + (1-\lambda) \bm Z_k(h)\big) - \nabla\phi_h(\bm Z_k)\Big)\D\lambda.
	\end{align*}
	Theorem~\ref{theorem: phi h estimate} yields the bounds of $S_{k,1}$ and $S_{k,2}$:
	\begin{equation*}
		|S_{k,1}| \Le \frac{C}{mh} \big(|\Dx|+|\Dhv|\big),\qquad
		|S_{k,2}| \Le \frac{C}{mh} |\Dmv|.
	\end{equation*}
	Hence leveraging the discretization error estimates in Lemma~\ref{lemma: discretization error} produces
	\begin{equation}
		\mathbb E[S_{k,1}^2] \Le \frac{Cd^2h^4}{m^4},\qquad
		\mathbb E[S_{k,2}^2] \Le \frac{Cdh^3}{m^3}.
        \label{S 1}
	\end{equation}
	The martingale property of $S_{k,2}$ gives
	\begin{equation}
		\mathbb E\Bigg[\bigg(\sum_{k=0}^{K-1} S_{k,2}\bigg)^2\Bigg] = \sum_{k=0}^{K-1} \mathbb E[S_{k,2}^2] \Le \frac{CdKh^3}{m^3}.
		\label{proof: S1 estimate}
	\end{equation}
	By Cauchy's inequality, $\sum_{k=0}^{K-1} S_{k,1}$ has the moment bound
	\begin{equation}
		\mathbb E\Bigg[\bigg(\sum_{k=0}^{K-1} S_{k,1}\bigg)^2\Bigg] \Le K \sum_{k=0}^{K-1} \mathbb E[S_{k,1}^2] \Le \frac{Cd^2K^2h^4}{m^4}.
		\label{proof: S2 estimate}
	\end{equation}
	Exploiting the expressions of $\bar{\bm Z}_{k+1}$ and $\bm Z_k(h)$ in Appendix~\ref{appendix: local error}, it is easy to derive
	\begin{equation*}
		\mathbb E|\bar{\bm Z}_{k+1} - \bm Z_k|^4 \Le \frac{Cd^2h^2}{m^2},~~~~
		\mathbb E|\bm Z_k(h)-\bm Z_k|^4\Le \frac{Cd^2h^2}{m^2}.
	\end{equation*}
	Applying Theorem~\ref{theorem: phi h estimate} we obtain
	\begin{equation*}
		|S_{k,3}| \Le \frac{C}{m^2h}
		\sqrt{|\bm Z_{k+1} - \bm Z_k(h)|^2\big(
			|\bar{\bm Z}_{k+1} - \bm Z_k|^2 + 
			|\bm Z_k(h) - \bm Z_k|^2\big)},
	\end{equation*}
	and thus by the discretization error moment estimate in Lemma~\ref{lemma: discretization error},
	\begin{align}
		\mathbb E[S_{k,3}^2] & \Le
		\frac{C}{m^4h^2}\sqrt{\mathbb E|\bm Z_{k+1} - \bm Z_k(h)|^4 \mathbb E\big(|\bar{\bm Z}_{k+1} - \bm Z_k|^4 + 
			|\bm Z_k(h) - \bm Z_k|^4\big)} \notag\\
		& \Le \frac{C}{m^4h^2}\bigg(\frac{d^2h^6}{m^2} + \frac{dh^5}{m}\bigg)\frac{dh}{m} = C\bigg(\frac{d^2h^4}{m^6} + \frac{d^3h^5}{m^7}\bigg).
        \label{S split}
	\end{align}
	Then by Cauchy's inequality:
	\begin{equation}
		\mathbb E\Bigg[\bigg(\sum_{k=0}^{K-1} S_{k,3}\bigg)^2\Bigg] \Le C\bigg(\frac{d^2K^2h^4}{m^6} + \frac{d^3K^2h^5}{m^7}\bigg).
		\label{proof: S3 estimate}
	\end{equation}
	Synthesizing the inequalities \eqref{proof: S1 estimate}, \eqref{proof: S2 estimate}, and \eqref{proof: S3 estimate} gives
	\begin{equation}
		\frac1{K^2}\mathbb E\Bigg[\bigg(\sum_{k=0}^{K-1} S_k\bigg)^2\Bigg] \Le C\bigg(\frac{dh^3}{m^3K} + \frac{d^2h^4}{m^6} + \frac{d^3h^5}{m^7}\bigg).
		\label{estimate: S}
	\end{equation}
	
	Finally, decompose the martingale $T_k$ as $T_k = T_{k,1} + T_{k,2}$ with
	\begin{equation*}
		T_{k,1} = \frac1h \big(\phi_h(\bm Z_k(h)) - \phi_h(\bm Z_k)\big),\qquad
		T_{k,2} = f(\bm X_k) - \pi(f).
	\end{equation*}
	Applying Theorem~\ref{theorem: phi h estimate} produces the bound
	\begin{equation*}
		|T_{k,1}| \Le \frac{C}{mh} |\bm Z_k(h) - \bm Z_k|~
		\Longrightarrow~\mathbb E[T_{k,1}^2] \Le \frac{C}{m^2h^2}
		\mathbb E|\bm Z_k(h) - \bm Z_k|^2 \Le
		\frac{Cd}{m^3h}.
	\end{equation*}
	For $T_{k,2}$, take $\bm Z_*\sim \pi$ independent of $\bm X_k$, then $\mathbb E|\bm X_*|^2 \Le \frac{Cd}m$ and
	\begin{equation*}
		\mathbb E[T_{k,2}^2] \Le \mathbb E|f(\bm X_k)-f(\bm X_*)|^2 \Le
		C\mathbb E\big(|\bm X_k|^2 + |\bm Z_*|^2\big) \Le
		\frac{Cd}{m}.
	\end{equation*}
	Combining these moment estimates, we derive
	\begin{equation*}
		\mathbb E[T_k^2] \Le C\big(\mathbb E [T_{k,1}^2]+\mathbb E[T_{k,2}^2]\big)
		\Le C\bigg(\frac{d}{m^3h} + \frac{d}{m}\bigg) \Le \frac{Cd}{m^3h}.
	\end{equation*}
	Leveraging the martingale property of $T_k$, we establish the time-averaged moment bound
	\begin{equation}
		\frac1{K^2}\mathbb E\Bigg[ \bigg(\sum_{k=0}^{K-1}
		T_k \bigg)^2\Bigg] \Le \frac{Cd}{m^3Kh}.
		\label{estimate: T}
	\end{equation}
	
	Combining the estimates \eqref{estimate: R}, \eqref{estimate: S}, and \eqref{estimate: T}, we derive
	\begin{equation}
		\mathbb E\Bigg[\bigg(\frac1K\sum_{k=0}^{K-1} (R_k + S_k + T_k)\bigg)^2\Bigg]
		\Le C\bigg(\frac{d}{m^3Kh} + \frac{\sigma^4d^2h^2}{m^4} + \frac{d^2h^4}{m^6} + \frac{d^3h^5}{m^7} \bigg).
		\label{final estimate: 1}
	\end{equation}
	On the other hand, Lemma~\ref{lemma: displacement} provides the displacement moment estimate:
	\begin{equation}
		\mathbb E\Bigg[\bigg(\frac{\phi_h(\bm Z_0) - \phi_h(\bm Z_K)}{Kh}\bigg)^2\Bigg] \Le
		\frac{C}{m^2K^2h^2}\mathbb E|\bm Z_0 - \bm Z_K|^2 \Le
		\frac{Cd}{m^3Kh}.
		\label{final estimate: 2}
	\end{equation}
	Integrating \eqref{final estimate: 1}, \eqref{final estimate: 2} with the time average error expression \eqref{explicit time average} yields
	\begin{equation*}
		\mathbb E \Bigg[\bigg(\frac1K\sum_{k=0}^{K-1} f(\bm X_k) - \pi(f)\bigg)^2\Bigg] \Le
		C\bigg(\frac{d}{m^3Kh} + \frac{\sigma^4d^2h^2}{m^4} + \frac{d^2h^4}{m^6} + \frac{d^3h^5}{m^7} \bigg),
	\end{equation*}
	completing the proof of Theorem~\ref{theorem: SG-UBU MSE}.
\end{proof}

The MSE analysis for mini-batch SG-UBU requires a quantitative bound on the high-order moments of the stochastic gradient. The following lemma establishes the key estimate.
\begin{lemma}
	\label{lemma: eighth moment}
	Let $x_1,\dots,x_N \in \mathbb{R}^d$ be vectors with bounded eighth moments:
	\begin{equation*}
		\frac1N \sum_{i=1}^N |x_i|^8 \Le 1.
	\end{equation*}
	Denote $\bar x_N = \frac1N \sum_{i=1}^N x_i \in \mathbb R^d$. Let $\theta$ be a random subset of $\{1,\dots,N\}$ of size $p$. Then
	\begin{equation*}
		\mathbb E_{\theta}\Bigg[\bigg|\frac1p\sum_{i\in \theta} x_i - \bar x_N\bigg|^8\Bigg] \Le
		\frac{C}{p^4},
	\end{equation*}
	where the constant $C$ is absolute.
\end{lemma}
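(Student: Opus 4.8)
The plan is to center the data, use Hoeffding's comparison to pass from sampling without replacement to an independent sum, and then apply a standard moment inequality. First I would set $Y_i = x_i - \bar x_N$, so that $\sum_{i=1}^N Y_i = \bm{0}$ and $\tfrac1p\sum_{i\in\theta}x_i - \bar x_N = \tfrac1p\sum_{i\in\theta}Y_i$; centering is what produces the $1/p^4$ scaling, since a mean-zero sum of $p$ bounded terms is typically of size $\sqrt p$. I would then record that the centered vectors retain a uniformly bounded eighth moment: Jensen's inequality gives $|\bar x_N|^8 \le \tfrac1N\sum_i|x_i|^8 \le 1$, hence
\[
\frac1N\sum_{i=1}^N |Y_i|^8 \le 2^7\Big(\frac1N\sum_{i=1}^N|x_i|^8 + |\bar x_N|^8\Big) \le 2^8 .
\]

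The key step is Hoeffding's classical comparison between sampling with and without replacement: for every convex $g\colon\mathbb R^d\to\mathbb R$,
\[
\mathbb E_\theta\Big[\,g\Big(\sum_{i\in\theta}Y_i\Big)\Big] \;\le\; \mathbb E\Big[\,g\Big(\sum_{j=1}^p Z_j\Big)\Big],
\]
where $Z_1,\dots,Z_p$ are i.i.d.\ and uniform on $\{Y_1,\dots,Y_N\}$. Since $g(\bm z)=|\bm z|^8$ is convex on $\mathbb R^d$, this reduces the estimate to the independent case, with $\mathbb E[Z_j]=\tfrac1N\sum_iY_i=\bm{0}$ and $\mathbb E|Z_j|^8=\tfrac1N\sum_i|Y_i|^8\le2^8$. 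The eighth moment of an independent mean-zero sum is then controlled by the Marcinkiewicz--Zygmund (or Rosenthal) inequality,
\[
\mathbb E\Big|\sum_{j=1}^p Z_j\Big|^8 \le C\,\mathbb E\Big(\sum_{j=1}^p|Z_j|^2\Big)^4 \le C\,p^3\sum_{j=1}^p\mathbb E|Z_j|^8 = C\,p^4\,\mathbb E|Z_1|^8 \le 2^8C\,p^4 ,
\]
the middle inequality being the power-mean bound $(\sum_{j=1}^p a_j)^4\le p^3\sum_j a_j^4$. Combining the three displays,
\[
\mathbb E_\theta\Big[\Big|\tfrac1p\sum_{i\in\theta}x_i-\bar x_N\Big|^8\Big] = \frac1{p^8}\,\mathbb E_\theta\Big|\sum_{i\in\theta}Y_i\Big|^8 \le \frac{2^8 C}{p^4} ,
\]
which is the claim, with an absolute constant.

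The main (mildly delicate) point is that Hoeffding's comparison is usually quoted for scalar summands, whereas I need it for $\mathbb R^d$-valued ones. This is legitimate because Hoeffding's proof is a conditioning/exchangeability argument that uses only convexity of $g$ and no one-dimensional structure; alternatively one may invoke the negative association of the inclusion indicators $\{\mathbf{1}_{i\in\theta}\}$ together with a negative-association moment comparison. If one wishes to bypass the comparison theorem altogether, the bound can instead be obtained by directly expanding $\mathbb E_\theta\langle\sum_{i\in\theta}Y_i,\sum_{i\in\theta}Y_i\rangle^4$ over eight-fold multi-indices, evaluating $\mathbb E_\theta[\mathbf{1}_{i_1\in\theta}\cdots\mathbf{1}_{i_8\in\theta}]$ according to the number of distinct indices, and cancelling the lower-order contributions using $\sum_iY_i=\bm{0}$ and the moment bounds on $\tfrac1N\sum_i|Y_i|^{2k}$ for $k\le4$; this works but is a longer combinatorial computation. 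All remaining steps are routine.
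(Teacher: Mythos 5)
Your argument is correct and follows essentially the same route as the paper: center the vectors (the paper's ``WLOG $\bar x_N=0$''), invoke Hoeffding's convex-function comparison to pass to sampling with replacement, and then control the eighth moment of the resulting independent mean-zero sum by a standard moment inequality, yielding $Cp^4$ before dividing by $p^8$. The only cosmetic difference is that you use the Marcinkiewicz--Zygmund form followed by the power-mean bound where the paper applies Rosenthal's inequality directly; both give the same $p^4$ scaling, and your remark about the vector-valued validity of Hoeffding's comparison is already covered by the general convex-function statement the paper quotes.
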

The proof mainly relies on Hoeffding's inequality \citep{hoeffding1963probability} and Rosenthal's inequality \citep{petrov2012sums,chen2020rosenthal}, which are stated below.
\begin{theorem}[Hoeffding]
Let $\Pi = \{x_1, \dots, x_N\}$ be a finite set. Let $(X_1, \dots, X_p)$ be a sample drawn without replacement from $\Pi$, and let $(Y_1, \dots, Y_p)$ be a sample drawn with replacement from $\Pi$. If $f(x_1, \dots, x_p)$ is a continuous convex function on the sample space,
$$
\mathbb E[f(X_1, \dots, X_p)] \Le \mathbb E[f(Y_1, \dots, Y_p)].
$$
\end{theorem}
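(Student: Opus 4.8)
The plan is to prove the comparison in the form in which it is actually invoked in Lemma~\ref{lemma: eighth moment}: for test functions $f(z_1,\dots,z_p)=g(z_1+\cdots+z_p)$ with $g\colon\mathbb R^d\to\mathbb R$ continuous and convex (in the application $g(s)=|s/p-\bar x_N|^8$). Writing $S_{\mathrm{wor}}$ and $S_{\mathrm{wr}}$ for the sums of a without‑ and with‑replacement sample of size $p$, the target is $\mathbb E[g(S_{\mathrm{wor}})]\Le\mathbb E[g(S_{\mathrm{wr}})]$. Everything will reduce to one application of Jensen's inequality: I will build the with‑replacement sample \emph{out of} a without‑replacement sample so that $\mathbb E[S_{\mathrm{wr}}\mid\mathcal H]=S_{\mathrm{wor}}$ for a sub‑$\sigma$‑algebra $\mathcal H$ with respect to which $S_{\mathrm{wor}}$ is measurable, whence $\mathbb E[g(S_{\mathrm{wr}})]=\mathbb E[\,\mathbb E[g(S_{\mathrm{wr}})\mid\mathcal H]\,]\Ge\mathbb E[g(\mathbb E[S_{\mathrm{wr}}\mid\mathcal H])]=\mathbb E[g(S_{\mathrm{wor}})]$.

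The coupling uses three independent ingredients: (i) a without‑replacement index sample $(Z_1,\dots,Z_p)$, uniform over the $(N)_p:=N(N-1)\cdots(N-p+1)$ distinct tuples in $\{1,\dots,N\}^p$; (ii) a random partition $\mathcal P$ of $\{1,\dots,p\}$ with $\mathbb P(\mathcal P=\mathcal Q)=(N)_{|\mathcal Q|}/N^p$ — the kernel‑partition law of $p$ i.i.d.\ uniform draws, which is a probability measure since classifying maps $\{1,\dots,p\}\to\{1,\dots,N\}$ by their kernel gives $\sum_{\mathcal Q}(N)_{|\mathcal Q|}=N^p$; (iii) given the blocks $B_1,\dots,B_k$ of $\mathcal P$, a uniform injection $\sigma\colon\{1,\dots,k\}\hookrightarrow\{1,\dots,p\}$. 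Then set $J_a:=Z_{\sigma(j)}$ for $a\in B_j$. The one nonroutine point is the claim that $(J_1,\dots,J_p)$ is uniform on $\{1,\dots,N\}^p$, so that $(x_{J_1},\dots,x_{J_p})$ is a bona fide with‑replacement sample: fixing $\mathbf j\in\{1,\dots,N\}^p$ with kernel partition $\mathcal Q$, $m:=|\mathcal Q|$, and distinct values $v_1,\dots,v_m$, injectivity of $\sigma$ together with distinctness of the $Z_l$ forces the kernel partition of $J$ to equal $\mathcal P$, so $\{J=\mathbf j\}$ requires $\mathcal P=\mathcal Q$ and then $Z_{\sigma(j)}=v_j$ for all $j$; since $(Z_{\sigma(1)},\dots,Z_{\sigma(m)})$ is a uniform ordered $m$‑tuple of distinct indices, this yields $\mathbb P(J=\mathbf j)=\frac{(N)_m}{N^p}\cdot\frac{1}{(N)_m}=N^{-p}$.

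With this in place I would take $\mathcal H:=\sigma(Z_1,\dots,Z_p,\mathcal P)$ — everything except the assignment $\sigma$. For each position $a$, the value $Z_{\sigma(j)}$ attached to its block is marginally uniform over $\{Z_1,\dots,Z_p\}$, so $\mathbb E[x_{J_a}\mid\mathcal H]=\frac{1}{p}\sum_{l=1}^p x_{Z_l}$, and summing over the $p$ positions gives $\mathbb E[S_{\mathrm{wr}}\mid\mathcal H]=\sum_{l=1}^p x_{Z_l}=S_{\mathrm{wor}}$, which is $\mathcal H$‑measurable; the Jensen step above then finishes the proof. Applied with $g(s)=|s/p-\bar x_N|^8$ this gives $\mathbb E_\theta\big[|\tfrac{1}{p}\sum_{i\in\theta}x_i-\bar x_N|^8\big]\Le\mathbb E\big[|\tfrac{1}{p}\sum_{i=1}^p Y_i-\bar x_N|^8\big]$ with $Y_1,\dots,Y_p$ i.i.d.\ uniform on $\{x_1,\dots,x_N\}$, the form needed in Lemma~\ref{lemma: eighth moment} before Rosenthal's inequality is applied to the right‑hand side.

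The main obstacle is precisely the construction of this coupling: asserting that $S_{\mathrm{wor}}$ is a mean‑preserving contraction of $S_{\mathrm{wr}}$ is equivalent to the theorem, and the naive couplings do not deliver it — resampling the repeats of an i.i.d.\ tuple gives a perturbation that is not conditionally centered, while conditioning an i.i.d.\ tuple only on its coincidence partition over‑contracts its sum to the constant $p\bar x_N$. The device that works is to \emph{generate} the repeats by drawing the coincidence pattern from the population‑size‑dependent law $\mathbb P(\mathcal P=\mathcal Q)\propto(N)_{|\mathcal Q|}$ and filling the blocks from a without‑replacement sample via a uniform injection; the only genuine computation is the counting identity $\mathbb P(J=\mathbf j)=N^{-p}$. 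For completeness I would also note that the literal product‑space statement — $\mathbb E[f(\text{WOR})]\Le\mathbb E[f(\text{WR})]$ for \emph{every} convex $f$ on the $p$‑fold product — is false (take $f(a,b)=(a-b)^2$, $N=p=2$, $\{x_1,x_2\}=\{0,1\}$: the left side is $1$, the right side is $\tfrac12$), so the theorem is to be read, as above, for $f$ that depends on the sample only through its sum.
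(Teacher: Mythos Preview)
The paper does not supply a proof of this statement; it is quoted as a classical result of Hoeffding (1963) and invoked as a black box in the proof of Lemma~\ref{lemma: eighth moment}. So there is no in-paper argument to compare against.

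Your coupling is correct for the form actually needed, namely $f$ depending on the sample only through its sum. The key identity $\mathbb E[S_{\mathrm{wr}}\mid\mathcal H]=S_{\mathrm{wor}}$ follows because each $\sigma(j)$ is marginally uniform on $\{1,\dots,p\}$, and your verification that $(J_1,\dots,J_p)$ is uniform on $\{1,\dots,N\}^p$ via the kernel-partition decomposition is clean. Your closing observation is also well taken: the theorem as the paper states it—for an arbitrary continuous convex $f$ on the $p$-fold product—is false, as your $(a-b)^2$ example with $N=p=2$ shows; Hoeffding's original Theorem~4 is for convex functions of the sample sum, and that is all Lemma~\ref{lemma: eighth moment} requires.
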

\begin{theorem}[Rosenthal]
	Let $Z_1, \dots, Z_p$ be independent mean-zero random vectors in $\mathbb R^{d}$. For $k \Ge 2$, there exists a constant $C_k$ depending only on $k$ such that
	$$
	\mathbb E\Bigg[\bigg|\sum_{j=1}^p Z_j\bigg|^k\Bigg] \Le C_k \max\Bigg(\sum_{j=1}^p \mathbb E\big[|Z_j|^k\big], \bigg(\sum_{j=1}^p \mathbb E\big[|Z_j|^2\big]\bigg)^{\frac k2} \Bigg).
	$$
\end{theorem}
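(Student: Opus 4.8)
The plan is to reduce the vector-valued inequality to its one-dimensional case via a Gaussian integral representation of $|\cdot|^k$, which turns the $\mathbb R^d$-valued sum into a family of scalar sums.

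\emph{Step 1 (linearization).} I would introduce an auxiliary $g\sim\mathcal N(\bm{0},\bm{I}_d)$ independent of the $Z_j$ and set $\gamma_k:=\mathbb E|\mathcal N(0,1)|^k\in(0,\infty)$. Since $v\cdot g\sim\mathcal N(0,|v|^2)$ for fixed $v$, one has $\mathbb E_g|v\cdot g|^k=\gamma_k|v|^k$; taking $v=S_p:=\sum_{j=1}^p Z_j$ and using Fubini gives
\begin{equation*}
	\mathbb E\big[|S_p|^k\big]=\gamma_k^{-1}\,\mathbb E_g\,\mathbb E\Big[\big|\textstyle\sum_{j=1}^p g\cdot Z_j\big|^k\Big].
\end{equation*}
For each frozen value of $g$ the scalars $\xi_j:=g\cdot Z_j$ are independent and mean-zero --- this is the only place the hypothesis $\mathbb E Z_j=\bm{0}$ enters.

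\emph{Step 2 (scalar Rosenthal, then average over $g$).} Next I would apply the one-dimensional Rosenthal inequality to $\sum_j\xi_j$ for each fixed $g$ and integrate over $g$, treating the two terms by Fubini. The $k$-th moment term contributes $\mathbb E_g\big[\mathbb E|g\cdot Z_j|^k\big]=\mathbb E\big[\mathbb E_g|g\cdot Z_j|^k\big]=\gamma_k\,\mathbb E|Z_j|^k$. For the variance term, $\sum_j\mathbb E\big[(g\cdot Z_j)^2\big]=g^\top\bm{\Sigma} g$ with $\bm{\Sigma}:=\sum_j\mathbb E[Z_jZ_j^\top]$ and $\operatorname{tr}\bm{\Sigma}=\sum_j\mathbb E|Z_j|^2=:V$; diagonalizing $\bm{\Sigma}$ shows $g^\top\bm{\Sigma} g$ is distributed as $\sum_i\lambda_i\eta_i^2$ with $\eta_i$ i.i.d.\ standard normal, $\lambda_i\Ge0$, $\sum_i\lambda_i=V$, so Minkowski's inequality in $L^{k/2}$ (valid since $k\Ge2$) gives $\big\|g^\top\bm{\Sigma} g\big\|_{L^{k/2}}\Le\sum_i\lambda_i\big\|\eta_i^2\big\|_{L^{k/2}}=\gamma_k^{2/k}V$, that is $\mathbb E_g\big[(g^\top\bm{\Sigma} g)^{k/2}\big]\Le\gamma_k V^{k/2}$. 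The factors $\gamma_k$ then cancel and
\begin{equation*}
	\mathbb E\big[|S_p|^k\big]\Le C_k\Big(\textstyle\sum_{j=1}^p\mathbb E|Z_j|^k+V^{k/2}\Big)\Le 2C_k\max\Big(\textstyle\sum_{j=1}^p\mathbb E|Z_j|^k,\ \big(\sum_{j=1}^p\mathbb E|Z_j|^2\big)^{k/2}\Big),
\end{equation*}
with $C_k$ the one-dimensional Rosenthal constant; in particular the resulting bound does not depend on $d$.

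\emph{Step 3 and the main obstacle.} What remains is the scalar ($d=1$) Rosenthal inequality, which is classical \citep{petrov2012sums}; invoked from the literature this completes the argument, so the substantive content of the theorem is concentrated in that scalar case. A self-contained proof of it would truncate each $\xi_j$ at a level $t$, re-center the truncated and the tail pieces separately, bound the bounded part by a Bernstein-type exponential-moment estimate, bound the tail part by $\sum_j\mathbb E|\xi_j|^k$, and optimize over $t$; the delicate point is keeping every constant produced dependent on $k$ alone. An alternative that sidesteps scalar Rosenthal would symmetrize, $\mathbb E|S_p|^k\Le 2^k\,\mathbb E|\sum_j\varepsilon_jZ_j|^k$ for Rademacher $\varepsilon_j$, then use the dimension-free Khintchine--Kahane inequality conditionally on the $Z_j$ followed by a Rosenthal-type bound for the sum of the nonnegative variables $|Z_j|^2$ --- the bottleneck then being the Khintchine--Kahane constant, or equivalently that nonnegative Rosenthal bound.
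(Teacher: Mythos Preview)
The paper does not prove this theorem; it is stated as a classical result with references to \citet{petrov2012sums} and \citet{chen2020rosenthal} and then invoked as a black box in the proof of Lemma~\ref{lemma: eighth moment}. Your proposal is therefore strictly more than what the paper does.

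Your reduction is correct. The Gaussian-linearization identity $\mathbb E_g|v\cdot g|^k=\gamma_k|v|^k$ combined with Fubini collapses the vector problem to the scalar one, and the Minkowski bound $\|g^\top\bm\Sigma g\|_{L^{k/2}}\le\gamma_k^{2/k}\operatorname{tr}\bm\Sigma$ (valid because $k\ge2$) handles the variance term cleanly; the $\gamma_k$ factors cancel exactly as you say, producing a constant that depends only on $k$ and not on $d$. The one structural point worth flagging is that you must apply the scalar inequality in its additive form $\sum+(\cdot)^{k/2}$ before integrating over $g$, since $\mathbb E_g[\max(\cdot,\cdot)]$ is not $\max(\mathbb E_g[\cdot],\mathbb E_g[\cdot])$; you implicitly do this, but it is the only place the argument could slip. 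With the scalar case cited (as the paper also does), the proof is complete.
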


\begin{proof}[Proof of Lemma~\ref{lemma: eighth moment}]
WLOG assume $\bar x_N = 0$.
Let $X_1,\cdots,X_p$ denote independent samples from $\{1,\cdots,N\}$ (with replacement), then by Hoeffding's inequality we have
\begin{equation}
	\mathbb E_{\theta}\Bigg[\bigg|\frac1p\sum_{i\in \theta} x_i\bigg|^8\Bigg] \Le 
	\mathbb E\Bigg[\bigg|\frac1p \sum_{j=1}^p X_j\bigg|^8\Bigg].
	\label{eighth: 1}
\end{equation}
Since $X_1,\cdots,X_p$ are independent, applying Rosenthal's inequality with $k=8$,
\begin{align*}
\mathbb E\Bigg[\bigg|\sum_{j=1}^p X_j\bigg|^8\Bigg] & \Le C \max\Bigg(\sum_{j=1}^p \mathbb E\big[|X_j|^8\big], \bigg(\sum_{j=1}^p \mathbb E\big[|X_j|^2\big]\bigg)^4 \Bigg) \\
& \Le C\bigg(p\,\mathbb E\big[|X_1|^8\big] + p^4 \Big(\mathbb E\big[|X_1|^2\big]\Big)^4\bigg) \\
& \Le Cp^4 \mathbb E\big[|X_1|^8\big] = \frac{Cp^4}{N}
\sum_{i=1}^N |x_i|^8 \Le Cp^4.
\end{align*}
Hence we derive the upper bound
\begin{equation}
	\mathbb E\Bigg[\bigg|\frac1p \sum_{j=1}^p X_j\bigg|^8\Bigg] \Le \frac{C}{p^4}.
	\label{eighth: 2}
\end{equation}
Combining the estimates \eqref{eighth: 1} and \eqref{eighth: 2} yields the result of Lemma~\ref{lemma: eighth moment}.
\end{proof}

By Lemma~\ref{lemma: eighth moment}, Assumption~\ref{asSgN} implies the following eighth moment control of the mini-batch gradient error: for any $\bm x\in\mathbb R^d$ and random subset $\theta\subset\{1,\dots,N\}$ with size $p$,
\begin{equation*}
	\Bigg(\mathbb E_{\theta}\Bigg[\bigg|\frac1{p}\sum_{i\in\theta}\nabla U_i(\bm x)-\nabla U(\bm x)\bigg|^8\Bigg]\Bigg)^{\tfrac14}
	\Le \frac{C\sigma^2 d}{p}.
\end{equation*}
Hence the stochastic gradient in mini-batch SG-UBU satisfies all items of Assumption~\ref{asSg} with $\sigma^2$ replaced by $C\sigma^2/p$. Therefore, Theorem~\ref{theorem: mini-batch SG-UBU MSE} follows directly from Theorem~\ref{theorem: SG-UBU MSE}.

\subsection{Numerical bias of SG-UBU}
\label{appendix: SG-UBU 2}

\begin{proof}[Proof of Theorem~\ref{theorem: SG-UBU numerical bias}]
To streamline the notation, expectations are taken with respect to the numerical invariant distribution $\pi_h$ and the stochastic gradient index $\theta$, which we omit in the proof. From Lemma~\ref{lemma: SG-UBU invariant},  $\bm Z_0 = (\bm X_0,\bm V_0)$ drawn from $\pi_h$ satisfies the moment bound:
\begin{equation*}
	\mathbb E\big[|\bm Z_0|^8\big] \Le \frac{Cd^4}{m^4}.
\end{equation*}
According to \eqref{bias decomposition}, 
the numerical bias is the sum of the expected stochastic gradient error, $\mathbb E[R_0]$, and the expected discretization error, $\mathbb E[S_0]$, where
\begin{equation*}
	R_0 = \frac{\phi_h(\bm Z_1) - \phi_h(\bar{\bm Z_1})}{h},\qquad
	S_0 = \frac{\phi_h(\bar{\bm Z}_1) - \phi_h(\bm Z_0(h))}{h}.
\end{equation*}
We begin with the stochastic error term $\mathbb E[R_0]$. A Taylor expansion of $\phi_h(\bm Z_1)$ around $\bar{\bm Z}_1$ using the mean value theorem gives
\begin{align*}
	\mathbb E[R_0] & =  \frac1h \mathbb E\bigg[(\bm Z_1 - \bar {\bm Z}_1)^\top \int_0^1 \nabla \phi_h\big(\lambda \bm Z_1 + (1-\lambda) \bar{\bm Z}_1\big)\D\lambda \bigg].
\end{align*}
The key insight is that the first-order term of this expansion vanishes in expectation due to the martingale property of the stochastic gradient error established in \eqref{martingale property}:
\begin{equation*}
	\mathbb E\big[(\bm Z_1 - \bar{\bm Z}_1)^\top \nabla \phi_h(\bar {\bm Z}_1)\big] = 0.
\end{equation*}
This simplification leaves only the second-order terms:
\begin{align*}
	\mathbb E[R_0] & = \frac1h \mathbb E\bigg[(\bm Z_1 - \bar {\bm Z}_1)^\top \int_0^1 \Big(\nabla \phi_h\big(\lambda \bm Z_1 + (1-\lambda) \bar{\bm Z}_1\big)-\nabla\phi_h(\bar {\bm Z}_1)\Big)\D\lambda \bigg] \\
	& = \frac1h \mathbb E\bigg[(\bm Z_1 - \bar {\bm Z}_1)^\top  \int_0^1\bigg(\lambda\int_0^1\nabla^2 \phi_h\big(\bar{\bm Z}_1+\lambda\varphi(\bm Z_1 - \bar{\bm Z}_1)\big)\D\varphi\bigg)\D\lambda\,
	(\bm Z_1-\bar{\bm Z_1}) \bigg].
\end{align*}
Recall that the components of the local error $\bm Z_1 - \bar{\bm Z}_1$ converge at different rates. Lemma~\ref{lemma: stochastic gradient error} provides the moment bounds:
\begin{equation*}
	\sqrt{\mathbb E\big|\bm X_1 - \bar{\bm X}_1\big|^4}
	\Le C\sigma^2dh^4,\qquad
	\sqrt{\mathbb E\big|\bm V_1 - \bar{\bm V}_1\big|^4}
	\Le C\sigma^2dh^2.
\end{equation*}
Since the position error $\bm X_1 - \bar{\bm X}_1$ is of a higher order in $h$ than the velocity error, its contribution to the bias is smaller. Using the Hessian bound $\|\nabla^2\phi_h\| \Le C/m^2$ from Theorem~\ref{theorem: phi h estimate}, we can bound these higher-order terms:
\begin{align*}
	& \frac1h \mathbb E\bigg[(\bm X_1 - \bar {\bm X}_1)
	^\top \int_0^1\bigg(\lambda\int_0^1\nabla_{\bm x\bm v}^2 \phi_h\big(\bar{\bm Z}_1+\lambda\varphi(\bm Z_1 - \bar{\bm Z}_1)\big)\D\varphi\bigg)\D\lambda\,
	(\bm V_1-\bar{\bm V}_1) \bigg] \Le \frac{Cdh^2}{m^2}, \\
	& \frac1h \mathbb E\bigg[(\bm X_1 - \bar {\bm X}_1)
	^\top \int_0^1\bigg(\lambda\int_0^1\nabla_{\bm x\bm x}^2 \phi_h\big(\bar{\bm Z}_1+\lambda\varphi(\bm Z_1 - \bar{\bm Z}_1)\big)\D\varphi\bigg)\D\lambda\,
	(\bm X_1-\bar{\bm X}_1) \bigg] \Le \frac{Cdh^3}{m^2}.
\end{align*}
The dominant contribution to the bias therefore comes from the velocity error component. Using the definition of $\bm M_0$ from the theorem statement, we arrive at the estimate:
\begin{equation}
	\bigg|\mathbb E[R_0] - \frac1{2h} \mathbb E\Big[(\bm V_1-\bar{\bm V}_1)^\top
	\bm M_0 (\bm V_1-\bar{\bm V}_1)\Big]\bigg| \Le  \frac{Cdh^2}{m^2}.
	\label{proof: R_0 estimate}
\end{equation}
Finally, we substitute the explicit expression for the velocity error from \eqref{stochastic gradient error expression}:
\begin{equation*}
	\bm V_1 - \bar{\bm V}_1 = -\frac{he^{-h}}{M_2}
	\big(b(\bm Y_0,\theta) - \nabla U(\bm Y_0)\big).
\end{equation*}
This substitution into \eqref{proof: R_0 estimate}, along with the linear approximation $e^{-h} = 1 + \mathcal O(h)$, yields the leading-order term for the stochastic error's contribution to the bias:
\begin{equation*}
	\bigg|\mathbb E[R_0] - \frac{h}{2M_2^2} \mathbb E
	\Big[\big(b(\bm Y_0,\theta) - \nabla U(\bm Y_0)\big)^\top \bm M_0 \big(b(\bm Y_0,\theta) - \nabla U(\bm Y_0)\big)\Big]\bigg| \Le \frac{Cdh^2}{m^2}.
\end{equation*}

For the discretization error component, $\mathbb E[S_0]$, we directly follow the decomposition in Theorem~\ref{theorem: SG-UBU MSE} and write $S_0 = S_{0,1} + S_{0,2} + S_{0,3}$, where
\begin{align*}
    S_{0,1} & = \frac1h\Big(\delta\bm X_0^\top \nabla_{\bm x} \phi_h(\bm Z_0) + \delta_h \bm V_0^\top \nabla_{\bm v} \phi_h(\bm Z_0)\Big), \\
		S_{0,2} & = \frac1h\delta_m\bm V_0^\top \nabla_{\bm v} \phi_h(\bm Z_0), \\
		S_{0,3} & = \frac1h(\bar{\bm Z}_{1} - \bm Z_0(h))^\top \int_0^1 \Big(\nabla \phi_h\big(\lambda \bar{\bm Z}_{1} + (1-\lambda) \bm Z_0(h)\big) - \nabla\phi_h(\bm Z_0)\Big)\D\lambda.
\end{align*}
The martingale property of $\delta_m\bm V_0$ stated in Lemma~\ref{lemma: discretization error} implies $\mathbb E[S_{0,2}] = 0$, and thus the expectation simplifies to $\mathbb E[S_0] = \mathbb E[S_{0,1}] + \mathbb E[S_{0,3}]$. Utilizing the moment estimates of $S_{0,1}$ and $S_{0,3}$ in \eqref{S 1} and \eqref{S split}, and the step size constraint $h\Le \frac{m}{d}$, we obtain
\begin{equation*}
    \mathbb E|S_{0,1}| \Le \frac{Cdh^2}{m^2},\qquad 
    \mathbb E|S_{0,3}| \Le \frac{Cdh^2}{m^3}.
\end{equation*}
Combining these estimates provides a bound on the expected total local error:
\begin{equation}
	\big|\mathbb E[S_0]\big| \Le \mathbb E|S_{0,1}| + \mathbb E|S_{0,3}| \Le 
    \frac{Cdh^2}{m^3}.
    \label{proof: S_0 estimate}
\end{equation}

Synthesizing the estimates for the stochastic error in \eqref{proof: R_0 estimate} and the discretization error in \eqref{proof: S_0 estimate}, we arrive at a complete characterization of the numerical bias:
\begin{equation*}
	\bigg|\pi_h(f) - \pi(f) - \frac{h}{2M_2^2} \mathbb E\Big[\big(b(\bm Y_0,\theta) - \nabla U(\bm Y_0)\big)^\top \bm M_0 \big(b(\bm Y_0,\theta) - \nabla U(\bm Y_0)\big)\Big]\bigg| \Le \frac{Cdh^2}{m^3}.
\end{equation*}
This expression isolates the leading-order term of the bias, which is proportional to the variance of the stochastic gradient. By applying the established bounds $\|\bm M_0\| \Le C/m^2$ and $\mathbb E\big[|b(\bm Y_0,\theta) - \nabla U(\bm Y_0)|^2\big] \Le C\sigma^2 d$, we can bound this leading term, confirming that the overall numerical bias exhibits first-order convergence:
\begin{equation*}
	|\pi_h(f) - \pi(f)| \Le C\bigg(\frac{\sigma^2dh}{m^2} + \frac{dh^2}{m^3}\bigg).
\end{equation*}
This completes the proof of Theorem~\ref{theorem: SG-UBU numerical bias}.
\end{proof}

\begin{proof}[Proof of Lemma~\ref{lemma: paradox}]
We begin by recalling the continuous Poisson equation in \eqref{continuous Poisson solution}:
\begin{equation}
	\bm v^\top \nabla_{\bm x}\phi_0(\bm x,\bm v) - \bigg(\frac1{M_2}\nabla U(\bm x)+2\bm v\bigg)^\top
	\nabla_{\bm v} \phi_0(\bm x,\bm v) + \frac2{M_2} \Delta_{\bm v} \phi_0(\bm x,\bm v) = \pi(f) - f(\bm x).
	\label{recall continuous Poisson}
\end{equation}
Theorem~\ref{theorem: u estimate} provides an exponential decay estimate for the  Kolmogorov solution $u(\bm x,\bm v,t)$:
\begin{equation*}
	\max\big\{
	|\nabla_{\bm x} u(\bm x,\bm v,t)|,|\nabla_{\bm v} u(\bm x,\bm v,t)|
	\big\} \Le C L_1 e^{-\frac{mt}{2M_2}}.
\end{equation*}
By integrating this inequality with respect to $t$ over $(0, \infty)$ and using the integral representation $\phi_0(\bm x,\bm v) = \int_0^\infty u(\bm x,\bm v,t) \D t$, we obtain uniform bounds on the derivatives of $\phi_0(\bm x,\bm v)$:
\begin{equation*}
	\max\big\{
	|\nabla_{\bm x} \phi_0(\bm x,\bm v)|,|\nabla_{\bm v} \phi_0(\bm x,\bm v)|
	\big\} \Le \frac{CL_1}{m}.
\end{equation*}
With these derivative bounds established, we can rearrange the Poisson equation \eqref{recall continuous Poisson} to isolate the Laplacian term. Since the potential gradient $\nabla U(\bm x)$ and the test function $f(\bm x)$ both exhibit at most linear growth in $|\bm x|$, all terms in the equation other than the Laplacian grow at most linearly in $|\bm x|$ and $|\bm v|$. This allows us to establish a growth bound on the Laplacian of the velocity component of $\phi_0$:
\begin{equation}
	|\Delta_{\bm v} \phi_0(\bm x,\bm v)| \Le
	\frac{C}{m}\sqrt{|\bm x|^2+|\bm v|^2+d},\qquad
	\forall \bm x,\bm v\in\mathbb R^d.
	\label{Laplace bound}
\end{equation}

To complete the proof, we need to bound the second moment of the Hessian of the Poisson solution, averaged over the equilibrium velocity distribution. Thus we define the quantity $\mathcal Q(\bm x)$ as this velocity-averaged integral:
\begin{equation*}
	\mathcal Q(\bm x) = \bigg(\frac{M_2}{2\pi}\bigg)^{\frac d2}\int_{\mathbb R^d} \big\|\nabla_{\bm v\bm v}^2 \phi_0(\bm x,\bm v)\big\|^2 e^{-\frac{M_2}2|\bm v|^2}\D\bm v.
\end{equation*}
The goal is to show that the expectation of $\mathcal Q(\bm x)$ with respect to the target distribution $\pi(\bm x)$ is appropriately bounded:
\begin{equation}
	\mathbb E^{\pi} \big[\mathcal Q(\bm x)\big] \Le \frac{C_d}{m^3}.
	\label{Qx bound}
\end{equation}
Our strategy is to leverage local elliptic regularity estimates. We partition the velocity space $\mathbb R^d$ into unit hypercubes centered at integer vectors $\bm j = (j_1,\cdots,j_d) \in \mathbb Z^d$:
\begin{align*}
	B_{\bm j} & = \Big(j_1-\frac12,j_1+\frac12\Big) \times \cdots \times \Big(j_d-\frac12,j_d+\frac12\Big), \\
	\tilde B_{\bm j} & = (j_1-1,j_1+1) \times \cdots \times (j_d-1,j_d+1).
\end{align*}
Standard local $L^2$-estimates for elliptic equations allow us to bound the integral of the Hessian over a block $B_{\bm j}$ by the integral of the Laplacian and the function itself over the larger block $\tilde B_{\bm j}$:
\begin{align*}
	\int_{B_{\bm j}} \big\|\nabla_{\bm v\bm v}^2 \phi_0(\bm x,\bm v)\big\|^2
	\D\bm v & \Le C_d \int_{\tilde B_{\bm j}} \Big(|\Delta_{\bm v}
	\phi_0(\bm x,\bm v)|^2 + |\phi_0(\bm x,\bm v)|^2\Big)
	\D\bm v \\
	& \Le \frac{C_d}{m^2} \int_{\tilde B_{\bm j}} (|\bm x|^2+|\bm v|^2+d)\D\bm v,
\end{align*}
where we have used the growth bound \eqref{Laplace bound} for the Laplacian and a similar bound for the solution $\phi_0$.

For any $\bm v \in \tilde B_{\bm j}$, its squared norm is related to the squared norm of the center point $\bm j$. This relationship allows us to bound the Gaussian weight within each block. For $\bm v \in B_{\bm j}$, we can bound the exponential term from below in terms of $|\bm j|^2$, which simplifies the integral. Combining these estimates, we can bound the contribution to $\mathcal Q(\bm x)$ from each block $B_{\bm j}$:
\begin{align*}
	& \quad \bigg(\frac{M_2}{2\pi}\bigg)^{\frac d2}
	\int_{B_{\bm j}} \big\|\nabla_{\bm v\bm v}^2 \phi_0(\bm x,\bm v)\big\|^2 e^{-\frac{M_2}2|\bm v|^2}
	\D\bm v \\
	& \Le C_d \exp\bigg(-\frac{M_2}4\sum_{i=1}^dj_i^2\bigg) \int_{B_{\bm j}} \big\|\nabla_{\bm v\bm v}^2 \phi_0(\bm x,\bm v)\big\|^2
	\D\bm v \\
	& \Le \frac{C_d}{m^2} \exp\bigg(-\frac{M_2}4\sum_{i=1}^dj_i^2\bigg)
	\int_{\tilde B_{\bm j}} (|\bm x|^2+|\bm v|^2+d)\D\bm v \\
	& \Le \frac{C_d}{m^2} \exp\bigg(-\frac{M_2}4\sum_{i=1}^dj_i^2\bigg)
	\bigg(|\bm x|^2 + \sum_{i=1}^d j_i^2 + C_d \bigg) \\
	& \Le \frac{C_d}{m^2}
	\exp\bigg(-\frac{M_2}8\sum_{i=1}^dj_i^2\bigg) (|\bm x|^2+1).
\end{align*}
Summing over all blocks $\bm j\in\mathbb Z^d$ yields a bound on $\mathcal Q(\bm x)$. The sum over the exponential terms is a convergent geometric series, resulting in an upper bound for $\mathcal Q(\bm x)$ that exhibits at most quadratic growth in $|\bm x|$:
\begin{align*}
	\mathcal Q(\bm x) & = \sum_{\bm j\in\mathbb Z^d}
	\bigg(\frac{M_2}{2\pi}\bigg)^{\frac d2}
	\int_{B_{\bm j}} \big\|\nabla_{\bm v\bm v}^2 \phi_0(\bm x,\bm v)\big\|^2 e^{-\frac{M_2}2|\bm v|^2}
	\D\bm v \\
	& \Le \frac{C_d}{m^2} (|\bm x|^2+1) \sum_{\bm j\in\mathbb Z^d}
	\exp\bigg(-\frac{M_2}8\sum_{i=1}^dj_i^2\bigg) \Le \frac{C_d}{m^2}(|\bm x|^2+1).
\end{align*}
Finally, taking the expectation of this inequality with respect to $\pi(\bm x)$ yields the desired result. Since the potential is strongly convex, $\pi(\bm x)$ has finite second moments, giving:
\begin{equation*}
	\mathbb E^{\pi}\big[\mathcal Q(\bm x)\big] \Le
	\frac{C_d}{m^2} \mathbb E^\pi[|\bm x|^2+1] \Le 
	\frac{C_d}{m^3}.
\end{equation*}
This completes the proof of Lemma~\ref{lemma: paradox}.
\end{proof}

The idea of using hypercubes to derive the $L^2$-estimate comes from Xu'an Dou.
\section{Proofs for MSE and numerical bias of SVRG-UBU}
\label{appendix: proof SVRG-UBU}

The analysis of SVRG-UBU follows the same structure as that for mini-batch SG-UBU, establishing stability through uniform-in-time moment bounds and consistency via local error estimates. The discrete Poisson equation is then applied to derive the MSE and numerical bias. A critical distinction, however, is that the SVRG-UBU solution $(\bm X_k,\bm V_k)_{k=0}^\infty$ is non-Markovian. To address this, we define the invariant distribution $\pi_h(\bm x,\bm v)$ with respect to the embedded Markovian subsequence $(\bm X_{kN/p},\bm V_{kN/p})_{k\Ge0}$. This subsequence is constructed from the anchor positions, which are updated at the end of each epoch.

Throughout this section, we let $q = N/p$ denote the number of inner iterations in each epoch. To formulate the SVRG-UBU integrator compactly, we define a solution flow for the variance-reduced gradient update, analogous to $\Phi_t^{\frak B}$ and $\Phi_t^{\frak B}(\theta)$. For a given anchor position $\bm x_* \in \mathbb R^d$, this flow is defined as
\begin{equation*}
	\Phi_t^{\mathfrak B}(\bm x_*,\theta) : \left\{
	\begin{aligned}
		\bm{x}_t &= \bm{x}_0, \\
		\bm{v}_t &= \bm{v}_0 - \frac{t}{M_2}
		\bigg(\frac1p \sum_{i\in\theta}
		\nabla U_i(\bm x_0) - \frac1p \sum_{i\in\theta} 
		\nabla U_i(\bm x_*) + \nabla U(\bm x_*)
		\bigg).
	\end{aligned}
	\right.
\end{equation*}
Then SVRG-UBU from Algorithm~\ref{algorithm: SVRG-UBU} can be expressed as a single update rule. Since the anchor position is recomputed every $q$ steps, the integrator takes the form:
\begin{equation}
	\bm Z_{k+1} = \big(\Phi^{\mathfrak U}_{h/2} \circ
	\Phi^{\mathfrak B}_h(\bm Y_{q[k/q]},\theta_k) \circ \Phi^{\mathfrak U}_{h/2}\big) \bm Z_k,\qquad k=0,1,\cdots,
	\label{SVRG-UBU}
\end{equation}
where $\bm Y_k$ is the position component of $\Phi_{h/2}^{\mathfrak U} \bm Z_k$. The term $\bm Y_{q[k/q]}$ represents the anchor position established at the beginning of the current epoch, which remains fixed for $q$ iterations. The random subsets $(\theta_k)_{k=0}^\infty$ of size $p$ are drawn independently at each step.

\subsection{Uniform-in-time moment bound of SVRG-UBU}

Our first step is to establish a uniform-in-time moment bound for the SVRG-UBU solution, which is a prerequisite for proving the existence of an invariant distribution for its embedded Markovian subsequence. Following the analytical approach used for SG-UBU, we begin by analyzing the corresponding BU-type integrator, SVRG-BU. Its update rule is given by
\begin{equation}
	\bm Z_{k+1} = \big(\Phi^{\mathfrak U}_{h} \circ
	\Phi^{\mathfrak B}_h(\bm X_{q[k/q]},\theta_k) \big) \bm Z_k,\qquad k=0,1,\cdots.
    \label{SVRG-BU update rule}
\end{equation}
The analysis of SVRG-BU closely parallels that of the standard SG-BU integrator. By directly adapting the proof of Lemma~\ref{lemma: SG-BU Lyapunov}, we establish an analogous Lyapunov condition for SVRG-BU, as stated below.
\begin{lemma}
	\label{lemma: SVRG-BU Lyapunov}
	Under Assumptions~\ref{asP} and \ref{asSgNp}, let the step size $h\Le \frac14$; for any initial state $\bm z_0 \in \mathbb R^{2d}$ and the anchor position $\bm x_* \in \mathbb R^d$,
	\begin{equation*}
		\mathbb E\Big[\mathcal V\big((\Phi^{\mathfrak U}_{h} \circ \Phi^{\mathfrak B}_h(\bm x_*,\theta)\big) \bm z_0)\Big] \Le \bigg(1-\frac{mh}{3M_2}\bigg) \mathcal V(\bm z_0) + \frac{Cd^4h}{m^3},
	\end{equation*}
	where $\theta$ is a random subset of $\{1,\cdots,N\}$ of size $p$, and $C$ depends only on $M_1,M_2$.
\end{lemma}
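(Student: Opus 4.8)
The plan is to transcribe the proof of Lemma~\ref{lemma: SG-BU Lyapunov} almost verbatim, the only genuinely new input being a uniform-in-state bound on the eighth moment of the variance-reduced gradient error. First I would fix $\bm z_0 = (\bm x_0, \bm v_0) \in \mathbb R^{2d}$ and the anchor $\bm x_* \in \mathbb R^d$, and write the one-step SVRG-BU update $\bm z_1 = (\Phi^{\mathfrak U}_h \circ \Phi^{\mathfrak B}_h(\bm x_*, \theta))\bm z_0$ alongside the companion FG-BU update $\bar{\bm z}_1 = (\Phi^{\mathfrak U}_h \circ \Phi^{\mathfrak B}_h)\bm z_0$ built from the exact gradient $\nabla U(\bm x_0)$. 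Reading off the explicit flow formulas exactly as in the proof of Lemma~\ref{lemma: SG-BU Lyapunov} gives $\bm z_1 = \bar{\bm z}_1 + h\bm\Delta_{\bm z}$, where $\bm\Delta_{\bm z} = (\bm\Delta_{\bm x}, \bm\Delta_{\bm v})$ with $\bm\Delta_{\bm x} = -\tfrac{1-e^{-2h}}{2M_2}\bm g$ and $\bm\Delta_{\bm v} = -\tfrac{e^{-2h}}{M_2}\bm g$, and $\bm g = b(\bm x_0, \bm x_*, \theta) - \nabla U(\bm x_0)$ is the SVRG gradient error, cf.\ \eqref{SVRG gradient error}.

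The heart of the argument is the estimate of $\bm g$. Unbiasedness $\mathbb E_\theta[\bm g] = 0$ is immediate from the unbiasedness of the estimator \eqref{SVRG b}. For the eighth moment I would deliberately avoid the pointwise representation \eqref{SVRG gradient error}, whose norm grows with $|\bm x_0 - \bm x_*|$ and hence cannot yield a state-independent bound; instead I would use the split
\begin{equation*}
	\bm g = \Bigl(\tfrac1p\sum_{i\in\theta}\nabla U_i(\bm x_0) - \nabla U(\bm x_0)\Bigr) - \Bigl(\tfrac1p\sum_{i\in\theta}\nabla U_i(\bm x_*) - \nabla U(\bm x_*)\Bigr)
\end{equation*}
into two mini-batch fluctuations and apply Lemma~\ref{lemma: eighth moment} to each, using the uniform eighth-moment control of $\{\nabla U_i - \nabla U\}$ from Assumption~\ref{asSgNp}. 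This produces $\mathbb E_\theta\big[|\bm g|^8\big] \Le C\sigma^8 d^4/p^4 \Le Cd^4$ (via $\sigma \Le M_1$ and $p \Ge 1$), a bound \textbf{independent of both $\bm z_0$ and $\bm x_*$}; consequently $\mathbb E_\theta[\bm\Delta_{\bm z}] = 0$ and $\mathbb E_\theta\big[|\bm\Delta_{\bm z}|^8\big] \Le Cd^4$ with $C$ depending only on $M_1, M_2$.

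With these two facts the proof closes exactly as in Lemma~\ref{lemma: SG-BU Lyapunov}. Given $\bm z_0$ and $\bm x_*$, the update $\bar{\bm z}_1$ is a deterministic function of the Brownian increment on $[0,h]$, which is independent of $\theta$, whereas $\bm\Delta_{\bm z}$ is a function of $\theta$ alone, so the two are independent. Expanding $\mathcal V(\bm z_1) = |\sqrt{\bm S}\bm z_1|^8 = |\sqrt{\bm S}\bar{\bm z}_1 + h\sqrt{\bm S}\bm\Delta_{\bm z}|^8$ by the binomial theorem, annihilating the first-order cross term with $\mathbb E_\theta[\bm\Delta_{\bm z}] = 0$, and bounding the remaining cross terms by the moments of $\bm\Delta_{\bm z}$ together with $h \Le \tfrac14$, gives $\mathbb E[\mathcal V(\bm z_1)] \Le (1 + \tfrac{mh}{12M_2})\mathbb E[\mathcal V(\bar{\bm z}_1)] + \tfrac{Cd^4 h}{m^3}$. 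Invoking the FG-BU Lyapunov bound of Lemma~\ref{lemma: FG-BU Lyapunov} for $\bar{\bm z}_1$ and using $(1 + \tfrac{mh}{12M_2})(1 - \tfrac{mh}{2M_2}) \Le 1 - \tfrac{mh}{3M_2}$ (valid since $m \Le M_2$ and $h \Le \tfrac14$) then yields the claimed inequality, with the constant term remaining $\tfrac{Cd^4 h}{m^3}$.

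The \textbf{main obstacle} is precisely the uniform eighth-moment bound on $\bm g$: although the SVRG estimator has small error only near the anchor, one must see that its error nonetheless obeys a \emph{state-independent} eighth-moment bound, because it splits into two independent sub-sampling fluctuations, each governed by the finite-sum data alone via Assumption~\ref{asSgNp} and Lemma~\ref{lemma: eighth moment}. Everything after that point is a routine rerun of the SG-BU computation, and the variance-reduction factor $1/p^4$ with $p \Ge 1$ never worsens any constant. The same template, \emph{mutatis mutandis}, will later serve for the SAGA-BU Lyapunov condition, the added complication there being the dependence of the SAGA error on the table of historical positions.
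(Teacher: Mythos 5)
Your proposal is correct and follows essentially the same route as the paper: the key step in both is to split the SVRG gradient error into two mini-batch fluctuations, one at $\bm x_0$ and one at $\bm x_*$, each controlled uniformly in the state by Assumption~\ref{asSgNp}, which yields unbiasedness and a state-independent eighth-moment bound, after which the SG-BU Lyapunov computation closes the argument. The only (immaterial) differences are that the paper verifies Assumption~\ref{asSg} for the SVRG estimator and then cites Lemma~\ref{lemma: SG-BU Lyapunov} as a black box rather than re-running the binomial expansion, and it bounds the eighth moment by a plain H\"older inequality giving $C(\sigma^2 d)^4$ instead of invoking Lemma~\ref{lemma: eighth moment} for the sharper but unneeded $1/p^4$ factor.
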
 
\begin{proof}
The proof proceeds by verifying that the stochastic gradient $b(\bm x,\bm x_*,\theta)$ satisfies the conditions of Assumption~\ref{asSg}. This allows for the application of Lemma~\ref{lemma: SG-BU Lyapunov} directly.

First, the unbiasedness property, $\mathbb{E}_\theta[b(\bm x,\bm x_*,\theta)] = \nabla U(\bm x)$, is readily verified from the definition of the estimator. To check the eighth moment condition in Assumption~\ref{asSg}, we apply the triangle inequality:
\begin{equation*}
	|b(\bm x,\bm x_*,\theta) - \nabla U(\bm x)|
	\Le
	\bigg|
	\frac1p \sum_{i\in\theta} \nabla U_i(\bm x) - 
	\nabla U(\bm x)
	\bigg| + \bigg|
	\frac1p \sum_{i\in\theta} \nabla U_i(\bm x_*) - \nabla U(\bm x_*)
	\bigg|.
\end{equation*}
Computing the eighth moment of both sides yields
\begin{align*}
	& ~~~~ \mathbb E_\theta\big[
	|b(\bm x,\bm x_*,\theta) - \nabla U(\bm x)|^8
	\big] \\
    & \Le C\, \mathbb E_\theta\Bigg[
	\bigg|
	\frac1p \sum_{i\in\theta} \nabla U_i(\bm x) - 
	\nabla U(\bm x)
	\bigg|^8 + 
	\bigg|
	\frac1p \sum_{i\in\theta} \nabla U_i(\bm x_*) - 
	\nabla U(\bm x_*)
	\bigg|^8
	\Bigg] \\
    & \Le \frac CN\sum_{i=1}^N |\nabla U_i(\bm x) - \nabla U(\bm x)|^8 + \frac CN\sum_{i=1}^N |\nabla U_i(\bm x_*) - \nabla U(\bm x_*)|^8 \Le C(\sigma^2 d)^4,
\end{align*}
where we have applied H\"older's inequality. This exactly produces Assumption~\ref{asSg}
\begin{equation*}
    \Big(\mathbb E_\theta\big[|b(\bm x,\bm x_*,\theta) - \nabla U(\bm x)|^8\big]\Big)^{\frac14} \Le C\sigma^2d
\end{equation*}
for arbitrary anchor position $\bm x_*$.
Therefore, Lemma~\ref{lemma: SG-BU Lyapunov}  establishes the Lyapunov condition of the SVRG-BU integrator, regardless of the choice of anchor positions.
\end{proof}
The Lyapunov condition established for SVRG-BU in Lemma~\ref{lemma: SVRG-BU Lyapunov} provides the foundation for bounding the moments of the SVRG-UBU solution. Since the SVRG-UBU integrator differs from its BU-type counterpart only by the stable $\Phi_{h/2}^{\mathfrak U}$ mapping, their uniform-in-time moment bounds are equivalent. This leads directly to the following theorem.
\begin{theorem}
\label{theorem: SVRG-UBU moment bound}
Under Assumptions~\ref{asP} and \ref{asSgNp}, let the step size $h\Le \frac14$, and
$$
	\bm Z_0 = (\bm X_0,\bm V_0)\text{~with~}\bm X_0 = \bm 0\text{~and~}\bm V_0 \sim \mathcal N(\bm 0,M_2^{-1}\bm I_d);
$$
then the SVRG-UBU solution $(\bm X_k,\bm V_k)_{k=0}^\infty$ satisfies
\begin{equation*}
    \sup_{k\Ge 0} \mathbb E\big[|\bm Z_k|^8\big] \Le \frac{Cd^4}{m^4},
\end{equation*}
where the constant $C$ depends only on $M_1,M_2$.
\end{theorem}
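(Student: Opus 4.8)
The plan is to follow the template of Theorem~\ref{theorem: SG-UBU moment bound}: pass from SVRG-UBU to the BU-type integrator, iterate the one-step Lyapunov contraction of Lemma~\ref{lemma: SVRG-BU Lyapunov}, and then transfer the resulting moment bound back across the stable half-step flow $\Phi_{h/2}^{\mathfrak U}$. Since the fixed matrix $\bm S$ has eigenvalues bounded away from $0$ and $\infty$, the Lyapunov function satisfies $c|\bm z|^8 \Le \mathcal V(\bm z) \Le C|\bm z|^8$, so it suffices to bound $\sup_{k\Ge0}\mathbb E[\mathcal V(\bm Z_k)]$.

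First I would set $\bm W_k := \Phi_{h/2}^{\mathfrak U}\bm Z_k$. Using the semigroup identity $\Phi_{h/2}^{\mathfrak U}\circ\Phi_{h/2}^{\mathfrak U} = \Phi_h^{\mathfrak U}$ in the update rule \eqref{SVRG-UBU}, the sequence $(\bm W_k)_{k\Ge0}$ satisfies exactly the SVRG-BU recursion \eqref{SVRG-BU update rule}, $\bm W_{k+1} = \big(\Phi_h^{\mathfrak U}\circ\Phi_h^{\mathfrak B}(\bm a_k,\theta_k)\big)\bm W_k$, where the anchor $\bm a_k$ is the position component of $\bm W_{q[k/q]}$ with $q = N/p$. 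The initial condition $\bm X_0 = \bm 0$, $\bm V_0\sim\mathcal N(\bm 0, M_2^{-1}\bm I_d)$ gives $\mathbb E[\mathcal V(\bm Z_0)] \Le Cd^4$; combining this with the elementary bound $\mathbb E[\mathcal V(\Phi_{h/2}^{\mathfrak U}\bm z_0)] \Le C\mathcal V(\bm z_0) + Cd^4$ established in the proof of Theorem~\ref{theorem: SG-UBU moment bound}, together with $m\Le M_2$, yields $\mathbb E[\mathcal V(\bm W_0)] \Le Cd^4/m^4$.

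Next I would iterate Lemma~\ref{lemma: SVRG-BU Lyapunov}. Let $\mathcal F_k$ be the filtration generated by $\theta_0,\dots,\theta_{k-1}$ and the driving Brownian motion on $[0,kh]$; then $\bm W_k$ and $\bm a_k$ are $\mathcal F_k$-measurable, while $\theta_k$ and the Brownian increments on $[kh,(k+1)h]$ are independent of $\mathcal F_k$. The crucial observation is that the contraction estimate in Lemma~\ref{lemma: SVRG-BU Lyapunov} holds \emph{for every anchor $\bm x_*$}, with the additive constant $Cd^4h/m^3$ not depending on $\bm x_*$; applying it conditionally with $\bm z_0 = \bm W_k$ and $\bm x_* = \bm a_k$ and then taking total expectation gives $a_{k+1} \Le \big(1-\tfrac{mh}{3M_2}\big)a_k + \tfrac{Cd^4h}{m^3}$ for $a_k := \mathbb E[\mathcal V(\bm W_k)]$. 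Because $h\Le\tfrac14$ and $m\Le M_2$ force the factor into $(0,1)$, the standard geometric-series bound gives $\sup_k \mathbb E[\mathcal V(\bm W_k)] \Le \max\{\mathbb E[\mathcal V(\bm W_0)],\ 3CM_2 d^4/m^4\} \Le Cd^4/m^4$. Finally I would transfer this back: reversing $\bm W_k = \Phi_{h/2}^{\mathfrak U}\bm Z_k$ expresses $\bm Z_k$ as an affine function of $\bm W_k$ with $\mathcal O(1)$ coefficients plus the two Gaussian Brownian integrals appearing in $\Phi_{h/2}^{\mathfrak U}$ (whose eighth moments are $\Le Cd^4$), so that $\mathcal V(\bm Z_k) \Le C\big(\mathcal V(\bm W_k) + d^4\big)$ in expectation, and the claimed bound $\sup_k\mathbb E[|\bm Z_k|^8] \Le Cd^4/m^4$ follows via $|\bm z|^8 \Le C\mathcal V(\bm z)$.

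The point requiring care — rather than a genuine obstacle, since the hard analytic work is already contained in Lemma~\ref{lemma: SVRG-BU Lyapunov} — is that SVRG-UBU, and hence $(\bm W_k)$, is not a Markov chain: the anchor $\bm a_k$ depends on the entire history up to the current epoch. The iteration above is nonetheless legitimate precisely because the one-step Lyapunov drift is uniform over the anchor, so conditioning on $\mathcal F_k$ at each step absorbs the anchor-dependence with no loss. One should also verify the measurability and independence assertions used in the conditioning step, and check the elementary forward and reverse $\Phi_{h/2}^{\mathfrak U}$-transfer bounds directly from the explicit flow formula in Section~\ref{subsection: uld SG-UBU}; both are routine.
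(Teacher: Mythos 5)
Your proposal is correct and follows essentially the same route as the paper, which likewise reduces SVRG-UBU to the SVRG-BU chain via the half-step $\Phi_{h/2}^{\mathfrak U}$ flow, iterates the anchor-uniform Lyapunov contraction of Lemma~\ref{lemma: SVRG-BU Lyapunov}, and transfers back through the stable affine flow (the paper leaves these steps implicit by analogy with Theorem~\ref{theorem: SG-UBU moment bound}). The only slip is cosmetic: since $\bm W_k=\Phi_{h/2}^{\mathfrak U}\bm Z_k$ uses Brownian increments on $[kh,kh+h/2]$, the filtration in your conditioning step should run up to time $kh+h/2$ rather than $kh$; with that adjustment the measurability and independence claims hold and the iteration is valid.
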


By applying the same techniques from the proof of Lemma~\ref{lemma: SG-UBU invariant}, we can prove the existence of and establish a moment bound for the invariant distribution of SVRG-UBU's embedded Markovian subsequence. This subsequence, $(\bm X_{kq},\bm V_{kq})_{k=0}^\infty$, is formed by the states at the end of each epoch (recall that the anchor position is updated every $q = N/p$ iterations). The result is described in the following lemma.
\begin{lemma}
\label{lemma: SVRG-UBU invariant}
Under Assumptions~\ref{asP} and \ref{asSgNp}, let the step size $h \Le \frac14$; the SVRG-UBU subsequence $(\bm X_{kq},\bm V_{kq})_{k=0}^\infty$ has an invariant distribution $\pi_h(\bm x,\bm v)$ in $\mathbb R^{2d}$, which satisfies
\begin{equation*}
	\int_{\mathbb R^{2d}} |\bm z|^8 \pi_h(\bm z)
	\D\bm z \Le \frac{Cd^4}{m^4},
\end{equation*}
where the constant $C$ depends only on $M_1,M_2$.
\end{lemma}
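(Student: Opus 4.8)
The plan is to mirror the proof of Lemma~\ref{lemma: SG-UBU invariant}, but carried out at the level of the epoch ($q$-step, $q = N/p$) Markov chain rather than the one-step chain. Although the full sequence $(\bm X_k,\bm V_k)_{k=0}^\infty$ is non-Markovian, the subsequence $(\bm X_{kq},\bm V_{kq})_{k=0}^\infty$ sampled at epoch boundaries \emph{is} Markovian: given $\bm Z_{kq}$, the anchor for the $k$-th epoch is the position component of $\Phi^{\mathfrak U}_{h/2}\bm Z_{kq}$, and $\bm Z_{(k+1)q}$ is then a deterministic function of $\bm Z_{kq}$ and the fresh randomness $(\theta_{kq},\dots,\theta_{(k+1)q-1})$ and Brownian increments. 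Write $\mathcal K^{\mathfrak U\mathfrak B\mathfrak U}$ and $\mathcal K^{\mathfrak U\mathfrak B}$ for the one-epoch transition kernels of SVRG-UBU \eqref{SVRG-UBU} and SVRG-BU \eqref{SVRG-BU update rule}, respectively. As in the SG case, I would first produce an invariant distribution $\hat\pi_h$ for $\mathcal K^{\mathfrak U\mathfrak B}$ and then push it forward by a half-step flow to obtain $\pi_h$ for $\mathcal K^{\mathfrak U\mathfrak B\mathfrak U}$.

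First I would upgrade the one-step drift condition of Lemma~\ref{lemma: SVRG-BU Lyapunov} to an epoch-level drift condition for $\mathcal K^{\mathfrak U\mathfrak B}$. Fix $\bm Z_0=\bm z_0$; throughout the resulting epoch the anchor $\bm x_*$ is the (now deterministic) position component of $\bm z_0$, and Lemma~\ref{lemma: SVRG-BU Lyapunov} applies at every sub-step with this same fixed $\bm x_*$, giving $\mathbb E[\mathcal V(\bm Z_{j+1})\mid\bm Z_j]\Le(1-\tfrac{mh}{3M_2})\mathcal V(\bm Z_j)+\tfrac{Cd^4h}{m^3}$. Iterating $q$ times and summing the additive terms as a geometric series bounded by $\tfrac{3M_2}{mh}$ yields
\begin{equation*}
\mathbb E\big[\mathcal V(\bm Z_q)\,\big|\,\bm Z_0=\bm z_0\big] \Le \Big(1-\tfrac{mh}{3M_2}\Big)^q \mathcal V(\bm z_0) + \frac{Cd^4}{m^4},
\end{equation*}
where the crucial point is that the additive constant is independent of $q$ (and we absorbed $d^4$ using $m\Le M_2$). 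With contraction factor $\gamma := (1-mh/(3M_2))^q<1$, this is a Lyapunov drift condition for $\mathcal K^{\mathfrak U\mathfrak B}$, so the Krylov--Bogolyubov theorem (Theorem~4.21, \cite{hairer2006ergodic}) produces an invariant distribution $\hat\pi_h$ of the SVRG-BU epoch chain with $\int_{\mathbb R^{2d}}\mathcal V(\bm z)\,\hat\pi_h(\D\bm z)\Le\tfrac{Cd^4}{m^4}$; since $\mathcal V(\bm z)=(\bm z^\top\bm S\bm z)^4\Ge c|\bm z|^8$ with $c=\lambda_{\min}(\bm S)^4>0$, this gives $\int|\bm z|^8\,\hat\pi_h(\D\bm z)\Le\tfrac{Cd^4}{m^4}$.

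Finally I would transfer this to SVRG-UBU by the pushforward trick used for $\pi_h=P^{\mathfrak U}_{h/2}P^{\mathfrak B}_h\hat\pi_h$ in Lemma~\ref{lemma: SG-UBU invariant}. Running one SVRG-UBU epoch started from $\bm Z_0$ and one SVRG-BU epoch started from $\Phi^{\mathfrak U}_{h/2}\bm Z_0$ under the \emph{same} driving randomness, and using that the SVRG estimator collapses exactly to the full gradient when the evaluation point coincides with the anchor (so the leading sub-steps of the two epochs are consistent), one checks the conjugation identity $\Phi^{\mathfrak U}_{h/2}\big(\bm Z_q^{\mathrm{UBU}}(\bm Z_0)\big)=\bm Z_q^{\mathrm{BU}}\big(\Phi^{\mathfrak U}_{h/2}\bm Z_0\big)$, hence at the level of laws $\mathcal K^{\mathfrak U\mathfrak B}\circ(\Phi^{\mathfrak U}_{h/2})_*=(\Phi^{\mathfrak U}_{h/2})_*\circ\mathcal K^{\mathfrak U\mathfrak B\mathfrak U}$. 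Defining $\pi_h$ as the appropriate half-step pushforward of $\hat\pi_h$ (the epoch analogue of $P^{\mathfrak U}_{h/2}P^{\mathfrak B}_h\hat\pi_h$) and combining this identity with the invariance of $\hat\pi_h$ shows $\pi_h$ is invariant for $\mathcal K^{\mathfrak U\mathfrak B\mathfrak U}$; the moment bound is preserved because the intervening flows satisfy $\mathbb E[\mathcal V(\Phi^{\mathfrak U}_{h/2}\bm z)]\Le C\mathcal V(\bm z)+Cd^4$ and $\mathbb E[\mathcal V(\Phi^{\mathfrak B}_h(\bm x_*,\theta)\bm z)]\Le C\mathcal V(\bm z)+Cd^4$ (as in the proof of Theorem~\ref{theorem: SG-UBU moment bound}), again absorbing $d^4$ into $d^4/m^4$ via $m\Le M_2$. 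The main obstacle is precisely this last step: getting the conjugation bookkeeping right, since in SVRG-UBU the anchor is defined as the position \emph{after} the leading $\Phi^{\mathfrak U}_{h/2}$ rather than at the epoch-start state, which at first sight breaks a naive conjugation; the resolution is the consistency observation above. Alternatively, one may bypass the conjugation entirely and establish the epoch drift condition directly for $\mathcal K^{\mathfrak U\mathfrak B\mathfrak U}$ by the same iteration, using the one-step UBU moment control already extracted in the proof of Theorem~\ref{theorem: SVRG-UBU moment bound}.
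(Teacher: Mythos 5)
Your proposal is correct and follows essentially the same route as the paper: iterate the anchor-uniform drift condition of Lemma~\ref{lemma: SVRG-BU Lyapunov} over an epoch, invoke Krylov--Bogolyubov for the SVRG-BU epoch chain to get $\hat\pi_h$ with the eighth-moment bound, and then transfer to SVRG-UBU via the half-step $\Phi^{\mathfrak U}_{h/2}$ pushforward together with the conjugation identity between the BU and UBU epoch kernels (the paper phrases this through the semigroups $P^{\mathfrak U}_t$ and $P^{\mathfrak B}_h(\bm x_*)$ with a Dirac initial condition to keep the anchor fixed within the epoch). Your observation that the anchor consistency is the only delicate bookkeeping point, resolved because the SVRG estimator reduces to the full gradient at the anchor, matches the paper's construction.
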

\begin{proof}
To formalize the analysis, we first define a semigroup for the variance-reduced gradient update that explicitly depends on a fixed anchor position $\bm x_*\in\mathbb R^d$. Analogous to the previous definitions, we have
\begin{equation*}
	P_t^{\mathfrak B}(\bm x_*) \mu := \mathbb E_{\theta}\big[\text{distribution of }\Phi^{\mathfrak B}_t(\bm x_*,\theta) \bm z_0 \text{ with } \bm z_0 \sim \mu\big].
\end{equation*}

The Lyapunov condition established in Lemma~\ref{lemma: SVRG-BU Lyapunov}, combined with the Krylov--Bogolyubov theorem, guarantees that the SVRG-BU subsequence $(\bm X_{kq}, \bm V_{kq})_{k=0}^\infty$ admits an invariant distribution $\hat \pi_h(\bm x,\bm v)$ in $\mathbb R^{2d}$. This distribution satisfies the moment bound:
\begin{equation}
	\int_{\mathbb R^{2d}} |\bm z|^8\hat \pi_h(\bm z)
	\D\bm z \Le \frac{Cd^4}{m^4}.
    \label{proof: eighth moment bound}
\end{equation}
By definition of the SVRG-BU integrator, an epoch consists of $q = N/p$ inner iterations where the anchor point is fixed. The invariance of $\hat\pi_h$ across these epochs is expressed by the relation
\begin{equation}
	\int_{\mathbb R^{2d}}
	\Big(\big(P_h^{\mathfrak U} P_h^{\mathfrak B}(\bm x)\big)^q \delta_{\bm x,\bm v}\Big)\hat \pi_h(\bm x,\bm v)\D \bm x\D\bm v = \hat \pi_h.
    \label{proof: hat pi_h}
\end{equation}
Here, the integral signifies an expectation over the invariant distribution $\hat\pi_h$. For each state $(\bm x, \bm v)$ drawn from $\hat\pi_h$, its position component $\bm x$ becomes the anchor for an entire epoch of $q$ steps. The use of the Dirac measure $\delta_{\bm x,\bm v}$ indicates that this epoch starts from the state $(\bm x, \bm v)$ itself, ensuring the anchor position remains fixed for all subsequent $\mathfrak B$ steps.

Next, we construct the invariant distribution for the SVRG-UBU subsequence, $\pi_h(\bm x, \bm v)$, using the invariant distribution of the SVRG-BU subsequence, $\hat\pi_h$. We define $\pi_h$ by evolving $\hat\pi_h$ through one epoch of SVRG-BU, followed by an additional half-step of the $\mathfrak U$ flow:
\begin{equation}
	\pi_h = \int_{\mathbb R^{2d}}
	\Big(P_{h/2}^{\mathfrak U} \big(P_h^{\mathfrak U} P_h^{\mathfrak B}(\bm x)\big)^{q} \delta_{\bm x,\bm v}\Big)\hat \pi_h(\bm x,\bm v)\D\bm x\D\bm v.
    \label{proof: pi_h}
\end{equation}
Then $\pi_h$ inherits the same moment bound given in \eqref{proof: eighth moment bound}.

By applying the semigroup operator $P_{h/2}^{\mathfrak U}$ to both sides of \eqref{proof: pi_h}, we get 
\begin{align*}
    P_{h/2}^{\mathfrak U} \pi_h &= \int_{\mathbb R^{2d}}
	\Big(P_{h}^{\mathfrak U} \big(P_h^{\mathfrak U} P_h^{\mathfrak B}(\bm x)\big)^{q} \delta_{\bm x,\bm v}\Big)\hat \pi_h(\bm x,\bm v)\D\bm x\D\bm v \\
    &= \int_{\mathbb R^{2d}}
	\Big(\big(P_h^{\mathfrak U} P_h^{\mathfrak B}(\bm x)\big)^{q} \delta_{\bm x,\bm v}\Big)\hat \pi_h(\bm x,\bm v)\D\bm x\D\bm v = \hat\pi_h.
\end{align*}
Therefore \eqref{proof: pi_h} can be equivalently written as
\begin{equation*}
\pi_h = \int_{\mathbb R^{2d}}
\Big(\big(P_{h/2}^{\mathfrak U}P_h^{\mathfrak B}(\bm x)P_{h/2}^{\mathfrak U}\big)^{q-1} \big(P_{h/2}^{\mathfrak U}P_h^{\mathfrak B}(\bm x)\big) \delta_{\bm x,\bm v}\Big) (P_{h/2}^{\mathfrak U} \pi_h)(\bm x,\bm v)\D\bm x\D\bm v
\end{equation*}
showing $\pi_h$ is invariant for the SVRG-UBU subsequence $(\bm X_{kq},\bm V_{kq})_{k=0}^\infty$.
\end{proof}

\subsection{Local error analysis of SVRG-UBU}

We now turn to the local error analysis for the SVRG-UBU integrator. The discretization error is identical to that for SG-UBU, as this term is independent of the gradient estimator. Our focus therefore shifts to bounding the stochastic gradient error, which is where the effect of the variance reduction mechanism becomes apparent. The following lemma, analogous to Lemma~\ref{lemma: stochastic gradient error}, establishes the corresponding fourth moment bounds.
\begin{lemma}
	\label{lemma: SVRG-UBU stochastic gradient error}
	Under Assumptions~\ref{asP} and \ref{asSgNp}, let the step size $h\Le \frac14$, and
	$$
	\bm Z_0 = (\bm X_0,\bm V_0)\text{~with~}\bm X_0 = \bm 0\text{~and~}\bm V_0 \sim \mathcal N(\bm 0,M_2^{-1}\bm I_d);
	$$
	then the stochastic gradient error of SVRG-UBU satisfies
	\begin{align*}
		\sup_{k\Ge0}\sqrt{\mathbb E\big|\bm X_{k+1} - \bar{\bm X}_{k+1}\big|^4} &
		\Le \frac{C\sigma^2dh^4}{p}\min\bigg\{1,\frac{N^2h^2}{mp^2}\bigg\}, \\
		\sup_{k\Ge0}\sqrt{\mathbb E\big|\bm V_{k+1} - \bar{\bm V}_{k+1}\big|^4} &
		\Le \frac{C\sigma^2dh^2}{p}\min\bigg\{1,\frac{N^2h^2}{mp^2}\bigg\},
	\end{align*}
	where the constant $C$ depends only on $M_1, M_2$.
\end{lemma}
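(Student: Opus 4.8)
The plan is to reduce everything to a single scalar estimate for the stochastic gradient deviation $\bm g_k=\bm g(\bm Y_k,\bm Y_*,\theta_k)$ (notation of Theorem~\ref{theorem: SVRG-UBU numerical bias}), where $\bm Y_*=\bm Y_{q[k/q]}$ is the anchor position of the current epoch and $q=N/p$. Indeed, exactly as in the SG-UBU case (the one-step formulas of Appendix~\ref{appendix: local error} with $\nabla U$ replaced by the SVRG estimator), the SVRG-UBU stochastic gradient error is
\begin{equation*}
	\bm X_{k+1}-\bar{\bm X}_{k+1}=-\frac{h(1-e^{-h})}{2M_2}\bm g_k,\qquad
	\bm V_{k+1}-\bar{\bm V}_{k+1}=-\frac{he^{-h}}{M_2}\bm g_k,
\end{equation*}
so the two claimed bounds follow by multiplying the scalar bound $\sup_{k\Ge0}\sqrt{\mathbb E|\bm g_k|^4}\Le\frac{C\sigma^2 d}{p}\min\{1,\frac{N^2h^2}{mp^2}\}$ by the prefactors, which are $\Le h^2/(2M_2)$ and $\Le h/M_2$. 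Throughout, the structural fact I would use repeatedly is that $\bm Y_k$ and $\bm Y_*$ are measurable with respect to the randomness generated before step $k$ and hence independent of the fresh subset $\theta_k$; all subset expectations below are conditional on that history, with $\bm Y_k,\bm Y_*$ frozen.

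For the ``$1$'' branch of the minimum, I would split $\bm g_k=A_k-B_k$ with $A_k=\frac1p\sum_{i\in\theta_k}\nabla U_i(\bm Y_k)-\nabla U(\bm Y_k)$ and $B_k=\frac1p\sum_{i\in\theta_k}\nabla U_i(\bm Y_*)-\nabla U(\bm Y_*)$. Each of $A_k,B_k$ is a centered mini-batch average, so Lemma~\ref{lemma: eighth moment} together with the gradient part of Assumption~\ref{asSgNp} (precisely the eighth-moment control derived in the paragraph following Lemma~\ref{lemma: eighth moment}) gives $(\mathbb E_{\theta_k}|A_k|^8)^{1/4},(\mathbb E_{\theta_k}|B_k|^8)^{1/4}\Le C\sigma^2 d/p$. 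Taking full expectations and using $\|\cdot\|_{L^4}\Le\|\cdot\|_{L^8}$ yields $\sqrt{\mathbb E|\bm g_k|^4}\Le C\sigma^2 d/p$.

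For the variance-reduced branch, I would exploit the control-variate cancellation: writing $\tilde{\bm c}_i=[\nabla U_i-\nabla U](\bm Y_k)-[\nabla U_i-\nabla U](\bm Y_*)$ one has $\frac1N\sum_{i=1}^N\tilde{\bm c}_i=0$ and $\bm g_k=\frac1p\sum_{i\in\theta_k}\tilde{\bm c}_i$, while the mean value theorem bounds $|\tilde{\bm c}_i|$ by $|\bm Y_k-\bm Y_*|\int_0^1\|[\nabla^2 U_i-\nabla^2 U](\bm Y_*+\lambda(\bm Y_k-\bm Y_*))\|\D\lambda$. Applying Lemma~\ref{lemma: eighth moment} to the centered vectors $\tilde{\bm c}_i$ and invoking the Hessian part of Assumption~\ref{asSgNp} (with Jensen's inequality in $\lambda$) gives $\mathbb E_{\theta_k}|\bm g_k|^8\Le C\sigma^8 p^{-4}|\bm Y_k-\bm Y_*|^8$, so after taking the full expectation this branch reduces to a \emph{ballistic epoch-displacement estimate} $\sup_{k\Ge0}(\mathbb E|\bm Y_k-\bm Y_*|^8)^{1/4}\Le\frac{Cd}{m}\min\{(qh)^2,1\}$. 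I would establish this as an epoch-localized sharpening of Lemma~\ref{lemma: displacement}: over the $j<q$ steps since the anchor the position displacement is governed, to leading order, by the integral of the velocity, which over a span $jh$ contracts to roughly $\tfrac{1-e^{-2jh}}{2}$ times the velocity carried at the anchor (whose eighth moment is $\Le Cd^4/m^4$ by Theorem~\ref{theorem: SVRG-UBU moment bound}), producing the $(jh)^2$-scaling when $jh\Le1$; the stochastic-integral and stochastic-gradient remainders are of strictly higher order in $h$, and the uniform-in-time moment bound caps the displacement at $Cd/m$ for all $jh$. Substituting the displacement estimate gives $\sqrt{\mathbb E|\bm g_k|^4}\Le\frac{C\sigma^2}{p}(\mathbb E|\bm Y_k-\bm Y_*|^8)^{1/4}\Le\frac{C\sigma^2 d}{mp}\min\{(qh)^2,1\}=\frac{C\sigma^2 d}{p}\min\{\frac{N^2h^2}{mp^2},\frac1m\}$ (using $q=N/p$), and taking the minimum of the two branches yields the scalar bound, hence the lemma.

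The main obstacle is precisely the ballistic displacement estimate: one must show that the \emph{position} component of the motion over a short epoch scales like $(qh)^2$ rather than diffusively like $qh$ — the latter being all that a direct appeal to Lemma~\ref{lemma: displacement} would give, which is not strong enough. This requires isolating the position component in the telescoped UBU recursion, summing the geometric velocity-correlation series, discarding the higher-order stochastic remainders, and carrying the $d$- and $m$-dependence so that it matches the factor $\frac{\sigma^2 d}{p}\cdot\frac{N^2h^2}{mp^2}$ in the statement.
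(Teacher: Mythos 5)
Your proposal is correct and follows essentially the same route as the paper's proof: the same two-branch $\min$ (a uniform bound via the triangle inequality and Lemma~\ref{lemma: eighth moment}, versus the control-variate identity \eqref{SVRG gradient error} combined with the Hessian part of Assumption~\ref{asSgNp} and an epoch displacement bound), with the same reduction of both claimed inequalities to a single fourth-moment estimate on the SVRG gradient deviation. The one place you flag as the main obstacle—the ballistic estimate $(\mathbb E|\bm Y_k-\bm Y_*|^8)^{1/4}\Le Cd(qh)^2/m$—is obtained in the paper far more simply than your sketch suggests: H\"older's inequality applied to the telescoped sum $\sum_s(\bm Y_{s+1}-\bm Y_s)$, together with the single-step bound $\mathbb E|\bm Y_{s+1}-\bm Y_s|^4\Le Cd^2h^4/m^2$ (each step moves the position by $O(h)$ times a velocity whose moments are uniformly bounded by the stability theorem), already yields the $(qh)^2$ scaling, with no need to isolate the anchor velocity, sum a geometric correlation series, or separately discard stochastic remainders.
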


\begin{proof}
Analogous to the expression for SG-UBU, the stochastic gradient error components of the SVRG-UBU integrator are given by:
\begin{equation}
\bm X_{k+1} - \bar {\bm X}_{k+1} = -\frac{h(1-e^{-h})}{2M_2} \bm g_k(\theta_k), \qquad
\bm V_{k+1} - \bar {\bm V}_{k+1} = -\frac{he^{-h}}{M_2} 
\bm g_k(\theta_k),
\label{proof: X difference}
\end{equation}
where $\bm g_k(\theta_k) \in\mathbb R^d$ is the deviation of the SVRG estimator from the true gradient $\nabla U(\bm Y_k)$. According to \eqref{SVRG gradient error}, this deviation is explicitly written as
\begin{align}
    \bm g_k(\theta_k) & = \bigg(\frac1p \sum_{i\in\theta_k} \nabla U_i(\bm Y_k)
     - \frac1p \sum_{i\in\theta_k} \nabla U_i(\bm Y_*) +
     \nabla U(\bm Y_*)\bigg) - \nabla U(\bm Y_k) \notag \\
    & = (\bm Y_k - \bm Y_*)^\top \int_0^1 \bigg(
     \frac1p \sum_{i\in\theta_k} \nabla^2 U_i - \nabla^2 U
     \bigg)\big(\lambda \bm Y_k+(1-\lambda) \bm Y_*\big)\D\lambda,
     \label{proof: gg 0}
\end{align}
where $\bm Y_* = \bm Y_{q[k/q]}$ is the anchor position. 

To bound the fourth moment of $\bm g_k(\theta_k)$, we leverage its integral representation. For a fixed $\lambda \in [0,1]$, we define a set of vectors:
\begin{equation*}
    a_i(\lambda) = (\bm Y_k - \bm Y_*)^\top \big(\nabla^2 U_i - \nabla^2 U
    \big)\big(\lambda \bm Y_k+(1-\lambda) \bm Y_*\big),\qquad 
    i=1,\cdots,N.
\end{equation*}
This sequence of vectors has a zero mean, namely, $\frac{1}{N}\sum_{i=1}^N a_i(\lambda) = 0$. We can bound average eighth moment of $a_i(\lambda)$ using Assumption~\ref{asSgNp}:
\begin{equation*}
    \frac1N\sum_{i=1}^N |a_i(\lambda)|^8 \Le 
    |\bm Y_k - \bm Y_*|^8 \cdot \frac{C}N \sum_{i=1}^N 
    \big\|(\nabla^2 U_i-\nabla^2 U)(\lambda \bm Y_k+(1-\lambda)\bm Y_*)\big\|^8 \Le C\sigma^8 |\bm Y_k - \bm Y_*|^8.
\end{equation*}
This confirms that the conditions of Lemma~\ref{lemma: eighth moment} are met. Applying the lemma to the mini-batch average of these vectors gives:
\begin{equation}
    \mathbb E_{\theta_k} \Bigg[
    \bigg|
    \frac1p \sum_{i\in\theta_k} a_i(\lambda)
    \bigg|^8 \Bigg]\Le \frac{C\sigma^8}{p^4} |\bm Y_k - \bm Y_*|^8.
    \label{proof: gg 1}
\end{equation}
By integrating over $\lambda \in [0,1]$ and applying the Jensen's inequality, we obtain a bound on the eighth moment of $\bm g_k(\theta_k)$, which implies a bound on its fourth moment:
\begin{equation}
    \mathbb E_{\theta_k} \big[|\bm g_k(\theta_k)|^8\big]
    \Le \frac{C\sigma^8}{p^4} |\bm Y_k - \bm Y_*|^8 \quad \Longrightarrow \quad
    \mathbb E_{\theta_k} \big[|\bm g_k(\theta_k)|^4\big]
    \Le \frac{C\sigma^4}{p^2} |\bm Y_k - \bm Y_*|^4.
    \label{proof: gg 2}
\end{equation}
Taking the full expectation over the numerical solution yields:
\begin{equation}
    \mathbb E \big[|\bm g_k(\theta_k)|^4\big] 
    \Le \frac{C\sigma^4}{p^2} \mathbb E\big|\bm Y_k - \bm Y_{q[k/q]}\big|^4 \Le 
    \frac{C\sigma^4q^3}{p^2} \sum_{s=q[k/q]}^{k-1} 
    \mathbb E|\bm Y_{s+1}-\bm Y_s|^4.
    \label{proof: gg 3}
\end{equation}
The single-step deviation can be bounded using SVRG-UBU from Algorithm~\ref{algorithm: SVRG-UBU}, which implies $(\bm Y_{s+1},\bm V_{s+1}^{(1)}) = \Phi_{h}^{\mathfrak U}(\bm Y_s,\bm V_s^{(2)})$. The uniform-in-time moment bounds then give
\begin{equation*}
    \mathbb E|\bm Y_{s+1} - \bm Y_s|^4 \Le Ch^4 \big(
    \mathbb E|\bm Y_s|^4 + \mathbb E|\bm V_s^{(2)}|^4 + d^2
    \big) \Le \frac{Cd^2h^4}{m^2}.
\end{equation*}
Substituting this into \eqref{proof: gg 3} and noting that the sum has at most $q$ terms, we arrive at the final bound for the stochastic gradient deviation:
\begin{equation}
    \sqrt{\mathbb E \big[|\bm g_k(\theta_k)|^4\big]} \Le 
    \frac{C\sigma^2d q^2 h^2}{mp}.
    \label{proof: gg 4}
\end{equation}

Alternatively, a simpler bound on the stochastic gradient deviation $\bm g_k(\theta_k)$ that is independent of the distance between the sampling point $\bm Y_k$ and the anchor point $\bm Y_*$ can be obtained via the triangle inequality:
\begin{align*}
    & ~~~~ \mathbb E_{\theta_k}\big[
    |\bm g_k(\theta_k)|^8
    \big] \Le C\Bigg(\bigg|\frac1p \sum_{i\in \theta_k}
    \nabla U_i(\bm Y_k) - \nabla U(\bm Y_k)
    \bigg|^8 + \bigg|\frac1p \sum_{i\in \theta_k}
    \nabla U_i(\bm Y_*) - \nabla U(\bm Y_k)
    \bigg|^8\Bigg) \\
    & \Le \frac{C}{p^4} \bigg(\frac1N\sum_{i=1}^N \big|
    \nabla U_i(\bm Y_k) - \nabla U(\bm Y_k)\big|^8+
    \frac1N\sum_{i=1}^N \big|
    \nabla U_i(\bm Y_*) - \nabla U(\bm Y_*)\big|^8\bigg) \Le \frac{C\sigma^8d^4}{p^4}.
\end{align*}
Then we arrive at a uniform bound on the fourth moment:
\begin{equation}
    \sqrt{\mathbb E\big[
    |\bm g_k(\theta_k)|^4
    \big]}\Le \frac{C\sigma^2d}{p}.
    \label{proof: gg 5}
\end{equation}
Combining this uniform bound with the distance-dependent bound from \eqref{proof: gg 4} yields a tighter composite estimate using the $\min$ function:
\begin{equation}
    \sqrt{\mathbb E\big[
    |\bm g_k(\theta_k)|^4
    \big]} \Le 
    \frac{C\sigma^2d}{p} \min\bigg\{1,\frac{q^2h^2}{m}\bigg\} = 
    \frac{C\sigma^2d}{p} \min\bigg\{1,\frac{N^2h^2}{mp^2}\bigg\}.
    \label{proof: gg 6}
\end{equation}
Finally, substituting the comprehensive bound from \eqref{proof: gg 6} into the local error expressions in \eqref{proof: gg 0} yields the result and completes the proof.
\end{proof}

\subsection{MSE of SVRG-UBU}
The uniform-in-time moment bound from Theorem~\ref{theorem: SVRG-UBU moment bound} and the local stochastic gradient error estimate from Lemma~\ref{lemma: SVRG-UBU stochastic gradient error} provide the necessary stability and consistency results for the SVRG-UBU integrator. Consequently, the MSE bound in Theorem~\ref{theorem: SVRG-UBU MSE} follows directly by applying the analytical procedure detailed in the proof of Theorem~\ref{theorem: SG-UBU MSE}.

\subsection{Numerical bias of SVRG-UBU}

\begin{proof}[Proof of Theorem~\ref{theorem: SVRG-UBU numerical bias}]
Let $q = N/p$ be the length of one epoch, and let $\pi_h(\bm x,\bm v)$ be the invariant distribution of the SVRG-UBU subsequence $(\bm X_{kq},\bm V_{kq})_{k=0}^\infty$. Let the initial state $\bm Z_0 \sim \pi_h$, then the state after one full epoch is also distributed according to $\pi_h$, i.e., $\bm Z_q \sim \pi_h$.

Using this property with the discrete Poisson equation \eqref{discrete Poisson eq}, we can express the bias of the initial state distribution as follows:
\begin{align}
    \mathbb E^{\pi_h}[f(\bm Z_0)] - \pi(f) &= \mathbb E^{\pi_h}
    \bigg[\bigg(\frac{1-e^{h\mathcal L}}{h} \phi_h\bigg)(\bm Z_0)
    \bigg] = \mathbb E^{\pi_h}\bigg[
    \frac{\phi_h(\bm Z_0) - \phi_h(\bm Z_0(h))}{h}
    \bigg] \notag \\
    & = \mathbb E^{\pi_h}\bigg[
    \frac{\phi_h(\bm Z_q) - \phi_h(\bm Z_0(h))}{h}
    \bigg].
    \label{SVRG bias 0}
\end{align}
To connect this expression to the bias of the intermediate steps within the epoch, we decompose the term $\phi_h(\bm Z_q) - \phi_h(\bm Z_0(h))$ into a sum over the $q$ steps:
\begin{align*}
	\mathbb E^{\pi_h}\bigg[
    \frac{\phi_h(\bm Z_q) - \phi_h(\bm Z_0(h))}{h}
    \bigg] & = \sum_{k=0}^{q-1}
	\mathbb E^{\pi_h} \bigg[\frac{\phi_h(\bm Z_{k+1})-\phi_h(\bm Z_k(h))}{h}\bigg] + \sum_{k=1}^{q-1}
	\mathbb E^{\pi_h} \bigg[\frac{\phi_h(\bm Z_k(h))-\phi_h(\bm Z_k)}{h}\bigg] \\
	& = \sum_{k=0}^{q-1}
	\mathbb E^{\pi_h} \bigg[\frac{\phi_h(\bm Z_{k+1})-\phi_h(\bm Z_k(h))}{h}\bigg] + \sum_{k=1}^{q-1} \mathbb E^{\pi_h}
	\bigg[\frac{e^{h\mathcal L}\phi_h(\bm Z_k) - \phi_h(\bm Z_k)}{h}\bigg] \\
	& = \sum_{k=0}^{q-1}
	\mathbb E^{\pi_h} \bigg[\frac{\phi_h(\bm Z_{k+1})-\phi_h(\bm Z_k(h))}{h}\bigg] - \sum_{k=1}^{q-1}
	\big(\mathbb E[f(\bm Z_k)] - \pi(f)\big).
\end{align*}
By combining this decomposition with \eqref{SVRG bias 0} and rearranging, we obtain an expression for the average bias over one epoch:
\begin{equation}
	\frac1q \sum_{k=0}^{q-1} \mathbb E[f(\bm Z_k)] - \pi(f) =
	\frac1q \sum_{k=0}^{q-1} \mathbb E^{\pi_h} \bigg[\frac{\phi_h(\bm Z_{k+1})-\phi_h(\bm Z_k(h))}{h}\bigg].
	\label{SVRG bias 1}
\end{equation}
By the definition of the averaged distribution $\tilde \pi_h$, the left-hand side is precisely the numerical bias $\tilde \pi_h(f) - \pi(f)$. This yields the final expression
\begin{equation}
    \tilde \pi_h(f) - \pi(f) = 
    \frac1q \sum_{k=0}^{q-1} \mathbb E^{\pi_h} \bigg[\frac{\phi_h(\bm Z_{k+1})-\phi_h(\bm Z_k(h))}{h}\bigg].
    \label{SVRG bias 2}
\end{equation}

Following the same decomposition strategy used in the proof of Theorem~\ref{theorem: SG-UBU numerical bias}, we separate each local error term on the right-hand side of \eqref{SVRG bias 2} into its corresponding stochastic gradient error component, $R_k$, and discretization error component, $S_k$. The expected values of these components satisfy bounds analogous to those derived for the standard SG-UBU case:
\begin{equation*}
	\bigg|\mathbb E[R_k] - \frac{h}{2M_2^2} \mathbb E_{\theta_k}^{\pi_h}
	\Big[\big(\bm g(\bm Y_k,\bm Y_0,\theta_k)\big)^\top \bm M_k \big(\bm g(\bm Y_k,\bm Y_0,\theta_k)\big)\Big]\bigg|  \Le \frac{Cdh^2}{m^2}, \quad
    \big|\mathbb E[S_k]\big| \Le 
    \frac{Cdh^2}{m^3}.
\end{equation*}
By substituting these estimates into \eqref{SVRG bias 1} and summing the error bounds over the epoch from $k=0$ to $q-1$, we arrive at the explicit characterization of the numerical bias:
\begin{equation}
		\Bigg|\tilde \pi_h(f) - \pi(f) -
		\frac{h}{2M_2^2q} \mathbb E_{\theta_0,\cdots,\theta_{q-1}}^{\pi_h}\bigg[
        \sum_{k=0}^{q-1}
		\big(\bm g(\bm Y_k,\bm Y_0,\theta_k)\big)^\top \bm M_k \big(\bm g(\bm Y_k,\bm Y_0,\theta_k)\big) \bigg]\Bigg| \Le
    \frac{Cdh^2}{m^3}.
    \label{SVRG bias 11}
\end{equation}

Next, since the numerical solution $(\bm Y_k)_{k=0}^{q-1}$ has bounded eighth moments, we can apply the bound from \eqref{proof: gg 6} to obtain the following estimate on the second moment:
\begin{equation}
    \mathbb E\Big[\big|\bm g(\bm Y_k,\bm Y_0,\theta_k)\big|^2 \Big]
    \Le \frac{C\sigma^2d}{p} \min\bigg\{1,\frac{N^2h^2}{mp^2}\bigg\}.
    \label{proof: gg 11}
\end{equation}
Finally, combining \eqref{SVRG bias 11} and \eqref{proof: gg 11} yields the inequality
\begin{equation*}
    |\tilde \pi_h(f) - \pi(f)| \Le
    C\bigg(\frac{\sigma^2dh}{m^2p} \min
		\bigg\{1,\frac{N^2h^2}{mp^2}\bigg\} +
    \frac{dh^2}{m^3}\bigg),
\end{equation*}
thus completing the proof of Theorem~\ref{theorem: SVRG-UBU numerical bias}.
\end{proof}

\section{Proofs for MSE and numerical bias of SAGA-UBU}
\label{appendix: proof SAGA-UBU}

The MSE analysis for SAGA-UBU proceeds by establishing its stability and consistency. Stability is addressed by deriving a modified Lyapunov condition for the corresponding SAGA-BU integrator, while consistency is analyzed by bounding the moments of the stochastic gradient error. A key challenge is the non-Markovian nature of the SAGA-UBU solution $(\bm X_k, \bm V_k)_{k=0}^\infty$, which arises from its dependence on the evolving table of historical positions $\{\bm\Phi_i\}_{i=1}^N$. This structural property precludes the existence of a standard invariant distribution. Consequently, the numerical bias, $\tilde \pi_h(f) - \pi(f)$, is measured with respect to a limit distribution $\tilde \pi_h(\bm x,\bm v)$, defined as the subsequential limit of the time-averaged distributions of the SAGA-UBU trajectory.

To formulate a compact representation for the SAGA-UBU integrator, we first define a solution flow that incorporates the SAGA gradient estimator. For a given table of historical positions $\{\bm\Phi_i\}_{i=1}^N$ and a single randomly drawn index $\theta \in \{1,\dots,N\}$ (corresponding to the batch size $p=1$ case), this flow is defined as:
\begin{equation*}
	\Phi_t^{\mathfrak B}(\{\bm\Phi_i\}_{i=1}^N,\theta) : \left\{
	\begin{aligned}
		\bm{x}_t &= \bm{x}_0, \\
		\bm{v}_t &= \bm{v}_0 - \frac{t}{M_2}
		\bigg(\nabla U_\theta(\bm x_0) - \nabla U_\theta(\bm \Phi_\theta) + \frac1N \sum_{i=1}^N \nabla U_i(\bm \Phi_i)
		\bigg).
	\end{aligned}
	\right.
\end{equation*}
Then the SAGA-UBU integrator can be expressed as a two-part update rule for each iteration $k$. The first equation describes the evolution of the state $\bm Z_k$ using the UBU splitting scheme, while the second equation describes the update to the historical position table for the sampled index $\theta_k$:
\begin{equation}
\left\{
\begin{aligned}
	\bm Z_{k+1} & = \big(\Phi^{\mathfrak U}_{h/2} \circ
	\Phi^{\mathfrak B}_h(\{\bm\Phi_i\}_{i=1}^N,\theta_k) \circ \Phi^{\mathfrak U}_{h/2}\big) \bm Z_k, \\
    \bm\Phi_{\theta_k} & = \bm Y_k,
\end{aligned}
\right.
\qquad k=0,1,\cdots,
	\label{SAGA-UBU}
\end{equation}
where the historical positions $\{\bm\Phi_i\}_{i=1}^N$ are initialized at $\bm Y_0$, and $(\theta_k)_{k=0}^\infty$ are selected independently and uniformly from $\{1,\cdots,N\}$ at each step.

Specifically, at step $k$ of SAGA-UBU, the historical position $\bm \Phi_i$ for each component $i$ is set to the intermediate state $\bm Y_{k_i}$, where $k_i$ is the index of the most recent iteration prior to $k$ in which the gradient $\nabla U_i(\bm x)$ was evaluated. This relationship is defined formally as
\begin{equation}
	\bm{\Phi}_i = \bm{Y}_{k_i}, \qquad
	k_i = \max\Big\{0 \Le l \Le k-1: \nabla U_i(\bm{x}) \text{ was computed at step } l\Big\}.
	\label{Phi definition}
\end{equation}
By convention, if the gradient for a component $i$ has not been computed in any of the steps from $1$ to $k-1$, its historical index is taken to be the initial step, $k_i=0$.

\subsection{Uniform-in-time moment bound of SAGA-UBU}

Following our analytical strategy, we begin by examining the SAGA-BU integrator, which is the BU-type counterpart to SAGA-UBU. Its update rule, similar to \eqref{SVRG-BU update rule}, is given by
\begin{equation}
\left\{
\begin{aligned}
	\bm Z_{k+1} & = \big(\Phi^{\mathfrak U}_{h} \circ
	\Phi^{\mathfrak B}_h(\{\bm\Phi_i\}_{i=1}^N,\theta_k)\big) \bm Z_k, \\
    \bm\Phi_{\theta_k} & = \bm X_k,
\end{aligned}
\right.
\qquad k=0,1,\cdots,
	\label{SAGA-BU update rule}
\end{equation}
where the table of historical positions $\{\bm\Phi_i\}_{i=1}^N$ is initialized with $\bm X_0$. At each step $k$, the historical position for a given component $i$ is determined by
\begin{equation}
	\bm{\Phi}_i = \bm{X}_{k_i}, \qquad
	k_i = \max\Big\{0 \Le l \Le k-1: \nabla U_i(\bm{x}) \text{ was computed at step } l\Big\}.
	\label{Phi definition 2}
\end{equation}
To establish a uniform-in-time moment bound, we derive a Lyapunov condition for this integrator. Unlike the condition for SVRG-BU, the non-Markovian nature of SAGA introduces a dependency on the maximum moment over the entire history of the process. This modified Lyapunov condition is stated in the following lemma.
\begin{lemma}
	\label{lemma: SAGA-BU Lyapunov}
	Under Assumptions~\ref{asP} and \ref{asSgNp}, let the step size $h\Le \frac14$; then the SAGA-BU solution $(\bm X_k,\bm V_k)_{k=0}^\infty$ with the batch size $1$ satisfies 
	\begin{equation*}
		\mathbb E\big[\mathcal V(\bm Z_{k+1})\big] \Le
		\bigg(1-\frac{mh}{3M_2}\bigg) \mathbb E\big[\mathcal V(\bm Z_k)\big] + \frac{Cd^4h}{m^3} + \frac{Ch^5}{m^3} \max_{0\Le l\Le k} \mathbb E\big[\mathcal V(\bm Z_l)\big],
	\end{equation*}
	where the constant $C$ depends only on $M_1,M_2$.
\end{lemma}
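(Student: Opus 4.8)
The plan is to mirror the proof of Lemma~\ref{lemma: SG-BU Lyapunov}, treating one SAGA-BU step as a conditionally unbiased perturbation of the corresponding FG-BU step, the new ingredient being an estimate of the stochastic gradient variance in terms of the distances to the stored historical positions. Let $\bar{\bm Z}_{k+1}$ denote the one-step FG-BU update from $\bm Z_k$, and let $\mathcal F_k$ be the $\sigma$-algebra generated by $\bm V_0$, the Brownian motion up to time $kh$, and $\theta_0,\dots,\theta_{k-1}$. From \eqref{SAGA-BU update rule}, the SAGA-BU update has the form $\bm Z_{k+1}=\bar{\bm Z}_{k+1}+h\bm\Delta_{\bm z}$, where $\bm\Delta_{\bm z}$ equals, up to the bounded factors $\frac{1-e^{-2h}}{2M_2}$ and $\frac{e^{-2h}}{M_2}$ appearing in the proof of Lemma~\ref{lemma: SG-BU Lyapunov}, the SAGA gradient deviation
\[
\bm b_k-\nabla U(\bm X_k)=\big(\nabla U_{\theta_k}(\bm X_k)-\nabla U_{\theta_k}(\bm\Phi_{\theta_k})\big)-\Big(\nabla U(\bm X_k)-\tfrac1N\textstyle\sum_{i=1}^N\nabla U_i(\bm\Phi_i)\Big).
\]
Since $\theta_k$ is uniform on $\{1,\dots,N\}$ and independent of $\mathcal F_k$, and since $\bm\Phi_{\theta_k}=\bm\Phi_i$ on $\{\theta_k=i\}$, one has $\mathbb E_{\theta_k}[\nabla U_{\theta_k}(\bm\Phi_{\theta_k})\,|\,\mathcal F_k]=\tfrac1N\sum_i\nabla U_i(\bm\Phi_i)$; hence $\mathbb E[\bm\Delta_{\bm z}\,|\,\mathcal F_k]=0$, and given $\mathcal F_k$ the deviation $\bm\Delta_{\bm z}$ is a function of $\theta_k$ alone, so it is conditionally independent of the Brownian increment driving $\bar{\bm Z}_{k+1}$.

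Next I would bound the conditional eighth moment of the deviation. Using $\nabla U_i(\bm X_k)-\nabla U_i(\bm\Phi_i)=\int_0^1\nabla^2U_i(\lambda\bm X_k+(1-\lambda)\bm\Phi_i)\,\D\lambda\,(\bm X_k-\bm\Phi_i)$ together with $\|\nabla^2U_i\|\le M_2$ from Assumption~\ref{asSgNp}, both brackets above are bounded in norm by $M_2\,\frac1N\sum_i|\bm X_k-\bm\Phi_i|$ (the second after Jensen's inequality), so by convexity of $t\mapsto t^8$
\[
\mathbb E_{\theta_k}\big[\,|\bm b_k-\nabla U(\bm X_k)|^8\,\big|\,\mathcal F_k\big]\le CM_2^8\,\frac1N\sum_{i=1}^N|\bm X_k-\bm\Phi_i|^8 .
\]
This is precisely where the analysis departs from SVRG: the bound is controlled by the displacements $|\bm X_k-\bm\Phi_i|$ rather than by the variance parameter $\sigma$, reflecting that the SAGA control variate can be anchored at stale states. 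Then, exactly as in the expansion of $|\sqrt{\bm S}\bm z_1|^8$ in the proof of Lemma~\ref{lemma: SG-BU Lyapunov} — with $\bm z=(\bm x,\bm v)\mapsto\mathcal V(\bm z)=(\bm z^\top\bm S\bm z)^4=|\sqrt{\bm S}\bm z|^8$ — I expand $|\sqrt{\bm S}\bm Z_{k+1}|^8=|\sqrt{\bm S}\bar{\bm Z}_{k+1}+h\sqrt{\bm S}\bm\Delta_{\bm z}|^8$; the first-order term in $\bm\Delta_{\bm z}$ vanishes under $\mathbb E[\,\cdot\,|\,\mathcal F_k]$ by conditional unbiasedness and conditional independence from the Brownian increment, and Young's inequality absorbs the remaining cross terms into $\frac{mh}{12M_2}\,\mathbb E[|\sqrt{\bm S}\bar{\bm Z}_{k+1}|^8\,|\,\mathcal F_k]+\frac{Ch^5}{m^3}\,\mathbb E_{\theta_k}[|\bm b_k-\nabla U(\bm X_k)|^8\,|\,\mathcal F_k]$. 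Invoking the FG-BU Lyapunov bound of Lemma~\ref{lemma: FG-BU Lyapunov} and using $m\le M_2$, $h\le\frac14$ to simplify $(1+\frac{mh}{12M_2})(1-\frac{mh}{2M_2})\le1-\frac{mh}{3M_2}$, this yields
\[
\mathbb E\big[\mathcal V(\bm Z_{k+1})\,\big|\,\mathcal F_k\big]\le\Big(1-\frac{mh}{3M_2}\Big)\mathcal V(\bm Z_k)+\frac{Cd^4h}{m^3}+\frac{Ch^5}{m^3}\,\frac1N\sum_{i=1}^N|\bm X_k-\bm\Phi_i|^8 .
\]

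Finally I would take full expectation and convert the displacement term into a bound on past moments. As $\bm S$ has bounded spectrum we have $|\bm x|^8\le C\,\mathcal V(\bm z)$, and with $\bm\Phi_i=\bm X_{k_i}$ from \eqref{Phi definition 2} this gives $|\bm X_k-\bm\Phi_i|^8\le C(\mathcal V(\bm Z_k)+\mathcal V(\bm Z_{k_i}))$. The delicate point — the main obstacle, stemming from the non-Markovian structure — is that $k_i$ is itself a random index, so $\mathbb E[\mathcal V(\bm Z_{k_i})]$ must be converted into $\max_{0\le l\le k}\mathbb E[\mathcal V(\bm Z_l)]$ rather than the useless $\mathbb E[\max_l\mathcal V(\bm Z_l)]$. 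This is possible because, by \eqref{Phi definition 2}, the event $\{k_i=l\}$ is determined by $\theta_l,\dots,\theta_{k-1}$ (by $\theta_1,\dots,\theta_{k-1}$ when $l=0$), which are independent of $\bm Z_l$; hence $\mathbb E[\mathbf{1}_{\{k_i=l\}}\mathcal V(\bm Z_l)]=\mathbb P(k_i=l)\,\mathbb E[\mathcal V(\bm Z_l)]$, and summing over $l$ with $\sum_{l=0}^{k-1}\mathbb P(k_i=l)=1$ gives $\mathbb E[\mathcal V(\bm Z_{k_i})]\le\max_{0\le l\le k-1}\mathbb E[\mathcal V(\bm Z_l)]$. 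Averaging over $i$, combining with the displayed contraction inequality, and absorbing the leftover $\frac{Ch^5}{m^3}\mathbb E[\mathcal V(\bm Z_k)]$ into the same maximum (since $\mathbb E[\mathcal V(\bm Z_k)]\le\max_{0\le l\le k}\mathbb E[\mathcal V(\bm Z_l)]$) produces the claimed recursive inequality. I expect the Young-inequality bookkeeping in the expansion step to be lengthy but entirely routine; the one genuinely new idea is the independence argument for $\{k_i=l\}$ used to pass from $\mathbb E[\max]$ to $\max[\mathbb E]$ over past moments.
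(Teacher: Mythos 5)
Your proof is correct and follows essentially the same route as the paper's: write the SAGA-BU step as the FG-BU step plus an $h$-scaled, conditionally unbiased perturbation, expand $|\sqrt{\bm S}\bm Z_{k+1}|^8$ so that the odd cross terms vanish (independence of $\theta_k$ from the Brownian increment plus conditional unbiasedness), absorb the even cross terms via Young's inequality into $\frac{mh}{12M_2}\mathbb E\big|\sqrt{\bm S}\bar{\bm Z}_{k+1}\big|^8+\frac{Ch^5}{m^3}\,\mathbb E|\bm g_k(\theta_k)|^8$, and invoke Lemma~\ref{lemma: FG-BU Lyapunov}. The one point where you diverge is the eighth-moment bound on the SAGA deviation: you exploit the Hessian bound $\|\nabla^2 U_i\|\le M_2$ to control it by $\frac1N\sum_i|\bm X_k-\bm\Phi_i|^8$, whereas the paper uses the cruder triangle inequality together with the linear-growth bound $|\nabla U_i(\bm x)|\le M_1\sqrt{|\bm x|^2+d}$ to land directly on $C\big(\frac1N\sum_i\mathbb E|\bm\Phi_i|^8+d^4\big)$; both reduce to $\max_{0\le l\le k}\mathbb E[\mathcal V(\bm Z_l)]$ and yield the identical recursion. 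Your explicit treatment of the random index $k_i$ — observing that $\{k_i=l\}$ is measurable with respect to $\theta_l,\dots,\theta_{k-1}$ and hence independent of $\bm Z_l$, so $\mathbb E[\mathbf 1_{\{k_i=l\}}\mathcal V(\bm Z_l)]=\mathbb P(k_i=l)\,\mathbb E[\mathcal V(\bm Z_l)]$ — is valid and in fact makes explicit (and slightly sharpens) a step the paper leaves implicit in this lemma and handles by Cauchy--Schwarz elsewhere.
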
 
\begin{proof}
The Lyapunov condition for the SAGA-BU integrator can be validated by adapting the proof for the full-gradient case from Lemma~\ref{lemma: FG-BU Lyapunov}. The core of the argument is to analyze the stochastic gradient error, which is the difference between the SAGA-BU and FG-BU updates, namely, $\bm Z_{k+1}$ and $\bar{\bm Z}_{k+1}$. This error can be written as
\begin{equation}
\bm X_{k+1} - \bar {\bm X}_{k+1} = -\frac{h(1-e^{-h})}{2M_2} \bm g_k(\theta_k), \qquad
\bm V_{k+1} - \bar {\bm V}_{k+1} = -\frac{he^{-h}}{M_2} 
\bm g_k(\theta_k),
\end{equation}
where the deviation term $\bm g_k(\theta_k) \in\mathbb R^d$ is explicitly defined as
\begin{equation}
	\bm g_k(\theta_k) = \nabla U_{\theta_k}(\bm X_k) - \nabla U_{\theta_k}(\bm\Phi_{\theta_k}) + \frac1N \sum_{i=1}^N \nabla U_i(\bm\Phi_i) - \frac1N \sum_{i=1}^N \nabla U_i(\bm X_k).
\end{equation}
The analysis then proceeds by bounding the moments of this deviation term.

To bound the moments of $\bm g_k(\theta_k) \in\mathbb R^d$, we first note that $\bm g_k(\theta_k)$ is unbiased with respect to $\theta_k$ for a fixed history. Applying the triangle inequality gives
\begin{equation}
		|\bm g_k(\theta_k)| \Le \bigg|\nabla U_{\theta_k}(\bm X_k)- \frac1N \sum_{i=1}^N \nabla U_i(\bm X_k)\bigg| + |\nabla U_{\theta_k} (\bm \Phi_{\theta_k})| + \frac1N \sum_{i=1}^N |\nabla U_i(\bm \Phi_i)|.
\label{SAGA: gk}
\end{equation}
Taking the eighth moment and the full expectation over the numerical solution $(\bm X_k,\bm V_k)_{k=0}^\infty$ and the random index $(\theta_k)_{k=0}^\infty$ in \eqref{SAGA: gk}, we obtain
\begin{align*}
    \mathbb E|\bm g_k(\theta_k)|^8 & \Le C\bigg( 
    \mathbb E \bigg|\nabla U_{\theta_k}(\bm X_k)- \frac1N \sum_{i=1}^N \nabla U_i(\bm X_k)\bigg|^8 + 
    \mathbb E|\nabla U_{\theta_k} (\bm \Phi_{\theta_k})|^8 + \frac1N \sum_{i=1}^N \mathbb E|\nabla U_i(\bm \Phi_i)|^8 \bigg) \\
    & \Le C\bigg((\sigma^2d)^4 + \frac2N \sum_{i=1}^N 
    \mathbb E|\nabla U_i(\bm \Phi_i)|^8
    \bigg) \Le C\bigg(d^4 + \frac1N \sum_{i=1}^N \mathbb E\big(|\bm \Phi_i|^2+d\big)^4\bigg) \\
    & \Le C\bigg(\frac1N \sum_{i=1}^N \mathbb E|\bm\Phi_i|^8 + d^4\bigg).
\end{align*}
This chain of inequalities provides the key moment bound for the deviation term:
\begin{equation}
    \mathbb E|\bm g_k(\theta_k)|^8 \Le C\bigg(\frac1N \sum_{i=1}^N \mathbb E|\bm\Phi_i|^8 + d^4\bigg).
    \label{SAGA: moment bound 1}
\end{equation}

To establish the Lyapunov condition, we follow the analysis in the proof for Lemma~\ref{lemma: SG-BU Lyapunov}. We first define the scaled stochastic gradient error at step $k$ as $\bm\Delta_{k} = h^{-1}(\bm Z_{k+1} - \bar{\bm Z}_{k+1}) \in \mathbb R^{2d}$. To manage the dependency on the algorithm's history, we also introduce a constant $C_k$ that tracks the maximum normalized moment up to step $k$:
\begin{equation*}
	C_k = \frac1{d^4} \max_{0\Le l\Le k} \mathbb E\big[\mathcal V(\bm Z_l)\big] = \frac1{d^4} \max_{0\Le l\Le k} \mathbb E\big[(\bm Z_l^\top \bm S \bm Z_l)^4\big], \qquad k=0,1,\cdots.
\end{equation*}
The error term $\bm\Delta_{k}$ is unbiased. The moment bound on $\bm g_k(\theta_k)$ from \eqref{SAGA: moment bound 1} directly translates into a bound on $\bm\Delta_k$ that depends on the historical maximum moment $C_k$:
\begin{equation}
	\mathbb E|\bm\Delta_k|^8 \Le Cd^4(C_k+1).
    \label{SAGA: moment bound 2}
\end{equation}
We now expand the eighth moment of $\bm Z_{k+1}$ using its decomposition $\bm Z_{k+1} = \bar{\bm Z}_{k+1} + h\bm\Delta_k$. This expansion, analogous to the one in the proof of Lemma~\ref{lemma: SG-BU Lyapunov}, yields several terms:
	\begin{align*}
		\mathbb E\big|\sqrt{\bm S} \bm Z_{k+1}\big|^8 & =
		\mathbb E\big|\sqrt{\bm S} \bar{\bm Z}_{k+1} + h \sqrt{\bm S} \bm\Delta_{k}\big|^8 \\
		& = \mathbb E\big|\sqrt{\bm S} \bar{\bm Z}_{k+1}\big|^8 +
		28h^2\mathbb E\Big(\big|\sqrt{\bm S} \bar{\bm Z}_{k+1}\big|^6\big|\sqrt{\bm S}\bm \Delta_{k}\big|^2\Big) + 70
		h^4\mathbb E\Big(\big|\sqrt{\bm S} \bar{\bm Z}_{k+1}\big|^4\big|\sqrt{\bm S}\bm \Delta_{k}\big|^4\Big) \\
		& \hspace{1cm} + 28h^6\mathbb E\Big(\big|\sqrt{\bm S} \bar{\bm Z}_{k+1}\big|^2\big|\sqrt{\bm S}\bm \Delta_{k}\big|^6\Big) +
		h^8 \mathbb E\big|\sqrt{\bm S} \bm \Delta_{k}\big|^8 \\
		& \Le \mathbb E\big|\sqrt{\bm S} \bar{\bm Z}_{k+1}\big|^8 + \frac{mh}{12M_2}
		\mathbb E\big|\sqrt{\bm S} \bar{\bm Z}_{k+1}\big|^8 + \frac{Ch^5}{m^3} \mathbb E|\bm\Delta_{k}|^8,
	\end{align*}
where we have utilized the binomial theorem and H\"older's inequality.
Applying the Lyapunov condition for the FG-BU integrator in Lemma~\ref{lemma: FG-BU Lyapunov} to the $\mathbb E|\sqrt{\bm S} \bar{\bm Z}_{k+1}|^8$ terms and substituting the bound \eqref{SAGA: moment bound 2} for the moment of $\bm\Delta_k$, we get
	\begin{align*}
		\mathbb E\big|\sqrt{\bm S} \bm Z_{k+1}\big|^8 & \Le \bigg(1+\frac{mh}{12M_2}\bigg)
		\bigg( \bigg(1-\frac{mh}{2M_2}\bigg)
		\mathbb E\big|\sqrt{\bm S} \bm Z_k\big|^8 + \frac{Cd^4h}{m^3} \bigg) + \frac{Cd^4h^5}{m^3}(C_k+1) \\
		& \Le \bigg(1-\frac{mh}{3M_2}\bigg)
		\mathbb E\big|\sqrt{\bm S} \bm Z_k\big|^8 +
		\frac{Cd^4h}{m^3} + \frac{Cd^4h^5}{m^3} C_k.
	\end{align*}
Expressing this inequality in terms of the Lyapunov function $\mathcal V(\bm x,\bm v)$, we arrive at the desired recursive relationship
	\begin{equation*}
		\mathbb E\big[\mathcal V(\bm Z_{k+1})\big] \Le
		\bigg(1-\frac{mh}{3M_2}\bigg) \mathbb E\big[\mathcal V(\bm Z_k)\big] + \frac{Cd^4h}{m^3} + \frac{Ch^5}{m^3} \max_{0\Le l\Le k} \mathbb E\big[\mathcal V(\bm Z_l)\big].
	\end{equation*}
This completes the proof of the modified Lyapunov condition for the SAGA-BU integrator.
\end{proof}

The recursive Lyapunov condition for SAGA-BU from Lemma~\ref{lemma: SAGA-BU Lyapunov} is sufficient to establish a uniform-in-time moment bound for the full SAGA-UBU integrator. This result requires a sufficiently small step size $h$ to ensure that the contractive dynamics dominate the historical dependency inherent in the SAGA method.
\begin{theorem}
\label{theorem: SAGA-UBU moment bound}
Under Assumptions~\ref{asP} and \ref{asSgNp}, let the step size $h\Le c_0 m$, and
$$
	\bm Z_0 = (\bm X_0,\bm V_0)\text{~with~}\bm X_0 = \bm 0\text{~and~}\bm V_0 \sim \mathcal N(\bm 0,M_2^{-1}\bm I_d);
$$
then the SAGA-UBU solution $(\bm X_k,\bm V_k)_{k=0}^\infty$ satisfies
\begin{equation*}
    \sup_{k\Ge 0} \mathbb E\big[|\bm Z_k|^8\big] \Le \frac{Cd^4}{m^4},
\end{equation*}
where the constants $c_0$ and $C$ depend only on $M_1,M_2$.
\end{theorem}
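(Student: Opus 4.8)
The plan is to follow the blueprint of Theorem~\ref{theorem: SG-UBU moment bound}, but with the one-step contraction of Lemma~\ref{lemma: SG-BU Lyapunov} replaced by the history-dependent recursion of Lemma~\ref{lemma: SAGA-BU Lyapunov}, which I then resolve by a bootstrap argument on the running maximum of the moments. The step-size restriction $h\le c_0 m$ enters precisely at this last point.

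\textbf{Reduction to SAGA-BU.} Set $\tilde{\bm Z}_k = \Phi_{h/2}^{\mathfrak U}\bm Z_k = (\bm Y_k,\bm V_k^{(1)})$. Using the splitting identity $\Phi_{h/2}^{\mathfrak U}\circ\Phi_{h/2}^{\mathfrak U}=\Phi_h^{\mathfrak U}$, the SAGA-UBU recursion \eqref{SAGA-UBU} telescopes into $\tilde{\bm Z}_{k+1}=\Phi_h^{\mathfrak U}\circ\Phi_h^{\mathfrak B}(\{\bm\Phi_i\}_{i=1}^N,\theta_k)\tilde{\bm Z}_k$ with $\bm\Phi_{\theta_k}=\bm Y_k$; that is, $(\tilde{\bm Z}_k)_{k\ge0}$ is itself a SAGA-BU trajectory whose historical table is assembled from its own position components, so Lemma~\ref{lemma: SAGA-BU Lyapunov} applies to it verbatim. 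Writing $a_k=\mathbb E[\mathcal V(\tilde{\bm Z}_k)]$, $\gamma=m/(3M_2)$ and $\beta=Cd^4h/m^3$, the lemma reads $a_{k+1}\le(1-\gamma h)a_k+\beta+\frac{Ch^5}{m^3}\max_{0\le l\le k}a_l$, while the explicit form of $\Phi_{h/2}^{\mathfrak U}$ together with $\bm V_0\sim\mathcal N(\bm 0,M_2^{-1}\bm I_d)$ and $m\le M_2$ gives $a_0=\mathbb E[\mathcal V(\Phi_{h/2}^{\mathfrak U}(\bm 0,\bm V_0))]\le Cd^4/m^4$.

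\textbf{Solving the recursion (main step).} Let $b_k=\max_{0\le l\le k}a_l$ and choose the constant $c_0$ (depending only on $M_1,M_2$) small enough that $h\le c_0 m$ forces both $h\le\frac14$ and $\frac{Ch^5}{m^3}\le\frac12\gamma h$; the latter is the condition $h^4\le m^4/(6CM_2)$. Then $a_{k+1}\le(1-\gamma h/2)b_k+\beta$, and an elementary check shows $(1-\gamma h/2)t+\beta\le\max\{t,B^*\}$ for every $t\ge0$, where $B^*=2\beta/(\gamma h)=6CM_2 d^4/m^4$. Induction on $k$ then yields $b_k\le\max\{a_0,B^*\}\le Cd^4/m^4$ for all $k$; since $\mathcal V(\bm z)=(\bm z^\top\bm S\bm z)^4$ is comparable to $|\bm z|^8$ within absolute constants, this gives $\sup_k\mathbb E[|\tilde{\bm Z}_k|^8]\le Cd^4/m^4$.

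\textbf{Passing back to $\bm Z_k$.} For $k\ge1$ one has $\bm Z_k=\Phi_{h/2}^{\mathfrak U}\circ\Phi_h^{\mathfrak B}(\{\bm\Phi_i\},\theta_{k-1})\tilde{\bm Z}_{k-1}$, while $\bm Z_0=(\bm 0,\bm V_0)$ is trivially controlled. By Assumption~\ref{asSgNp} the $p=1$ SAGA estimator obeys $|\bm b_{k-1}|^8\le C(|\tilde{\bm Z}_{k-1}|^8+\frac1N\sum_i|\bm\Phi_i|^8+d^4)$, and since every $\bm\Phi_i$ is a past position component of $\tilde{\bm Z}$, its eighth moment is already bounded by $Cd^4/m^4$; hence $\mathbb E[|\bm b_{k-1}|^8]\le Cd^4/m^4$, and one $\Phi_h^{\mathfrak B}$ step followed by the stable $\Phi_{h/2}^{\mathfrak U}$ step preserves the bound, giving $\sup_k\mathbb E[|\bm Z_k|^8]\le Cd^4/m^4$. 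The main obstacle is the second step: the $\max$-term in Lemma~\ref{lemma: SAGA-BU Lyapunov} couples $a_{k+1}$ to the entire past and can create runaway feedback unless the perturbation rate $Ch^5/m^3$ is dominated by the contraction rate $mh/(3M_2)$; reconciling the two is exactly what forces the restriction $h\le c_0 m$ and what makes a naive telescoping insufficient, so the running-maximum bootstrap is essential.
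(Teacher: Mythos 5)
Your argument is correct and follows essentially the same route as the paper's proof: conjugate SAGA-UBU to a SAGA-BU chain via the stable half-step $\Phi_{h/2}^{\mathfrak U}$, invoke the history-dependent Lyapunov recursion of Lemma~\ref{lemma: SAGA-BU Lyapunov}, and close it by choosing $c_0$ so that the $Ch^5/m^3$ feedback term is absorbed into the contraction rate $mh/(3M_2)$, after which the running maximum of the moments is bounded by $\max\{a_0, Cd^4/m^4\}$. Your explicit check that $(1-\gamma h/2)t+\beta\le\max\{t,B^*\}$ and your spelled-out return from $\tilde{\bm Z}_k$ to $\bm Z_k$ are slightly more detailed than the paper's treatment but do not change the argument.
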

\begin{proof}
To prove the uniform-in-time moment bound for the SAGA-BU integrator, we analyze the recursive relationship for the maximum normalized moment, defined as
\begin{equation*}
	C_k = \frac1{d^4} \max_{0\Le l\Le k} \mathbb E\big[\mathcal V(\bm Z_l)\big],\qquad
	k = 0,1,\cdots.
\end{equation*}
The Lyapunov condition from Lemma~\ref{lemma: SAGA-BU Lyapunov} provides the following inequality:
\begin{equation}
	C_{k+1} \Le \bigg(1-\frac{mh}{3M_2}\bigg) C_k + \frac{Ch}{m^3} + \frac{Ch^5}{m^3} C_k.
	\label{proof: SAGA Ck}
\end{equation}
The challenge lies in the final term, $\frac{Ch^5}{m^3} C_k$, which reflects the integrator's dependency on its entire history. To ensure stability, we must choose a step size $h$ small enough for the contractive term to dominate this historical dependency. By setting a threshold $c_0 = (6CM_2)^{-1/4}$, a step size $h \Le c_0 m$ guarantees that $\frac{Ch^4}{m^3} \Le \frac{mh}{6M_2}$. This allows us to simplify the recursive inequality \eqref{proof: SAGA Ck} into a standard contractive form:
\begin{equation*}
	C_{k+1} \Le \bigg(1-\frac{mh}{6M_2}\bigg) C_k + \frac{Ch}{m^3}.
\end{equation*}
This standard recursion implies that the sequence $(C_k)_{k=0}^\infty$ is uniformly bounded:
\begin{equation*}
	\sup_{k \Ge 0} C_k \Le \max\bigg\{C_0, \frac{Cd^4}{m^4}\bigg\} \Le \frac{Cd^4}{m^4}.
\end{equation*}
This establishes the desired uniform-in-time moment bound for the SAGA-BU integrator, which directly extends to SAGA-UBU since the two integrators differ only by the application of the stable $\Phi_{h/2}^{\mathfrak U}$ solution flow.
\end{proof}

Due to the non-Markovian nature of the SAGA-UBU solution, a standard invariant distribution cannot be defined. Instead, the numerical bias is analyzed with respect to $\tilde \pi_h(\bm x,\bm v)$, which represents the subsequential weak limit of the time-averaged distributions along the trajectory, as established in the following lemma.
\begin{lemma}
\label{lemma: SAGA-UBU invariant}
Under Assumptions~\ref{asP} and \ref{asSgNp}, let the step size $h \Le c_0m$; define $\tilde{\pi}_h^K$ as the averaged distribution of SAGA-UBU over the first $K$ steps:
	\begin{equation*}
		\tilde{\pi}_h^K(A) = \frac{1}{K}
		\sum_{k=0}^{K-1} \mathbb{P} \big(
		\bm{Z}_k \in A \mid \bm{X}_0 = \bm{0},\ \bm{V}_0 \sim \mathcal{N}(\bm{0}, M_2^{-1}\bm{I}_d)
		\big).
	\end{equation*}
	Then $\{\tilde{\pi}_h^K\}_{K=1}^\infty$ admits a weakly convergent subsequence with limit $\tilde{\pi}_h$ satisfying
	\begin{equation*}
		\int_{\mathbb{R}^{2d}} 
		|\bm z|^8  
		\D\tilde{\pi}_h(\bm{z})
		\leqslant \frac{Cd^4}{m^4},
	\end{equation*}
	where the constants $c_0,C$ depend only on $M_1,M_2$.

    Moreover, for any Lipschitz continuous test function $g(\bm{x}, \bm{v})$ on $\mathbb{R}^{2d}$, $\tilde{\pi}_h(g)$ is the subsequential limit of $\{\tilde{\pi}_h^K(g)\}_{K=1}^\infty$ in $\mathbb R$.
\end{lemma}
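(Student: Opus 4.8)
The plan is to obtain the subsequential limit $\tilde\pi_h$ from Prokhorov's theorem, using the uniform-in-time eighth moment bound of Theorem~\ref{theorem: SAGA-UBU moment bound} to establish tightness of the family $\{\tilde\pi_h^K\}_{K\Ge1}$, and then to upgrade the weak convergence along the extracted subsequence from bounded continuous test functions to test functions of at most linear growth (in particular, all Lipschitz $g$) by a truncation argument. Note that this argument is purely at the level of the sequence of marginal laws, so the non-Markovian structure of SAGA-UBU plays no role.

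First I would record the elementary consequence of Theorem~\ref{theorem: SAGA-UBU moment bound} that, for every $K$,
$$
\int_{\mathbb R^{2d}}|\bm z|^8\,\D\tilde\pi_h^K(\bm z)=\frac1K\sum_{k=0}^{K-1}\mathbb E\big[|\bm Z_k|^8\big]\Le\frac{Cd^4}{m^4},
$$
with the bound independent of $K$. By Markov's inequality this gives $\tilde\pi_h^K(\{|\bm z|>R\})\Le R^{-8}Cd^4/m^4$ uniformly in $K$, so $\{\tilde\pi_h^K\}_{K\Ge1}$ is tight, and Prokhorov's theorem produces a subsequence $\tilde\pi_h^{K_j}$ converging weakly to a probability measure $\tilde\pi_h$ on $\mathbb R^{2d}$. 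To pass the moment bound to the limit, I would test against the bounded continuous functions $\bm z\mapsto\min\{|\bm z|^8,R\}$: weak convergence gives $\int\min\{|\bm z|^8,R\}\,\D\tilde\pi_h=\lim_j\int\min\{|\bm z|^8,R\}\,\D\tilde\pi_h^{K_j}\Le Cd^4/m^4$, and letting $R\to\infty$ with monotone convergence yields $\int|\bm z|^8\,\D\tilde\pi_h\Le Cd^4/m^4$.

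For the last assertion, let $g$ be Lipschitz, so $|g(\bm z)|\Le C_g(1+|\bm z|)$. Fix a continuous cutoff $\chi_R$ with $\mathbf{1}_{\{|\bm z|\le R\}}\Le\chi_R\Le\mathbf{1}_{\{|\bm z|\le 2R\}}$, so that $g\chi_R$ is bounded and continuous and hence $\tilde\pi_h^{K_j}(g\chi_R)\to\tilde\pi_h(g\chi_R)$. On $\{|\bm z|>R\}$ one has $1+|\bm z|\Le 2|\bm z|^8/R^7$, so for any measure $\mu$ in $\{\tilde\pi_h^K\}_{K\Ge1}\cup\{\tilde\pi_h\}$,
$$
\Big|\int g\,(1-\chi_R)\,\D\mu\Big|\Le C_g\int_{\{|\bm z|>R\}}(1+|\bm z|)\,\D\mu\Le\frac{2C_g}{R^7}\int|\bm z|^8\,\D\mu\Le\frac{C}{R^7}\cdot\frac{d^4}{m^4},
$$
uniformly. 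Combining the two displays, letting first $j\to\infty$ and then $R\to\infty$, gives $\tilde\pi_h^{K_j}(g)\to\tilde\pi_h(g)$, which is the claimed subsequential convergence.

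The only delicate point is this last step: weak convergence controls only bounded continuous integrands, so the contribution of large $|\bm z|$ to $\int g\,\D\mu$ for a linearly growing $g$ must be dominated quantitatively and uniformly in the measure; this is exactly where the eighth moment bound (rather than, say, a second moment bound) provides the needed margin. Everything else---tightness, the Prokhorov extraction, and the lower-semicontinuity passage for $|\bm z|^8$---is standard.
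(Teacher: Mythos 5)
Your proof is correct, and it reaches the same conclusions as the paper via a more elementary route. You work directly with $\{\tilde\pi_h^K\}$: tightness from the uniform eighth moment plus Markov, Prokhorov for the extraction, monotone convergence on $\min\{|\bm z|^8,R\}$ for the limit moment bound, and a cutoff-plus-uniform-tail argument (using $1+|\bm z|\Le 2|\bm z|^8/R^7$ on $\{|\bm z|>R\}$) to upgrade weak convergence to linear-growth test functions. The paper instead tilts the measures, defining $\tilde\mu_h^K$ with Radon--Nikodym density $|\bm x|^2+|\bm v|^2+d$ with respect to $\tilde\pi_h^K$, applies Prokhorov to the tilted family (whose total mass and third moments are uniformly controlled by the same eighth moment bound), and then untilts: a Lipschitz $g$ becomes the bounded continuous integrand $g/(|\bm x|^2+|\bm v|^2+d)$ against $\tilde\mu_h^{K_j}$, so the linear-growth convergence falls out of ordinary weak convergence with no explicit truncation. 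The two arguments are morally the same -- both are ways of encoding uniform integrability of linear-growth functions under the family of laws -- and both lean on Theorem~\ref{theorem: SAGA-UBU moment bound} in exactly the same way; your version is slightly more hands-on but avoids the paper's small extra step of checking that weak convergence of the tilted measures transfers back to the $\tilde\pi_h^{K_j}$. You are also right that the non-Markovian structure of SAGA-UBU is irrelevant here; the entire argument lives at the level of the marginal laws.
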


\begin{proof}
To prove the existence of a subsequential weak limit $\tilde{\pi}_h$, we introduce a sequence of weighted measures. For each $K \in \mathbb N$, define the positive measure $\tilde \mu_h^K$ on $\mathbb R^{2d}$ via the Radon--Nikodym derivative with respect to the averaged distribution $\tilde \pi_h^K$:
	\begin{equation*}
		\frac{\mathrm{d}\tilde \mu_h^K}{\mathrm{d}\tilde \pi_h^K}(\bm x,\bm v) = |\bm x|^2 + |\bm v|^2 + d.
	\end{equation*}
The uniform-in-time moment of SAGA-UBU in Theorem~\ref{theorem: SAGA-UBU moment bound} implies uniform bounds on the total mass and higher moments of these weighted measures:
	\begin{equation*}
		\int_{\mathbb R^{2d}} \frac{1}{|\bm x|^2 + |\bm v|^2 + d}  \mathrm{d}\tilde \mu_h^K(\bm x,\bm v) = \int_{\mathbb R^{2d}} \mathrm{d}\tilde \pi_h^K(\bm x,\bm v) = 1,
	\end{equation*}
and
    \begin{equation*}
		\int_{\mathbb R^{2d}} (|\bm x|^2 + |\bm v|^2 + d)^3  \mathrm{d}\tilde \mu_h^K(\bm x,\bm v) = \int_{\mathbb R^{2d}} (|\bm x|^2 + |\bm v|^2 + d)^4 \mathrm{d}\tilde \pi_h^K(\bm x,\bm v) \leqslant \frac{Cd^4}{m^4}.
	\end{equation*}
These uniform bounds ensure that the sequence of measures $\{\tilde \mu_h^K\}_{K=1}^\infty$ is tight. By Prokhorov's theorem, there exists a subsequence $\{\tilde \mu_h^{K_j}\}_{j=1}^\infty$ that converges weakly to a limit measure $\tilde \mu_h$. This limit measure inherits the same moment bounds:
	\begin{equation*}
		\int_{\mathbb R^{2d}} \frac{1}{|\bm x|^2 + |\bm v|^2 + d}  \mathrm{d}\tilde \mu_h(\bm x,\bm v) = 1, \quad
		\int_{\mathbb R^{2d}} (|\bm x|^2 + |\bm v|^2 + d)^3  \mathrm{d}\tilde \mu_h(\bm x,\bm v) \leqslant \frac{Cd^4}{m^4}.
	\end{equation*}
We define the probability measure $\tilde \pi_h$ via its Radon--Nikodym derivative with respect to $\tilde \mu_h$:
	\begin{equation*}
		\frac{\mathrm{d}\tilde \pi_h}{\mathrm{d}\tilde \mu_h}(\bm x,\bm v) = \frac{1}{|\bm x|^2 + |\bm v|^2 + d}.
	\end{equation*}
By construction, $\tilde \pi_h$ is the weak limit of the subsequence $\{\tilde \pi_h^{K_j}\}$. Therefore, for any test function $g(\bm x,\bm v)$ with linear growth, the following holds
	\begin{equation*}
		\lim_{j\rightarrow\infty} \int_{\mathbb R^{2d}} g(\bm x,\bm v)  \mathrm{d}\tilde \pi_h^{K_j}(\bm x,\bm v) = 
		\int_{\mathbb R^{2d}} g(\bm x,\bm v)  \mathrm{d}\tilde \pi_h(\bm x,\bm v).
	\end{equation*}
This establishes $\tilde{\pi}_h$ as a subsequential weak limit possessing the stated moment bound and convergence property, completing the proof.
\end{proof}

\subsection{Local error analysis of SAGA-UBU}

A key step in the local error analysis for the SAGA-UBU integrator is to bound the distance between the current intermediate position $\bm Y_k$ and the historical positions $\{\bm\Phi_i\}_{i=1}^N$. The magnitude of this distance directly determines the variance, and thus the accuracy, of the SAGA gradient estimator. The following lemma establishes the required moment bound.
\begin{lemma}
\label{lemma: SAGA distance}
Under Assumptions~\ref{asP} and \ref{asSgNp}, let the step size $h\Le c_0 m$, and
$$
	\bm Z_0 = (\bm X_0,\bm V_0)\text{~with~}\bm X_0 = \bm 0\text{~and~}\bm V_0 \sim \mathcal N(\bm 0,M_2^{-1}\bm I_d);
$$
for each $k\Ge 1$, let $\bm Y_k$ be the position component of $ \Phi^{\mathfrak U}_{h/2}\bm Z_k$ defined in \eqref{Y_k expression}, and $\{\bm\Phi_i\}_{i=1}^N$ be the historical positions of SAGA-UBU in \eqref{Phi definition}, then
\begin{equation*}
	\frac1N \sum_{i=1}^N \mathbb E\big[|\bm Y_k - \bm \Phi_i|^4\big]
	\Le \frac{CN^4d^2h^4}{m^2},
\end{equation*}
where the constants $c_0$ and $C$ depend only on $M_1,M_2$.
\end{lemma}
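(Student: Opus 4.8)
The plan is to close a one-step recursion for the averaged quantity $Q_k := \frac1N\sum_{i=1}^N \mathbb E[|\bm Y_k - \bm\Phi_i|^4]$, where $\{\bm\Phi_i\}_{i=1}^N$ denotes the historical table at the beginning of iteration $k$, i.e.\ $\bm\Phi_i = \bm Y_{k_i}$ with $k_i < k$ as in \eqref{Phi definition}. I will show that $Q_k$ contracts at rate $1-\frac{1}{2N}$ up to a forcing term of order $N^3 d^2 h^4 / m^2$; summing the geometric series then gives the claimed bound $Q_k \le C N^4 d^2 h^4/m^2$.

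The first ingredient is a uniform one-step displacement bound $\sup_{k\ge 0}\mathbb E[\delta_k^4] \le C d^2 h^4/m^2$ for $\delta_k := |\bm Y_{k+1} - \bm Y_k|$. Since $\bm Y_{k+1}$ is the position component of $\Phi_h^{\mathfrak U}(\bm Y_k, \bm V_k^{(2)})$, one has $\bm Y_{k+1} - \bm Y_k = \frac{1-e^{-2h}}{2}\bm V_k^{(2)} + \bm\xi_k$ with $\bm V_k^{(2)} = \bm V_k^{(1)} - \frac{h}{M_2}\bm b_k$ and $\bm\xi_k$ a Gaussian increment of variance $\mathcal O(h^3)$. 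The uniform moment bounds of Theorem~\ref{theorem: SAGA-UBU moment bound} control $\mathbb E|\bm V_k^{(1)}|^4$, and the linear-growth bounds on $\nabla U_i$ from Assumption~\ref{asSgNp} together with $\frac1N\sum_{i}\mathbb E|\bm\Phi_i|^8 \le C d^4/m^4$ give $\mathbb E|\bm b_k|^4 \le C d^2/m^2$. This last averaged moment bound is itself a consequence of Theorem~\ref{theorem: SAGA-UBU moment bound}: for each fixed $i$, the trajectory value $|\bm Y_j|^8$ is independent of the event $\{k_i = j\}$, since the former is determined by $\theta_0,\dots,\theta_{j-1}$ and the Brownian motion while the latter depends only on $\theta_j,\dots,\theta_{k-1}$, hence $\mathbb E|\bm\Phi_i|^8 = \sum_j \mathbb E|\bm Y_j|^8\,\mathbb P(k_i=j) \le \sup_j \mathbb E|\bm Y_j|^8$.

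For the recursion, note that at iteration $k$ only the entry $\bm\Phi_{\theta_k}$ is refreshed, and it is set equal to $\bm Y_k$, so after the update $|\bm Y_{k+1} - \bm\Phi_{\theta_k}|^4 = \delta_k^4$; for every index $i \ne \theta_k$ the entry is unchanged, and the triangle and Young inequalities give $|\bm Y_{k+1} - \bm\Phi_i|^4 \le (1+\epsilon)|\bm Y_k - \bm\Phi_i|^4 + C\epsilon^{-3}\delta_k^4$. Averaging over $i$ and taking expectations, and using that $\theta_k$ is uniform on $\{1,\dots,N\}$ and independent of $\bm Y_k$, of the table $\{\bm\Phi_i\}$ and of $\bm V_k^{(1)}$ — so that $\mathbb E[\frac1N\sum_{i\ne\theta_k}|\bm Y_k-\bm\Phi_i|^4] = (1-\frac1N)Q_k$ — one obtains
\begin{equation*}
Q_{k+1} \le (1+\epsilon)\Big(1-\tfrac1N\Big)Q_k + C\epsilon^{-3}\,\mathbb E[\delta_k^4].
\end{equation*}
Choosing $\epsilon = \frac1{2N}$ makes $(1+\epsilon)(1-\frac1N) \le 1-\frac1{2N}$ while $\epsilon^{-3} = 8N^3$, so by Step~1, $Q_{k+1} \le (1-\frac1{2N})Q_k + C N^3 d^2 h^4/m^2$. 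Iterating from $k=1$, where the table is $\bm\Phi_i \equiv \bm Y_0$ and hence $Q_1 = \mathbb E[\delta_0^4] \le C d^2 h^4/m^2$, and using $\sum_{j\ge0}(1-\frac1{2N})^j = 2N$, yields $Q_k \le C N^4 d^2 h^4/m^2$. The step-size restriction $h \le c_0 m$ enters only through Theorem~\ref{theorem: SAGA-UBU moment bound}.

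The main obstacle is the recursion step: the table entries are simultaneously ``aged'' by the drift of the trajectory (growth $\delta_k$ per iteration) and ``reset'' at a geometric rate (index $i$ is selected with probability $1/N$, collapsing $|\bm Y_{k+1} - \bm\Phi_i|$ to $\delta_k$), and these opposing effects must be balanced so that the net one-step factor is a genuine contraction $1-\frac1{2N}$ rather than the crude expansion $1+\epsilon$. This is exactly why the $(1+\epsilon)(1-\frac1N)$ structure — and thus the independence of the fresh index $\theta_k$ from the current state — has to be exploited cleanly, and it is also what pins down the power of $N$: the forcing constant is $C\epsilon^{-3}\cdot(2N) = \mathcal O(N^3)\cdot\mathcal O(N) = \mathcal O(N^4)$. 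A secondary point is that the displacement bound of Step~1 must hold uniformly in $k$, which forces one to control $\frac1N\sum_i\mathbb E|\bm\Phi_i|^8$ via the independence-of-ages observation rather than by a crude union bound over $i$.
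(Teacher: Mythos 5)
Your proposal is correct, and it reaches the stated bound by a genuinely different route from the paper. The paper telescopes $\bm Y_k - \bm\Phi_i = \bm Y_k - \bm Y_{k_i}$ into single-step increments, applies H\"older to get a factor $(k-k_i)^3$, then uses Cauchy's inequality to decouple the ``age'' $k-k_i$ from the displacement moments; this forces it to control the \emph{eighth} moment of $\bm Y_{l+1}-\bm Y_l$ and to prove a separate combinatorial estimate, $\sum_{l}\sqrt{\mathbb E[(k-k_1)^6\bm 1_{\{k_1\Le l\}}]}\Le CN^4$, via a time-reversal identification of $k-k_1$ with a truncated geometric waiting time. Your one-step contraction $Q_{k+1}\Le(1+\epsilon)(1-\tfrac1N)Q_k+C\epsilon^{-3}\,\mathbb E[\delta_k^4]$ with $\epsilon=\tfrac1{2N}$ sidesteps all of that: the crucial identity $\mathbb E[\frac1N\sum_{i\ne\theta_k}|\bm Y_k-\bm\Phi_i|^4]=(1-\tfrac1N)Q_k$ is legitimate because $\theta_k$ is drawn independently of $(\bm Y_k,\{\bm\Phi_i\})$, the Young constant $\epsilon^{-3}=8N^3$ combined with the geometric sum $2N$ reproduces exactly the $N^4$ power, and the base case $Q_1=\mathbb E[\delta_0^4]$ is handled by the algorithm's initialization $\bm\Phi_i\equiv\bm Y_0$. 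Your argument needs only the fourth moment of the single-step displacement (versus the paper's eighth), and your independence-of-ages observation for $\frac1N\sum_i\mathbb E|\bm\Phi_i|^8\Le\sup_j\mathbb E|\bm Y_j|^8$ is in fact cleaner than the Cauchy--Schwarz route the paper uses for the analogous bound in its Lyapunov lemma. What the paper's approach buys in exchange is an explicit description of the age distribution, which it reuses elsewhere in the SAGA analysis; your recursion is more modular and would adapt more easily to a general batch size $p$, where $1-\tfrac1N$ becomes $1-\tfrac pN$.
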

\begin{proof}
The proof begins by expressing the average fourth moment distance between the current position $\bm Y_k$ and the historical positions $\{\bm \Phi_i\}_{i=1}^N$. For each component $i$, the historical position is $\bm \Phi_i = \bm Y_{k_i}$, where $k_i$ is the last step at which the gradient $\nabla U_i$ was evaluated. By repeatedly applying Hölder's inequality, we can bound this distance by 
	\begin{align*}
		\frac1N \sum_{i=1}^N \mathbb E|\bm Y_k - \bm \Phi_i|^4
		& = \frac1N \sum_{i=1}^N \mathbb E|\bm Y_k - \bm Y_{k_i}|^4 \\
		& \Le \frac1N \sum_{i=1}^N \mathbb E \bigg( \sum_{l=k_i}^{k-1} |\bm Y_{l+1} - \bm Y_l| \bigg)^4 \\
		& \Le \frac1N \sum_{i=1}^N \mathbb E\bigg[(k-k_i)^3 \sum_{l=k_i}^{k-1} |\bm Y_{l+1} - \bm Y_l|^4\bigg] \\
		& = \frac1N \sum_{i=1}^N \sum_{l=0}^{k-1} \mathbb E\Big[(k-k_i)^3 \bm 1_{\{k_i\Le l\}} |\bm Y_{l+1} - \bm Y_l|^4\Big].
	\end{align*}
Applying Cauchy's inequality separates the expectation, isolating the term related to the random indices from the term involving the dynamics:
	\begin{equation}
		\frac1N \sum_{i=1}^N \mathbb E|\bm Y_k - \bm \Phi_i|^4
		\Le \frac1N \sum_{i=1}^N \sum_{l=0}^{k-1}
		\sqrt{\mathbb E\big[(k-k_i)^6 \bm 1_{\{k_i\Le l\}}\big]}
		\sqrt{\mathbb E|\bm Y_{l+1} - \bm Y_l|^8}.
		\label{proof: SAGA distance bound}
	\end{equation}
The eighth moment of the single-step deviation, $\bm Y_{l+1} - \bm Y_l$, can be bounded using an argument similar to that in the proof of Lemma~\ref{lemma: displacement}, yielding
	\begin{equation*}
		\sqrt{\mathbb E|\bm Y_{l+1} - \bm Y_l|^8} \Le \frac{Cd^2h^4}{m^2},\qquad l=0,1,\cdots.
	\end{equation*}
Substituting this bound into \eqref{proof: SAGA distance bound} simplifies the expression. Since the random indices $\{k_i\}_{i=1}^N$ are identically distributed, the average over $i$ can be removed:
	\begin{equation*}
		\frac1N \sum_{i=1}^N \mathbb E|\bm Y_k - \bm \Phi_i|^4
		\Le \frac{Cd^2h^4}{Nm^2} \sum_{j=1}^N \sum_{l=0}^{k-1}
		\sqrt{\mathbb E\big[(k-k_i)^6 \bm 1_{\{k_i\Le l\}}\big]} = \frac{Cd^2h^4}{m^2} \sum_{l=0}^{k-1}
		\sqrt{\mathbb E\big[(k-k_1)^6 \bm 1_{\{k_1\Le l\}}\big]}.
	\end{equation*}
Thus, the proof of Lemma~\ref{lemma: SAGA distance} is reduced to establishing the following purely probabilistic inequality concerning the distribution of the random index $k_1$:
\begin{equation}
	\sum_{l=0}^{k-1} \sqrt{\mathbb E\big[(k-k_1)^6 \bm 1_{\{k_1\Le l\}}\big]} \Le CN^4.
	\label{proof: SAGA pure inequality}
\end{equation}

To establish the probabilistic inequality \eqref{proof: SAGA pure inequality}, we formally define the random variable $k_1$. It signifies the index of the last iteration before step $k$ where the gradient $\nabla U_1(\bm x)$ was computed. Given the sequence of randomly chosen indices $\theta_1, \dots, \theta_{k-1}$, $k_1$ is defined as
	\begin{equation*}
		k_1 = \left\{
		\begin{aligned}
			& \max\{l:\theta_l=1\}, && \text{if $\exists\, l\in\{1,\cdots,k-1\}$ with $\theta_l=1$}, \\
			& 0, && \text{otherwise}.
		\end{aligned}
		\right.
	\end{equation*}
Due to the i.i.d. nature of the index selection, a time-reversal symmetry argument shows that the distribution of the ``age" of the gradient, $k-k_1$, is identical to the distribution of the ``waiting time" for the first selection of index 1, which we denote by $k_1'$:
	\begin{equation*}
		k_1' = \left\{
		\begin{aligned}
			& \min\{l:\theta_l=1\}, && \text{if $\exists\, l\in\{1,\cdots,k-1\}$ with $\theta_l=1$}, \\
			& k, && \text{otherwise}.
		\end{aligned}
		\right.
	\end{equation*}
This symmetry allows us to transform \eqref{proof: SAGA pure inequality} into an equivalent problem involving $k_1'$:
	\begin{equation*}
		\sum_{l=1}^k \sqrt{\mathbb E\big[(k_1')^6\bm 1_{\{k_1'\Ge l\}}\big]} \Le CN^4.
	\end{equation*}
Expanding the expectation inside the square root, we only need to prove
	\begin{equation}
		\sum_{l=1}^k \sqrt{\sum_{s=l}^k s^6 \cdot \mathbb P(k_1'=s)}
		\Le CN^4.
		\label{proof: SAGA pure inequality 2}
	\end{equation}

The random variable $k_1'$ follows a truncated geometric distribution. As a consequence, the probability $\mathbb P(k_1'=s)$ is given by the standard geometric probability:
	\begin{equation*}
		\mathbb P(k_1' = s) =
		\bigg(1-\frac1N\bigg)^{s-1} \frac1N, \qquad s = 1,\cdots,k-1.
	\end{equation*}
The event $k_1'=k$ occurs if index 1 is not selected in any of the first $k-1$ steps:
	\begin{equation*}
		\mathbb P(k_1' = k) =
		\bigg(1-\frac1N\bigg)^{k-1}.
	\end{equation*}
Conveniently, this probability can be expressed as the tail sum of the corresponding infinite geometric distribution: $\mathbb P(k_1'=k) = \sum_{s=k}^\infty (1-\frac{1}{N})^{s-1} \frac{1}{N}$.

Substituting this distribution into \eqref{proof: SAGA pure inequality 2} allows us to replace the finite sum up to $k$ with an infinite sum, effectively removing the dependency on $k$ and reducing the problem to proving the following algebraic inequality:
	\begin{equation}
		\sum_{l=1}^\infty \sqrt{\sum_{s=l}^\infty s^6 \bigg(1-\frac1N\bigg)^{s-1}\frac1N} \Le CN^4.
		\label{proof: SAGA pure inequality 3}
	\end{equation}
To establish \eqref{proof: SAGA pure inequality 3}, we bound the terms in the inner sum. Let $j = \lfloor s/N \rfloor$. For $s$ such that $jN \Le s < (j+1)N$, we can use the approximation $(1 - 1/N)^{jN} \Le e^{-j}$ to obtain:
	\begin{equation*}
		s^6 \bigg(1-\frac1N\bigg)^{s-1} \Le (j+1)^6N^6 \bigg(1-\frac1N\bigg)^{jN-1} \Le (j+1)^6 N^6 e^{-j} \Le C j^6 N^6 e^{-j}.
	\end{equation*}
Replacing the terms $s^6(1-1/N)^{s-1}$ with this upper bound effectively groups the terms in blocks of size $N$. The inequality \eqref{proof: SAGA pure inequality 3} then simplifies to
	\begin{equation}
		\sum_{l=1}^\infty \sqrt{\sum_{j=\lfloor l/N \rfloor}^\infty j^6 N^6 e^{-j}} \Le CN^4.
		\label{proof: SAGA pure inequality 4}
	\end{equation}
By factoring out $N^3$ from the square root and grouping terms in the outer sum based on $i = \lfloor l/N \rfloor$, we arrive at the final simplified inequality, which is independent of $N$:
	\begin{equation}
	\sum_{i=0}^\infty \sqrt{\sum_{j=i}^\infty j^6e^{-j}} \Le C.
		\label{proof: SAGA pure inequality 5}
	\end{equation}
This final inequality \eqref{proof: SAGA pure inequality 5} holds because the exponential term $e^{-j}$ decays faster than any polynomial in $j$. Specifically, using the bound $e^{-j} \Le C(j+1)^{-10}$, we have:
	\begin{equation*}
		\sum_{j=i}^\infty j^6e^{-j} \Le C \sum_{j=i}^\infty \frac{j^6}{(j+1)^{10}} \Le C \sum_{j=i}^\infty \frac1{j^4} \Le \frac{C}{i^3}.
	\end{equation*}
Therefore, the outer sum converges:
	\begin{equation*}
		\sum_{i=0}^\infty \sqrt{\sum_{j=i}^\infty j^6e^{-j}}
		\Le C \sum_{i=1}^\infty \frac1{i^{1.5}} < +\infty.
	\end{equation*}
This confirms \eqref{proof: SAGA pure inequality 5}, which in turn proves \eqref{proof: SAGA pure inequality} and thus establishes Lemma~\ref{lemma: SAGA distance}.
\end{proof}

Building on the previous results, we now establish bounds for the stochastic gradient error of the SAGA-UBU integrator.
\begin{lemma}
	\label{lemma: SAGA-UBU stochastic gradient error}
	Under Assumptions~\ref{asP} and \ref{asSgNp}, let the step size $h\Le c_0 m$, and
	$$
	\bm Z_0 = (\bm X_0,\bm V_0)\text{~with~}\bm X_0 = \bm 0\text{~and~}\bm V_0 \sim \mathcal N(\bm 0,M_2^{-1}\bm I_d);
	$$
    then the stochastic gradient error of SAGA-UBU with the batch size $1$ satisfies
	\begin{align*}
		\sup_{k\Ge0}\sqrt{\mathbb E\big|\bm X_{k+1} - \bar{\bm X}_{k+1}\big|^4} &
		\Le \frac{Cdh^4}{m}\min\big\{1,N^2h^2\big\}, \\
		\sup_{k\Ge0}\sqrt{\mathbb E\big|\bm V_{k+1} - \bar{\bm V}_{k+1}\big|^4} &
		\Le \frac{Cdh^2}{m}\min\big\{1,N^2h^2\big\},
	\end{align*}
    where $c_0$ depends only on $(M_i)_{i=1}^2$, and $C$  depends only on $(M_i)_{i=1}^3$.
\end{lemma}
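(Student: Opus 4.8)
The plan is to mirror the structure of Lemmas~\ref{lemma: stochastic gradient error} and \ref{lemma: SVRG-UBU stochastic gradient error}: reduce both moment bounds to a single fourth‑moment estimate on the SAGA gradient deviation, and prove that estimate in two complementary ways whose minimum produces the $\min\{1,N^2h^2\}$ factor. Concretely, since SAGA‑UBU differs from FG‑UBU only by replacing $\nabla U(\bm Y_k)$ with the SAGA estimator, the same computation that produced \eqref{stochastic gradient error expression} gives
\[
\bm X_{k+1}-\bar{\bm X}_{k+1} = -\frac{h(1-e^{-h})}{2M_2}\,\bm g_k(\theta_k),\qquad
\bm V_{k+1}-\bar{\bm V}_{k+1} = -\frac{he^{-h}}{M_2}\,\bm g_k(\theta_k),
\]
with $\bm g_k(\theta_k) = \big(\nabla U_{\theta_k}(\bm Y_k)-\nabla U_{\theta_k}(\bm\Phi_{\theta_k})\big) - \frac1N\sum_{i=1}^N\big(\nabla U_i(\bm Y_k)-\nabla U_i(\bm\Phi_i)\big)$. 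Because $|1-e^{-h}|,|e^{-h}|\le 1$, the claim reduces to showing $\sqrt{\mathbb E|\bm g_k(\theta_k)|^4}\le \frac{Cd}{m}\min\{1,N^2h^2\}$; the prefactors $h^2$ and $h$, after raising to the fourth power and taking square roots, then supply the $h^4$ and $h^2$ in the conclusion.

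For the uniform branch (value $1$), I would split $\bm g_k(\theta_k)$ by the triangle inequality into $|\nabla U_{\theta_k}(\bm Y_k)-\nabla U(\bm Y_k)|$, $|\nabla U_{\theta_k}(\bm\Phi_{\theta_k})|$, and $\frac1N\sum_i|\nabla U_i(\bm\Phi_i)|$, take eighth moments, and close with Assumption~\ref{asSgNp} (the eighth‑moment control of $\{\nabla U_i-\nabla U\}$ and the linear growth $|\nabla U_i(\bm x)|\le M_1\sqrt{|\bm x|^2+d}$) together with the uniform‑in‑time moment bound of Theorem~\ref{theorem: SAGA-UBU moment bound} applied to each historical position $\bm\Phi_i=\bm Y_{k_i}$. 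Using $m\le M_2$ this gives $\mathbb E|\bm g_k(\theta_k)|^8\le Cd^4/m^4$, hence $\sqrt{\mathbb E|\bm g_k(\theta_k)|^4}\le Cd/m$ by Jensen's inequality.

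For the sharp branch (value $N^2h^2$), I would instead apply the mean value theorem and the Hessian bound $\|\nabla^2 U_i\|\le M_2$ to obtain the pathwise estimate $|\bm g_k(\theta_k)|\le M_2|\bm Y_k-\bm\Phi_{\theta_k}| + \frac{M_2}{N}\sum_i|\bm Y_k-\bm\Phi_i|$. Conditioning on the $\sigma$‑algebra generated by $\bm Y_k$ and $\{\bm\Phi_i\}_{i=1}^N$ — which depend only on $\theta_0,\dots,\theta_{k-1}$ and the Brownian path, so that the freshly drawn uniform index $\theta_k$ is independent of it — yields $\mathbb E|\bm Y_k-\bm\Phi_{\theta_k}|^4 = \frac1N\sum_i\mathbb E|\bm Y_k-\bm\Phi_i|^4$, and Jensen on the averaged term produces $\mathbb E|\bm g_k(\theta_k)|^4\le \frac{C}{N}\sum_i\mathbb E|\bm Y_k-\bm\Phi_i|^4$. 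Invoking Lemma~\ref{lemma: SAGA distance} then gives $\mathbb E|\bm g_k(\theta_k)|^4\le CN^4d^2h^4/m^2$, i.e.\ $\sqrt{\mathbb E|\bm g_k(\theta_k)|^4}\le CN^2dh^2/m$. Taking the minimum of the two branches and substituting back into the explicit error formulas completes the proof.

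I do not expect a serious obstacle in the present lemma: the genuine difficulty is already isolated in Lemma~\ref{lemma: SAGA distance}, whose proof handles the non‑Markovian dependency on the history of sampled indices via a time‑reversal argument relating the age of a stored gradient to a geometric waiting time for its index to be drawn. Granting that estimate, the only point here that needs care is the conditioning step — recognizing that the SAGA estimator involves two distinct historical lookups, the randomly selected $\bm\Phi_{\theta_k}$ and the full‑table average, which become interchangeable in expectation precisely because $\theta_k$ is drawn independently of the current state and history — and noticing that it is the crude uniform bound, carrying no smallness in $h$ or $N$, that governs the regime $Nh\gtrsim1$.
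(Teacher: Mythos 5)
Your proposal is correct and follows essentially the same route as the paper: reduce to the single bound $\sqrt{\mathbb E|\bm g_k(\theta_k)|^4}\le \frac{Cd}{m}\min\{1,N^2h^2\}$, prove the uniform branch by the triangle inequality plus Assumption~\ref{asSgNp} and the uniform-in-time moments of the historical positions, and prove the $N^2h^2$ branch by Lipschitz continuity of the component gradients, the independence of $\theta_k$ from the history (so that $\mathbb E|\bm Y_k-\bm\Phi_{\theta_k}|^4$ equals the table average), and Lemma~\ref{lemma: SAGA distance}. The only cosmetic difference is that the paper works directly with fourth moments in the uniform branch and handles the random lookup time $k_i$ by decomposing over $\{k_i=l\}$ with Cauchy--Schwarz, a technicality your sketch subsumes.
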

\begin{proof}
The SAGA stochastic gradient deviation at step $k$, denoted by $\bm g_k(\theta_k)$, is defined as
	\begin{equation}
		\bm g_k(\theta_k) = \nabla U_{\theta_k}(\bm Y_k) - \nabla U_{\theta_k}(\bm\Phi_{\theta_k}) + \frac1N \sum_{i=1}^N
		\nabla U_i(\bm\Phi_i) - \frac1N \sum_{i=1}^N \nabla U_i(\bm Y_k).
        \label{proof: SAGA sg 0}
	\end{equation}
As established in \eqref{proof: X difference}, the stochastic gradient error $\bm Z_{k+1} - \bar{\bm Z}_{k+1}$ is directly proportional to this deviation term $\bm g_k(\theta_k)$. Therefore, bounding the moments of $\bm g_k(\theta_k)$ is sufficient to control the stochastic gradient error. The specific target bound required is
	\begin{equation}
		\sqrt{\mathbb E|\bm g_k(\theta_k)|^4} \Le \frac{C d}{m}\min\big\{1,N^2h^2\big\}.
		\label{proof: SAGA sg 1}
	\end{equation}

We establish the required bound \eqref{proof: SAGA sg 1} by deriving two separate estimates for $\mathbb E|\bm g_k(\theta_k)|^4$ and then taking the minimum.

First, a uniform bound independent of $h$ can be obtained using an argument similar to the one leading to \eqref{SAGA: moment bound 1}. We start with the triangle inequality \eqref{SAGA: gk}:
\begin{equation*}
		|\bm g_k(\theta_k)| \Le \bigg|\nabla U_{\theta_k}(\bm Y_k)- \frac1N \sum_{i=1}^N \nabla U_i(\bm Y_k)\bigg| + |\nabla U_{\theta_k} (\bm \Phi_{\theta_k})| + \frac1N \sum_{i=1}^N |\nabla U_i(\bm \Phi_i)|.
\end{equation*}
Using Assumption~\ref{asSgNp} and Theorem~\ref{theorem: SAGA-UBU moment bound} (since $\bm \Phi_i = \bm Y_{k_i}$ are previous states), we obtain
	\begin{align*}
		\mathbb E|\bm g_k(\theta_k)|^4
		& \Le C \bigg( d^2 + \frac1N \sum_{i=1}^N \mathbb E\big(|\bm \Phi_i|^2+d\big)^2 \bigg) = C \bigg( d^2 + \frac1N \sum_{i=1}^N \mathbb E|\bm Y_{k_i}|^4 \bigg) \\
        & = C \bigg( d^2 + \frac1N \sum_{i=1}^N \sum_{l=0}^{k-1} \mathbb E\big(|\bm Y_l|^4\cdot \bm 1_{\{k_i=l\}}\big) \bigg) \\
		& \Le C \bigg( d^2 + \frac1N \sum_{i=1}^N \sum_{l=0}^{k-1} \sqrt{\mathbb E|\bm Y_l|^8 \cdot \mathbb P(k_i=l)} \bigg) \\
		& \Le C \bigg( d^2 + \frac{d^2}{m^2} \sum_{l=0}^{k-1} \mathbb P(k_1=l) \bigg) \Le C \bigg(d^2 + \frac{d^2}{m^2}\bigg) \Le \frac{Cd^2}{m^2}.
	\end{align*}
This provides the first part of the minimum bound
\begin{equation}
\sqrt{\mathbb E|\bm g_k(\theta)|^4} \Le \frac{Cd}{m}.
\label{proof: SAGA local 2}
\end{equation}

Starting from the definition of $\bm g_k(\theta_k)$, we apply the triangle inequality to obtain
	\begin{align*}
		|\bm g_k(\theta_k)|
		& \Le \big|\nabla U_{\theta_k}(\bm Y_k) - \nabla U_{\theta_k}(\bm \Phi_{\theta_k})\big| +
		\frac1N \sum_{i=1}^N \big|\nabla U_i(\bm Y_k) - \nabla U_i(\bm\Phi_i)\big| \\
		& \Le C |\bm Y_k - \bm \Phi_{\theta_k}| + \frac CN \sum_{i=1}^N |\bm Y_k - \bm \Phi_i|.
	\end{align*}
Taking the fourth moment and the full expectation over the trajectory and $\theta_k$ gives
	\begin{align*}
		\mathbb E|\bm g_k(\theta_k)|^4
		& \Le C\,\mathbb E|\bm Y_k-\bm\Phi_{\theta_k}|^4 + C \,\mathbb E \Bigg[ \bigg(\frac{1}{N} \sum_{i=1}^N |\bm Y_k - \bm \Phi_i| \bigg)^4 \Bigg]  \\
        & \Le C\,\mathbb E|\bm Y_k-\bm\Phi_{\theta_k}|^4 + \frac{C}{N} \sum_{i=1}^N \mathbb E|\bm Y_k - \bm \Phi_i|^4 = \frac{C}{N} \sum_{j=1}^N \mathbb E|\bm Y_k - \bm \Phi_i|^4. 
	\end{align*}
Applying the bound from Lemma~\ref{lemma: SAGA distance} yields the $h$-dependent estimate:
	\begin{equation}
		\sqrt{\mathbb E|\bm g_k(\theta_k)|^4}
		 \Le \frac{CN^2dh^2}{m}.
		\label{proof: SAGA local 3}
	\end{equation}
Combining the uniform bound implied by \eqref{proof: SAGA local 2} and the $h$-dependent bound from \eqref{proof: SAGA local 3}, we obtain the desired composite bound \eqref{proof: SAGA sg 1}.
\end{proof}

\subsection{MSE of SAGA-UBU}
The MSE bound for SAGA-UBU stated in Theorem~\ref{theorem: SAGA-UBU MSE} is derived using the same analytical framework employed for SG-UBU and SVRG-UBU. The proof combines the uniform-in-time moment bound from Theorem~\ref{theorem: SAGA-UBU moment bound} and the local error estimates from Lemma~\ref{lemma: SAGA-UBU stochastic gradient error}.
\begin{proof}[Proof of Theorem~\ref{theorem: SAGA-UBU MSE}]
For SAGA-UBU, we note that the stochastic gradient error bound from Lemma~\ref{lemma: SAGA-UBU stochastic gradient error} exhibits a stronger dependence on the parameter $m$. Therefore, the contribution of this error to the total MSE must be bounded carefully.

Following the proof strategy for SG-UBU (Theorem~\ref{theorem: SG-UBU MSE}), we decompose the stochastic gradient error term $R_k$ as:
\begin{align*}
		R_{k,1} & = \frac1h (\bm Z_{k+1} - \bar {\bm Z}_{k+1}) ^\top \nabla \phi_h(\bar{\bm Z}_{k+1}), \\
		R_{k,2} & = \frac1h (\bm Z_{k+1} - \bar {\bm Z}_{k+1})^\top \int_0^1 \Big(\nabla \phi_h\big(\lambda \bm Z_{k+1} + (1-\lambda) \bar{\bm Z}_{k+1}\big)-\nabla\phi_h(\bar {\bm Z}_{k+1})\Big)\D\lambda.
\end{align*}
This decomposition leads to the following moment bounds:
\begin{align*}
\mathbb E[R_{k,1}^2] & \Le \frac{C}{m^2h^2}
\mathbb E|\bm Z_{k+1} - \bar{\bm Z}_{k+1}|^2 \Le \frac{C\sigma^2d}{m^3}\min\big\{1,N^2h^2\big\} \Le \frac{Cd}{m^3} \\
\mathbb E[R_{k,2}^2] & \Le \frac{C}{m^4h^2}
\mathbb E|\bm Z_{k+1} - \bar{\bm Z}_{k+1}|^4 \Le \frac{C\sigma^2d^2h^2}{m^6}\min\big\{1,N^4h^4\big\}.
\end{align*}
Then the contribution of the stochastic gradient error $\sum_{k=0}^{K-1} R_k$ to the MSE is bounded by
\begin{align*}
\frac1{K^2} \Bigg[
\bigg(\sum_{k=0}^{K-1} R_k\bigg)^2
\Bigg] & \Le \frac{C}{K^2}\sum_{k=0}^{K-1}
\mathbb E[R_{k,1}^2] + \frac{C}{K}
\sum_{k=0}^{K-1} \mathbb E[R_{k,2}^2] \\
& \Le C\bigg(\frac{d}{m^3K} + \frac{d^2h^2}{m^6}\min\big\{1,N^4h^4\big\}\bigg).
\end{align*}
The analysis of the discretization error and martingale components remains unchanged.
Therefore, the total MSE is bounded by
\begin{equation*}
		\mathbb E \Bigg[\bigg(\frac1K\sum_{k=0}^{K-1} f(\bm X_k) - \pi(f)\bigg)^2\Bigg] \Le
		C\bigg(\frac{d}{m^3Kh} + \frac{d^2h^2}{m^6} \min\big\{1,N^4h^4\big\}+ \frac{d^2h^4}{m^6}\bigg(1+\frac{dh}m\bigg) \bigg),
\end{equation*}
completing the proof of Theorem~\ref{theorem: SAGA-UBU MSE}.
\end{proof}

\subsection{Numerical bias of SAGA-UBU}

\begin{proof}[Proof of Theorem~\ref{theorem: SAGA-UBU numerical bias}]
We begin with the time average error decomposition in \eqref{explicit time average}:
	\begin{equation*}
		\frac{1}{K} \sum_{k=0}^{K-1} f(\bm{X}_k) - \pi(f) = 
		\frac{\phi_h(\bm{Z}_0) - \phi_h(\bm{Z}_K)}{Kh} + 
		\frac{1}{K}\sum_{k=0}^{K-1} (R_k + S_k + T_k),
	\end{equation*}
where $R_k$, $S_k$ and $T_k$ represent the stochastic gradient error, discretization error, and martingale difference term, respectively.
To analyze the numerical bias, we take the expectation of this expression. Since the martingale term satisfies $\mathbb{E}[T_k]=0$, the expectation becomes
	\begin{equation}
		\mathbb{E}\bigg[\frac{1}{K} \sum_{k=0}^{K-1} f(\bm{X}_k)\bigg] - \pi(f) = 
		\frac{\mathbb{E}[\phi_h(\bm{Z}_0) - \phi_h(\bm{Z}_K)]}{Kh} + 
		\frac{1}{K} \sum_{k=0}^{K-1} \big( \mathbb{E}[R_k] + \mathbb{E}[S_k] \big).
		\label{proof: SAGA bias 1}
	\end{equation}
The subsequent analysis focuses on bounding the expectations of the stochastic error term $\mathbb E[R_k]$ and the discretization error term $\mathbb E[S_k]$.

For the stochastic gradient error $R_k$, we use its integral representation
	\begin{equation*}
		R_k = \frac{\phi_h(\bm{Z}_{k+1}) - \phi_h(\bar{\bm{Z}}_{k+1})}{h} = \frac{1}{h} 
		(\bm{Z}_{k+1} - \bar{\bm{Z}}_{k+1})^\top
		\int_0^1 \nabla \phi_h\big(\lambda \bm{Z}_{k+1} +
		(1-\lambda) \bar{\bm{Z}}_{k+1}
		\big) \mathrm{d}\lambda.
	\end{equation*}
Following the decomposition strategy used in the proof of Theorem~\ref{theorem: SG-UBU MSE}, we split $R_k$ into a martingale difference term $R_k^1$ and a higher-order remainder $R_k^2$:
	\begin{align*}
		R_k^1 & = \frac{1}{h}(\bm{Z}_{k+1} - \bar{\bm{Z}}_{k+1})^\top
		\nabla \phi_h(\bar{\bm{Z}}_{k+1}), \\
		R_k^2 & = \frac{1}{h} (\bm{Z}_{k+1} - \bar{\bm{Z}}_{k+1})^\top 
		\int_0^1 \Big(\nabla \phi_h\big(\lambda \bm{Z}_{k+1} + (1-\lambda) \bar{\bm{Z}}_{k+1}\big) - \nabla \phi_h(\bar{\bm{Z}}_{k+1}) \Big) \mathrm{d}\lambda.
	\end{align*}
The martingale property ensures $\mathbb{E}[R_k^1] = 0$. For the remainder $R_k^2$, we apply the Hessian bound from Theorem~\ref{theorem: phi h estimate} and the stochastic gradient error bounds from Lemma~\ref{lemma: SAGA-UBU stochastic gradient error} to obtain
	\begin{equation*}
		\mathbb{E}|R_k^2| \leqslant \frac{C}{m^2 h} \mathbb{E} \big| \bm{Z}_{k+1} - \bar{\bm{Z}}_{k+1} \big|^2 \leqslant \frac{Cdh}{m^3}\min\big\{1,N^2h^2\big\}.
	\end{equation*}
Since $\mathbb E[R_k] = \mathbb E[R_k^2]$, the average expected stochastic gradient error is bounded by:
	\begin{equation}
		\bigg| \frac{1}{K}\sum_{k=0}^{K-1} 
		\mathbb{E}[R_k] \bigg|
        \leqslant \frac{Cdh}{m^3}\min\big\{1,N^2h^2\big\}.
		\label{proof: SAGA bias 2}
	\end{equation}
For the discretization error $S_k$, the analysis is identical to the SG-UBU case. Therefore, we directly use the bound established in \eqref{proof: S_0 estimate} from the proof of Theorem~\ref{theorem: SG-UBU numerical bias}:
	\begin{equation}
		\bigg|\frac{1}{K} \sum_{k=0}^{K-1} \mathbb{E}[S_k]\bigg| \leqslant 
    \frac{Cdh^2}{m^3}.
		\label{proof: SAGA bias 3}
	\end{equation}

Finally, we need to bound the boundary term $\mathbb{E}[\phi_h(\bm{Z}_0) - \phi_h(\bm{Z}_K)] / (Kh)$ in \eqref{proof: SAGA bias 1}. Using the Lipschitz property of $\phi_h$ from Theorem~\ref{theorem: phi h estimate} and an argument similar to Lemma~\ref{lemma: displacement}, we can estimate the displacement moment:
	\begin{equation}
		\mathbb E\big|\phi_h(\bm Z_0) - \phi_h(\bm Z_K)\big| \leqslant \frac{C}{m} \mathbb E|\bm Z_0 - \bm Z_K| \leqslant C\sqrt{\frac{dKh}{m^3}}.
		\label{proof: SAGA bias 4}
	\end{equation}
Substituting the bounds for the expected stochastic error \eqref{proof: SAGA bias 2}, the discretization error \eqref{proof: SAGA bias 3}, and the boundary term \eqref{proof: SAGA bias 4} into the expression \eqref{proof: SAGA bias 1} yields an estimate for the time average bias over $K$ steps:
	\begin{equation*}
		\Bigg|\mathbb{E}\bigg[\frac{1}{K} \sum_{k=0}^{K-1} f(\bm{X}_k)\bigg] - \pi(f)  \Bigg| \leqslant C\Bigg( \sqrt{\frac{d}{m^3Kh}} + 
		\frac{dh}{m^3}\min\big\{1,N^2h^2\big\}+
    \frac{dh^2}{m^3}\Bigg).
	\end{equation*}
As established in Lemma~\ref{lemma: SAGA-UBU invariant}, the time-averaged distribution $\tilde \pi_h^K$ converges weakly along a subsequence as $K\rightarrow\infty$ to the limit distribution $\tilde{\pi}_h$. Taking the subsequential limit $K\rightarrow\infty$, the boundary term vanishes, resulting in the final bound for the numerical bias:
	\begin{equation*}
		|\tilde{\pi}_h(f) - \pi(f)| \leqslant C\bigg(\frac{dh}{m^3}\min\big\{1,N^2h^2\big\} + 
    \frac{dh^2}{m^3}\bigg).
	\end{equation*}
This bound exactly matches the result stated in Theorem~\ref{theorem: SAGA-UBU numerical bias}.
\end{proof}
\section{Numerical calculation of leading-order coefficient}
\label{appendix: leading-order coefficient}

In this section, we present the computational method for the leading-order coefficient \eqref{h coefficient} in the numerical bias of SG-UBU. This calculation requires computing the quantity
\begin{equation*}
	C_0 = \mathbb E^{\pi} \Big[
	\big(b(\bm x_0,\theta) - \nabla U(\bm x_0)\big)^\top
	\nabla_{\bm v\bm v}^2 \phi_0(\bm x_0,\bm v_0)
	\big(b(\bm x_0,\theta) - \nabla U(\bm x_0)\big)
	].
\end{equation*}
We define the noise outer-product matrix $\bm E(\bm x_0) \in \mathbb R^{d\times d}$ as
\begin{equation*}
	\bm E(\bm x_0) = \big(b(\bm x_0,\theta) - \nabla U(\bm x_0)\big) \big(b(\bm x_0,\theta) - \nabla U(\bm x_0)\big)^\top,
\end{equation*}
which allows $C_0$ to be written equivalently using the trace:
\begin{equation*}
	C_0 = \mathbb E^{\pi} \Big[\Tr(\bm E(\bm x_0) \nabla_{\bm v\bm v}^2 \phi_0(\bm x_0,\bm v_0)) \Big].
\end{equation*}
Since the continuous Poisson solution $\phi_0(\bm x,\bm v)$ is independent of the specific stochastic gradient model, we can compute the Hessian matrix $\nabla_{\bm v\bm v}^2 \phi_0(\bm x,\bm v)$ and apply this result to both the Gaussian noise and finite-sum cases.

Using the Kolmogorov solution $u(\bm x,\bm v,t)$ from \eqref{function u}, the Hessian matrix $\nabla_{\bm v\bm v}^2 \phi_0(\bm x,\bm v)$ is
\begin{equation*}
	\nabla_{\bm v\bm v}^2 \phi_0(\bm x_0,\bm v_0) =
	\int_0^\infty \mathbb E^{(\bm x_0,\bm v_0)}\Big[
	(D_{\bm v\bm v} \bm x_t)^\top
	\nabla f(\bm x_t) + (D_{\bm v}\bm x_t)^\top \nabla^2 f(\bm x_t) D_{\bm v}\bm x_t
	\Big],
\end{equation*}
where $(\bm x_t,\bm v_t)_{t\Ge0}$ solves the exact underdamped Langevin dynamics \eqref{ULD} with the initial state $(\bm x_0,\bm v_0)$, and $D_{\bm v} \bm x_t$ and $D_{\bm v\bm v} \bm x_t$ are the first and second variation processes, respectively.

Here, the second variation process can be derived by taking derivatives with respect to $\bm v$ in the first variation process \eqref{first variation}, and it is governed by the coupled ODE system:
\begin{equation}
	\left\{
	\begin{aligned}
		\frac{\D}{\D t} D_{\bm v\bm v} \bm x_t & = D_{\bm v\bm v} \bm v_t, \\
		\frac{\D}{\D t} D_{\bm v\bm v} \bm v_t & = -\frac{1}{M_2}\Big(\nabla^3 U(\bm x_t) \langle D_{\bm v} \bm x_t, D_{\bm v} \bm x_t \rangle + \nabla^2 U(\bm x_t) D_{\bm v\bm v} \bm x_t\Big) - 2D_{\bm v\bm v} \bm v_t,
	\end{aligned}
	\right.
	\label{second variation 1}
\end{equation}
with the initial configuration $D_{\bm v\bm v} \bm x_0 = D_{\bm v\bm v} \bm v_0 = \bm O_{d\times d \times d}$, which are zero third-order tensors. The tensor quadratic form $\bm B\langle\bm A,\bm A\rangle \in \mathbb R^{d\times d\times d}$ is defined for $\bm B \in \mathbb R^{d\times d\times d}$ and $\bm A \in \mathbb R^{d\times d}$ by
\begin{equation*}
	\big(\bm B \langle\bm A,
	\bm A \rangle\big)_{kmn} :=  \sum_{i,j=1}^d B_{ijk} A_{im} A_{jn}, \qquad
	k,m,n = 1,\cdots,d.
\end{equation*}

For the discrete-time setting, let $(\bm X_k,\bm V_k)_{k=0}^\infty$ be the FG-UBU solution with step size $h>0$ starting from $(\bm x_0,\bm v_0)$. Let $(D_{\bm v} \bm X_k)_{k=0}^\infty$ and $(D_{\bm v\bm v} \bm X_k)_{k=0}^\infty$ be the corresponding first and second variation processes, both solved using an exponential integrator. The Hessian matrix $\nabla_{\bm v\bm v}^2 \phi_0(\bm x_0,\bm v_0)$ can then be approximated by the sum
\begin{equation*}
	\nabla_{\bm v\bm v}^2 \phi_0(\bm x_0,\bm v_0) \approx h \sum_{k=0}^\infty \mathbb E^{(\bm x_0,\bm v_0)} \Big[
	(D_{\bm v\bm v} \bm X_k)^\top
	\nabla f(\bm X_k) + (D_{\bm v}\bm X_k)^\top \nabla^2 f(\bm X_k) D_{\bm v}\bm X_k
	\Big].
\end{equation*}
Note that the second variation $D_{\bm v\bm v} \bm x_t$ is only guaranteed to decay exponentially almost surely when the potential function $U(\bm x)$ is globally convex. For non-convex potentials, this computation of $\nabla_{\bm v\bm v}^2 \phi_0(\bm x_0,\bm v_0)$ may be unstable.
	
\vskip 0.2in
	
\bibliography{reference.bib}
\end{document}